\xpatchcmd{\thmt@restatable}{\csname #2\@xa\endcsname\ifx\@nx#1\@nx\else[{#1}]\fi}%
{\IfAppendix{\csname #2\@xa\endcsname}{\csname #2\@xa\endcsname[{#1}]}}
\newcolumntype{C}{ >{\centering\arraybackslash} m{7cm} }
\newcolumntype{D}{ >{\centering\arraybackslash} m{2cm} }
\newcolumntype{E}{ >{\centering\arraybackslash} m{1.5cm} }
\newcolumntype{F}{ >{\centering\arraybackslash} m{8.3cm} }
\newcolumntype{G}{ >{\centering\arraybackslash} m{8cm} }
\newcommand{\alignmulticolumn}[1]{\omit\rlap{$\displaystyle{#1}$}}
\newtheorem{theorem}{Theorem}[section]
\newtheorem{lemma}[theorem]{Lemma}
\newtheorem{corollary}[theorem]{Corollary}
\newtheorem{definition}[theorem]{Definition}
\theoremstyle{remark}
\newtheorem{example}[theorem]{Example}
\numberwithin{equation}{section}
\newcommand{\smo}[1]{#1^{\downarrow}_\infty}
\newcommand{\sym}[1]{#1_\infty}
\newcommand{\wid}{\tau}
\newcommand{\DeltaG}{3.751}
\newcommand{\DeltaR}{5.056}
\newcommand{\botwid}{0.065}
\newcommand{\topwid}{0.2375}
\newcommand{\botwidG}{\botwid}
\newcommand{\topwidG}{\topwid}
\newcommand{\botwidR}{0.0775}
\newcommand{\topwidR}{0.165}
\newcommand{\etaG}{1.08732}
\newcommand{\etaR}{1.12597}
\newcommand{\psiG}{\etaG}
\newcommand{\psiR}{\etaR}
\newcommand{\zetaG}{0.087317}
\newcommand{\zetaR}{0.12597}
\newcommand{\betaG}{0.118824}
\newcommand{\betaR}{0.261070}
\newcommand{\alphaG}{1.09206}
\newcommand{\alphaR}{1.12633}
\newcommand{\alphajG}{0.907937}
\newcommand{\alphajR}{0.873678}
\newcommand{\QsiG}{1.39862}
\newcommand{\QsiR}{1.19352}
\newcommand{\waveG}{1.83193}
\newcommand{\waveR}{1.14055}
\newcommand{\KG}{K^{\mathcal{G}}}
\newcommand{\KR}{K^{\mathcal{R}}}
\newcommand{\DG}{D^{\mathcal{G}}}
\newcommand{\DR}{D^{\mathcal{R}}}
\newcommand{\wt}[1]{\widetilde{ #1 } }
\newcommand{\wh}[1]{\widehat{ #1 } }
\newcommand{\LB}{\mathrm{LB}}
\newcommand{\CCC}{\mathcal{C}}
\newcommand{\R}{\mathbb{R}}
\newcommand{\RR}{\mathbb{R}}
\newcommand{\BB}{\mathcal{B}}
\newcommand{\DBB}{\mathcal{B}^{\brac{1}}}
\newcommand{\cL}{\mathcal{L}}
\newcommand{\bxi}{\bar{\xi}}
\newcommand{\cC}{\mathcal{C}}
\newcommand{\cN}{\mathcal{N}}
\newcommand{\cI}{\mathcal{I}}
\newcommand{\Noi}{\cN}
\newcommand{\sB}{\mathscr{B}}
\newcommand{\sD}{\mathscr{D}}
\newcommand{\WW}{\mathcal{W}}
\newcommand{\DWW}{\mathcal{W}^{\brac{1}}}
\newcommand{\sW}{\mathscr{W}}
\newcommand{\II}{\mathcal{I}}
\newcommand{\ZZ}{\mathbb{Z}}
\newcommand{\qj}{q(j)}
\newcommand{\Qaux}{Q_{\text{aux}}}
\newcommand{\QI}{Q_{\ml{I}}}
\newcommand{\QC}{Q_{\ml{C}}}
\newcommand{\RQ}{R}
\newcommand{\IS}{\ml{I}}
\newcommand{\CS}{\ml{C}}
\newcommand{\RC}{R_{C}}
\newcommand{\nearNoise}{\eta}
\newcommand{\sign}{\operatorname*{sign}}
\newcommand{\bdb}[1]{#1}
\newcommand{\MAT}[1]{\begin{bmatrix} #1 \end{bmatrix}}
\newcommand{\abs}[1]{\left| #1 \right|}
\newcommand{\keys}[1]{\left\{ #1 \right\}}
\newcommand{\sqbr}[1]{\left[ #1 \right]}
\newcommand{\brac}[1]{\left( #1 \right) }
\newcommand{\ml}[1]{\mathcal{ #1 } }
\newcommand{\op}[1]{ \operatorname{#1} }
\newcommand{\normInf}[1]{\left\| #1 \right\| _{\infty}}
\newcommand{\normTwo}[1]{\left\| #1 \right\| _{2}}
\newcommand{\normOne}[1]{\left\| #1 \right\| _{1}}
\newcommand{\normTV}[1]{\left\| #1 \right\| _{\op{TV}}}
\newcommand{\diff}[1]{ \, \text{d} #1 }
\title{Deconvolution of Point Sources:\\ A Sampling Theorem and Robustness Guarantees}
\author{Brett Bernstein\thanks{Courant Institute of Mathematical Sciences, New York University} \hspace{0.4cm} Carlos Fernandez-Granda\footnotemark[1] \thanks{Center for Data Science,
    New York University}}
\date{May 2018}
\begin{document}

\maketitle

\vspace{-0.3in}

\begin{abstract}
\noindent

In this work we analyze a convex-programming method for estimating superpositions of point sources or spikes from nonuniform samples of their convolution with a known kernel. We consider a one-dimensional model where the kernel is either a Gaussian function or a Ricker wavelet, inspired by applications in geophysics and imaging. Our analysis establishes that minimizing a continuous counterpart of the $\ell_1$ norm achieves exact recovery of the original spikes as long as (1) the signal support satisfies a minimum-separation condition and (2) there are at least two samples close to every spike. In addition, we derive theoretical guarantees on the robustness of the approach to both dense and sparse additive noise.

\end{abstract}

{\bf Keywords.} Deconvolution, sampling theory, convex optimization, sparsity, super-resolution, dual certificate, nonuniform sampling, impulsive noise, Gaussian convolution, Ricker wavelet.

\section{Introduction}

The problem of deconvolution consists of estimating a signal $\mu$ from data $y$ modeled as the convolution of $\mu$ with a kernel $K$ sampled at a finite number of locations $s_1$, $s_2$, \ldots, $s_n \in \RR$
\begin{align}
y_i := \brac{ K \ast \mu} \brac{s_i} , \quad i = 1,2,\ldots , n.
\end{align}
This problem arises in multiple domains of the applied sciences,
including spectroscopy~\cite{kauppinen1981fourier,carley1979application}, neuroscience~\cite{vogelstein2010fast}, geophysics~\cite{mendel2013optimal}, ultrasound~\cite{abeyratne1995higher,jensen1992deconvolution}, optics~\cite{broxton2013wave}, where $K$ is the point-spread function of the
imaging system, and signal processing~\cite{mallat1999wavelet}, where $K$ is the
impulse response of a linear system. In many of these applications, the signal $\mu$ is well modeled as a superposition of point sources or spikes, which could correspond to heat sources~\cite{li2014heat}, fluorescent probes in microscopy~\cite{palm,fpalm}, celestial bodies in astronomy~\cite{astronomy_puschmann} or neural action potentials in neuroscience~\cite{rieke_spikes}. Mathematically, the spikes can be represented as a superposition of Dirac measures supported on a finite set $T \subseteq \R$ 
\begin{align}
\mu & := \sum_{t_j \in T} a_j\delta_{t_j}, \label{eq:signal}
\end{align} 
with amplitudes $a_1, a_2, \ldots \in\RR$. For such signals, the data correspond to samples from a linear combination of shifted and scaled copies of the convolution kernel
\begin{align}
y_i = \sum_{t_j \in T} a_j K \brac{s_i-t_j} , \quad i = 1,2,\ldots,n. \label{eq:data_dirac}
\end{align}

\begin{figure}[t]
\centering
\begin{tabular}{ECC}
&&\hspace{-1.5cm}  Spectrum \\ \\ 
Gaussian&\includegraphics{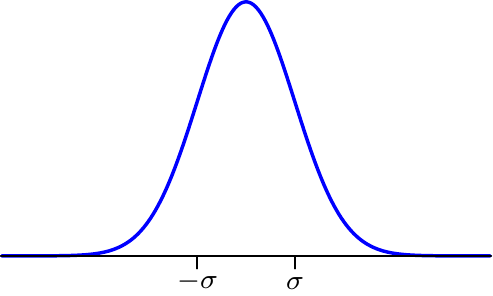}& \hspace{-1.5cm} \includegraphics{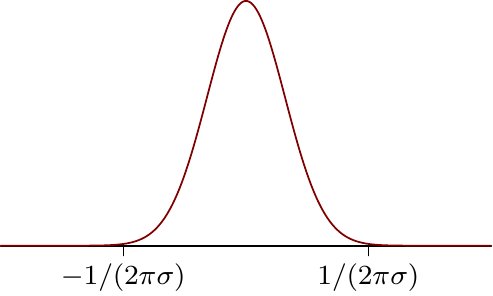}\\
Ricker&\includegraphics{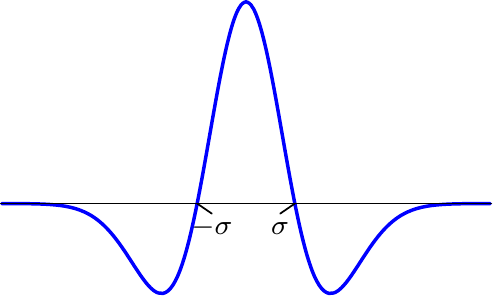}& \hspace{-1.5cm} \includegraphics{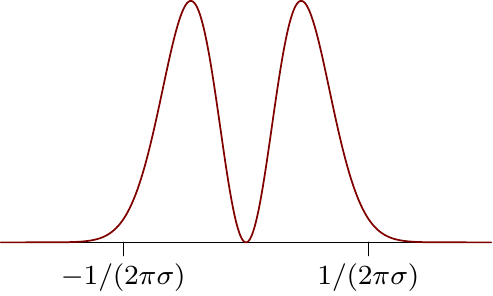}
\end{tabular}
\caption{The Gaussian and Ricker kernels with their respective spectra. }
\label{fig:Kernels}
\end{figure}

To make our analysis concrete, we focus on two specific convolution kernels, depicted in \Cref{fig:Kernels}, that are particularly popular in deconvolution applications: the Gaussian kernel,
\begin{align}
\KG(t) & := \exp\left(- \frac{t^2}{2\sigma^2}\right),
\end{align}
and the Ricker wavelet
\begin{align}
\KR(t) & := \brac{1-\frac{t^2}{\sigma^2}}\exp\left(-
\frac{t^2}{2\sigma^2}\right),
\end{align}
which equals the second derivative of the Gaussian kernel up to a
multiplicative constant. In both cases $\sigma > 0$ is a real-valued parameter that determines the spread of the
kernel. \Cref{fig:Examples} illustrates the measurement model described by \cref{eq:data_dirac} for the Gaussian and Ricker kernels. Gaussian kernels are used to model the point-spread function of diffraction-limited imaging systems such as microscopes~\cite{palm,zhang2007gaussian} and telescopes~\cite{aniano2011common}, blurring operators in computer vision~\cite{hummel1987deblurring} and the Green's function of systems governed by the diffusion equation~\cite{li2014heat,murray2015estimating}. The Ricker kernel is relevant to reflection seismology, a technique to estimate properties of the subsurface of the Earth 
by transmitting a short pulse into the ground and then measuring the reflected signal \cite{sheriff1995exploration}. Processing these measurements can be reduced to a one-dimensional deconvolution problem of the form~\eqref{eq:data_dirac}~\cite{pereg2017seismic}, where the signal $\mu$ represents the reflection coefficients at the interface between adjacent underground layers and the convolution kernel corresponds to the pulse, which can be modeled using a Ricker wavelet~\cite{ricker1953form}. 

\begin{figure}[tp]
\centering
\includegraphics{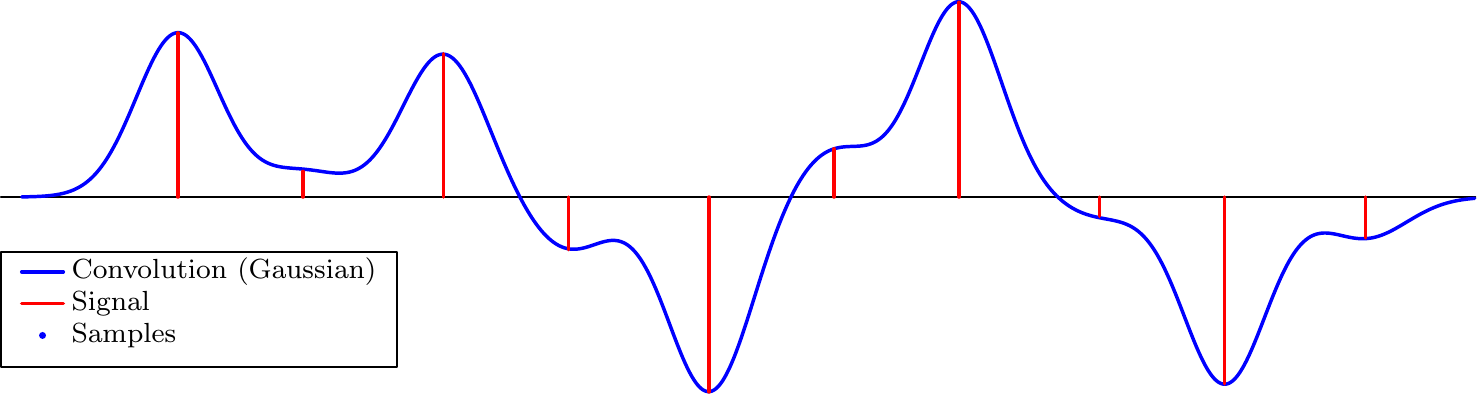}
\vspace{.5cm}
\includegraphics{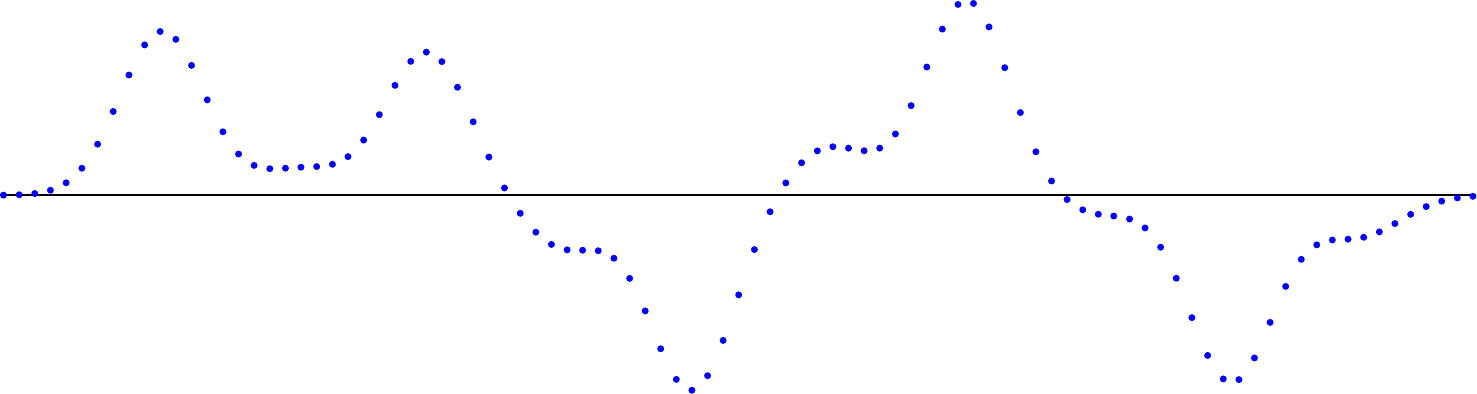}
\vspace{.5cm}
\includegraphics{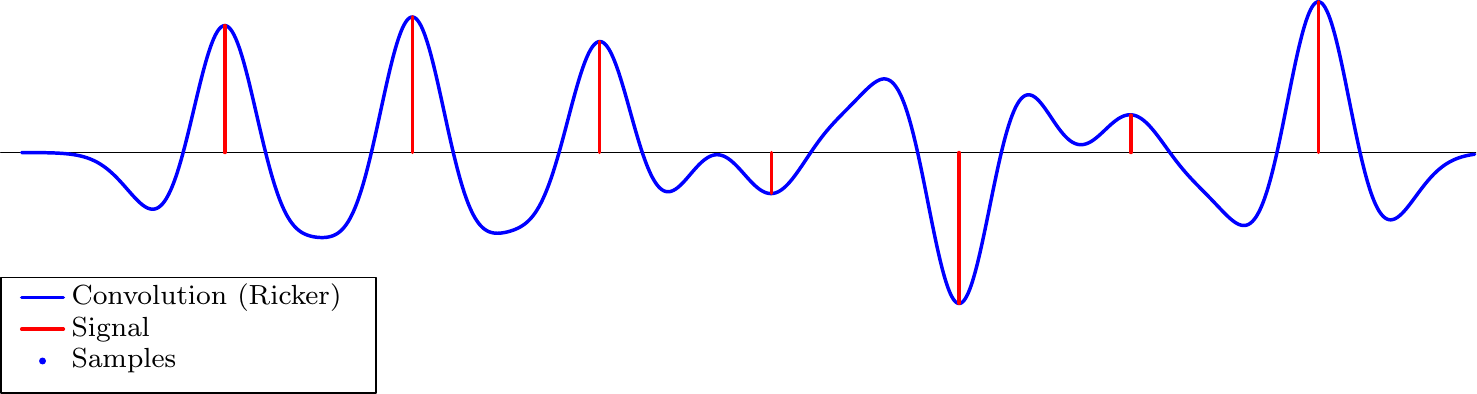} 
\vspace{.5cm}
\includegraphics{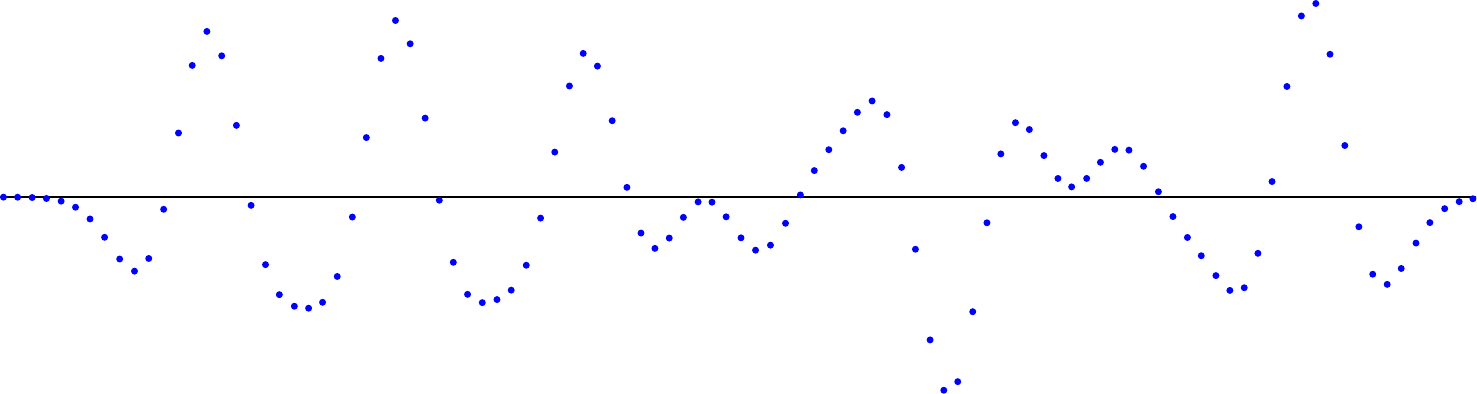}
\caption{Measurement model where a superposition of spikes is convolved with a Gaussian (top) or Ricker (bottom) kernel and then sampled to produce the data.}
\label{fig:Examples}
\end{figure}

In the 1970s and 1980s, geophysicists working on reflection seismology proposed to tackle the spike-deconvolution problem by solving a regularized least-squares problem incorporating an $\ell_1$-norm penalty to promote sparse solutions~\cite{taylor1979deconvolution,claerbout,levy,santosa,debeye1990lp}. Empirically, the method often recovers the spikes exactly from noiseless data-- this is the case for instance for the data shown in \Cref{fig:Examples}-- and produces robust estimates when the data are perturbed by noise. This optimization-based approach has also been successfully applied to deconvolution problems arising in other domains such as processing of marine seismic data~\cite{chapman1983deconvolution}, signal processing~\cite{drachman1984two}, ultrasound imaging~\cite{o1994recovery,bendory64stable} and estimation of initial conditions for systems governed by the heat equation~\cite{li2014heat}.   

In this paper we provide a theoretical framework to analyze deconvolution via $\ell_1$-norm minimization when the sampling pattern is nonuniform. In order to make our analysis independent of the discretization of the signal support, we consider a continuous counterpart of the $\ell_1$ norm, called the total-variation (TV) norm~\cite[Section~3.1]{folland2013real}, which should not be interpreted as the total variation of a piecewise-constant function~\cite{tv}, but rather as the analog of the $\ell_1$ norm for atomic measures. In fact, the TV norm of an atomic measure of the form $\sum_{i=1}^n b_i\delta_{t_i}$, $b_1, \ldots, b_n \in\RR$, equals the $\ell_1$ norm of the coefficients $ \sum_{i=1}^n \abs{b_i}$. Our goal is to characterize under what conditions minimizing the TV norm subject to measurement constraints is an accurate and robust deconvolution method. In the noiseless case, we consider the solution to the problem 
\begin{equation}
  \begin{aligned}
    \underset{\tilde{\mu}}{\op{minimize}} \quad& \normTV{ \tilde{\mu} } \\ \text{subject to}
    \quad& (K*\tilde{\mu})(s_i) = y_i, \quad i=1,\ldots,n.
  \end{aligned}
  \label{pr:TVnorm}
\end{equation}
Our main result is a sampling theorem for deconvolution via convex programming. We prove that solving Problem~\eqref{pr:TVnorm} achieves exact recovery as long as the signal support is not too clustered-- a condition which is necessary for the problem to be well posed-- and the set of samples contains at least two samples that are close to each spike. Our analysis holds for any nonuniform sampling pattern satisfying this condition, except for some pathological cases described in \Cref{sec:proximity}. 
In addition, the method is shown to be robust to dense additive noise in terms of support estimation. Finally, we illustrate the potential of this framework for incorporating additional assumptions by providing exact-recovery guarantees when the data are corrupted by impulsive perturbations. 
 
 The paper is organized as follows: In Section~\ref{sec:MainResults} we describe our main results and discuss related work. Section~\ref{sec:ExactRecoveryProof} is devoted to the proof of the deconvolution sampling theorem, which is based on a novel dual-certificate construction that is our main technical contribution. Sections~\ref{sec:RobustRecoveryProof} and~\ref{sec:SparseNoise} establish robustness to dense and sparse noise respectively. In Section~\ref{sec:numerical} we report the results of several experiments illustrating the numerical performance of our methods. We conclude the paper in Section~\ref{sec:conclusions}, where we discuss several future research directions. Code implementing all the algorithms discussed in this work is available online\footnote{\url{http://www.cims.nyu.edu/~cfgranda/scripts/deconvolution_simulations.zip}}.

\section{Main Results}
\label{sec:MainResults}

\subsection{Conditions on the Signal Support and the Sample Locations}


In this section we introduce conditions to characterize the class of signals and sampling patterns that we consider in our analysis of the point-source deconvolution problem. 

\subsubsection{Minimum Separation}
\label{sec:minsep}
A remarkable insight that underlies compressed-sensing theory is that randomized linear measurements preserve the energy of most sparse signals (more technically, they satisfy conditions such as the restricted isometry property~\cite{Candes:2005cs}). As a result, it is possible to robustly recover arbitrary sparse signals from such measurements, even if the problem is underdetermined~\cite{candes2006robust,donoho2006compressed}. Unfortunately, this is not the case for the deconvolution problem, unless the convolution kernel is random~\cite{hauptToeplitz,romberg2009compressive}. For smooth deterministic kernels, the data corresponding to different sparse signals can be almost identical, even if we have access to their full convolution with the kernel. Figure~\ref{fig:minsep_illposed} shows an example of this phenomenon, where a pair of distinct sparse signals become essentially indistinguishable after they are convolved with a Gaussian kernel (the same happens if we use a Ricker kernel instead). No algorithm would be able to tell the two signals apart from noisy samples of the convolution even at very high signal-to-noise ratios. The reason why the data are so similar becomes apparent in the frequency domain: most of the energy of the convolution kernel is concentrated in a low-pass band, whereas the energy of the difference between the signals is concentrated in the high end of the spectrum. Since convolution is equivalent to pointwise multiplication in frequency, the difference is suppressed by the measurement process. 

\begin{figure}[tp]
\begin{tabular}{ECC}
  & &Spectrum (magnitude)\\ \\
Signals & \includegraphics[scale=0.85]{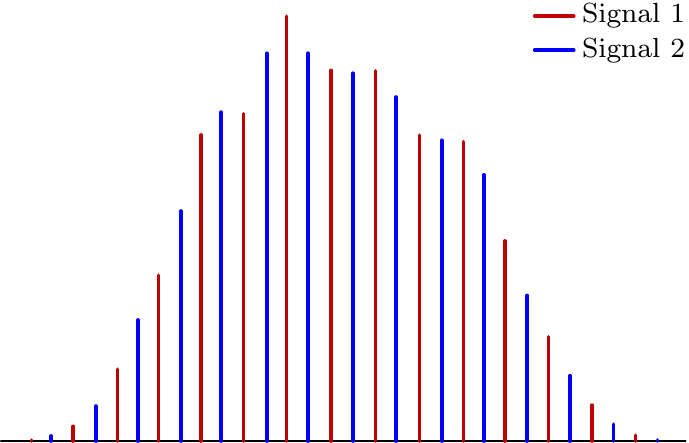} &
\includegraphics[scale=0.95]{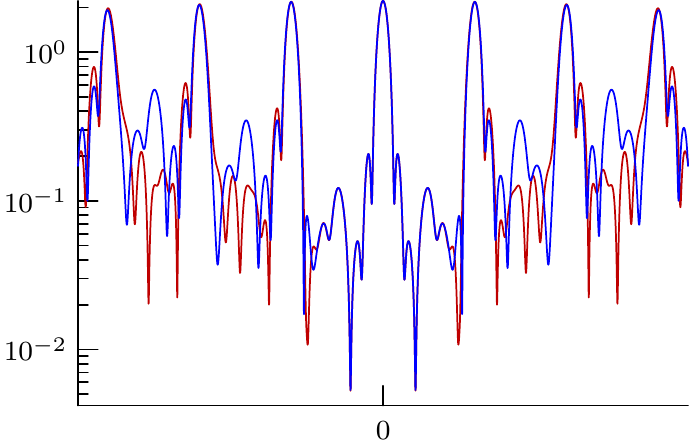}\\\\
Difference & \includegraphics[scale=0.85]{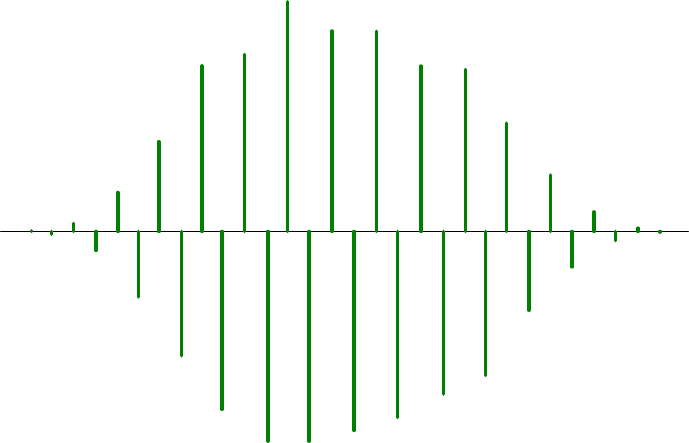} &
\includegraphics[scale=0.95]{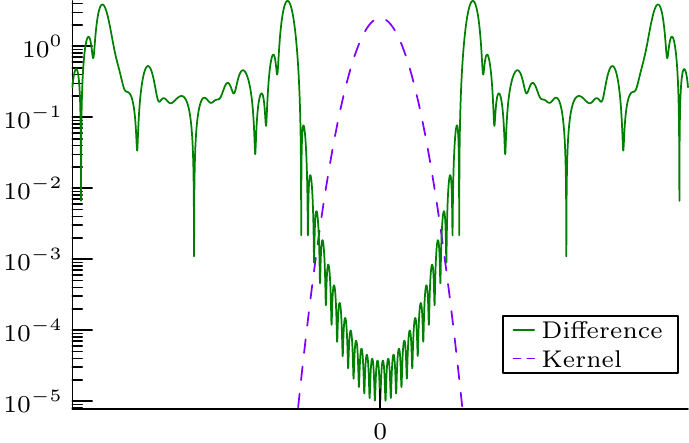}\\\\
Data & \includegraphics[scale=0.85]{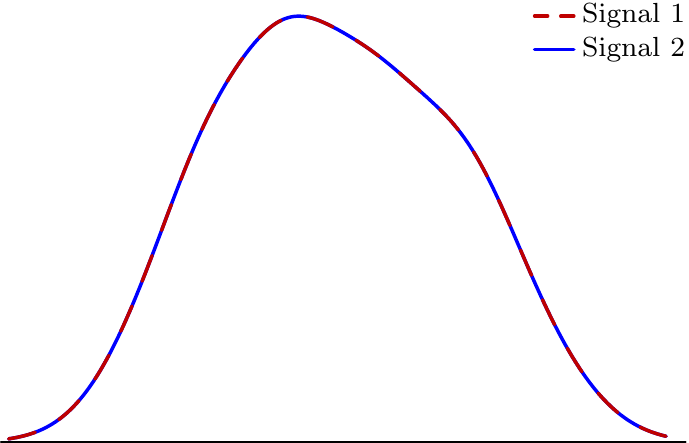} & \includegraphics[scale=0.95]{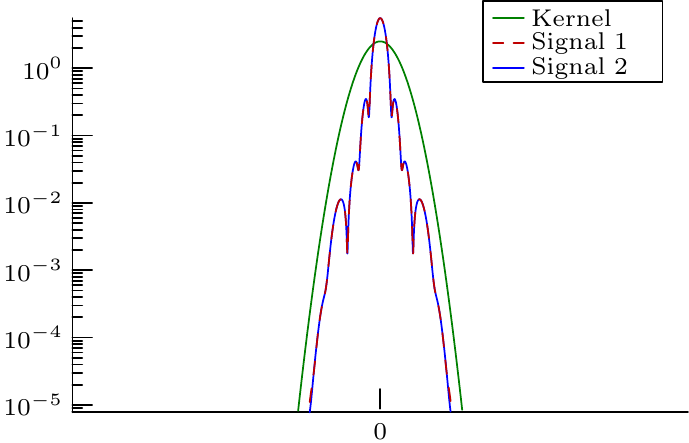}\\
\end{tabular}
\caption{The top row shows two signals with a minimum separation of $\sigma$ which have disjoint supports (no two spikes are closer than $\sigma/2$), but are very similar in the low-pass band of the spectrum. The difference between them is approximately high pass, which is problematic if the convolution kernel is approximately low pass (center row). The bottom row shows that the convolution of both signals with a Gaussian kernel are almost identical.}
\label{fig:minsep_illposed}
\end{figure}

In order to restrict our attention to a class of signals for which deconvolution is a well-posed problem for our convolution kernels of interest, we constrain the support to have a certain minimum separation.

\begin{definition}[Minimum Separation]
  \label{def:MinSep}
  The minimum separation of the support $T = \{t_1,\ldots,t_m\}$ of a signal is
\begin{align}
\label{eq:min_separation}
\Delta(T) = \min_{i\neq i'} \abs{ t_{i}-t_{i'}}.
\end{align} 
\end{definition}

This quantity was introduced in~\cite{superres} in the context of super-resolution of point sources from low-pass measurements (see also~\cite{donoho1992superresolution}). In that case, the minimum separation necessary to ensure that two signals do not produce very similar measurements can be characterized theoretically by applying Slepian's seminal work on prolate spheroidal sequences~\cite{slepian} (see also~\cite{moitra_superres} and Section 3.2 in \cite{superres}). In Section~\ref{sec:conditioning} we report numerical experiments indicating that for Gaussian and Ricker convolution kernels the minimum separation must be at least around $\sigma$ to avoid situations like the one in \Cref{fig:minsep_illposed}. 

\subsubsection{Sample Proximity}
\label{sec:proximity}

In order to estimate a signal of the form~\eqref{eq:signal} we need to determine two uncoupled parameters for each spike: its location and its amplitude. As a result, at least two samples per spike are necessary to deconvolve the signal from data given by~\eqref{eq:data_dirac}. In addition, the amplitude of our convolution kernels of interest decay quite sharply away from the origin, so most of the energy in the convolved signal corresponding to a particular spike is concentrated around that location. We therefore need two samples that are near each spike to estimate the signal effectively. To quantify to what extent this is the case for a fixed sampling pattern $S$ and signal support $T$ we define the \emph{sample proximity} of $S$ and $T$. In words, $S$ and $T$ have sample proximity $\gamma(S,T)$ if for each spike in $T$ there are at least two samples that are $\gamma(S,T)$ close to it. 

\begin{definition}[Sample Proximity]
  \label{def:ProxSep}
  Fix a set of sample locations $S$ and a signal support $T$.  Then
  $S$ has sample proximity $\gamma(S,T)>0$ if for every spike location
  $t_i\in T$ there exist two distinct sample locations $s,s' \in S$ such that
\begin{align}
\abs{t_i-s} \leq \gamma(S,T) \quad \text{and} \quad \abs{t_i-s'} \leq \gamma(S,T).
\end{align}
\end{definition}

To derive an upper bound on the sample proximity necessary for TV-norm minimization to identify a spike we consider a signal consisting of a single spike, $T:=\keys{t_1}$, and a sampling pattern with two samples $S
:=\keys{s_1,s_2}$, where $s_1:=t_1 - \gamma_0 $ and $s_2 :=
t_1+ \gamma_0 $. \Cref{fig:FarSamples} shows two alternative
explanations for these data: a single spike at $t_1$ or two spikes at
$s_1$ and $s_2$. For the Gaussian kernel, minimizing the TV norm chooses the
two-spike option when $\gamma_0  > \sqrt{2\log2}\sigma \approx
1.18\sigma$. For the Ricker wavelet, minimizing the TV norm chooses the
two-spike option when $\gamma_0 > 0.7811\sigma$. 

\begin{figure}[t]
  \centering 
  \includegraphics{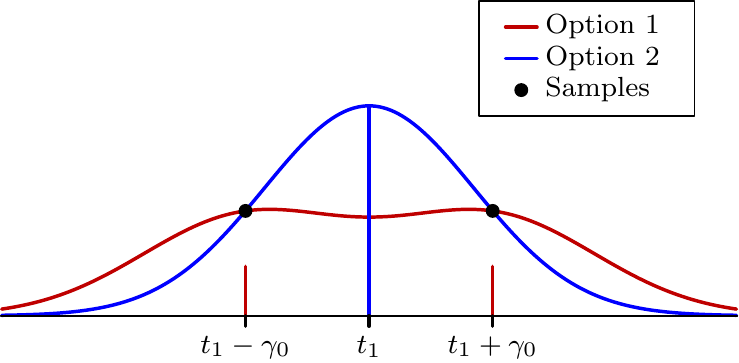}
  \includegraphics{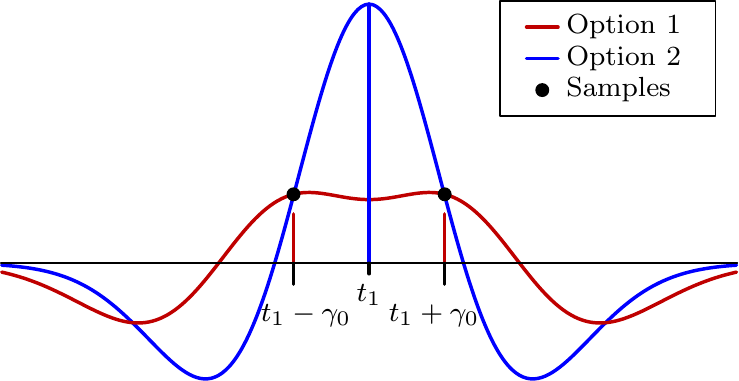}
  \caption{The two measurements in this example can be interpreted as samples from one spike situated at $t_1$ or two spikes situated at $t_1-\gamma_0$ and $t_1+\gamma_0$. When $\gamma_0$ is larger than $\sqrt{2\log2}\sigma$ for the Gaussian kernel (left) and $0.7811\sigma$ for the Ricker wavelet (right), the two-spike option has a smaller TV norm.}
  \label{fig:FarSamples}
\end{figure}

If we describe a sampling pattern only through its sample proximity, we allow for the following situation: there could be \emph{only} two samples close to a spike which also happen to be very close to each other. We cannot expect to derive robust-deconvolution guarantees for such patterns: a small perturbation in the measurements can have a dramatic effect on the estimated spike location \emph{for any recovery method} due to the local smoothness of the convolution kernels of interest. This is illustrated in \Cref{fig:NearSamples}. The good news is that this situation is pathological and would never arise in practice: placing the samples in this way requires knowing the location of the spikes beforehand! In order to rule out these patterns in our analysis, we define the sample separation associated to a given sample proximity and fix it to a small value.

\begin{definition}[Sample Separation]
  \label{def:SampSep}
  Fix a set of samples $S$ and a signal support $T$ with sample proximity $\gamma\brac{S,T}$.  $S$ and $T$ have sample separation $ \kappa(S,T)$ for every spike location $t_i \in T$ there exist at least two samples locations $s$ and $s'$  such that
\begin{align}
\abs{t_i-s} \leq \gamma(S,T),  \quad \abs{t_i-s'} \leq \gamma(S,T),\quad \text{and} \quad\abs{s-s'} \geq \kappa\brac{S,T}.
\end{align}
\end{definition}

\begin{figure}[t]
  \centering 
  \includegraphics[scale=0.7]{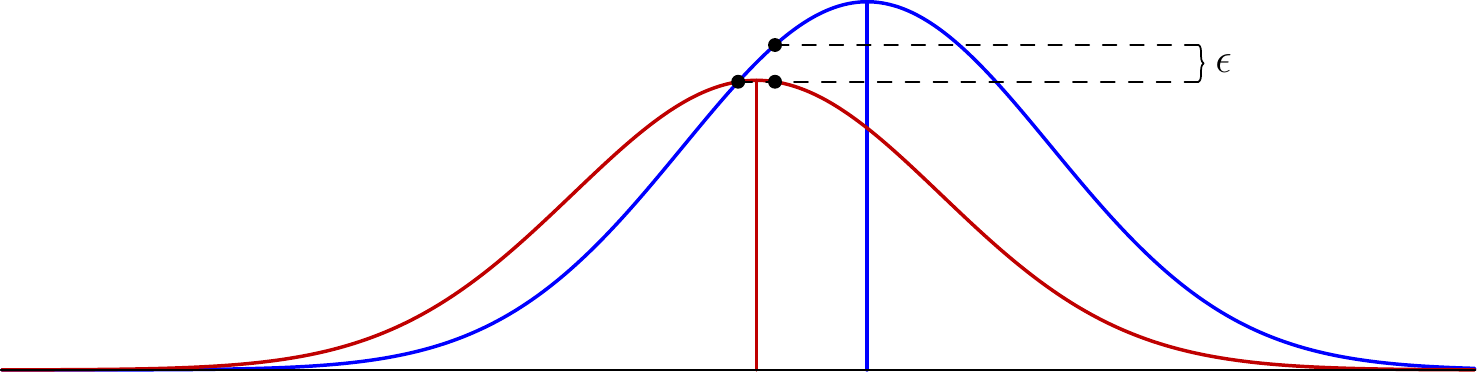} \vspace{0.5cm}
  \caption{A small sample separation may make it impossible to estimate the spike location robustly. In this example the original signal is shown in blue. A small perturbation of $\epsilon$ on one of the samples makes the red spike more plausible, producing a large error in the estimate of the support.}
  \label{fig:NearSamples}
\end{figure}

\subsection{Sampling Theorem for Exact Recovery}
\label{sec:exact_recovery}

Our main result is a sampling theorem for deconvolution via convex optimization from nonuniform samples. We show that solving problem~\eqref{pr:TVnorm} achieves exact recovery for the Gaussian and Ricker kernels under minimum-separation and sample-proximity conditions on the support and sample locations.

\begin{theorem}\label{thm:Exact}
Let $\mu$ be a signal defined by~\eqref{eq:signal} and assume that the data are of the form~\eqref{eq:data_dirac}, where $K$ is the Gaussian kernel or the Ricker wavelet. Then $\mu$ is the unique solution to problem~\eqref{pr:TVnorm}, as long as the signal support $T$ and the sampling pattern $S$ have a minimum separation $\Delta(T)$ and a sample proximity $\gamma(S,T)$ lying in the orange region depicted in the corresponding plot of \Cref{fig:ExactRecoveryRegion}, and the sample separation $\kappa(S,T)$ is greater than $\sigma/20$.
\end{theorem}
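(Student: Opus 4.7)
The plan is to establish \Cref{thm:Exact} via the standard dual-certificate argument for total-variation minimization. Concretely, I will show that $\mu$ is the unique optimum of problem~\eqref{pr:TVnorm} by constructing a \emph{dual polynomial} of the form $q(t)=\sum_{i=1}^n c_i K(s_i-t)$ satisfying $q(t_j)=\sign(a_j)$ for every $t_j\in T$ and $|q(t)|<1$ on $\RR\setminus T$. Once such a $q$ exists, a short duality argument (standard in this literature, cf.~\cite{superres}) shows that any other feasible measure has strictly larger TV norm.

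The certificate is built locally around each spike. For each $t_j\in T$, pick two samples $s_j^-,s_j^+\in S$ with $|t_j-s_j^\pm|\leq\gamma(S,T)$ and $|s_j^+-s_j^-|\geq\kappa(S,T)\geq\sigma/20$, whose existence is guaranteed by the sample-proximity and sample-separation hypotheses. Using only these $2m$ samples (setting the remaining coefficients $c_i$ to zero), I take
\begin{align}
q(t)=\sum_{j=1}^m\bigl(\alpha_j K(s_j^--t)+\beta_j K(s_j^+-t)\bigr),
\end{align}
and impose the $2m$ interpolation conditions
\begin{align}
q(t_j)=\sign(a_j),\qquad q'(t_j)=0,\qquad j=1,\ldots,m.
\end{align}
Forcing $q'(t_j)=0$ at each spike is the usual trick that makes $t_j$ a local extremum at height $\pm1$ and is what allows the strict inequality $|q|<1$ in a neighborhood of $T$.

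The technical heart of the proof is then to show (a) the $2m\times 2m$ interpolation system is invertible with well-controlled coefficients, and (b) the resulting $q$ satisfies the global bound $|q(t)|<1$ off $T$. For (a), under the minimum-separation hypothesis the Gaussian and Ricker kernels decay fast enough that the cross-spike blocks of the interpolation matrix are small compared to the diagonal $2\times 2$ blocks; a Schur-complement / Neumann-series expansion then controls $\alpha_j,\beta_j$ by the solution of the one-spike (two-sample) subproblem plus a small perturbation. For (b), I plan to split $\RR$ into near-spike intervals $I_j$ and a far-field $F$. On $I_j$, the conditions $q(t_j)=\sign(a_j)$ and $q'(t_j)=0$ yield $|q|<1$ via a second-order Taylor argument provided $\sign(a_j)\,q''(t_j)<0$ with a quantitative margin; on $F$, the rapid decay of $\KG$ and $\KR$ together with the bounds on the coefficients suffices.

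The main obstacle I anticipate is verifying the curvature inequality $\sign(a_j)\,q''(t_j)<0$ and the associated near-field bound \emph{uniformly} across the entire orange region of $(\Delta(T),\gamma(S,T))$ in \Cref{fig:ExactRecoveryRegion}, especially since the two samples $s_j^\pm$ need not be symmetric about $t_j$. I expect this to reduce, via the Neumann expansion, to a finite family of scalar inequalities in the two variables $(s_j^--t_j)/\sigma$ and $(s_j^+-t_j)/\sigma$, which can be certified by a combination of explicit tail bounds (to truncate the parameter domain) and computer-assisted verification on the remaining compact range. The Ricker case will require extra care because $\KR$ changes sign, so the straightforward positivity arguments that work for the Gaussian must be replaced by careful sign bookkeeping in both the interpolation coefficients and the tail estimates.
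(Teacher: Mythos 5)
Your proposal follows essentially the same route as the paper: a dual certificate $q(t)=\sum_i c_iK(s_i-t)$ supported on two samples per spike, the paired interpolation conditions $q(t_j)=\sign(a_j)$, $q'(t_j)=0$, invertibility via Schur complement and Neumann series under the separation hypothesis, a near-spike curvature argument plus far-field decay for the global bound $|q|<1$, and computer-assisted verification of the resulting finite family of inequalities over the $(\Delta,\gamma)$ region. Your ``solution of the one-spike (two-sample) subproblem plus a small perturbation'' is precisely what the paper formalizes as the reparametrization into \emph{bumps} and \emph{waves} --- local combinations of the two kernels normalized to have $(B(t_j),B'(t_j))=(1,0)$ and $(W(t_j),W'(t_j))=(0,1)$ --- which turns the raw interpolation matrix into one that is a small perturbation of the identity, making the Neumann and tail-decay bounds uniform in the sample configuration.

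One small omission worth flagging: the paper's global bound uses a \emph{three}-region decomposition (a neighboring zone where $Q''<0$, an intermediate zone where $Q'<0$, and a far zone where $|Q|<1$ directly), whereas your near/far split has only two. The numerical bounds on $Q''$ in the paper (cf.\ \Cref{fig:ExactQPlots}) show that $Q''$ can change sign before the far-field bound kicks in, so the intermediate monotonicity zone is genuinely needed; your Taylor argument alone would leave a gap there. This is a refinement you would likely discover during the interval-arithmetic verification step you already anticipate, so it does not affect the viability of your plan.
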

As explained in \Cref{sec:proximity}, we fix a small sample separation to rule out pathological sampling patterns. Except for such cases, the result holds for \emph{any nonuniform sampling pattern} satisfying the sample-proximity condition: among the samples there must be two close to each spike, but the rest can be situated at any arbitrary location. The proof of the theorem, presented in \Cref{sec:ExactRecoveryProof}, relies on a novel construction of a dual certificate, which is our main technical contribution and can be applied to derive recovery guarantees for other convolution kernels.

\begin{figure}[bp]
\centering
\includegraphics{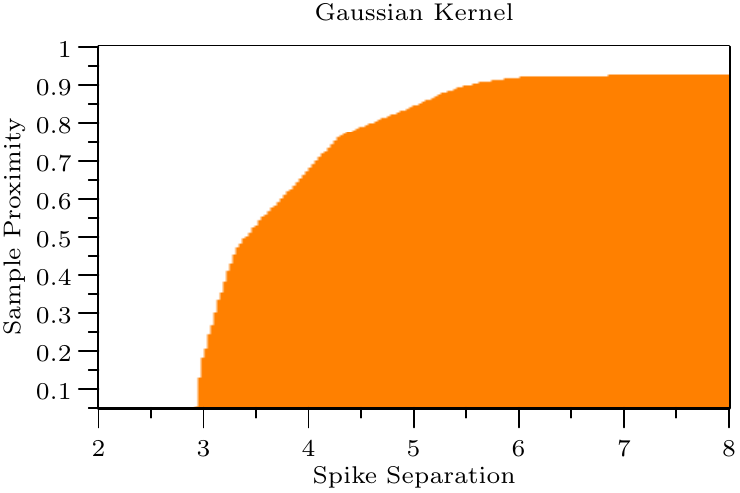}\qquad
\includegraphics{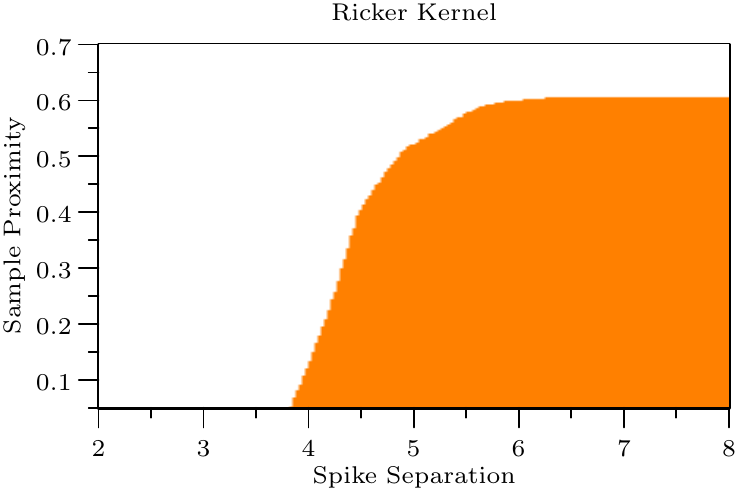}
\caption{We prove exact recovery for the region marked in orange. The sample-proximity and minimum-separation samples are in units of $\sigma$.}
\label{fig:ExactRecoveryRegion}
\end{figure}

In order to evaluate the accuracy of our theoretical analysis, we compare our results to the numerical performance of the method for a sampling pattern containing only two samples close to each spike placed at different sample proximities. This sampling pattern is not realistic, because placing only two samples close to every spike requires knowing the support of the signal. However, it provides a worst-case characterization of the method for a given sample proximity, since incorporating additional samples can only increase the chances of achieving exact recovery. The simulation, described in more detail in \Cref{sec:numexactrecovery}, confirms that the method succeeds as long as the minimum separation is not too small and the sample proximity is not too large. \Cref{fig:ProofVsCVX} shows that the numerical exact-recovery region contains the theoretical exact-recovery region established by \Cref{thm:Exact}. 

\begin{figure}[tp]
\centering
\includegraphics{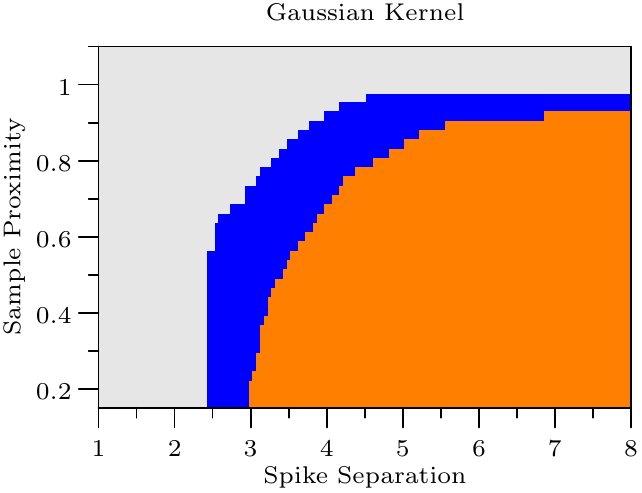}\quad
\includegraphics{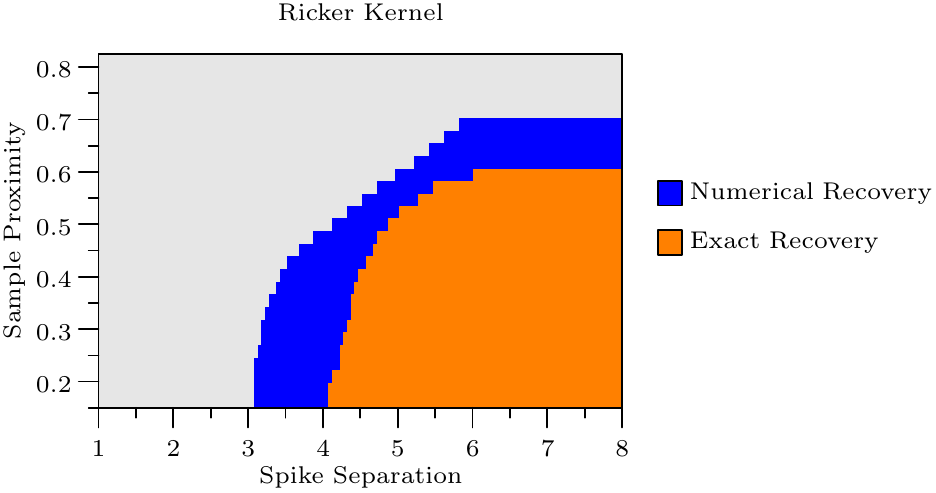}
\caption{Comparison of the region where we prove exact recovery
  (orange) with the region where we observe exact recovery in the
  numerical experiment using 60 spikes described in \Cref{fig:CVXExactRecovery}
  (blue). Every blue point indicates that exact recovery occurs for
  all larger minimum separations and smaller sample proximities. 
  }
\label{fig:ProofVsCVX}
\end{figure}

Our analysis focuses on the continuous TV-norm minimization
problem. In practice, this can be implemented by discretizing the
domain and then minimizing the $\ell_1$-norm of the discretized
estimate. \Cref{thm:Exact} implies that this procedure succeeds if the
original signal is supported on the discretization grid.
\bdb{As we explain at the end of \Cref{sec:RobustnessDense}
  the techniques we present to derive robustness guarantees provide
  some control over the error incurred
  if the signal is not
supported on the grid.  Performing a more detailed analysis of
discretization error is an interesting
direction for future research.}

\begin{corollary}
  \label{cor:L1Exact}
  Assume that the support of the measure $\mu$ in \cref{eq:signal}
  lies on a known discretized grid $G\subset\RR$. Then as long as its support $T$ and the set of samples $S$
  satisfy the conditions of \Cref{thm:Exact}, the true coefficients $a_1$, \ldots, $a_{\abs{G}}$ are the unique solution to the $\ell_1$-norm minimization problem
  \begin{equation}
  \begin{aligned}
    \underset{\tilde{a}\in\RR^{|G|}}{\op{minimize}} \quad& \normOne{ \tilde{a} } 
    \\ \text{subject to}
    \quad& \sum_{t_j\in T} \tilde{a}_jK(s_i-t_j) = y_i, \quad i=1,\ldots,n.
  \end{aligned}
  \label{pr:TVnormDisc}
\end{equation}
\end{corollary}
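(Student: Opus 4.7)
The plan is to deduce this corollary as a direct specialization of \Cref{thm:Exact} via the bijection between grid-supported atomic measures and their coefficient vectors. The core observation is that the $\ell_1$ problem in \eqref{pr:TVnormDisc} is literally the restriction of the TV-norm problem in \eqref{pr:TVnorm} to the subclass of measures whose support lies in $G$, so uniqueness over the larger class immediately implies uniqueness over the smaller one.

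First I would set up the correspondence. Enumerate $G = \{g_1, \ldots, g_{|G|}\}$, and for any $\tilde{a} \in \RR^{|G|}$ define the atomic measure $\tilde{\mu}_{\tilde{a}} := \sum_{j=1}^{|G|} \tilde{a}_j \delta_{g_j}$. Since the $\delta_{g_j}$ are supported at distinct points, the TV-norm of an atomic measure equals the $\ell_1$ norm of its coefficients (see the discussion preceding \eqref{pr:TVnorm}), giving $\normTV{\tilde{\mu}_{\tilde{a}}} = \normOne{\tilde{a}}$. Moreover $(K * \tilde{\mu}_{\tilde{a}})(s_i) = \sum_{j} \tilde{a}_j K(s_i - g_j)$, so $\tilde{a}$ is feasible for \eqref{pr:TVnormDisc} precisely when $\tilde{\mu}_{\tilde{a}}$ is feasible for \eqref{pr:TVnorm}.

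Next I would carry out the reduction. By hypothesis the true support $T$ lies in $G$, so the true measure $\mu$ from \eqref{eq:signal} is of the form $\tilde{\mu}_{a}$ for the coefficient vector $a$ that is zero off $T$ and equals the original amplitudes on $T$; in particular $a$ is feasible for \eqref{pr:TVnormDisc}. Suppose $\tilde{a} \in \RR^{|G|}$ is any optimal solution to \eqref{pr:TVnormDisc}. Then $\normOne{\tilde{a}} \le \normOne{a}$, hence $\normTV{\tilde{\mu}_{\tilde{a}}} \le \normTV{\mu}$, while $\tilde{\mu}_{\tilde{a}}$ is feasible for \eqref{pr:TVnorm}. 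Since $T$ and $S$ satisfy the conditions of \Cref{thm:Exact}, $\mu$ is the unique minimizer of \eqref{pr:TVnorm}, so we must have $\tilde{\mu}_{\tilde{a}} = \mu$. Matching coefficients on the distinct atoms $\{g_j\}$ yields $\tilde{a} = a$, proving uniqueness.

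Since the argument is purely a pushforward of the continuous uniqueness statement through the obvious embedding $\RR^{|G|} \hookrightarrow \mathcal{M}(\RR)$, there is no real technical obstacle; the only thing one has to be careful about is the identity $\normTV{\sum_j b_j \delta_{t_j}} = \sum_j |b_j|$ for atoms at \emph{distinct} points, which is why I enumerated $G$ with distinct grid nodes from the start.
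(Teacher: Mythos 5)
Your proposal is correct and matches the paper's own proof in essence: both reduce \eqref{pr:TVnormDisc} to \eqref{pr:TVnorm} restricted to measures supported on $G$, then invoke the uniqueness from \Cref{thm:Exact}. Your write-up is a bit more explicit about the coefficient-measure bijection and the equality $\normTV{\tilde{\mu}_{\tilde{a}}} = \normOne{\tilde{a}}$, but the underlying argument is identical.
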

\begin{proof}
  Problem~\eqref{pr:TVnormDisc} is
  equivalent to problem~\eqref{pr:TVnorm} restricted to measures supported on $G$.
  As a result, any respective solutions $\hat{a}$ and $\hat{\mu}$ must satisfy
  $\normOne{\hat{a}}\geq\normTV{\hat{\mu}}$.  By \Cref{thm:Exact},
  \eqref{pr:TVnorm} is uniquely minimized by $\mu$.
  Since $\mu$ is supported on $T$, $a$ is the unique solution of \eqref{pr:TVnormDisc}.
\end{proof}
When the original spike does not lie exactly on the grid the approximation error can be controlled to some extent using the results on additive dense noise in the following section. A more specific analysis of the effect of discretization in the spirit of~\cite{duval2017sparse} is an interesting topic for future research.

\subsection{Robustness to dense noise}
\label{sec:RobustnessDense}
\begin{figure}[tp]
\centering
\includegraphics{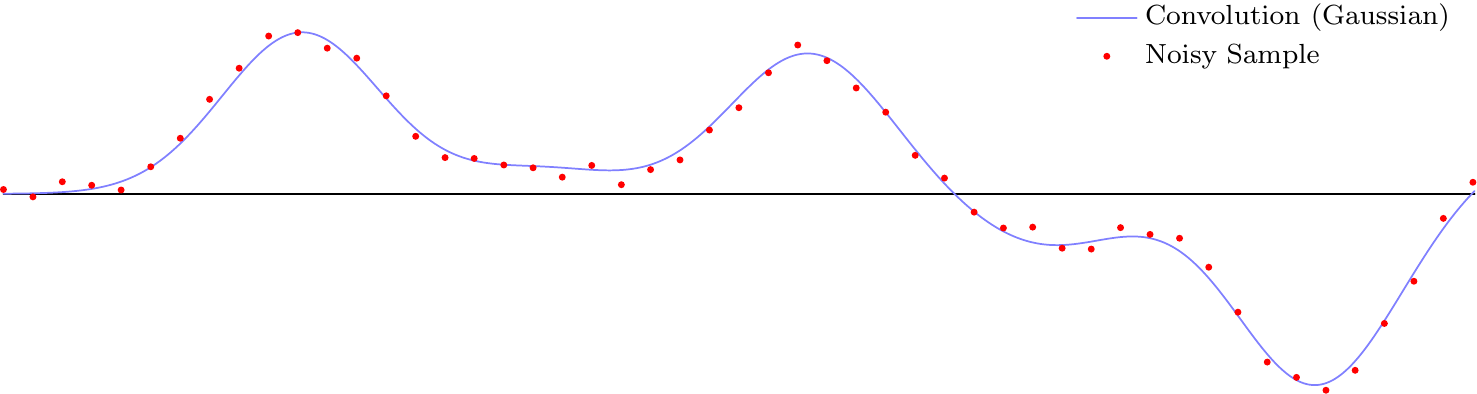}\vspace{.25cm}
\includegraphics{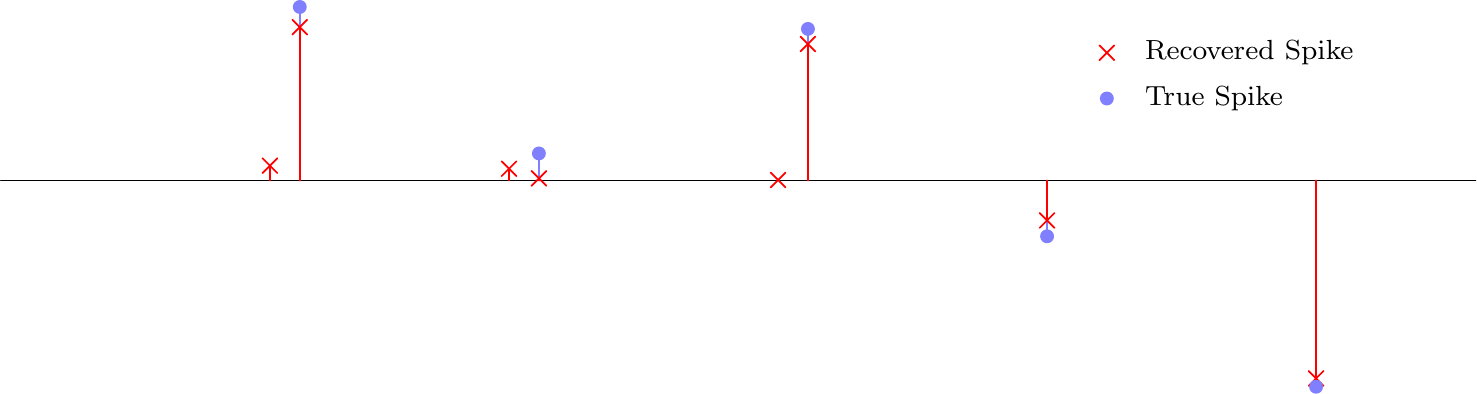}\vspace{.25cm}
\includegraphics{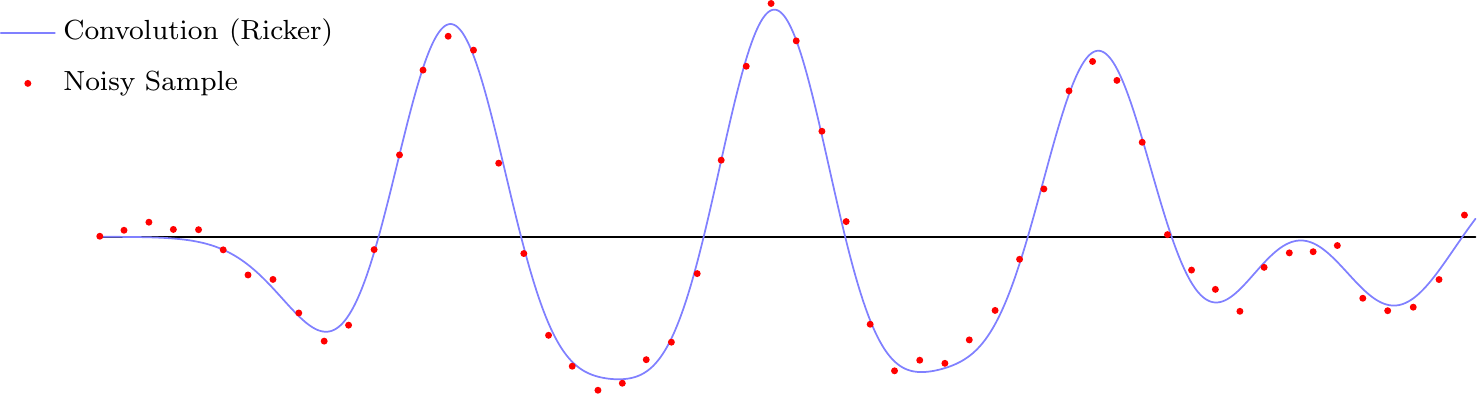}\vspace{.25cm}
\includegraphics{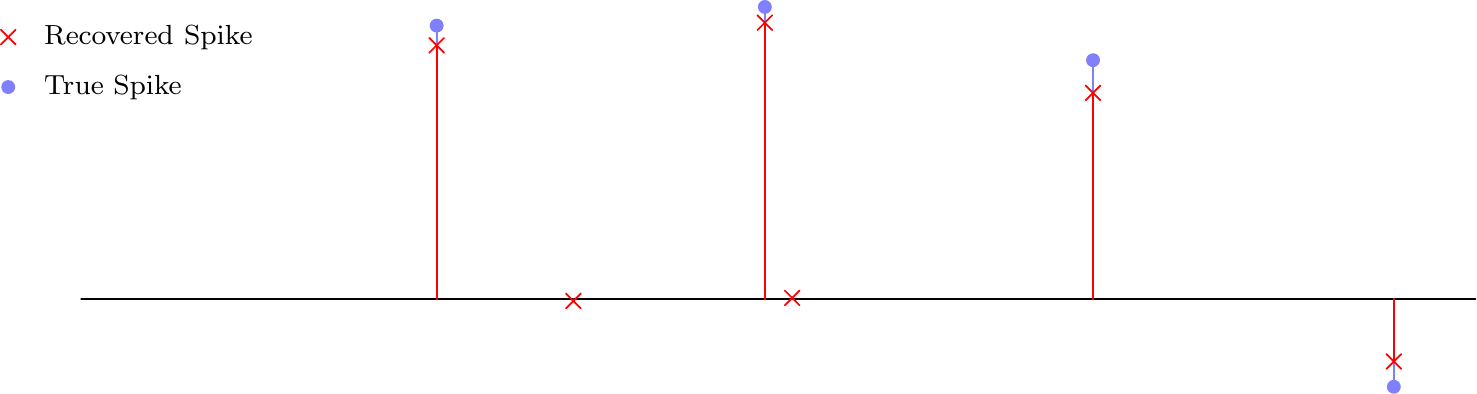}
\caption{Deconvolution from noisy samples (in red) with a
  signal-to-noise ratio of 21.6 dB for the Gaussian kernel (top) and
  20.7 dB for the Ricker wavelet (bottom). The noise is iid
  Gaussian. The recovered signals are obtained by solving
  \cref{pr:NoiseRecovery} on a fine grid which contains the location
  of the original spikes. Here we set $\bar{\xi}:=1.25\xi$.}
\label{fig:NoiseExperiment}
\end{figure}

In this section we consider the deconvolution problem when additive noise perturbs the data,
\begin{align}
y_i := \brac{ K \ast \mu}\brac{s_i } + z_i, \quad i = 1,2,\ldots,n. \label{eq:noisy_data}
\end{align}
Our only assumption on the perturbation $z \in \RR^n$ is that it has bounded $\ell_2$ norm, i.e. $\normTwo{z}< \xi$ for some noise level $\xi$. Otherwise, the error is arbitrary and can be adversarial. To adapt the TV-norm minimization problem~\eqref{pr:TVnorm} to this scenario we relax the data-consistency constraint from an equality to an inequality involving a known upper bound on the noise level $\bxi \geq \xi$,
\begin{equation}
\label{pr:NoiseRecovery}
\begin{aligned}
  \underset{\tilde{\mu}}{\op{minimize}} \quad& \normTV{ \tilde{ \mu } }\\ 
  \text{subject to} \quad & \sum_{i=1}^{n}\brac{ y_i -
    (K*\tilde{\mu})(s_i)}^2 \leq \bxi^2.
\end{aligned}
\end{equation}
For real analytic kernels that decay at infinity, which includes the Gaussian kernel and the Ricker wavelet, the solution to this problem is atomic.
\begin{restatable}[Proof in
    \Cref{sec:NoiseAtomicProof}]{lemma}{NoiseAtomic}
\label{lem:NoiseAtomic}
Assume $y$ satisfies \eqref{eq:noisy_data} with $\normTwo{z}<\bxi$.
If $K$ is real analytic with $K(t)\to 0$ as $t\to\infty$ then the
solution $\hat{\mu}$ to problem~\eqref{pr:NoiseRecovery} is an atomic
measure with finite support $\widehat{T} \subseteq\RR$ of the form
\begin{align}
\hat{\mu} & = \sum_{\hat{t}_k\in\widehat{T}}
\hat{a}_k\delta_{\hat{t}_k}, \quad \hat{a}_1,
\ldots,\hat{a}_{\abs{\widehat{T}}} \in\RR.
\end{align}  
\end{restatable}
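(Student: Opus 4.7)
The plan is to use a dual-certificate argument that leverages the real analyticity and decay of $K$. Because $\|z\|_2 < \bar\xi$ the true measure $\mu$ is strictly feasible, so the feasible set is nonempty. Any minimizing sequence has TV norm bounded by $\normTV{\mu}$; viewing finite signed Radon measures as the dual of $C_0(\RR)$, Banach--Alaoglu yields a weak-$*$ convergent subsequence. The TV norm is weak-$*$ lower semicontinuous, and each evaluation $\tilde\mu \mapsto (K \ast \tilde\mu)(s_i) = \int K(s_i - t)\, d\tilde\mu(t)$ is weak-$*$ continuous because $K(s_i - \cdot) \in C_0(\RR)$ (here the decay hypothesis is essential), so the weak-$*$ limit $\hat\mu$ is feasible and hence optimal.

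Next, strong duality holds by Slater's condition, producing a multiplier $\lambda^* \geq 0$ such that at the optimum $\hat\mu$ the function
\begin{equation*}
q(t) \; := \; 2\lambda^* \sum_{i=1}^n \brac{y_i - (K \ast \hat\mu)(s_i)}\, K(s_i - t)
\end{equation*}
belongs to the subdifferential $\partial \normTV{\hat\mu}$; equivalently, $\normInf{q} \leq 1$ and $\int q\, d\hat\mu = \normTV{\hat\mu}$. Decomposing $\hat\mu$ through its polar form $d\hat\mu = \sigma\, d|\hat\mu|$ with $\sigma \in \{-1,+1\}$, the two conditions force $q(t)\sigma(t) = 1$ for $|\hat\mu|$-a.e.\ $t$; by continuity of $q$ this extends to $|q(t)| = 1$ for every $t \in \op{supp}(\hat\mu)$.

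Finally, $q$ is real analytic as a finite linear combination of translates of $K$, and satisfies $q(t) \to 0$ as $|t| \to \infty$. If $q \equiv 0$ (which covers the case $\lambda^* = 0$), the inclusion forces $\hat\mu = 0$, trivially finitely supported. Otherwise $q$ is nonconstant, so both $q - 1$ and $q + 1$ are nonzero real analytic functions on $\RR$ (neither can vanish identically because $q \to 0$ at infinity). The identity theorem for real analytic functions then yields discrete zero sets, which combined with the boundedness of $\{t : |q(t)| = 1\}$ (again from the decay of $q$) shows that this level set is finite; hence so is $\op{supp}(\hat\mu)$. The main technical obstacle is setting up the infinite-dimensional duality rigorously---choosing the correct $C_0(\RR)$--measure pairing, verifying strong duality in this non-reflexive setting, and extracting the subgradient condition $q \in \partial \normTV{\hat\mu}$ via a Fenchel--Rockafellar-type theorem---whereas the subsequent analyticity arguments are a routine application of the identity theorem.
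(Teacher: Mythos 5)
Your proposal is correct and follows essentially the same strategy as the paper: both extract from strong duality a dual-certificate function in the linear span of $\{K(s_i-\cdot)\}$ (the paper's $C^*$, your $q$), show it must equal $\pm 1$ on the support of $\hat\mu$, and then invoke real analyticity together with the decay of $K$ at infinity to conclude that the level set $\{|q|=1\}$—and hence the support of $\hat\mu$—is finite. The only material difference is in the plumbing: the paper restricts $\tilde\mu$ to measures supported on a fixed compact interval $I$ and appeals to Shapiro's semi-infinite-programming duality (Theorems 2.2–2.3) to obtain existence of both primal and dual optima, whereas you use Banach--Alaoglu for existence and rely on $q(t)\to 0$ rather than compactness of $I$ to bound the level set, while correctly identifying the rigorous infinite-dimensional Fenchel--Rockafellar step as the main technical burden.
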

Empirically, we observe that the estimated support is an accurate estimate of the true support when the noise is not too
high, as illustrated in \Cref{fig:NoiseExperiment}. The following theorem shows that this is guaranteed to be the case under the assumptions of
\Cref{thm:Exact}.

\begin{theorem}[Proof in \Cref{sec:RobustRecoveryProof}]
\label{thm:RobustRecovery}
Let $\mu$ be a signal defined by~\eqref{eq:signal} and assume that the data are of the form~\eqref{eq:noisy_data}, where $K$ is the Gaussian kernel or the Ricker wavelet and  $\normTwo{z}< \xi$. If the sample separation $\kappa(S)$ is at least $\sigma/20$ and the minimum separation and sample proximity lie in the exact-recovery regions depicted in
  \Cref{fig:ExactRecoveryRegion}, the solution
  \begin{equation}
    \label{eq:denseproofmu}
    \hat{ \mu } := \sum_{\hat{t}_k\in\hat{T}}
    \hat{a}_k\delta_{\hat{t}_k}
  \end{equation}
  to problem~\eqref{pr:NoiseRecovery} has the following properties:
\begin{align}
\abs{ a_j - \sum_{\keys{\hat{t}_l\in\hat{T}: |
      \hat{t}_l-t_j|\leq \nearNoise\sigma } }\hat{a}_l } & \leq C_1\bxi\sqrt{|T|} \quad
\text{for all $t_j\in T$,} \label{eq:RobustRecovery_1}
\\ 
\sum_{\keys{\hat{t}_l\in\hat{T},t_j\in T:
    |\hat{t}_l-t_j|\leq \nearNoise\sigma } } \abs{ \hat{a}_l} \brac{
  \hat{t}_l-t_j}^2 & \leq C_2\bxi\sqrt{|T|} , \label{eq:RobustRecovery_2}
\\ 
\sum_{\keys{\hat{t}_l\in\hat{T}:
    |\hat{t}_l-t_j|> \nearNoise\sigma,\forall t_j\in T } } \abs{ \hat{a}_l } & \leq C_3\bxi\sqrt{|T|}
, \label{eq:RobustRecovery_3}
  \end{align}
  where $\nearNoise,C_1,C_2,C_3>0$ depend only on $\Delta(T)$, $\gamma(S,T)$,
  and $\kappa(S)$.  Upper bounds on the values of $\nearNoise$ are given in
  \Cref{fig:C0Bounds}.  
\end{theorem}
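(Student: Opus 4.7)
The plan is to lift the dual certificates constructed in the proof of \Cref{thm:Exact} into the noisy regime, following the template introduced for super-resolution from low-pass data and adapted here to the nonuniform-sample deconvolution setting. Throughout, set $h := \hat\mu - \mu$. Since $\normTwo{z} < \xi \leq \bxi$, the true measure $\mu$ is feasible for \eqref{pr:NoiseRecovery}, so optimality of $\hat\mu$ yields the core inequality $\normTV{\hat\mu} \leq \normTV{\mu}$.

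The first step is to extract from \Cref{sec:ExactRecoveryProof} a family of dual certificates of the form $q(t) = \sum_{i=1}^n c_i K(s_i - t)$ with quantitative properties: (a) prescribed interpolation values at each $t_j\in T$; (b) $|q(t)|\leq 1$ everywhere; (c) a local quadratic envelope $|q(t)| \leq 1 - \alpha(t-t_j)^2$ on an $\nearNoise\sigma$-neighborhood of each $t_j$; (d) a uniform gap $|q(t)|\leq 1 - \beta$ outside all $\nearNoise\sigma$-neighborhoods; and (e) a coefficient bound $\normTwo{c} \leq C\sqrt{|T|}$. The exact-recovery dual certificate already witnesses (a)--(d) for the sign pattern $q(t_j)=\sgn(a_j)$; I would need only to make the gaps $\alpha,\beta$ quantitative and record the sample-domain $\ell_2$-norm of the coefficient vector, all in terms of $\Delta(T)$, $\gamma(S,T)$ and $\kappa(S,T)$.

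For any such $q$, I evaluate $\int q \, dh$ two ways. In the sample domain, $\int q \, dh = \sum_{i} c_i\bigl[(K*\hat\mu)(s_i) - (K*\mu)(s_i)\bigr]$, so by Cauchy--Schwarz together with the two feasibility bounds $\normTwo{K*\hat\mu - y} \leq \bxi$ and $\normTwo{K*\mu - y} < \xi \leq \bxi$ we obtain
\begin{align*}
\left| \int q \, dh \right| \leq 2\bxi\,\normTwo{c} \leq 2C\bxi\sqrt{|T|}.
\end{align*}
In the spatial domain, taking $q$ with $q(t_j) = \sgn(a_j)$ and partitioning $\widehat T$ into a near part (points within $\nearNoise\sigma$ of some $t_j$) and a far part (the rest), properties (c)--(d) yield
\begin{align*}
\normTV{\hat\mu} - \int q\, d\hat\mu \;\geq\; \alpha \!\!\sum_{\text{near}} (\hat t_k - t_j)^2 |\hat a_k| + \beta \!\!\sum_{\text{far}} |\hat a_l|.
\end{align*}
Combining with $\normTV{\hat\mu}\leq \normTV{\mu} = \int \sgn(\mu)\, d\mu$ and the sample-domain bound on $\int q\, dh$ produces \eqref{eq:RobustRecovery_2} and \eqref{eq:RobustRecovery_3} with $C_2,C_3$ expressed through $\alpha,\beta,C$.

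To obtain the per-spike amplitude bound \eqref{eq:RobustRecovery_1}, I would construct auxiliary certificates $q_j^{\pm}$ interpolating $\pm 1$ at $t_j$ and $0$ at all remaining $t_i\in T$, again with a quadratic envelope around each $t_i$ and the uniform gap away from $T$. Applying the Step~2 calculation to $q_j^{\pm}$ pins down $a_j - \sum_{\hat t_k \text{ near } t_j}\hat a_k$ up to error terms that are absorbed via the already-established \eqref{eq:RobustRecovery_2}--\eqref{eq:RobustRecovery_3}. The main obstacle is precisely this construction of $q_j^{\pm}$ with a \emph{simultaneous} quadratic envelope at every $t_i\in T$ and a controlled $\normTwo{c_j^{\pm}}\lesssim\sqrt{|T|}$; the exact-recovery certificate carries $\sgn(a)$ and does not suffice, so one must revisit the interpolation construction in \Cref{sec:ExactRecoveryProof} and show that perturbing the prescribed values to $e_j^{\pm}$ preserves the bound $|q_j^{\pm}|\leq 1$ and the quadratic decay, using the same sample-proximity geometry. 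A secondary, purely computational obstacle is tracking how $\alpha,\beta,\nearNoise,C$ depend on $(\Delta,\gamma,\kappa)$ through the explicit Gaussian and Ricker kernels, in order to certify that they depend only on the separation parameters as claimed and to produce the concrete upper bounds on $\nearNoise$ reported in the figure.
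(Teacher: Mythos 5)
Your proposal matches the paper's argument in all essentials: feasibility of $\mu$ gives $\normTV{\hat\mu}\leq\normTV{\mu}$, the sign-interpolating certificate from \Cref{sec:ExactRecoveryProof} is upgraded with a quadratic envelope near $T$ and a uniform gap away from $T$ plus a coefficient-norm bound (the paper records $\normInf{q}\leq C_3$ and then passes to $\normTwo{q}\lesssim\sqrt{|T|}\normInf{q}$, which is equivalent to your $\normTwo{c}\lesssim\sqrt{|T|}$ since only $O(|T|)$ coefficients are nonzero), Cauchy--Schwarz against the two feasibility residuals gives the sample-domain bound, and combining yields \eqref{eq:RobustRecovery_2}--\eqref{eq:RobustRecovery_3}; the per-spike bound \eqref{eq:RobustRecovery_1} is obtained, exactly as you outline, from auxiliary certificates interpolating the indicator of $t_j$ on $T$ (the paper uses a single $Q_j$ with $Q_j(t_j)=1$ rather than a $\pm$ pair, but that is cosmetic), with error terms absorbed via \eqref{eq:RobustRecovery_2}--\eqref{eq:RobustRecovery_3}. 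You have also correctly identified the real technical work as re-running the bump-and-wave interpolation with the indicator right-hand side and verifying the quadratic envelope and gap, which is precisely the content of \Cref{lem:RobustCombinationTwo}.
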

\begin{figure}[t]
\centering 
\includegraphics{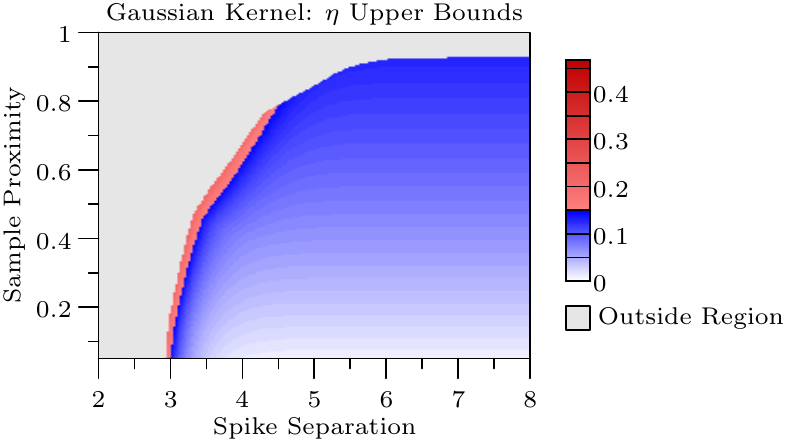}
\includegraphics{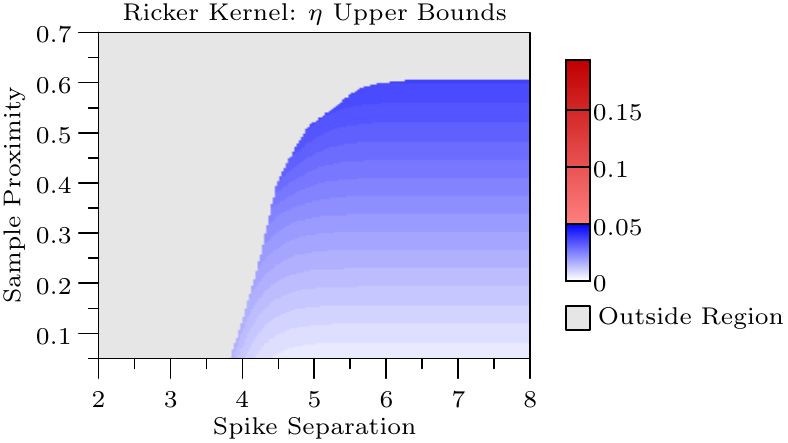}
  \caption{Upper bound on the values of $\nearNoise$ in
    \Cref{thm:RobustRecovery} for the corresponding values of the
    sample proximity and minimum separation.  Sample proximity and
    spike separation are given in units of~$\sigma$.}
  \label{fig:C0Bounds}
\end{figure}

Property \eqref{eq:RobustRecovery_1} implies that the amplitude of each spike is well
approximated by the sum of the estimated spikes close to it, where
\emph{close} means in a radius of $\nearNoise \sigma< \Delta(T)/2$. $\nearNoise$ is
smaller than $0.15$ for the Gaussian kernel and $0.05$ for the Ricker
kernel for most values of the minimum separation and sample proximity
for which exact recovery occurs, as shown in
\Cref{fig:C0Bounds}. Property \eqref{eq:RobustRecovery_2} implies that the estimated support
clusters around the support of the true signal. 
Property \eqref{eq:RobustRecovery_3} shows that
spurious spikes detected away from the support have small
magnitude. An interesting consequence of the theorem is that the error
incurred by the method at each spike can be bounded by a quantity that
does not depend on the amplitude of other spikes.
\begin{corollary}
  \label{cor:densesupport}
  Under the conditions of Theorem~\ref{thm:RobustRecovery}, for any
  element $t_i$ in the support of $\mu$ such that $|a_i|>C_1\bxi\sqrt{|T|}$
  there exists an element $\hat{t}_i$ in the support of the estimate
  $\hat{ \mu }$ satisfying
  \begin{equation}
    \label{eq:RobustRecovery_Supp}
    |t_i-\hat{t}_i| \leq \sqrt{\frac{C_2\bxi\sqrt{|T|}}{|a_i|-C_1\bxi\sqrt{|T|}}}.
  \end{equation}
\end{corollary}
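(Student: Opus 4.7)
The plan is to combine properties \eqref{eq:RobustRecovery_1} and \eqref{eq:RobustRecovery_2} of \Cref{thm:RobustRecovery} via a simple pigeonhole argument. Fix $t_i \in T$ with $|a_i| > C_1\bxi\sqrt{|T|}$ and define the cluster of nearby estimated spikes
\[
\hat{T}_i := \{\hat{t}_l \in \hat{T} : |\hat{t}_l - t_i| \leq \nearNoise\sigma\}.
\]
Since $\nearNoise\sigma < \Delta(T)/2$, each estimated spike $\hat{t}_l$ lies within distance $\nearNoise\sigma$ of at most one true spike, so the $t_j = t_i$ contribution to \eqref{eq:RobustRecovery_2} is exactly the sum over $\hat{T}_i$.

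First I would use \eqref{eq:RobustRecovery_1} applied at $t_i$ together with the reverse triangle inequality to obtain
\[
\sum_{\hat{t}_l \in \hat{T}_i} |\hat{a}_l| \;\geq\; \Bigl|\sum_{\hat{t}_l \in \hat{T}_i} \hat{a}_l\Bigr| \;\geq\; |a_i| - C_1\bxi\sqrt{|T|} \;>\; 0,
\]
which in particular shows that $\hat{T}_i$ is nonempty. Next, let $\hat{t}_i \in \hat{T}_i$ achieve the minimum of $|\hat{t}_l - t_i|$ over $\hat{T}_i$, and set $d := |\hat{t}_i - t_i|$. Then from \eqref{eq:RobustRecovery_2} restricted to $t_j = t_i$,
\[
d^2 \sum_{\hat{t}_l \in \hat{T}_i} |\hat{a}_l| \;\leq\; \sum_{\hat{t}_l \in \hat{T}_i} |\hat{a}_l| (\hat{t}_l - t_i)^2 \;\leq\; C_2\bxi\sqrt{|T|}.
\]
Dividing by the lower bound on $\sum_{\hat{t}_l \in \hat{T}_i} |\hat{a}_l|$ from the previous step gives
\[
d^2 \;\leq\; \frac{C_2\bxi\sqrt{|T|}}{|a_i| - C_1\bxi\sqrt{|T|}},
\]
and taking square roots yields \eqref{eq:RobustRecovery_Supp}.

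There is no real obstacle here since all the analytic work is already carried out in \Cref{thm:RobustRecovery}; the only subtle point to state carefully is that $\nearNoise\sigma < \Delta(T)/2$ guarantees that the $\hat{T}_i$ are pairwise disjoint, so the restriction of the double sum in \eqref{eq:RobustRecovery_2} to a single $t_j$ is well defined. The hypothesis $|a_i| > C_1\bxi\sqrt{|T|}$ is used precisely to ensure that the denominator in the bound is positive, which is also exactly the regime in which the amplitude is large enough that \eqref{eq:RobustRecovery_1} forces a detected spike into the neighborhood of $t_i$.
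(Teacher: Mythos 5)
Your proof is correct. The paper states Corollary~\ref{cor:densesupport} without a separate proof, so there is no reference argument to compare against, but the chain you give — reverse triangle inequality on \eqref{eq:RobustRecovery_1} to get a positive lower bound on the cluster mass $\sum_{\hat{t}_l \in \hat{T}_i} |\hat{a}_l|$, then factoring the minimal squared distance $d^2$ out of the $t_j = t_i$ slice of \eqref{eq:RobustRecovery_2} — is exactly the natural deduction the authors leave implicit. One small remark: the disjointness of the clusters $\hat{T}_i$ (from $\nearNoise\sigma < \Delta(T)/2$) is not actually needed to restrict \eqref{eq:RobustRecovery_2} to a single $t_j$, since every term in that double sum is nonnegative and dropping terms only helps; the disjointness is what makes \eqref{eq:RobustRecovery_1} a genuine per-spike statement, which you use, but the remark about well-definedness of the restriction is superfluous.
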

These guarantees demonstrate that the method is robust in a small-noise regime: the error tends to zero as the noise decreases. This occurs under the same conditions that we require for exact recovery. In particular, we only assume that the sampling pattern has two samples close to each spike, but we do not account for the rest of the samples. Characterizing how additional samples improve the estimation accuracy of the algorithm in the presence of noise is an interesting problem for future research. 

\bdb{
  The results of this section allow us to control the error of our
  recovered signal when the measurements are corrupted by noise.
  Another source of measurement error is due to discretization.
  As mentioned earlier, in practice we often solve the continuous TV-norm
  minimization problem by discretizing and solving an $\ell_1$-norm
  minimization problem.  In general the true signal does not
  necessarily lie on the grid, which means the model is not completely accurate.
  We can account for this by solving the following discretized
  version of problem~\eqref{pr:NoiseRecovery}
  \begin{equation}
    \label{pr:L1NoiseRecovery}
    \begin{aligned}
      \underset{\tilde{a}\in\RR^{|G|}}{\op{minimize}} \quad&
      \normOne{ \tilde{a} }\\ 
      \text{subject to} \quad & \sum_{i=1}^{n}\brac{ y_i -
        \sum_{g_j\in G}\tilde{a}_jK(s_i-g_j)}^2 \leq \bxi^2,
    \end{aligned}
  \end{equation}
  where $G$ is some fixed discretization grid.  By bounding the effect
  of shifting the true measure $\mu$ onto $G$
  we can obtain results analogous to
  \Cref{thm:RobustRecovery} and \Cref{cor:densesupport} for
  $\bxi\geq L\eta\normTV{\mu}\sqrt{|S|}$. Here $\eta$ is the grid
  separation and $L$ is the Lipschitz constant of the kernel.
  Improving these somewhat crude error bounds is an
  interesting direction for future work.
}
\subsection{Sparse Noise}
\label{sec:sparse_noise_results}
In this section we consider data contaminated by outliers, where some of the samples are completely corrupted by impulsive noise,
\begin{align}
y_i := \brac{ K \ast \mu}\brac{s_i } + w_i \quad i = 1,2,\ldots,n. \label{eq:sparse_noise}
\end{align}
Here $w \in \R^n$ is a sparse vector with arbitrary amplitudes. In order to account for the presence of the sparse perturbation, we incorporate an additional variable to Problem~\eqref{pr:NoiseRecovery} and add a corresponding sparsity-inducing penalty term to the cost function:
\begin{equation}
  \begin{aligned}
    \underset{\tilde{\mu},\tilde{w}}{\op{minimize}} \quad& \normTV{ \tilde{\mu} } +\lambda\normOne{\tilde{w}}\\ 
    \text{subject to}
    \quad& (K*\tilde{ \mu})(s_i) +\tilde{w}_i = y_i, \quad i=1,\ldots,n,
  \end{aligned}
  \label{pr:SparseNoiseRecovery}
\end{equation}
where $\lambda >0$ is a regularization parameter. An equivalent approach was previously proposed in~\cite{gholami2012fast} without theoretical guarantees. In~\cite{fernandez2016demixing} an analogous formulation is applied to spectral super-resolution in the presence of outliers, where the impulsive noise occurs in the spectral domain and is consequently very incoherent with the signal. 
In our case, the sparse noise and the clean samples are much less incoherent: most of the energy corresponding to a given spike in the signal is concentrated in a few samples around it, which could be mistaken for sparse noise. For the Gaussian and Ricker kernels, we observe that solving problem~\eqref{pr:SparseNoiseRecovery} is effective as long as there are not too many contiguous noisy samples. \Cref{fig:sparseNoise} shows an example where the solution to the optimization problem estimates both the signal and the sparse corruptions exactly. 

\begin{figure}[tp]
\centering
\includegraphics{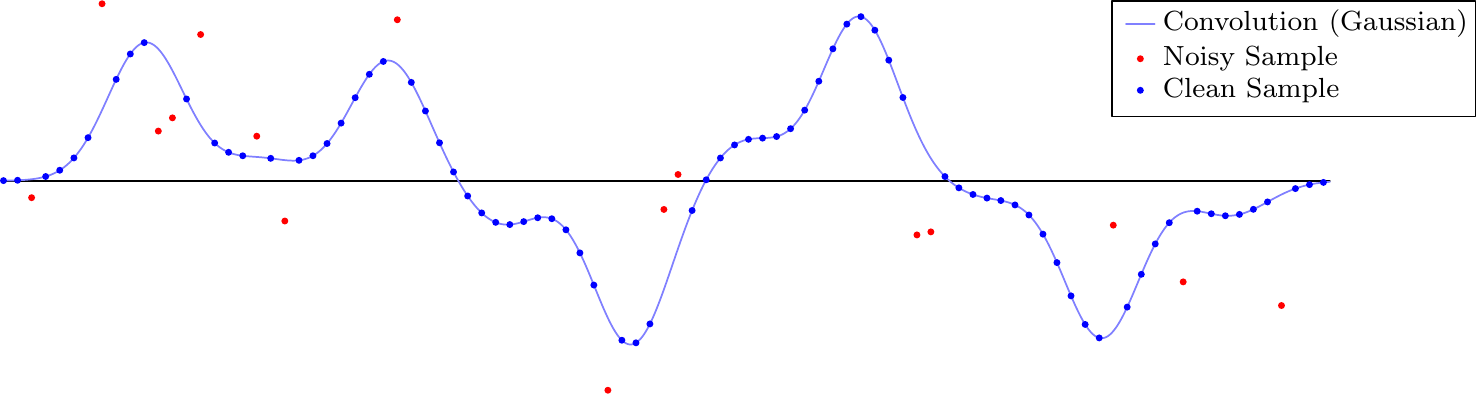}\vspace{.5cm}
\includegraphics{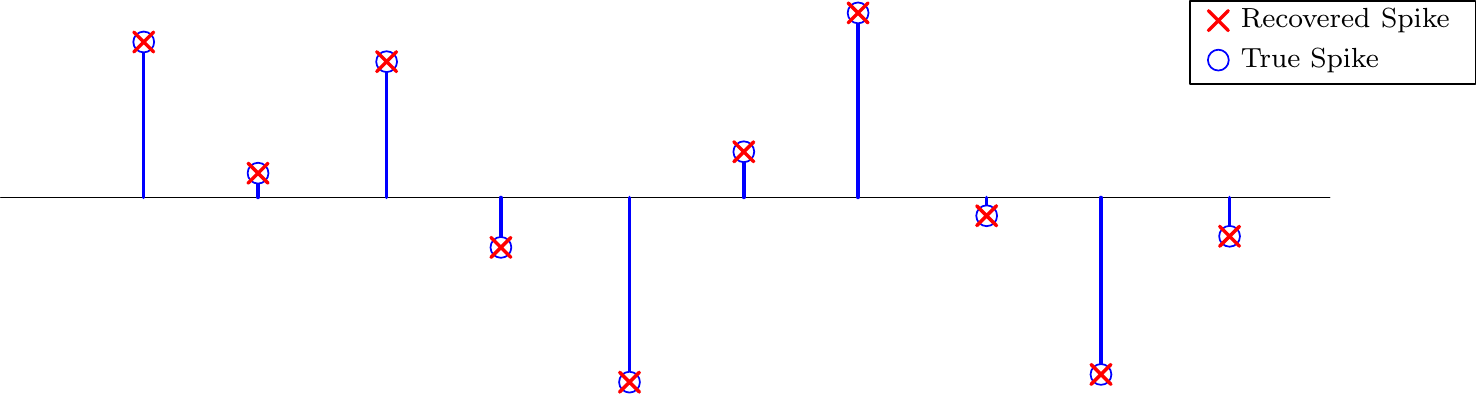}\vspace{.5cm}
\includegraphics{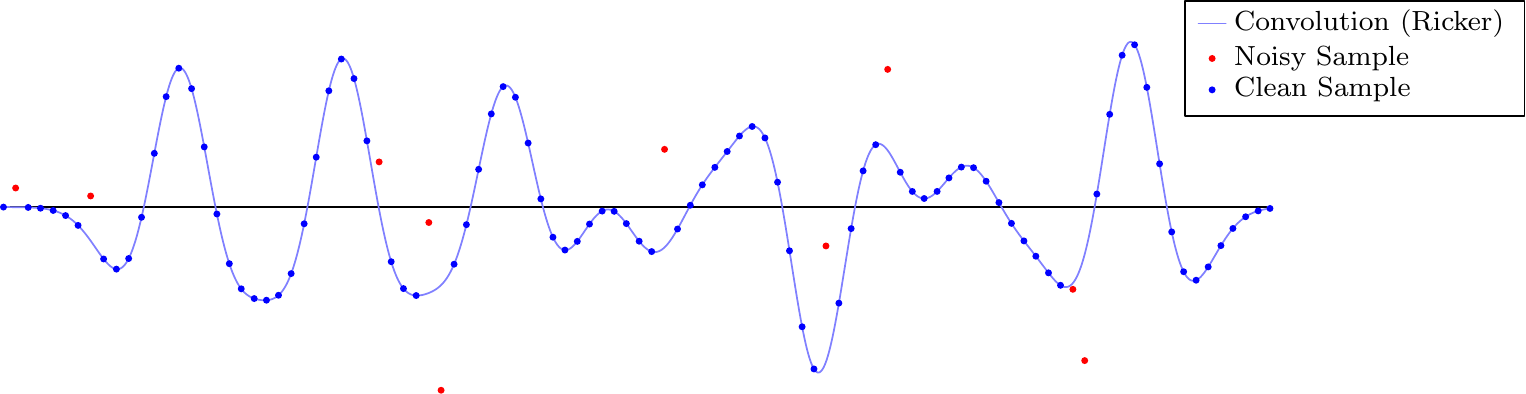}\vspace{.5cm}
\includegraphics{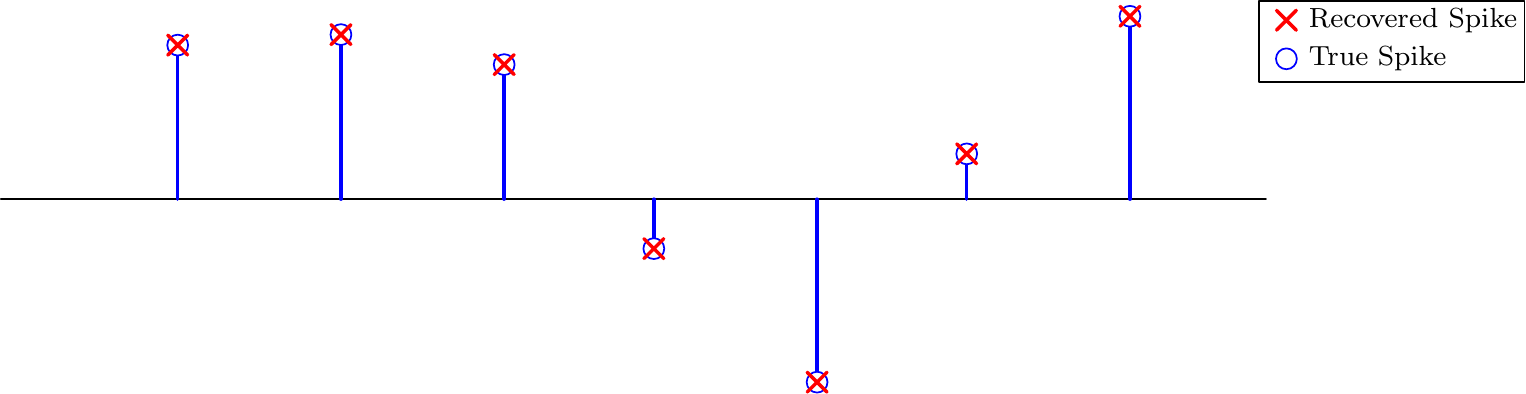}
\caption{Deconvolution from convolution samples (blue) corrupted by large-amplitude impulsive noise (red) for the Gaussian kernel (top) and the Ricker wavelet (bottom). The recovered signals are obtained by solving \cref{pr:SparseNoiseRecovery} on a fine grid, which achieves exact recovery.}
\label{fig:sparseNoise}
\end{figure}
The following theorem provides exact-recovery guarantees in the
presence of sparse noise. For simplicity, we assume that the samples
lie on a regular grid with step size $\tau$, so that $S :=
\{s_1,\ldots,s_n\}$ where $s_{i+1}=s_i+\wid$, instead of requiring a
sample-proximity condition. In addition, we impose a
minimum-separation condition on the support of the signal, as well as
on the set of noise locations $\Noi\subseteq S$ to preclude it from being too clustered. Finally, we require that the samples surrounding the spikes and the sparse noise, denoted by
\begin{align}
  \cI &:= \{s_i\in S\setminus\Noi\mid \exists t_j\in T, s_i\leq t_j < s_{i+1}\}\cup
  \{s_{i+1}\in S\setminus\Noi\mid \exists t_j\in T, s_i\leq t_j < s_{i+1}\},\\
  \cC &:= \{s_{i-1}\in S\setminus\Noi \mid s_i\in\Noi\}\cup
  \{s_{i+1}\in S\setminus\Noi \mid s_i\in\Noi\}, 
\end{align}
respectively, be unaffected by the noise and non-overlapping (so that there are at least two clean samples between every corruption and every spike).

\begin{theorem}[Proof in \Cref{sec:SparseNoise}]
\label{thm:SparseNoiseRecovery}
Let $\mu$ be a signal defined by~\eqref{eq:signal} and $y$ be data of the form~\eqref{eq:sparse_noise}, where $K$ is the
Gaussian kernel and the samples lie on a grid with step size
$\wid\in[\botwidG\sigma,\topwidG\sigma]$. Assume the spikes and
samples are both surrounded by clean samples, so that:
\begin{equation}
\label{eq:sparse_conditions}
  |\II| = 2|T|,\quad |\cC|=2|\Noi|,\quad \text{and}\quad \cC\cap\II = \emptyset.
\end{equation}
If $\mu$ has minimum
separation $\Delta(T)\geq\DeltaG\sigma$, and the noise locations $\Noi$
have minimum separation $\Delta(\Noi)\geq\DeltaG\sigma$ 
then solving problem~\eqref{pr:SparseNoiseRecovery}
with parameter $\lambda:=2$ recovers $\mu$ and $w$ exactly.
\end{theorem}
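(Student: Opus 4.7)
The plan is to prove exact recovery by constructing a dual certificate for the joint TV--plus--$\ell_1$ program. By Lagrangian duality applied to problem~\eqref{pr:SparseNoiseRecovery}, the pair $(\mu,w)$ is the unique optimal solution provided we exhibit a vector $\nu\in\R^n$ such that $q(t):=\sum_{i} \nu_i K(s_i-t)$ satisfies $q(t_j)=\sgn(a_j)$ for every $t_j\in T$ with $|q(t)|<1$ on $\R\setminus T$, and simultaneously $\nu_i=\lambda\sgn(w_i)$ for every $s_i\in\Noi$ with $|\nu_i|<\lambda$ for every $s_i\in S\setminus\Noi$. The first block is the subgradient condition for the TV term; the second is the subgradient condition for the $\lambda\ell_1$ term.

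I would split $\nu=\nu^{\op{sig}}+\nu^{\op{noise}}$ with corresponding $q=q^{\op{sig}}+q^{\op{noise}}$. The signal piece $\nu^{\op{sig}}$ is supported on $\II$ and built by the noiseless construction from \Cref{sec:ExactRecoveryProof}: for each spike $t_j$, the two bracketing clean samples in $\II$ (which exist by $|\II|=2|T|$) carry coefficients chosen so that $q^{\op{sig}}(t_j)=\sgn(a_j)$ and $(q^{\op{sig}})'(t_j)=0$, producing a bump that decays rapidly away from $t_j$. The noise piece $\nu^{\op{noise}}$ is supported on $\Noi\cup\cC$: on $\Noi$ it is forced to be $\lambda\sgn(w_i)$, and for each pair in $\cC$ bracketing a noise sample $s_i$ I would choose the two coefficients so that the three-term combination $\lambda\sgn(w_i)K(s_i-\cdot) + \alpha K(s_{i-1}-\cdot)+\beta K(s_{i+1}-\cdot)$ behaves as a discrete second difference, so its value and derivative at $s_i$ (and hence any nearby spike) essentially cancel.

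With these building blocks the remainder of the argument is quantitative. I would verify: (i) each $2\times 2$ local linear system (both for the signal bumps in $\II$ and for the noise-correction pair in $\cC$) is well conditioned, which follows from the grid window $\wid\in[\botwidG\sigma,\topwidG\sigma]$ ensuring the two samples in each pair are close on the scale of $\sigma$; (ii) locally near $t_j$, a second-order Taylor expansion around $q(t_j)=\pm 1$, $q'(t_j)=0$ with the correct sign of $q''(t_j)$ forces $|q(t)|<1$ in a neighborhood, while globally the Gaussian tail bounds together with the minimum separations $\Delta(T),\Delta(\Noi)\geq\DeltaG\sigma$ control the contribution of distant bumps by a quantity strictly smaller than the margin from the local analysis; (iii) for $s_i\in S\setminus\Noi$ the coefficient $\nu_i$ is strictly below $\lambda=2$: on $\II$ it is essentially the signal bump value, with only a small leak from distant noise dipoles (thanks to $\cC\cap\II=\emptyset$ and $\Delta(\Noi)\geq\DeltaG\sigma$); on $\cC$ the coefficients are small by construction; elsewhere both contributions are small by decay.

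The main obstacle will be showing that the strict inequalities $|q(t)|<1$ and $|\nu_i|<\lambda$ survive once all the cross-contamination is tallied. Decoupling the two constructions requires interference to be small in both directions: signal bumps must be tiny at noise samples and their second differences, and noise dipoles must be tiny, together with their derivatives, at spike locations. The hypotheses of the theorem---a regular grid with spacing in $[\botwidG\sigma,\topwidG\sigma]$, minimum separations $\Delta(T),\Delta(\Noi)\geq\DeltaG\sigma$, and the disjointness $\cC\cap\II=\emptyset$ so every clean sample plays only one role---are precisely what is needed to verify these decay estimates numerically, reusing the explicit kernel and derivative bounds developed in \Cref{sec:ExactRecoveryProof}. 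The choice $\lambda=2$ reflects that $K(0)=1$ and leaves a unit-sized margin in the $\ell_\infty$ budget on $\nu$ to absorb the cross terms.
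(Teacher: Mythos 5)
Your overall architecture---a dual certificate for the joint TV-plus-$\ell_1$ program, decomposed into a signal piece supported on $\II$ and a noise piece on $\Noi\cup\CS$ with dampened-kernel cancellations---matches the paper's strategy, and the role you assign to $\lambda=2$, the grid window, and the disjointness $\CS\cap\II=\emptyset$ is also right. There is, however, a structural gap in the interpolation step that would make the certificate invalid as written.

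You construct $q^{\op{sig}}$ so that $q^{\op{sig}}(t_j)=\sgn(a_j)$ and $(q^{\op{sig}})'(t_j)=0$ for all $t_j\in T$, and separately build $q^{\op{noise}}$ as a sum of dampened combinations calibrated to vanish (to first order) at each noise location $s_i$. But those cancellations happen \emph{at the noise samples}, not at the spikes: the bump used to damp the kernel at $s_i$ is chosen so that $D_{s_i}(s_i)=D_{s_i}'(s_i)=0$, and since a spike may sit only a couple of grid steps away, $q^{\op{noise}}(t_j)$ and its derivative are small but strictly nonzero there. The total $q=q^{\op{sig}}+q^{\op{noise}}$ therefore has $q(t_j)=\sgn(a_j)+q^{\op{noise}}(t_j)\neq\sgn(a_j)$, and the dual certificate in \Cref{prop:SparseDualCert} demands \emph{exact} equality $Q(t_j)=\rho_j$; this is where the $\normTV{h}+\lambda\normOne{g}=0$ step comes from. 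A small residual at $T$ cannot be absorbed into the $|q_i|<\lambda$ budget or the $|Q|<1$ margin---it breaks the optimality argument outright. The fix, which is what the paper does in \cref{eq:Interpolation_sparse}: keep $q^{\op{noise}}$ exactly as you describe (it is $R_C=\QC+R$), but set up the $2|T|\times 2|T|$ linear system for the signal piece with the perturbed right-hand side $\QI(t_j)=\rho_j-R_C(t_j)$ and $\QI^{(1)}(t_j)=-R_C^{(1)}(t_j)$, so that the total $Q=\QI+R_C$ interpolates $\rho$ exactly. Invertibility of this shifted system, together with the resulting bounds on $\normInf{\alpha}$ and $\normInf{\beta}$ (now tracking $\normInf{\psi}$ and $\normInf{\zeta}$ rather than $\normInf{\rho}$ alone), is what \Cref{lem:SparseSchur} and \Cref{lem:SparseCoeffBound} provide. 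Your remaining steps (ii)--(iii)---the local curvature argument, global tail sums, and the bounds on $q_i$ for $s_i\in\Noi^c$---track \Cref{lem:SparseQBound,lem:SparseqBound} closely once this correction is made.
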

\begin{theorem}[Proof in \Cref{sec:SparseNoise}]
\label{thm:SparseNoiseRecoveryRicker}
Let $\mu$ be a signal defined by~\eqref{eq:signal} and $y$ be data of the form~\eqref{eq:sparse_noise}, where $K$ is the Ricker wavelet and the samples lie on a grid with step size $\wid\in[\botwidR\sigma,\topwidR\sigma]$. If~\eqref{eq:sparse_conditions} holds, $\mu$ has minimum separation $\Delta(T)\geq\DeltaR\sigma$, and the
noise locations $\Noi$ have minimum separation
$\Delta(\Noi)\geq\DeltaR\sigma$ then solving problem~\eqref{pr:SparseNoiseRecovery}
with parameter $\lambda:=2$ recovers $\mu$ and $w$ exactly.
\end{theorem}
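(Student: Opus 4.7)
The plan is to mirror the dual-certificate construction used for the Gaussian case in \Cref{thm:SparseNoiseRecovery}, adapted to the Ricker wavelet. By convex duality for problem~\eqref{pr:SparseNoiseRecovery}, it suffices to exhibit a vector $v\in\RR^{|S|}$ whose associated function $q(t):=\sum_{s_i\in S}v_i\KR(s_i-t)$ satisfies (i)~$q(t_j)=\sgn(a_j)$ for every $t_j\in T$; (ii)~$|q(t)|<1$ for every $t\notin T$; (iii)~$v_i=\lambda\,\sgn(w_i)$ for every $s_i\in\cN$; and (iv)~$|v_i|<\lambda$ for every $s_i\in S\setminus\cN$. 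Strict inequalities in (ii) and (iv) then certify uniqueness of the primal pair $(\mu,w)$.

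I would support $v$ on $\cI\cup\cC\cup\cN$: the two clean samples bracketing each spike, the two clean samples bracketing each corrupted sample, and the corrupted samples themselves. Assumption~\eqref{eq:sparse_conditions} guarantees these three sets are disjoint, and the minimum-separation hypothesis ensures that the resulting clusters are well separated. The certificate $q$ is then decomposed as a sum of ``local atoms'', one centered on each $t_j\in T$ and one on each $s_i\in\cN$, whose coefficients are pinned down by the interpolation conditions $q(t_j)=\sgn(a_j)$, $q'(t_j)=0$ (so that $t_j$ is a local extremum of $q$ and the sign interpolation is preserved to second order), together with $v_i=\lambda\,\sgn(w_i)$ at $s_i\in\cN$ and corresponding value/derivative constraints at the bracket samples in $\cI$ and $\cC$. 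The resulting linear system is close to block-diagonal once $\Delta(T),\Delta(\cN)\ge\DeltaR\sigma$ and $\wid\in[\botwidR\sigma,\topwidR\sigma]$; a Neumann-style expansion around the block-diagonal part yields atom coefficients that decay rapidly with distance from the associated cluster, producing quantitative control on $q$ and on the off-support entries of $v$.

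Verification of (ii) and (iv) splits into two regimes. In the near field of each spike (or corrupted sample), a second-order Taylor expansion around $t_j$ combined with the local-extremum condition $q'(t_j)=0$ yields $|q(t)|<1$ on a small neighborhood, provided the curvature of the dominant atom dominates the aggregate curvature contribution from all other atoms. In the far field, the Gaussian-type decay of $\KR$ (dominated by $(1+t^2/\sigma^2)\exp(-t^2/(2\sigma^2))$) delivers the remaining slack through tail estimates. The ranges $[\botwidR\sigma,\topwidR\sigma]$ for the grid spacing and the lower bound $\DeltaR\sigma$ on the separation are calibrated precisely so these two regimes overlap. Condition (iv) follows from the same coefficient estimates, evaluated at the clean sample locations in $\cI$ and $\cC$.

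The main obstacle, compared with the Gaussian case, is the oscillatory character of $\KR$: the kernel takes both signs, so constructive interference between distant atoms of opposite sign can erode the slack in (ii) or (iv) and naive triangle-inequality bounds are wasteful. I expect to handle this by partitioning the tail contributions according to the sign of $\KR$ at the evaluation point, bounding the positive and negative lobes separately, and exploiting the above envelope decay. The remaining pointwise inequalities will likely be closed through careful numerical interval bounds, in the spirit of the exact-recovery proof of \Cref{thm:Exact}; pinning down the explicit constants $\DeltaR$, $\botwidR$, and $\topwidR$ for which the entire argument closes is where most of the technical effort will reside.
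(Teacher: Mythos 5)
Your overall strategy is right, and closely tracks the paper's: build a dual certificate supported on $\cI\cup\cC\cup\Noi$, pin the coefficients at $\Noi$ to $\lambda\rho'$, enforce sign interpolation with vanishing derivative on $T$, control a near-block-diagonal system via a Neumann/Schur argument, split the verification of $|Q|<1$ into near- and far-field regimes, and close the numerics with interval arithmetic. Three specific points, however, are either misstated or depart from what actually makes the proof work.

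First, the paper's central new device for the sparse-noise setting is not just ``value/derivative constraints at the bracket samples'' but an explicit two-stage cancellation: the kernels forced at $\Noi$, collected into $R(t)=\lambda\sum_{s_i\in\Noi}\rho'_i K(s_i-t)$, are \emph{cancelled} by $\QC(t)=-\lambda\sum_{s_i\in\Noi}\rho'_i B_{s_i}(t,s_{i-1},s_{i+1})$, producing the dampened kernels $D_{s_i}=\lambda\rho'_i(K(s_i-\cdot)-B_{s_i})$ which vanish to first order at $s_i$ and decay globally. Only \emph{after} this dampening does one solve a $2|T|\times2|T|$ system for the bump/wave coefficients at $\cI$, with $\RC=\sum D_{s_i}$ entering as a small source term $(\psi,\zeta)$. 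Your description leaves it unclear what determines the $\cC$ coefficients and does not cleanly separate the cancellation from the interpolation; without this decoupling, bounding the $\cI$-block becomes much harder, since the noise otherwise contaminates the right-hand side in an uncontrolled way.

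Second, your identified ``main obstacle'' — the oscillatory sign of $\KR$ and a proposed sign-partitioned tail bound — is not where the Ricker-specific difficulty lies, and the paper does not use any sign-aware cancellation in the tails; it uses the same absolute-value decay bounds as the Gaussian case. The genuine Ricker-specific issue is that the bump/wave denominator $\DR(s_1,s_2)=(s_2-s_1)\bigl(3-(s_1-s_2)^2+s_1^2 s_2^2\bigr)\exp\bigl(-(s_1^2+s_2^2)/2\bigr)$ has an additional polynomial factor that vanishes as $|s_1|,|s_2|\to 1$; the sample proximity (here, the grid width) must therefore be bounded well away from the kernel's zeros, which is what Lemma \ref{lem:RickerF} and the constraint $\wid\le\topwidR\sigma$ handle. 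Your proposal does not address the existence/conditioning of the local atoms near the Ricker zeros at all.

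Third, verifying $|q_i|<\lambda$ on $\Noi^c$ is not a uniform ``same coefficient estimate'' across $\cC$ and $\cI$. For $s_i\in\cC$ it does reduce to a direct size bound on the dampening bump coefficients (Lemma \ref{lem:VqBound}). For $s_i\in\cI$ the paper argues by contradiction: a sign analysis of the bump and wave coefficients (Lemma \ref{lem:SparseqPos}, plus the Ricker-specific Lemma \ref{lem:RickerWaveBound}) shows $\min(q_i,q_{i+1})>-\lambda$, and if one of them were $\ge\lambda$ then a lower bound on $Q(s_i)$ would exceed $1$, contradicting Lemma \ref{lem:SparseQBound}. That coupling between condition (ii) and condition (iv) is essential and is missing from your sketch.
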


\Cref{sec:sparse_recovery} reports the results of several numerical
experiments showing that the method achieves exact recovery under
weaker assumptions than those in \Cref{thm:SparseNoiseRecovery,thm:SparseNoiseRecoveryRicker}. Characterizing the performance of the method more generally is an interesting research problem. These experiments also suggest that the approach is quite robust to the choice of the regularization parameter $\lambda$. The following lemma, which is based on Theorem 2.2 in~\cite{candes2011robust}, provides some theoretical justification: it establishes that if the method recovers a signal and a sparse noise realization for a certain value of $\lambda$, then it also achieves exact recovery for data corresponding to any trimmed versions of the signal or the noise for that same value of $\lambda$. Note that $\Omega$ denotes the nonzero entries of $w$, as opposed to $\Noi$ which denotes the corresponding sample locations.  

\begin{restatable}[Proof in
    \Cref{sec:SparseTrimProof}]{lemma}{SparseTrim}
\label{lem:SparseTrim}
  Let $w$ be a vector with support $\Omega$ and let $\mu$ be an
  arbitrary measure such that 
  \begin{equation}
    y_i = (K*\mu)(s_i)+w_i,
  \end{equation}
  for $i=1,\ldots,n$.  Assume that the pair $(\mu,w)$ is the unique
  solution to Problem~\eqref{pr:SparseNoiseRecovery} for the data $y$,
  and consider the
  data
  \begin{equation}
    y_i' = (K*\mu')(s_i)+w_i',
  \end{equation}
  for $i=1,\ldots,n$.  Here $\mu'$ is a trimmed version of $\mu$: it
  is equal to $\mu$ on a subset $T'\subseteq T$ of its support.
  Similarly, the support $\Omega'$ of $w'$ satisfies
  $\Omega'\subseteq\Omega$.  For any choice of $T'$ and $\Omega'$, the
  pair $(\mu',w')$ is the unique solution to
  Problem~\eqref{pr:SparseNoiseRecovery} if we set the data vector to
  equal $y'$ for the same value of $\lambda$.  
\end{restatable}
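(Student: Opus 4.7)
\textbf{Proof plan for \Cref{lem:SparseTrim}.} The plan is to use a trimming argument in the spirit of Theorem~2.2 of~\cite{candes2011robust}: we will lift any candidate solution for the trimmed data $y'$ back to a candidate for the original data $y$, and then invoke the assumed uniqueness. Concretely, suppose $(\tilde\mu,\tilde w)$ is feasible for Problem~\eqref{pr:SparseNoiseRecovery} with data $y'$ and achieves $\normTV{\tilde\mu}+\lambda\normOne{\tilde w}\le \normTV{\mu'}+\lambda\normOne{w'}$. I would form the pair
\begin{equation*}
(\tilde\mu + (\mu-\mu'),\;\tilde w + (w-w')).
\end{equation*}
The first step is to verify that this pair is feasible for the original problem with data $y$. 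Writing $y_i' = (K*\mu')(s_i)+w_i'$ and $y_i = (K*\mu)(s_i)+w_i$, a direct computation gives
\begin{equation*}
(K*(\tilde\mu+(\mu-\mu')))(s_i) + (\tilde w + (w-w'))_i
= y_i' + (K*(\mu-\mu'))(s_i) + (w-w')_i = y_i.
\end{equation*}

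The second step is to bound the objective. By the triangle inequality,
\begin{equation*}
\normTV{\tilde\mu + (\mu-\mu')} + \lambda\normOne{\tilde w + (w-w')}
\le \normTV{\tilde\mu}+\normTV{\mu-\mu'} + \lambda\normOne{\tilde w}+\lambda\normOne{w-w'}.
\end{equation*}
The key observation is that because $\mu'$ is the restriction of $\mu$ to $T'\subseteq T$, the measures $\mu'$ and $\mu-\mu'$ have disjoint supports, so $\normTV{\mu'}+\normTV{\mu-\mu'}=\normTV{\mu}$; the analogous identity $\normOne{w'}+\normOne{w-w'}=\normOne{w}$ holds for the vector pair by construction of $w'$ as the restriction of $w$ to $\Omega'\subseteq\Omega$. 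Combining these identities with the assumed bound on $\normTV{\tilde\mu}+\lambda\normOne{\tilde w}$ yields
\begin{equation*}
\normTV{\tilde\mu + (\mu-\mu')} + \lambda\normOne{\tilde w + (w-w')}
\le \normTV{\mu}+\lambda\normOne{w}.
\end{equation*}

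The final step is to invoke uniqueness: since $(\mu,w)$ is by hypothesis the unique minimizer for data $y$ and the lifted pair is feasible with objective no larger than the optimum, we must have $\tilde\mu + (\mu-\mu')=\mu$ and $\tilde w + (w-w')=w$, i.e.\ $\tilde\mu=\mu'$ and $\tilde w=w'$. This establishes both optimality and uniqueness of $(\mu',w')$ for the trimmed data. There is no real obstacle here beyond bookkeeping; the only point requiring a brief justification is the additivity of the TV norm on disjointly supported atomic measures, which follows directly from the fact that the TV norm of $\sum_j b_j\delta_{t_j}$ equals $\sum_j |b_j|$.
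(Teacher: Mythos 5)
Your proof is correct and follows essentially the same strategy as the paper: lift a candidate solution of the trimmed problem back to the original problem by adding $\mu-\mu'=\mu_{T\setminus T'}$ and $w-w'=w_{\Omega\setminus\Omega'}$, verify feasibility, apply the triangle inequality together with disjoint-support additivity of the TV and $\ell_1$ norms, and then invoke the assumed uniqueness of $(\mu,w)$ for the data $y$.
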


\subsection{Related work}

There are two previous works that derive exact recovery guarantees for the Gaussian kernel in one dimension. In~\cite{schiebinger2015superresolution} the authors prove exact deconvolution of $k$ spikes from samples at $2k+1$ arbitrary locations using a weighted version of the TV norm that avoids having to impose a sample proximity condition. The result only holds for positive spikes, but does not require a minimum-separation condition. In~\cite{vetterli2002sampling} the authors prove exact recovery of $k$ spikes from $2k$ samples lying on a uniform grid, also without a minimum-separation condition. The method can be interpreted as an extension of Prony's method~\cite{deProny:tg}. In both cases, no robustness guarantees are provided (such guarantees would require conditions on the support of the signal, as explained in \Cref{sec:minsep}). 

As mentioned in the introduction, to the best of our knowledge $\ell_1$-norm minimization for deconvolution was originally proposed by researchers in geophysics from the 1970s~\cite{taylor1979deconvolution,claerbout,levy,santosa,debeye1990lp}. Initial theoretical works focused on analyzing random convolution kernels~\cite{hauptToeplitz,romberg2009compressive} using techniques from compressed sensing~\cite{Candes:2005cs,donoho2006compressed}. A series of recent papers analyze TV-norm minimization for recovering point sources from low-pass measurements~\cite{supportPursuit,superres,superres_new} and derive stability guarantees~\cite{superres_noisy, support_detection, azais2015spike, tang2014minimax,peyreduval,li2016approximate}. This framework has been extended to obtain both exact-recovery and robustness guarantees for deconvolution via convex programming for bump-like kernels including the Gaussian in~\cite{bendory64stable,bendory2016robust} and specifically Ricker kernels in~\cite{pereg2017seismic} under the assumption that the complete convolution between a train of spikes and the kernel of interest is known (i.e. without sampling). 

When the convolution kernel is a low-pass function, as is approximately the case for the Gaussian kernel, one can reformulate the spike deconvolution as a super-resolution problem. First, the spectrum of the convolved signal is estimated from the data. Then the low-frequency component of the spikes is approximated by dividing the result by the spectrum of the convolution kernel. Finally, the spikes are recovered from the estimate of their low-pass spectrum using spectral super-resolution techniques based on convex programming~\cite{superres} or on Prony's method~\cite{deProny:tg,Stoica:2005wf}, as in the finite-rate-of-innovation (FRI) framework~\cite{vetterli2002sampling,dragotti2007sampling}. This framework can also be applied to arbitrary non-bandlimited convolution kernels~\cite{uriguen2013fri} and nonuniform sampling patterns~\cite{pan2016towards}, but without exact-recovery guarantees.

\section{Proof of Exact Recovery (\Cref{thm:Exact})}
\label{sec:ExactRecoveryProof}
In the proof of \Cref{thm:Exact} we use standardized versions of our
kernels where $\sigma=1$:  
\begin{equation}
  \KG(t):= \exp\brac{-\frac{t^2}{2}}\quad\text{and}\quad
  \KR(t):= (1-t^2)\exp\brac{-\frac{t^2}{2}} = -(\KG)^{(2)}(t),
\end{equation}
without loss of generality. This is equivalent to expressing $t$ in units of $\sigma$. Auxiliary Mathematica code to perform the computations needed for the proof is available online\footnote{\url{http://www.cims.nyu.edu/~cfgranda/scripts/deconvolution_proof.zip}}.
\subsection{Dual Certificate}
We prove \Cref{thm:Exact} by establishing the
existence of a \emph{certificate} that guarantees exact recovery.

\begin{restatable}[Proof in \Cref{sec:DualCertProof}]{proposition}{DualCert}
\label{prop:DualCert}
Let $T \subseteq \RR$ be the nonzero support of a signal $\mu$ of the
form~\eqref{eq:signal}. If for any sign pattern $\rho\in\{-1,1\}^{|T|}$
there exists a function of the form
\begin{align}
Q(t) := \sum_{i=1}^n q_i K(s_i-t) \label{eq:dualcomb_K}
\end{align}
satisfying
\begin{alignat}{2}
 & Q(t_j) = \rho_j, \qquad && \forall t_j \in
  T, \label{eq:conditionQT}\\ 
  & \abs{Q(t)} < 1, && \forall t \in
  T^c, \label{eq:conditionQTc}
\end{alignat}
then the unique solution to Problem~\eqref{pr:TVnorm} is $\mu$.
\end{restatable}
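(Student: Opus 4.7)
The plan is to run the classical dual-certificate uniqueness argument. Let $\tilde{\mu}$ be any feasible solution to Problem~\eqref{pr:TVnorm} with $\normTV{\tilde{\mu}}\leq\normTV{\mu}$, and set $\nu:=\tilde{\mu}-\mu$. Both measures satisfy the sample constraints, so $(K\ast\nu)(s_i)=0$ for every $i$. For any $Q$ of the form~\eqref{eq:dualcomb_K}, swapping the finite sum and the integral gives the identity
\begin{equation*}
\int Q\,d\nu \;=\; \sum_{i=1}^n q_i\,(K\ast\nu)(s_i) \;=\; 0,
\end{equation*}
which, applied to the dual certificates furnished by the hypothesis, drives everything that follows.

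Next I would decompose $\nu=\nu|_T+\nu|_{T^c}$ by restriction. Because $T$ is finite and therefore disjoint from $T^c$, the TV norm splits as $\normTV{\tilde{\mu}}=\normTV{\mu+\nu|_T}+\normTV{\nu|_{T^c}}$. Writing $\nu|_T=\sum_j\nu_j\delta_{t_j}$, I would pick the sign pattern $\rho_j:=-\sgn(\nu_j)$ (with an arbitrary convention where $\nu_j=0$). By hypothesis there is a certificate $Q$ with $Q(t_j)=\rho_j$, which gives $\int Q\,d\nu|_T=-\normTV{\nu|_T}$ and hence $\int Q\,d\nu|_{T^c}=\normTV{\nu|_T}$.

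The crux is to upgrade the pointwise bound $\abs{Q(t)}<1$ on $T^c$ to a strict inequality on the integral. If $\nu|_{T^c}\neq 0$, then $\abs{\nu|_{T^c}}$ is a nontrivial finite positive measure on $T^c$, and $1-\abs{Q}$ is strictly positive on \emph{all} of $T^c$, so $\int(1-\abs{Q})\,d\abs{\nu|_{T^c}}>0$. This yields $\abs{\int Q\,d\nu|_{T^c}}<\normTV{\nu|_{T^c}}$ and therefore $\normTV{\nu|_T}<\normTV{\nu|_{T^c}}$. On the other hand, the optimality bound $\normTV{\mu+\nu|_T}+\normTV{\nu|_{T^c}}\leq\normTV{\mu}$ together with the reverse triangle inequality $\normTV{\mu+\nu|_T}\geq\normTV{\mu}-\normTV{\nu|_T}$ forces $\normTV{\nu|_{T^c}}\leq\normTV{\nu|_T}$, a contradiction. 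Hence $\nu|_{T^c}=0$.

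Finally, with $\nu$ reduced to the atomic measure $\sum_j\nu_j\delta_{t_j}$ on $T$, I would exploit the fact that the hypothesis supplies a certificate for \emph{every} sign pattern: $\sum_j\rho_j\nu_j=\int Q_\rho\,d\nu=0$ must hold for all $\rho\in\keys{-1,1}^{\abs{T}}$. Testing against the all-ones vector and then flipping one coordinate at a time forces each $\nu_j=0$, giving $\tilde{\mu}=\mu$. The main subtlety I expect is the strict-integral step on $T^c$: the pointwise bound $\abs{Q}<1$ does not give a uniform bound (since $\abs{Q}$ can approach $1$ near $T$), but this is fine because $1-\abs{Q}$ is strictly positive on all of $T^c$, so its integral against any nonzero finite positive measure supported there is strictly positive.
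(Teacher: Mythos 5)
Your argument is correct and follows essentially the same route as the paper's: decompose the difference measure into pieces supported on $T$ and $T^c$ (the paper uses the Lebesgue decomposition with respect to $|\mu|$, which coincides with your restriction since $T$ is finite), split the TV norm by mutual singularity, pick the certificate whose sign pattern matches the atoms on $T$, and combine the zero-integral identity with the strict pointwise bound $|Q|<1$ on $T^c$ to close the argument. One small simplification worth noting: your final step testing against all sign patterns is redundant, since the single certificate you already chose yields $\normTV{\nu|_T}=\int Q\,d\nu|_{T^c}=0$ directly once $\nu|_{T^c}=0$ is established.
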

In words, to prove exact recovery we need to show that is possible to
interpolate the sign of the amplitudes of any superposition of spikes, which we denote $\rho$, on the support of the spikes using
scaled copies of the convolution kernel centered at the location of
the samples.

\bdb{The vector $q$ is known as a \emph{dual certificate} in the
literature~\cite{Candes:2005cs} because it certifies recovery and is a solution
to the Lagrange dual of Problem~\eqref{pr:TVnorm}:
  \begin{equation}
    \label{pr:TVnormDual}
    \begin{aligned}
      \underset{q}{\op{maximize}} \quad& q^Ty\\ 
      \text{subject to} \quad&
      \sup_{t}\left|\sum_{i=1}^n q_iK(s_i-t)\right|\leq 1.
    \end{aligned}
  \end{equation}
}
Dual certificates have mostly been used to derive guarantees for inverse
problems involving random measurements, including compressed
sensing~\cite{Candes:2005cs,candes2011probabilistic}, matrix
completion~\cite{candes2012exact} and phase
retrieval~\cite{candes2015phase}. In such cases, the construction
relies on concentration bounds and other tools from probability
theory~\cite{vershynin2010introduction}. In contrast, our setting is
completely deterministic.

Our techniques are inspired by the deterministic certificate used to
establish exact recovery of an atomic measure from low-pass
measurements in~\cite{superres}. In that case, the certificate
corresponds to a low-pass trigonometric polynomial which also
interpolates an arbitrary sign pattern on the support of the atomic measure that
represents the signal. This polynomial is constructed via
interpolation using a low-pass interpolation kernel with fast
decay. The crucial difference with our setting is that
in~\cite{superres} the interpolating function just needs to be
low-pass, which makes it possible to center the shifted copies of the
interpolation kernel at the elements of the signal support ($T$ in our
notation). Similarly, if the whole convolution $K\ast \mu$ is assumed to be know, as in~\cite{bendory64stable,bendory2016robust,pereg2017seismic}, a certificate can be built by interpolating the sign pattern with the convolution kernel in the same way. In contrast, if we incorporate sampling into the measurement process, we are constrained to use shifted kernels \emph{centered at the sample locations} $s_1,\ldots,s_n$. This requires a new method for constructing the dual certificate, which is our main technical contribution.

By the sample proximity and separation conditions
(\Cref{def:ProxSep}), we can associate each spike location in $T$ with two nearby
samples that are not too close to each other. This allows us to form a subset $\wt{S}\subseteq S$ of size
$2|T|$ that contains pairs of samples separated by $\kappa(S,T)$ that are $\gamma(S,T)$-close to each spike location in the
support~$T$. We construct the \textit{dual combination} $Q$ using
kernels centered at the elements of $\wt{S}$,
\begin{equation}
\label{eq:DualCombination}
Q(t) := \sum_{\tilde{s}_i \in \wt{S}} q_i K( \tilde{s}_i - t).
\end{equation}
In order to satisfy condition~\eqref{eq:conditionQT}, $Q$ must
interpolate the sign pattern $\rho$ on $T$. To satisfy
condition~\eqref{eq:conditionQTc}, $Q$ must have a local
extremum at each element of $T$. Otherwise the construction will
violate condition~\eqref{eq:conditionQTc} close to the support of the
signal as shown in~\Cref{fig:onlybump}.  To favor local extrema on
$T$, we constrain the derivative of $Q$ to vanish at those points. The
sign-interpolation and zero-derivative constraints yield a system of
$2\abs{T}$ equations,
\begin{equation}
\label{eq:Interpolation}
\begin{aligned}
Q(t_i) & = \rho_i,\\ Q'(t_i) & = 0, \quad \text{for all $t_i\in T$}.
\end{aligned}
\end{equation}
The main insight underlying our proof is that the solution to this system is amenable
to analysis once $Q$ is reparametrized appropriately. The
reparametrization, described in the next section, allows us to prove
that the system is invertible and to control the amplitude of $Q$ on
$T^c$.

\subsection{Interpolation With Bumps and Waves}
\label{sec:bumps_waves}
We construct the dual combination $Q$ defined in \Cref{prop:DualCert}
by interpolating an arbitrary sign pattern $\rho$ using modified
interpolation kernels, which are linear combinations of shifted copies
of~$K$. The construction is a reparametrization of
\eqref{eq:DualCombination} of the form
\begin{align} \label{eq:reparametrization}
Q(t) = \sum_{t_i\in T}\alpha_i
B_{t_i}(t,\tilde{s}_{i,1},\tilde{s}_{i,2})+\beta_i
W_{t_i}(t,\tilde{s}_{i,1},\tilde{s}_{i,2}),
\end{align} 
where for each $t_i \in T$ we define a pair of modified kernels $B_{t_i}$ and $W_{t_i}$. The motivation is that controlling the
coefficients $\alpha_i$ and $\beta_i$ is considerably simpler than
controlling $q_1$, \ldots, $q_{\abs{\wt{S}}}$ directly.

We call the first interpolation kernel $B_{t_i}$ a \emph{bump},
defined as
\begin{align}
\label{eq:bump}
B_{t_i}(t,\tilde{s}_{i,1},\tilde{s}_{i,2}) & :=
b_{i,1}K(\tilde{s}_{i,1}-t)+ b_{i,2}K(\tilde{s}_{i,2}-t),
\end{align}
where $\tilde{s}_{i,1}$ and $\tilde{s}_{i,2}$ are the two elements of
$\wt{S}$ that are closest to $t_i$.  The coefficients $b_{i,1}$ and
$b_{i,2}$ are adjusted so that $B_{t_i}$ has a local extremum equal to
one at $t_i$:
\begin{align}
B_{t_i}(t_i,\tilde{s}_{i,1},\tilde{s}_{i,2}) & =1,
\\ \frac{\partial}{\partial
  t}B_{t_i}(t_i,\tilde{s}_{i,1},\tilde{s}_{i,2})& =0.
\end{align} 
If $\tilde{s}_{i,1}$ and $\tilde{s}_{i,2}$ are close enough to $t_i$,
the extremum is a maximum and $B_{t_i}$ is indeed a bump-like kernel
centered at $t_i$. \Cref{fig:bump} shows examples of bumps when $K$ is
the Gaussian kernel and the Ricker wavelet.

\begin{figure}[t]
\centering
\includegraphics{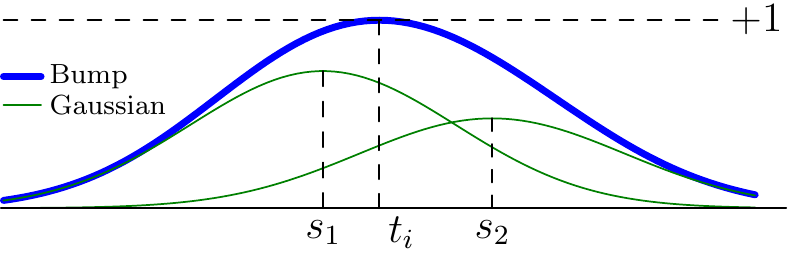}
\includegraphics{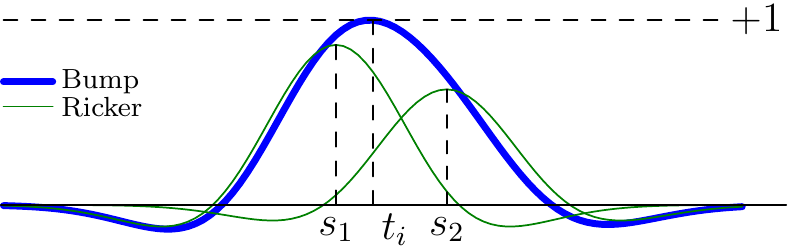}
  \caption{The bump interpolation function $B_{t_i}(t,s_1,s_2)$ is a
    linear combination of kernels centered at
    $s_1,s_2$. The figure shows examples for the Gaussian kernel (left) and the Ricker wavelet (right). }
  \label{fig:bump}
\end{figure}

The bumps are linear combinations of shifted copies of the
interpolation kernel $K$ centered at the samples, so we can construct
a dual combination $Q$ of the form~\eqref{eq:dualcomb_K} interpolating
an arbitrary sign pattern $\rho$ by weighting them appropriately. The
problem is that the resulting construction does not satisfy the
zero-derivative condition in~\eqref{eq:Interpolation} and therefore
tends to violate condition~\eqref{eq:conditionQT} close to the
elements of $T$, as shown in \Cref{fig:onlybump}. This is the reason
why our dual-combination candidate~\eqref{eq:reparametrization}
contains an additional interpolation kernel, which we call a
\emph{wave},
\begin{align}
W_{t_i}(t,\tilde{s}_{i,1},\tilde{s}_{i,2}) & = w_{i,1}
K(\tilde{s}_{i,1}-t)+ w_{i,2} K(\tilde{s}_{i,2}-t).
\end{align}

\begin{figure}[b]
\centering 
\includegraphics{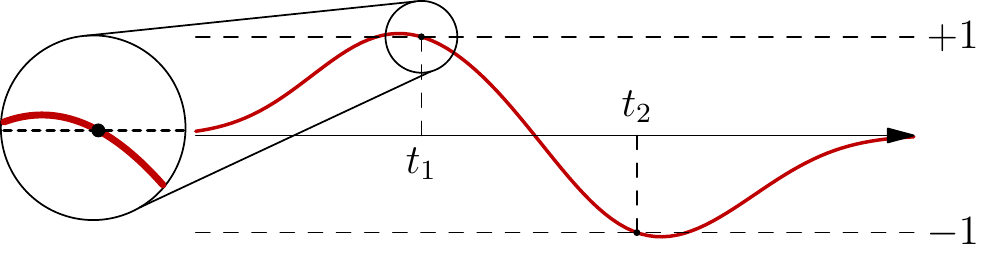}
\caption{A linear combination of two bumps interpolating the signs
  $+1$ at $t_1$ and $-1$ at $t_2$.  Since there are no constraints on its derivative, the red
  curve does not have local extrema at $t_1,t_2$.
  The construction is not a valid certificate because 
  its magnitude is not bounded by one.}
\label{fig:onlybump}
\end{figure}

In this case, the coefficients are adjusted so that
\begin{align}
W_{t_i}(t_i,\tilde{s}_{i,1},\tilde{s}_{i,2}) & =0,
\\ \frac{\partial}{\partial
  t}W_{t_i}(t_i,\tilde{s}_{i,1},\tilde{s}_{i,2}) & =1.
\end{align}  
Each $W_{t_i}$ is a wave-shaped function centered at
$t_i$. \Cref{fig:wave} shows examples of waves when $K$ is the
Gaussian kernel and the Ricker wavelet. The additional degrees of
freedom in~\eqref{eq:reparametrization} allow us to enforce the
zero-derivative condition and obtain a valid dual combination. The
role of the wave in our analysis is analogous to the role of the
derivative of the interpolation kernel in the dual-certificate
construction for super-resolution from low-pass
data~\cite{superres}. Note that here we cannot use the derivative of
the bumps or of $K$, as the resulting construction would not be a
linear combination of shifted copies of $K$.

\begin{figure}[t]
\includegraphics{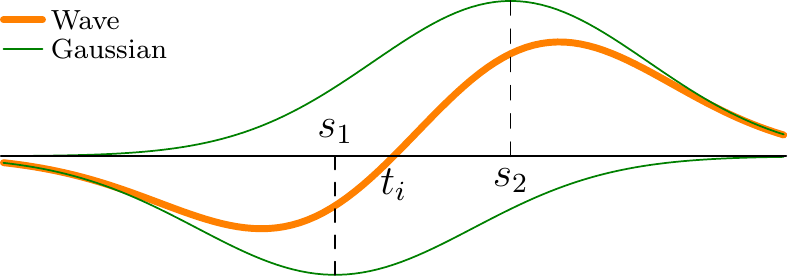}
\includegraphics{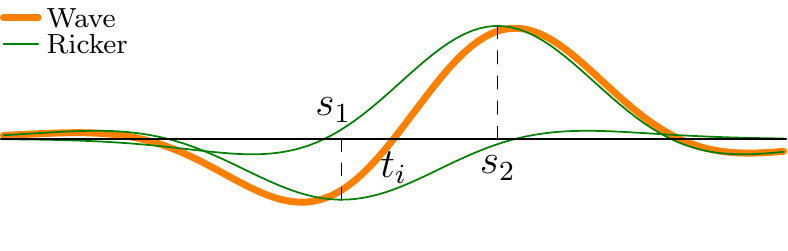}
  \caption{The wave interpolation function $W_{t_i}(t,s_1,s_2)$ is a
    linear combination of kernels centered at $s_1,s_2$. The figure shows examples for the Gaussian kernel (left) and the Ricker wavelet (right). }
  \label{fig:wave}
\end{figure}

The following lemma provides a closed-form expression for the
coefficients used to build the bump and the wave. In addition, it
shows that the bump and wave always exist for the Gaussian kernel and
the Ricker wavelet under mild assumptions on the sample proximity and
the sample separation.

\begin{restatable}[Proof in
    \Cref{sec:BumpExistsProof}]{lemma}{BumpExists}
\label{lem:BumpExists}
For a fixed kernel $K$, the bump $B_{t_i}$ and wave $W_{t_i}$ exist
with coefficients given by
\begin{align}
\MAT{b_{i,1} & w_{i,1} \\ b_{i,2} & w_{i,2}} & =
\frac{1}{K(\tilde{s}_{i,2}-t_i)K^{(1)}(\tilde{s}_{i,1}-t_i)-K^{(1)}(\tilde{s}_{i,2}-t_i)K(\tilde{s}_{i,1}-t_i)}\MAT{-K^{(1)}(\tilde{s}_{i,2}-t_i)
  & -K(\tilde{s}_{i,2}-t_i)\\K^{(1)}(\tilde{s}_{i,1}-t_i) &
  K(\tilde{s}_{i,1}-t_i)} \notag
\end{align}
when the expression in the denominator is nonzero. The Gaussian kernel
has nonzero denominator for $\tilde{s}_{i,1}\neq \tilde{s}_{i,2}$.
The Ricker wavelet has nonzero denominator when $\tilde{s}_{i,1}\neq
\tilde{s}_{i,2}$ and $|\tilde{s}_{i,1}|,|\tilde{s}_{i,2}|<1$.
\end{restatable}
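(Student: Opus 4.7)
The plan is to recognize the defining conditions on $B_{t_i}$ and $W_{t_i}$ as two $2 \times 2$ linear systems that share the same coefficient matrix but have different right-hand sides, solve them jointly by matrix inversion, and then verify that the common determinant is nonzero under the stated hypotheses for each kernel.

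Setting $u_j := \tilde{s}_{i,j} - t_i$ and using the chain rule $\partial_t K(s-t) = -K^{(1)}(s-t)$, the four interpolation constraints bundle into the single matrix equation
\[
\MAT{K(u_1) & K(u_2) \\ K^{(1)}(u_1) & K^{(1)}(u_2)} \MAT{b_{i,1} & w_{i,1} \\ b_{i,2} & w_{i,2}} = \MAT{1 & 0 \\ 0 & -1}.
\]
Applying the adjugate formula for the inverse of a $2\times 2$ matrix and then multiplying the numerator and denominator through by $-1$ yields exactly the closed-form expression stated in the lemma, provided the determinant is nonzero.

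For the Gaussian, $\KG(u) = e^{-u^2/2}$ and $(\KG)^{(1)}(u) = -u\, e^{-u^2/2}$, so the determinant collapses to $e^{-(u_1^2+u_2^2)/2}(u_2 - u_1)$, which vanishes exactly when $\tilde{s}_{i,1} = \tilde{s}_{i,2}$. For the Ricker wavelet, $\KR(u) = (1-u^2)e^{-u^2/2}$ and $(\KR)^{(1)}(u) = u(u^2-3)e^{-u^2/2}$. Expanding the eight-term polynomial and carefully collecting like terms, the determinant factors as
\[
e^{-(u_1^2+u_2^2)/2}\,(u_1 - u_2)\bigl[(u_1-u_2)^2 - 3 - u_1^2 u_2^2\bigr].
\]
The first nontrivial factor vanishes only when $\tilde{s}_{i,1} = \tilde{s}_{i,2}$, so what remains is to verify that the bracketed factor is nonzero on the open region $\keys{|u_1|, |u_2| < 1}$.

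The main obstacle is this last positivity claim. I plan to reparametrize by $a := (u_1+u_2)/2$ and $b := (u_1-u_2)/2$, which converts the hypothesis $|u_1|, |u_2| < 1$ into $|a| + |b| < 1$ and the bracket into $4b^2 - 3 - (a^2 - b^2)^2 = -f(a,b)$ with $f(a,b) := 3 + (a^2-b^2)^2 - 4b^2$. A direct critical-point computation shows that the only interior critical point of $f$ in $\keys{|a|+|b| \leq 1}$ is $(0,0)$, where $f = 3$. Along the boundary $|a|+|b| = 1$, parametrizing $(|a|,|b|) = (t, 1-t)$ for $t\in[0,1]$ and using $a^2 - b^2 = 2t - 1$, the expression simplifies to $f = 4t \geq 0$, with equality only at $t = 0$, i.e.\ the two excluded corners $(u_1, u_2) \in \keys{(1,-1),(-1,1)}$. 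Hence $f > 0$ on the open region, the bracket is strictly negative there, and the denominator is nonzero. The factorization and this parametrization are the only nonroutine ingredients; everything else is elementary linear algebra.
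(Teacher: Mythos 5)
The proposal is correct and follows essentially the same route as the paper: set up a $2\times 2$ linear system from the four interpolation constraints, invert it by the adjugate formula, and then control the determinant via the closed-form factorizations of \Cref{lem:DenomFormula}, with the Ricker case reducing to the positivity of $3-(u_1-u_2)^2+u_1^2u_2^2$ exactly as in \Cref{lem:RickerF}. The one genuine variant is in that positivity argument: you change variables to $(a,b)=\bigl((u_1+u_2)/2,(u_1-u_2)/2\bigr)$, which maps the unit square $|u_1|,|u_2|<1$ to the diamond $|a|+|b|<1$ and collapses the boundary restriction of the relevant function to the linear form $4t$, giving an especially short verification of nonvanishing. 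The paper instead works on the square $\max(|x|,|y|)\leq a<1$, shows the boundary restriction is a concave quadratic minimized at a corner, and thereby obtains the \emph{quantitative} lower bound $3-4a^2+a^4$; that stronger estimate is reused later (e.g.\ in \Cref{lem:RickerDecayBound} and in \Cref{sec:RobustCombinationOneProof}), so it buys more than the lemma at hand requires. Your version is cleaner for the purpose of \Cref{lem:BumpExists} alone, but would not by itself furnish the explicit constant the later arguments rely on.
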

The condition $|\tilde{s}_{i,1}|,|\tilde{s}_{i,2}|<1$ is natural for
the Ricker wavelet, since it has roots at $\pm1$.

We would like to emphasize that the bumps and waves are just a tool
for analyzing the dual certificate obtained by solving the system of
equations~\eqref{eq:Interpolation}. Under the conditions of
\Cref{thm:Exact}, which ensure that the support
of the signal is not too clustered and that there are two samples
close to each spike, we show that each coefficient $\alpha_i$ is close
to the sign of the spike and $\beta_i$ is small, so that
\begin{align}
  Q(t) & = \sum_{t_i\in T} \alpha_i
  B_{t_i}(t,\tilde{s}_{i,1},\tilde{s}_{i,2})+\beta_i
  W_{t_i}(t,\tilde{s}_{i,1},\tilde{s}_{i,2}) \\ & \approx \sum_{t_i\in
    T} \rho_i B_{t_i}(t,\tilde{s}_{i,1},\tilde{s}_{i,2}).
\end{align}
This allows us to control $Q$ and show that it is a valid dual
combination. In contrast, characterizing the coefficients $q_1$,
\ldots, $q_{\abs{\wt{S}}}$ directly is much more challenging, as
illustrated in \Cref{fig:bwdual}.

\begin{figure}[tp]
  \centering
  \includegraphics{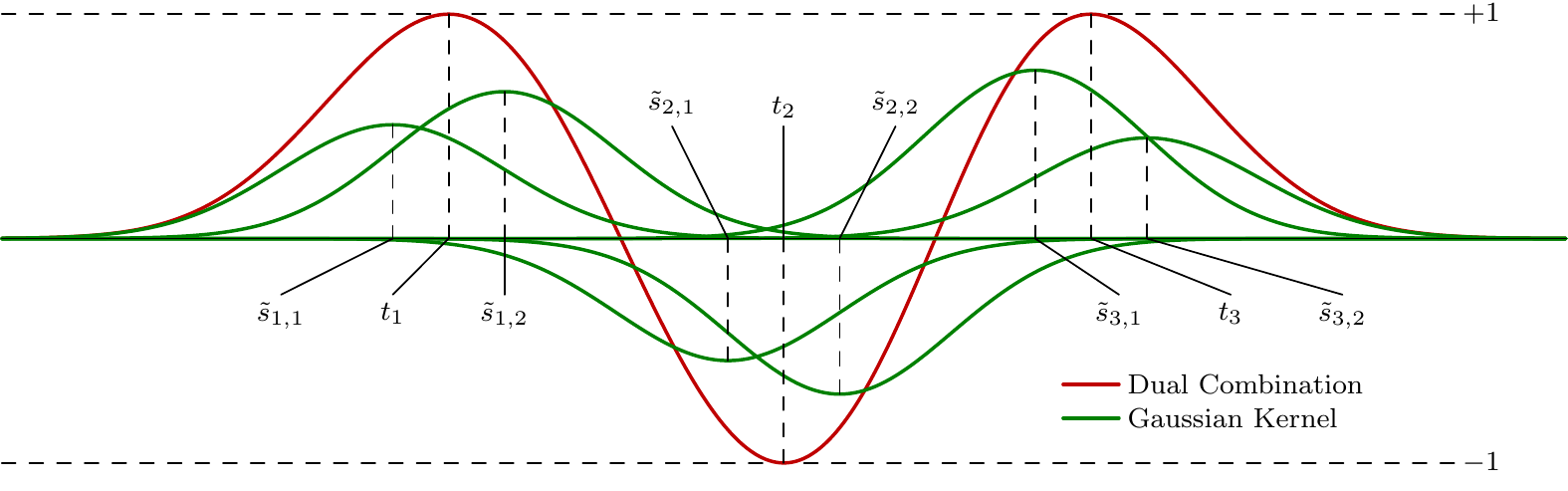}
  \includegraphics{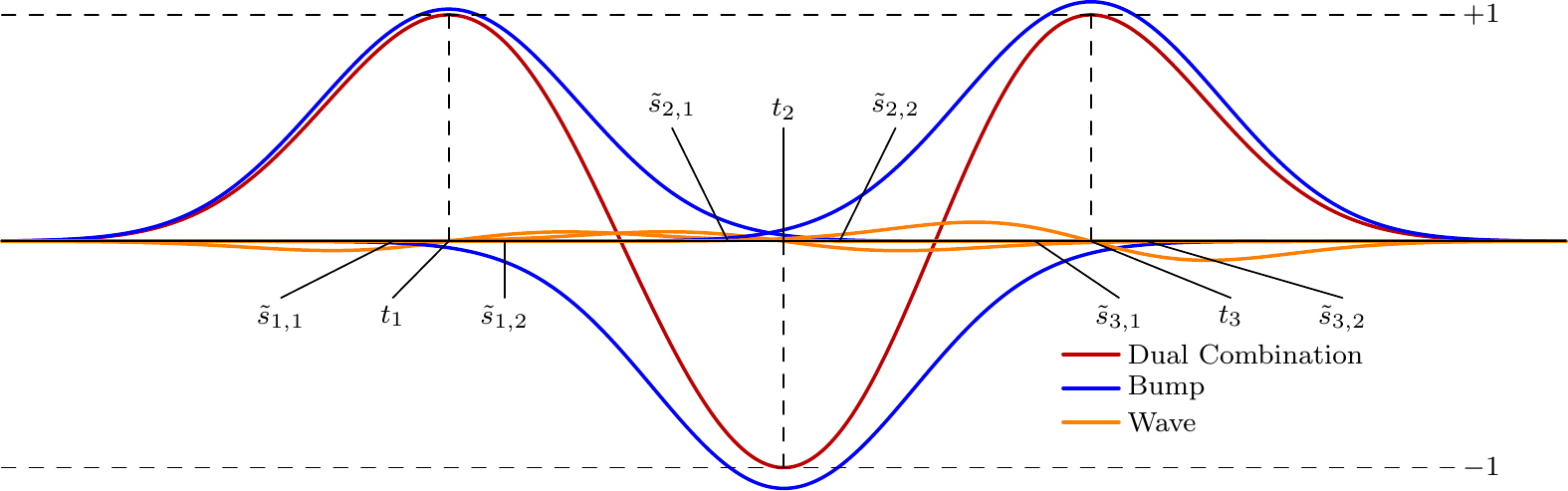}
  \includegraphics{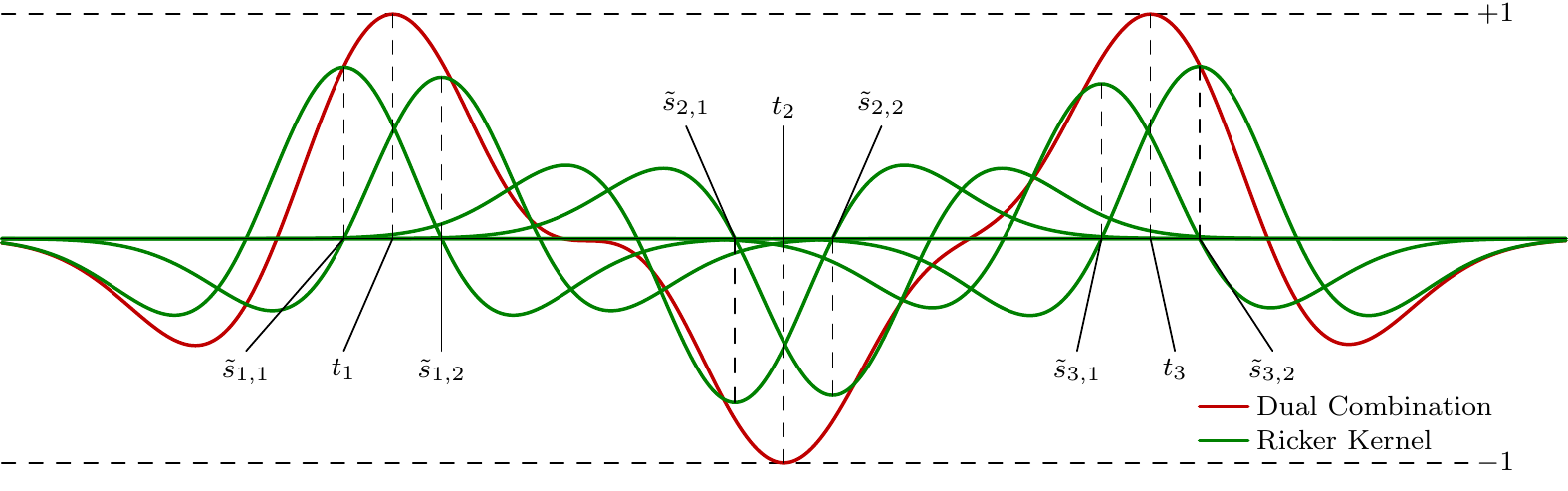}
  \includegraphics{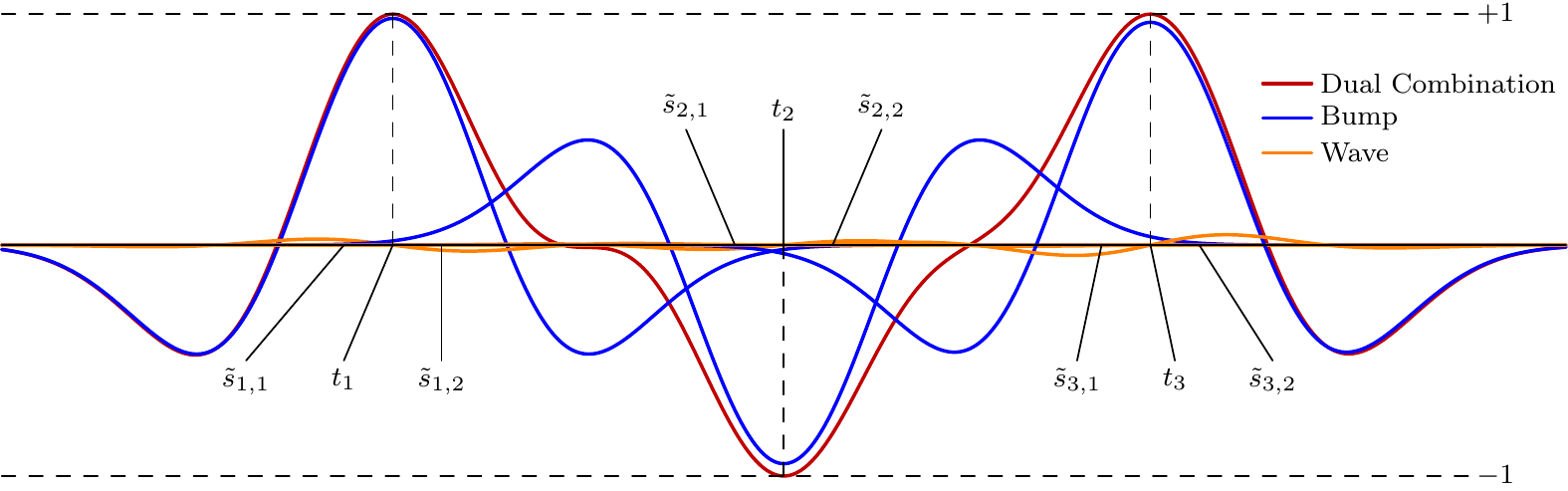}
  \caption{Effect of the proposed reparametrization for the Gaussian (top) and Ricker kernels (bottom). The dual combination $Q$ (red) is a sum of scaled shifted kernels with varying amplitudes (green). When we decompose the same function $Q$ into bumps (blue) and waves (orange), the amplitudes of the bumps all approximately equal the signs of the corresponding spikes, whereas the waves are very small.}
  \label{fig:bwdual}
\end{figure}

The following two lemmas formalize this intuition. The first
establishes that the system of equations~\eqref{eq:Interpolation} is
invertible. The second shows that the amplitude of $Q$ is bounded on
$T^c$. Together with \Cref{prop:DualCert} they yield a proof of
\Cref{thm:Exact}. Both of their proofs rely
heavily on bounds controlling the bumps and waves and their
derivatives, which are compiled in Section~\ref{sec:bwbounds}.
 
\begin{lemma}[Proof in \Cref{sec:CoeffBoundProof}]
\label{lem:CoeffBound}
 Under the assumptions of \Cref{thm:Exact}, the system
 of equations~\eqref{eq:Interpolation} has a unique solution.
\end{lemma}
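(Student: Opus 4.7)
\textbf{Proof proposal for \Cref{lem:CoeffBound}.}

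The plan is to write the system~\eqref{eq:Interpolation} in matrix form under the reparametrization~\eqref{eq:reparametrization} and exhibit it as a perturbation of the identity. Stacking the unknowns as $x = (\alpha_1,\beta_1,\alpha_2,\beta_2,\ldots,\alpha_{|T|},\beta_{|T|})^T$ and the right-hand side as $y = (\rho_1,0,\rho_2,0,\ldots,\rho_{|T|},0)^T$, the two equations $Q(t_j)=\rho_j$ and $Q'(t_j)=0$ evaluate, by the defining normalizations $B_{t_i}(t_i)=1$, $B_{t_i}'(t_i)=0$, $W_{t_i}(t_i)=0$, $W_{t_i}'(t_i)=1$, to
\begin{align*}
\alpha_j + \sum_{i\neq j}\bigl(\alpha_i B_{t_i}(t_j) + \beta_i W_{t_i}(t_j)\bigr) &= \rho_j,\\
\beta_j + \sum_{i\neq j}\bigl(\alpha_i B_{t_i}'(t_j) + \beta_i W_{t_i}'(t_j)\bigr) &= 0,
\end{align*}
so the system takes the form $(I+M)x = y$, where $M$ is a $2|T|\times 2|T|$ matrix whose only nonzero entries are the cross values $B_{t_i}(t_j)$, $W_{t_i}(t_j)$, $B_{t_i}'(t_j)$, $W_{t_i}'(t_j)$ for $i\neq j$ (arguments $\tilde{s}_{i,1},\tilde{s}_{i,2}$ suppressed). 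Uniqueness and existence follow if $I+M$ is invertible, for which it suffices to show $\|M\|_\infty < 1$.

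The natural norm is the maximum absolute row sum. Fix a row of $M$ corresponding to spike $t_j$; its $\ell_1$ norm equals either $\sum_{i\neq j}\bigl(|B_{t_i}(t_j)| + |W_{t_i}(t_j)|\bigr)$ or $\sum_{i\neq j}\bigl(|B_{t_i}'(t_j)| + |W_{t_i}'(t_j)|\bigr)$. Because each $B_{t_i}$ and $W_{t_i}$ is a linear combination of two kernels centered within $\gamma(S,T)$ of $t_i$, their values and derivatives at $t_j$ decay rapidly in $|t_j-t_i|$ (the Gaussian decays as $e^{-(t_j-t_i)^2/2}$ up to controlled prefactors, the Ricker similarly up to a polynomial factor). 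The minimum-separation hypothesis $|t_i-t_j|\geq k\Delta(T)$ whenever $t_i$ is the $k$th closest spike to $t_j$ then reduces each row sum to a geometric-type tail controlled by the closed-form coefficients given by \Cref{lem:BumpExists}.

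The crux, and the step I expect to be the main obstacle, is producing explicit numerical bounds on $|B_{t_i}(t_j)|$, $|W_{t_i}(t_j)|$, $|B_{t_i}'(t_j)|$, $|W_{t_i}'(t_j)|$ as functions of the separation $|t_j-t_i|$, the sample proximity $\gamma(S,T)$, and the sample separation $\kappa(S,T)$, and then verifying that the resulting sums $\sum_{k\geq 1}$ of the $k$-th closest contributions are strictly less than $1$ over the entire orange region in \Cref{fig:ExactRecoveryRegion}. This is precisely what the bump-and-wave bounds of Section~\ref{sec:bwbounds} are designed for: I would invoke those monotone upper bounds, worst-case over admissible $(\tilde{s}_{i,1},\tilde{s}_{i,2})$, and combine them with the spacing $|t_i-t_j|\geq k\Delta(T)$ to pass to a decaying series summed in closed form (or numerically verified at the boundary of the admissible region).

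Finally, given $\|M\|_\infty<1$, $I+M$ is invertible by the Neumann series $\sum_{k\geq 0}(-M)^k$, so $x=(I+M)^{-1}y$ is the unique solution of~\eqref{eq:Interpolation}. This also yields quantitative control $\|x - y\|_\infty \leq \|M\|_\infty/(1-\|M\|_\infty)$, which will be reused in the proof of \Cref{lem:QBound} to show $\alpha_i\approx\rho_i$ and $\beta_i\approx 0$, as anticipated in the discussion following \Cref{fig:bwdual}.
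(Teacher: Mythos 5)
Your reparametrization into bumps and waves and the overall plan---treat the system as a small perturbation of the identity and invert using decay of the cross terms---match the paper's. The inversion mechanism, however, is genuinely different, and yours is strictly weaker. The paper orders the unknowns as $(\alpha;\beta)$, writes the system as the $2\times 2$ block matrix $\MAT{\BB&\WW\\\DBB&\DWW}$, and applies a Schur-complement argument (\Cref{lem:Schur}): invertibility needs $\|\II-\DWW\|_\infty<1$ and $\|\II-\CCC\|_\infty<1$ where $\CCC=\BB-\WW(\DWW)^{-1}\DBB$, with $\|\II-\CCC\|_\infty\leq\|\II-\BB\|_\infty+\|\WW\|_\infty\|(\DWW)^{-1}\|_\infty\|\DBB\|_\infty$. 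Your criterion $\|M\|_\infty<1$ on the interleaved system is equivalent to demanding both $\|\II-\BB\|_\infty+\|\WW\|_\infty<1$ and $\|\DBB\|_\infty+\|\II-\DWW\|_\infty<1$; this implies the paper's conditions but not conversely, so it carves out a strictly smaller parameter set. Near the boundary of the orange region the cross-block norms $\|\WW\|_\infty$ and $\|\DBB\|_\infty$ are only moderately small, and replacing their additive contribution with the product $\|\WW\|_\infty\|(\DWW)^{-1}\|_\infty\|\DBB\|_\infty$ is exactly where the Schur route gains. It is quite possible your stronger condition fails on part of the claimed region even though the paper's weaker one holds; that would need checking before you could assert coverage of \Cref{fig:ExactRecoveryRegion}.

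The second issue is that the quantitative information you extract is insufficient downstream. The plain Neumann series gives only the undifferentiated bound $\max\brac{\|\alpha-\rho\|_\infty,\|\beta\|_\infty}\leq\|M\|_\infty/(1-\|M\|_\infty)$, whereas \Cref{lem:Schur} produces the decoupled estimate $\|\beta\|_\infty\leq\|(\DWW)^{-1}\|_\infty\|\DBB\|_\infty\|\CCC^{-1}\|_\infty$, which carries the extra small factor $\|\DBB\|_\infty$ and is therefore much tighter on the wave amplitudes. The proof of \Cref{lem:qbound}, via \Cref{lem:GenericQBounds} and \Cref{lem:Regions}, multiplies $\|\beta\|_\infty$ against the monotonized wave bounds to control the wave contribution to $|Q|$, $Q^{(1)}$, and $Q^{(2)}$; with the cruder uniform bound these inequalities degrade and the certificate argument would likely not close over the claimed region. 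So your logic is correct as far as it goes, but to recover the paper's quantitative conclusions you must pass to the block Schur-complement decomposition (or at least exploit the block triangular structure to bound $\beta$ through $\DBB$ separately), not just invert the full matrix by Neumann series.
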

\begin{lemma}[Proof in \Cref{sec:qboundproof}]
\label{lem:qbound}
  Under the assumptions of \Cref{thm:Exact}, the dual
  combination corresponding to the solution 
  system~\eqref{eq:Interpolation} satisfies $\abs{ Q(t) } <1$ for
  all $t\in T^c$.
\end{lemma}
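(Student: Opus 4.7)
The plan is to leverage the reparametrization $Q(t) = \sum_{t_i \in T} \alpha_i B_{t_i}(t,\tilde{s}_{i,1},\tilde{s}_{i,2}) + \beta_i W_{t_i}(t,\tilde{s}_{i,1},\tilde{s}_{i,2})$ from \eqref{eq:reparametrization}, together with Lemma \ref{lem:CoeffBound}, which will guarantee $|\alpha_i - \rho_i|$ and $|\beta_i|$ are small, and the pointwise/decay estimates for bumps, waves, and their derivatives compiled in Section \ref{sec:bwbounds}. Concretely, I would partition $T^c$ into a collection of \emph{near} intervals $I_j := (t_j - r, t_j + r) \setminus \{t_j\}$ about each spike (for a radius $r$ strictly less than $\Delta(T)/2$) and a \emph{far} region $F := T^c \setminus \bigcup_j I_j$, and bound $|Q|$ strictly below $1$ separately in each.

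In a near region $I_j$, I would split
\begin{equation*}
Q(t) = \rho_j B_{t_j}(t) + (\alpha_j - \rho_j) B_{t_j}(t) + \beta_j W_{t_j}(t) + \sum_{i \neq j}\bigl[\alpha_i B_{t_i}(t) + \beta_i W_{t_i}(t)\bigr].
\end{equation*}
The leading term satisfies $\rho_j B_{t_j}(t) \leq 1 - c(t - t_j)^2$ on $I_j$ by a second-order Taylor expansion at $t_j$, using $B_{t_j}(t_j) = 1$, $\partial_t B_{t_j}(t_j) = 0$, and a strictly negative upper bound on $\partial_t^2 B_{t_j}$ throughout $I_j$ (this is one of the key bounds to extract from Section \ref{sec:bwbounds}). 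The correction $(\alpha_j - \rho_j) B_{t_j}(t) + \beta_j W_{t_j}(t)$ is controlled by Lemma \ref{lem:CoeffBound} combined with pointwise bounds on $|B_{t_j}|$ and $|W_{t_j}|$ in $I_j$; the key trick is that $W_{t_j}$ vanishes at $t_j$ and is nearly linear there, so $|\beta_j W_{t_j}(t)| \leq C|\beta_j||t - t_j|$, which can be absorbed into the $c(t - t_j)^2$ budget whenever $|\beta_j|$ is small enough. Finally, the sum over $i \neq j$ is bounded using the minimum-separation hypothesis $|t - t_i| \geq \Delta(T) - r$ and the Gaussian-type decay of $B_{t_i}$, $W_{t_i}$ away from $t_i$.

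In the far region $F$, every $|t - t_i|$ is bounded below by a constant of order $\Delta(T)$, so direct application of the decay estimates on bumps and waves from Section \ref{sec:bwbounds}, combined with $|\alpha_i| \leq 1 + |\alpha_i - \rho_i|$ and small $|\beta_i|$ from Lemma \ref{lem:CoeffBound}, bounds $|Q(t)|$ by a rapidly decaying sum. Using that the spikes are $\Delta(T)$-separated, this sum can be majorized by a geometric-type series in $\exp(-c(\Delta(T))^2)$ (for the Gaussian, and analogously for the Ricker wavelet), which is strictly smaller than one throughout the orange region of \Cref{fig:ExactRecoveryRegion}.

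The main obstacle will be quantitative: I have to choose the radius $r$ and track constants so that both the concavity slack $c(t - t_j)^2$ in the near regime and the decay budget in the far regime hold uniformly across the full sample-proximity/minimum-separation window shown in \Cref{fig:ExactRecoveryRegion}, and so that the transition between the two regimes is consistent (i.e., $|Q|<1$ at $t$ with $|t-t_j|=r$). Because the bumps and waves depend on the positions of the closest samples, the worst-case configuration over all admissible sample patterns must be considered, and the final numerical verification that the resulting inequalities hold throughout the orange region will be carried out using the accompanying Mathematica computations, exactly as suggested in the paper's footnote.
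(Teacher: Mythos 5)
Your overall strategy is in the spirit of the paper's proof: you reparametrize $Q$ into bumps and waves, invoke \Cref{lem:CoeffBound} to control $\alpha-\rho$ and $\beta$, split $T^c$ into a near region around each spike and a far region, and fall back on the Mathematica-assisted piecewise/interval bounds of \Cref{sec:bwbounds} to close the inequalities. There is, however, a genuine gap in your near-region argument, and a secondary structural issue.

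The gap: in the near interval $I_j$ you bound $\rho_j B_{t_j}(t)\leq 1-c(t-t_j)^2$ and then separately bound $(\alpha_j-\rho_j)B_{t_j}(t)$, $\beta_j W_{t_j}(t)$, and the tail $\sum_{i\neq j}(\alpha_i B_{t_i}+\beta_i W_{t_i})$ by absolute values. The first of these is $\approx|\alpha_j-\rho_j|$, a constant, and the tail also evaluates to a nonzero constant at $t=t_j$; neither vanishes as $t\to t_j$. So your upper bound on $Q(t)$ is of the form $1-c(t-t_j)^2+\text{(nonzero constant)}$, which is $\geq 1$ for $t$ sufficiently close to $t_j$. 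You have silently discarded the exact cancellation that makes $Q(t_j)=\rho_j$ and $Q'(t_j)=0$: the term $(\alpha_j-\rho_j)B_{t_j}(t)$ and the off-spike tail cancel to first order at $t_j$, and this cancellation is essential. The paper avoids the issue by never splitting the Taylor expansion term by term. Instead, it shows directly that $Q^{(2)}(t)<0$ on a neighborhood of $t_j$ (condition 1 of \Cref{lem:Regions}, verified via \Cref{lem:GenericQBounds} and the piecewise bounds), and then invokes Taylor's theorem with remainder on $Q$ itself: since $Q(t_j)=1$, $Q'(t_j)=0$, and $Q^{(2)}<0$ between $t_j$ and $t$, we get $Q(t)<1$. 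Bounding $Q^{(2)}$ holistically keeps the cancellation intact; bounding terms individually destroys it. You would need to rewrite your near-region estimate to control $Q^{(2)}$ directly, not to add a constant slack to a quadratic budget.

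Secondary issue: your two-region decomposition (near: Taylor; far: magnitude) may not close. The paper uses three regions in \Cref{lem:Regions}: a region where $Q^{(2)}<0$, an intermediate region where $Q^{(1)}<0$, and a far region where $|Q|<1$. The intermediate monotonicity region is needed precisely because, for the parameter ranges in \Cref{fig:ExactRecoveryRegion}, the radius on which the concavity bound can be certified does not overlap the region on which the direct magnitude bound can be certified. You flag the need to ``track constants so that the transition is consistent,'' but a fixed cutoff $r$ satisfying both estimates may simply not exist; the monotone middle region is what bridges the gap.
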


\bdb{
We now illustrate the proof technique described in this section using
a simple example.  Consider an atomic measure $\mu$ consisting of 3
point masses:
\begin{equation}
  \mu := a_1\delta_{t_1} + a_2\delta_{t_2} + a_3\delta_{t_3}.
\end{equation}
Here $T=\{t_1,t_2,t_3\}\subset\RR$.  Let $S$ denote a set of sample
locations, and let $\tilde{S}\subseteq S$ be defined by
\begin{equation}
  \tilde{S} =
  \{\tilde{s}_{1,1},\tilde{s}_{1,2},\tilde{s}_{2,1},\tilde{s}_{2,2}
  \tilde{s}_{3,1},\tilde{s}_{3,2}\},
\end{equation}
where $\tilde{s}_{i,1}$ and $\tilde{s}_{i,2}$ are close
to $t_i$ for $i=1,2,3$.  We fix a sign
pattern $\rho=(1,-1,1)$.  Using
kernels centered at the elements of $\tilde{S}$ we construct a dual
combination $Q(t)$ of the form in \eqref{eq:DualCombination} that
satisfies the interpolation equations in \eqref{eq:Interpolation}.
The resulting dual combination $Q(t)$ can be seen in the first and third plots of
\Cref{fig:bwdual}.  The amplitudes of the kernels centered at the
samples are difficult to control. We then reparametrize $Q(t)$
into the form given in \eqref{eq:reparametrization} by writing it as
a linear combination of 3 bumps and 3 waves.  This reparametrization
is shown in the second and fourth plots of \Cref{fig:bwdual}.  As can be seen
in the figure, the bump coefficients roughly match the
values of $\rho$ and the wave coefficients are very small, making them
amenable to analysis.  
}
\subsection{Bounding Bumps and Waves}
\label{sec:bwbounds}

In this section we establish bounds on the bumps and waves and their
derivatives, which are used in Sections~\ref{sec:CoeffBoundProof}
and~\ref{sec:qboundproof} to establish exact recovery. To simplify
notation, without loss of generality we consider a bump and a wave
centered at the origin and set
\begin{align}
  B(t,\tilde{s}_{i,1},\tilde{s}_{i,2}) &:=
  B_{0}(t,\tilde{s}_{i,1},\tilde{s}_{i,2}),\\ 
  W(t,\tilde{s}_{i,1},\tilde{s}_{i,2}) &:=
  W_{0}(t,\tilde{s}_{i,1},\tilde{s}_{i,2}).
\end{align}

We begin with a simplifying observation. For even kernels like the
Gaussian and Ricker, the bumps and waves exhibit a form of symmetry.
\begin{restatable}[Proof in \Cref{sec:EvenKernelProof}]{lemma}{EvenKernel}
  \label{lem:EvenKernel}
  Let $K$ be a kernel with corresponding bump $B$ and wave $W$.  If
  the kernel satisfies $K(t)=K(-t)$ for all $t\in\RR$ then
  \begin{equation}
    B(t,s_1,s_2) = B(-t,-s_1,-s_2)\quad\text{and}\quad
    W(t,s_1,s_2)=-W(-t,-s_1,-s_2),
  \end{equation}
  for all $s_1,s_2,t\in\RR$.
\end{restatable}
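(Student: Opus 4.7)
The plan is to use the uniqueness of the coefficients supplied by \Cref{lem:BumpExists} together with the evenness hypothesis on $K$, so that the two assertions reduce to checking that the reflected functions satisfy the same defining interpolation equations as the originals. Throughout, I write $K^{(1)}$ for the derivative of $K$; since $K$ is even, $K^{(1)}$ is odd.

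First I would treat the bump. Define $\tilde B(t) := B(-t,-s_1,-s_2)$. Using the defining form \eqref{eq:bump} with coefficients $b_1',b_2'$ obtained from \Cref{lem:BumpExists} applied to the pair $(-s_1,-s_2)$, we have
\begin{equation*}
  \tilde B(t)
  = b_1' K(-s_1-(-t)) + b_2' K(-s_2-(-t))
  = b_1' K(s_1-t) + b_2' K(s_2-t),
\end{equation*}
where the second equality uses $K(-x)=K(x)$. Thus $\tilde B$ is a linear combination of $K(s_1-\cdot)$ and $K(s_2-\cdot)$. Moreover, $\tilde B(0)=B(0,-s_1,-s_2)=1$ by the bump normalization, and $\tilde B'(0) = -\partial_t B(0,-s_1,-s_2) = 0$ by the zero-derivative condition. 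Hence $\tilde B$ solves the same linear system that uniquely determines $B(\cdot,s_1,s_2)$ via \Cref{lem:BumpExists}. Since the associated denominator is nonzero under the hypotheses of that lemma, we conclude $\tilde B(t) = B(t,s_1,s_2)$, which is the claimed identity for the bump.

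Next I would run the analogous argument for the wave. Set $\tilde W(t) := -W(-t,-s_1,-s_2)$. Writing $W(-t,-s_1,-s_2) = w_1' K(s_1-t) + w_2' K(s_2-t)$ exactly as above shows that $\tilde W$ is also a linear combination of $K(s_1-\cdot)$ and $K(s_2-\cdot)$. The normalization conditions give $\tilde W(0) = -W(0,-s_1,-s_2) = 0$ and $\tilde W'(0) = \partial_t W(0,-s_1,-s_2) = 1$. Thus $\tilde W$ satisfies the defining equations of $W(\cdot,s_1,s_2)$, and by the uniqueness in \Cref{lem:BumpExists} we obtain $\tilde W(t) = W(t,s_1,s_2)$, i.e. $W(t,s_1,s_2)=-W(-t,-s_1,-s_2)$.

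The argument is essentially bookkeeping, and there is no serious obstacle: evenness of $K$ (equivalently, oddness of $K^{(1)}$) is the only analytic input, while the rest is a uniqueness-of-solutions argument for a $2\times 2$ linear system whose nonsingularity is exactly what \Cref{lem:BumpExists} guarantees. (A more computational alternative would be to plug directly into the closed-form coefficients: one checks that the denominator changes sign under $(s_1,s_2)\mapsto(-s_1,-s_2)$, the bump numerators also change sign, and the wave numerators are invariant, yielding $b_i'=b_i$ and $w_i'=-w_i$; the identities then follow immediately. I would use whichever version makes the surrounding exposition cleanest.)
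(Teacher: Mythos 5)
Your proof is correct, but it takes a genuinely different route from the paper's. The paper plugs $(-t,-s_1,-s_2)$ directly into the closed-form expressions for $B$ and $W$ supplied by \Cref{lem:BumpExists}, uses evenness of $K$ (hence oddness of $K^{(1)}$) to rewrite numerator and denominator, and reads off the identities (plus the sample-swap symmetry $B(\cdot,a,b)=B(\cdot,b,a)$). You instead argue by uniqueness: you show that $t\mapsto B(-t,-s_1,-s_2)$ is again a linear combination of $K(s_1-\cdot)$ and $K(s_2-\cdot)$ satisfying the same two normalization constraints that pin down $B(\cdot,s_1,s_2)$, and similarly for $-W(-t,-s_1,-s_2)$, then invoke nonsingularity of the $2\times 2$ interpolation system. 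Both arguments need the same analytic inputs (evenness of $K$, oddness of $K^{(1)}$, and the non-vanishing of the determinant from \Cref{lem:BumpExists}); the uniqueness route avoids manipulating the explicit quotient, at the minor cost of having to observe that the existence/nonsingularity hypotheses of \Cref{lem:BumpExists} are themselves invariant under $(s_1,s_2)\mapsto(-s_1,-s_2)$ (which they are, since the denominator merely changes sign). You also note the direct computation as an alternative in your closing remark — that alternative is essentially what the paper does.
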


The shape of each bump and wave depends on where its corresponding
samples are located.  In order to account for all possible sample
locations satisfying the sample-proximity and sample-separation
conditions in \Cref{def:ProxSep}, we define sample-independent upper
bounds of the form
\begin{align}
\sym{|B^{(i)}|}(t) & := \sup_{\substack{|s_1|,|s_2|\leq
    \gamma(S,T)\\|s_1-s_2|\geq\kappa(S)}} |B^{(i)}(t,s_1,s_2)|
,\\ \sym{|W^{(i)}|}(t) & := \sup_{\substack{|s_1|,|s_2|\leq
    \gamma(S,T)\\|s_1-s_2|\geq\kappa(S)}}
|W^{(i)}(t,s_1,s_2)|,\\ \sym{B^{(i)}}(t) & :=
\sup_{\substack{|s_1|,|s_2|\leq \gamma(S,T)\\|s_1-s_2|\geq\kappa(S)}}
B^{(i)}(t,s_1,s_2),
\end{align}
for $i=0,1,2$.  By \Cref{lem:EvenKernel}, these functions are all
even. The bounds are depicted in \Cref{fig:maximize}.

\begin{figure}[t]
  \includegraphics{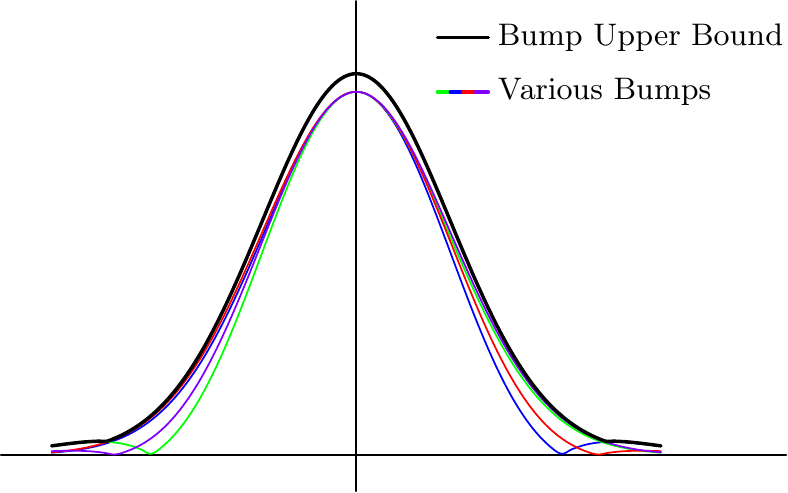}
  \includegraphics{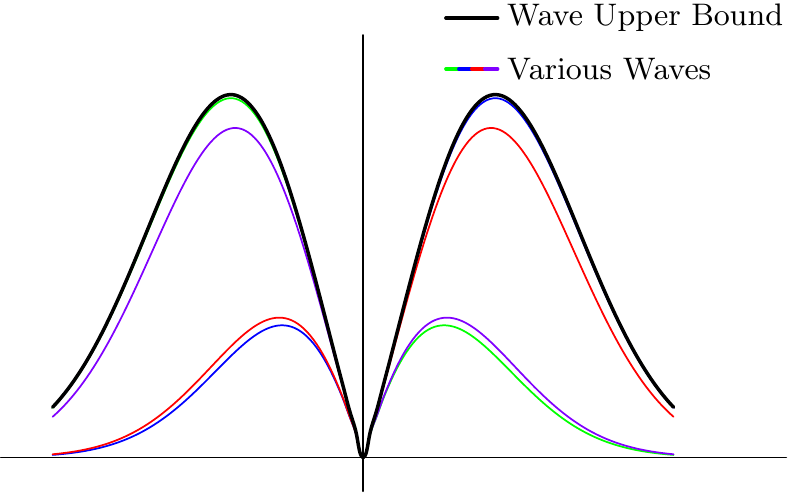}
  \caption{Sample-independent bounds on the absolute bump $\sym{|B|}(t)$ (left)
    and absolute wave $\sym{|W|}(t)$ (right) for the Gaussian kernel under a
    sample proximity $\gamma(S,T)=0.5$ and sample separation $\kappa(S)=0.05$}
  \label{fig:maximize}
\end{figure}

To simplify our analysis, we define monotonized versions of the
absolute bounds above for
$i \in \keys{0,1,2}$:
\begin{align}
\smo{|B^{(i)}|}(t) & := \sup\{|B^{(i)}|_{\infty}(u)\mid u\geq |t|\},
\\ \smo{|W^{(i)}|}(t) & := \sup\{|W^{(i)}|_{\infty}(u)\mid u\geq
|t|\}.
\end{align}
The decay properties of the Gaussian and Ricker kernels translate to
fast-decaying bumps and waves.  The following lemma formalizes this,
by controlling the tails of our monotonic bounds.
\begin{restatable}[Proof in
    \Cref{sec:TailDecayProof}]{lemma}{TailDecay}
  \label{lem:TailDecay}
  For the Gaussian and Ricker kernels we have
  \begin{align}
  \sum_{j=6}^\infty \smo{|B^{(i)}|}((j-1/2)\Delta) & \leq
  \frac{10^{-12}}{\kappa(S)}, & \sum_{j=6}^\infty
  \smo{|W^{(i)}|}((j-1/2)\Delta) & \leq
  \frac{10^{-12}}{\kappa(S)},\\ \smo{|B^{(i)}|}(t) & \leq
  \frac{10^{-12}}{\kappa(S)}, & \smo{|W^{(i)}|}(t) & \leq
  \frac{10^{-12}}{\kappa(S)},
  \end{align}
  for $i=0,1,2$, $t\geq 10$, and $\Delta\geq 2$.
\end{restatable}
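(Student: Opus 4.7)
The plan is to derive explicit closed-form upper bounds on $|B^{(i)}(t,s_1,s_2)|$ and $|W^{(i)}(t,s_1,s_2)|$ as functions of $|t|$ that are manifestly monotone decreasing, then to verify the claimed constants by a direct calculation using the Gaussian-type decay of $K$ and its derivatives. Starting from \Cref{lem:BumpExists}, each bump, wave, and its derivative has the form
\begin{equation}
B^{(i)}(t,s_1,s_2) = (-1)^i\bigl(b_1 K^{(i)}(s_1-t) + b_2 K^{(i)}(s_2-t)\bigr),
\end{equation}
and analogously for $W^{(i)}$, where $b_1,b_2,w_1,w_2$ are ratios whose common denominator is the Wronskian-type quantity $D(s_1,s_2)\defeq K(s_2)K^{(1)}(s_1)-K^{(1)}(s_2)K(s_1)$.

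First I would lower bound $|D(s_1,s_2)|$ in terms of $\kappa(S)$ on the feasible set $|s_1|,|s_2|\le\gamma(S,T)$, $|s_1-s_2|\ge\kappa(S)$. For the Gaussian this is explicit: $D(s_1,s_2)=e^{-(s_1^2+s_2^2)/2}(s_2-s_1)$, so $|D|\ge e^{-\gamma(S,T)^2}\kappa(S)$. For the Ricker wavelet a short direct computation factors $D$ as $(s_2-s_1)$ times a smooth function that is bounded away from zero when $|s_1|,|s_2|<1$, yielding the same $\kappa(S)^{-1}$ growth for $|b_j|,|w_j|$. Combining this with the bound $|K^{(j)}(s-t)|\le P_j(|s-t|)\,e^{-(s-t)^2/2}$ for a fixed low-degree polynomial $P_j$ (for both kernels and $j\le 3$) and using $|s-t|\ge |t|-\gamma(S,T)$, one obtains an inequality of the form
\begin{equation}
\sym{|B^{(i)}|}(t),\;\sym{|W^{(i)}|}(t) \;\le\; \frac{R(|t|)}{\kappa(S)}\,e^{-(|t|-\gamma(S,T))^2/2},
\end{equation}
where $R$ is an explicit polynomial depending on $i$ and the kernel. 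Since the right-hand side is already monotone decreasing in $|t|$ for $|t|$ past a small threshold, it dominates the monotonized versions $\smo{|B^{(i)}|}$ and $\smo{|W^{(i)}|}$ as well, up to taking the same supremum over the small left-over initial segment.

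With this bound in hand, the pointwise statement for $t\ge 10$ reduces to plugging in $|t|=10$ and checking $R(10)\,e^{-(10-\gamma(S,T))^2/2}\le 10^{-12}$, which holds with plenty of margin because $\gamma(S,T)$ is bounded by roughly $1.2$ in the regime of interest (cf.\ \Cref{fig:ExactRecoveryRegion}). For the summation, the terms $(j-1/2)\Delta$ with $j\ge 6$ and $\Delta\ge 2$ satisfy $(j-1/2)\Delta\ge 11$ and grow linearly, so the right-hand side sums as a super-geometric series dominated by its first term; again one verifies numerically that the sum is bounded by $10^{-12}/\kappa(S)$.

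The main obstacle is not the strategy but the bookkeeping: one must track the polynomial prefactor $R$ carefully across $i=0,1,2$ and across the two kernels, and verify the $10^{-12}$ constant rigorously rather than just by plotting. The Ricker case is more delicate because the denominator factorization and the derivative $K^{(2)}$ each introduce additional polynomial factors, and because the constraint $|s_j|<1$ must be used to keep the lower bound on the non-vanishing factor of $D$ away from zero. I would carry out the final numerical verification using the auxiliary Mathematica code referenced at the start of the proof section, which allows one to evaluate $R(10)$ and the head of the series to many digits and confirm the slack against the $10^{-12}$ threshold.
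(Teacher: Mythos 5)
Your proposal follows essentially the same route as the paper: lower-bound the Wronskian denominator $D(s_1,s_2)$ by a constant times $\kappa(S)$ (using the explicit factorizations of \Cref{lem:DenomFormula} and, for the Ricker, the non-vanishing lower bound of \Cref{lem:RickerF}), upper-bound the kernel derivatives by a polynomial prefactor times a Gaussian tail (\Cref{lem:GaussDerivs}), combine to obtain a manifestly monotone bound of the form $\mathrm{poly}(t)\,e^{-t^2/2+O(t)}/\kappa(S)$ valid for $t\geq 10$, and then evaluate at $t=10$ and sum a super-geometrically decaying series to verify the $10^{-12}/\kappa(S)$ constant (this is exactly what \Cref{lem:GaussDecayBound}, \Cref{lem:RickerDecayBound}, and \Cref{lem:SumTailBound} do). The only slight imprecision is the remark that $\gamma(S,T)$ is bounded by roughly $1.2$: the kernel-derivative bound in \Cref{lem:GaussDerivs} requires $|s|\leq 1$, and for the Ricker the paper in fact restricts to $\gamma(S,T)\leq 0.8$, so the explicit constants in the proof rely on the tighter restriction $\gamma(S,T)\leq 1$ rather than $1.2$; but this does not affect the structure or validity of the argument.
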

When applying the lemma, $\Delta$ corresponds to the minimum
separation $\Delta(T)$ of the signal support.  There is nothing
special about the constants above (i.e.~$j=6$, $t\geq 10$, $\Delta
\geq 2$); they are simply chosen for definiteness and other values
would also work (see \Cref{sec:TailDecayProof} for more details).

\Cref{lem:TailDecay} shows that the bumps, waves, and their
derivatives are small for $t\geq 10$. The following lemma provides
piecewise constant bounds for $t\in[0,10)$.
\begin{restatable}[Proof in \Cref{sec:PiecewiseProof}]{lemma}{Piecewise}\label{lem:Piecewise}
  Fix $\gamma(S,T)$ and $\kappa(S)$ and let $N_1\in\ZZ_{>0}$. Partition the
  interval $[0,10)$ into $N_1$ intervals of the form
  \begin{align}
    \ml{U}_j & := \left[\frac{10(j-1)}{N_1},\frac{10j}{N_1}\right),
  \end{align}
 and $j=1,\ldots,N_1$.  For the Gaussian and Ricker kernels, there
 exist functions $\wt{|B^{(i)}|}$, $\wt{|W^{(i)}|}$, $\wt{B^{(i)}}$,
 such that for all $ t \in \ml{U}_j$ and $i=0,1,2$
  \begin{align}
    \sym{|B^{(i)}|}(t) & \leq \wt{|B^{(i)}|}(j) ,
    \\ \sym{|W^{(i)}|}(t) & \leq \wt{|W^{(i)}|}(j),\\ \sym{B^{(i)}}(t)
    & \leq \wt{B^{(i)}}(j).
  \end{align}
These bounds depend on $\gamma(S,T)$, $\kappa(S)$ and an additional
parameter $N_2$, satisfying $\kappa(S)>2\gamma(S,T)/N_2$ as explained in \Cref{sec:PiecewiseProof}.
\end{restatable}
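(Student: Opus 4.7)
The plan is to derive explicit numerical upper bounds by discretizing the admissible region of sample locations and padding each discretization cell with a Lipschitz safety margin. Let $\ml{R} := \keys{(s_1,s_2)\in\RR^2 : |s_1|,|s_2|\leq\gamma(S,T),\ |s_1-s_2|\geq\kappa(S)}$ denote the domain of the suprema defining $\sym{|B^{(i)}|}$, $\sym{|W^{(i)}|}$ and $\sym{B^{(i)}}$. Using the closed form from \Cref{lem:BumpExists}, I would write $B^{(i)}(t,s_1,s_2)$ and $W^{(i)}(t,s_1,s_2)$ as smooth rational functions of $(t,s_1,s_2)$ whose denominator is the Wronskian-type quantity $K(s_2)K^{(1)}(s_1)-K^{(1)}(s_2)K(s_1)$. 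On $\ml{R}$ this denominator is bounded away from zero by an explicit positive constant (the Gaussian is analytic, and on $\ml{R}$ the Ricker wavelet avoids its roots at $\pm 1$ as noted after \Cref{lem:BumpExists}), so each $B^{(i)}$ and $W^{(i)}$ is Lipschitz on the compact set $\ml{R}\times[0,10]$ with a computable constant $L$ (or tighter per-cell constants).

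Next I would partition $[-\gamma(S,T),\gamma(S,T)]^2$ into an $N_2\times N_2$ uniform grid with cells of side $2\gamma(S,T)/N_2$ and discard the cells that do not meet $\ml{R}$. The condition $\kappa(S)>2\gamma(S,T)/N_2$ forces every cell to have diameter strictly smaller than $\kappa(S)$, which keeps each surviving cell at positive distance from the singular diagonal $s_1=s_2$ and makes the Lipschitz error manageable. For each surviving cell $\ml{C}_{k,\ell}$ and each interval $\ml{U}_j$ I would pick a sample point $(t^*,s_1^*,s_2^*)$ (say the center) and set
\begin{equation*}
\wt{|B^{(i)}|}(j) := \max_{(k,\ell)}\brac{|B^{(i)}(t^*,s_1^*,s_2^*)| + L\cdot\op{diam}\brac{\ml{C}_{k,\ell}\times\ml{U}_j}},
\end{equation*}
and analogously for $\wt{|W^{(i)}|}(j)$ and for $\wt{B^{(i)}}(j)$ (the latter without the absolute value on the evaluation term). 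Since every $(t,s_1,s_2)\in\ml{R}\cap(\ml{C}_{k,\ell}\times\ml{U}_j)$ lies within $\op{diam}(\ml{C}_{k,\ell}\times\ml{U}_j)$ of the evaluation point, the Lipschitz bound dominates the deviation, so these numerical tables upper bound the suprema for every $t\in\ml{U}_j$ as required. By \Cref{lem:EvenKernel} it suffices to carry out this procedure for $t\geq 0$.

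The main obstacle is producing the constant $L$ (or per-cell refinements of it) so that the resulting bounds are both rigorously valid and tight enough to be useful in the downstream arguments of \Cref{lem:CoeffBound} and \Cref{lem:qbound}. This requires explicit upper bounds on the partial derivatives of $B^{(i)}$ and $W^{(i)}$ in all three arguments, which in turn demand matching lower bounds on the Wronskian-type denominator over $\ml{R}$ together with polynomial-times-Gaussian upper bounds on the numerator and its derivatives. These estimates are elementary but lengthy; the Mathematica companion code linked at the start of \Cref{sec:ExactRecoveryProof} implements them and produces the concrete arrays $\wt{|B^{(i)}|}$, $\wt{|W^{(i)}|}$, $\wt{B^{(i)}}$ that feed into the rest of the proof of \Cref{thm:Exact}.
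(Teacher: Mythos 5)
Your high-level plan---discretize the admissible sample domain into cells, compute a rigorous bound on each cell, and take maxima over the surviving cells---is the same as the paper's. The paper implements the per-cell bound via interval arithmetic (see the appendix section on interval arithmetic and the definitions of $|b^{(i)}|_{j,k,l}$, $|w^{(i)}|_{j,k,l}$, $b^{(i)}_{j,k,l}$), propagating interval enclosures through the polynomial-times-Gaussian expressions for $B^{(i)}$ and $W^{(i)}$; you propose instead to evaluate at a cell center and pad by a Lipschitz constant times the cell diameter. Interval arithmetic is generally tighter and does not require producing an explicit Lipschitz constant near the Wronskian singularity, but your route could in principle be made rigorous.

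There is, however, a genuine gap in your discard criterion and in your reading of the hypothesis $\kappa(S)>2\gamma(S,T)/N_2$. With cells of side $2\gamma(S,T)/N_2$ and the rule ``discard cells that do not meet $\ml{R}$'', a surviving cell is simply one containing at least one admissible pair $(s_1,s_2)$ with $|s_1-s_2|\geq\kappa(S)$; this does not keep the cell bounded away from the singular diagonal $s_1=s_2$. Concretely, take $\gamma(S,T)=1$, $\kappa(S)=0.5$, $N_2=5$: the cell $[0,0.4]\times[0.4,0.8]$ meets $\ml{R}$ (witness $(0,0.8)$) and also touches the diagonal at the corner $(0.4,0.4)$. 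On such a cell the Wronskian-type denominator from \Cref{lem:BumpExists} vanishes, so no finite Lipschitz constant exists on $\ml{C}_{k,\ell}\times\ml{U}_j$, and the bound ``center value plus $L\cdot\op{diam}$'' is invalid---and the center may not even lie in $\ml{R}$, so a Lipschitz bound restricted to $\ml{R}$ does not connect the center to the admissible points either. The paper's construction avoids exactly this: it partitions $[-\gamma(S,T),\gamma(S,T)]$ into $2N_2$ segments $\ml{V}_k$ of width $\gamma(S,T)/N_2$, and the auxiliary lemma in \Cref{sec:PiecewiseProof} shows that any pair $(\ml{V}_k,\ml{V}_l)$ with $d(\ml{V}_k,\ml{V}_l)<c:=\kappa(S)-2\gamma(S,T)/N_2$ contains no admissible sample pair and can be safely discarded. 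The hypothesis $\kappa(S)>2\gamma(S,T)/N_2$ is what makes $c>0$, and every retained product cell $\ml{V}_k\times\ml{V}_l$ is then uniformly at distance at least $c$ from the diagonal, which keeps the denominator bounded below on each retained box. To make your Lipschitz argument sound you would need to replace ``meets $\ml{R}$'' with this distance-based criterion (or otherwise enforce a uniform positive gap to the diagonal on each surviving cell), and then derive a per-cell Lipschitz constant from that gap.
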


\Cref{lem:TailDecay} can be used to extend the bounds of
\Cref{lem:Piecewise} to the monotonized bumps, waves, and their
derivatives. 
\begin{corollary}\label{cor:Monotonize}
  Assuming the conditions and definitions in
  \Cref{lem:TailDecay,lem:Piecewise} we have, for $i=0,1,2$ and $ t
  \in \ml{U}_j$
  \begin{align}
    \smo{|B^{(i)}|}(t) & \leq \max\left(\max_{k:j\leq k \leq
      N_1}\wt{|B^{(i)}|}(k),\epsilon\right),\\ \smo{|W^{(i)}|}(t) &
    \leq \max\left(\max_{k:j\leq k \leq
      N_1}\wt{|B^{(i)}|}(k),\epsilon\right),
  \end{align}
  where $\epsilon := 10^{-12}/\kappa(S)$.
\end{corollary}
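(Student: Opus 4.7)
The plan is to combine the interval-wise bounds of \Cref{lem:Piecewise} with the tail bounds of \Cref{lem:TailDecay} by splitting the defining supremum of the monotonized bounds into a bounded part (on $[|t|, 10)$) and a tail part (on $[10, \infty)$), then bounding each piece using the appropriate lemma.

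First I would unwind the definition: for $t \in \ml{U}_j \subseteq [0, 10)$,
\begin{equation*}
\smo{|B^{(i)}|}(t) \;=\; \sup_{u \geq |t|} \sym{|B^{(i)}|}(u) \;=\; \max\!\left(\sup_{|t|\leq u < 10} \sym{|B^{(i)}|}(u),\; \sup_{u\geq 10} \sym{|B^{(i)}|}(u)\right).
\end{equation*}
For the tail piece, \Cref{lem:TailDecay} gives $\sym{|B^{(i)}|}(u) \leq \smo{|B^{(i)}|}(u) \leq 10^{-12}/\kappa(S) = \epsilon$ for every $u \geq 10$, so the second supremum is at most $\epsilon$.

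For the bounded piece, any $u \in [|t|, 10)$ lies in some $\ml{U}_k$ with $k \in \{1, \ldots, N_1\}$; moreover, since $t \in \ml{U}_j$ implies $|t| \geq 10(j-1)/N_1$, every such $u$ satisfies $u \geq 10(j-1)/N_1$, forcing $k \geq j$. Applying \Cref{lem:Piecewise} on each such interval gives $\sym{|B^{(i)}|}(u) \leq \wt{|B^{(i)}|}(k)$, and taking the supremum over $k \in \{j, \ldots, N_1\}$ yields the bound $\max_{k:\, j \leq k \leq N_1} \wt{|B^{(i)}|}(k)$. Combining with the tail bound proves the first inequality. The wave inequality follows by the identical argument with $W$ replacing $B$ throughout.

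There is no real obstacle here: the result is essentially a bookkeeping consequence of monotonizing. The only thing to verify carefully is the index condition $k \geq j$, which comes directly from the left endpoint of $\ml{U}_j$, and the compatibility of the parameter choices $\gamma(S,T)$, $\kappa(S)$, $N_2$ from the hypotheses of \Cref{lem:Piecewise}, which are assumed to hold throughout by the statement.
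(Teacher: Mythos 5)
Your proof is correct and is exactly the argument the paper has in mind; the paper does not write out a proof of this corollary but simply asserts it as a consequence of Lemmas~\ref{lem:TailDecay} and~\ref{lem:Piecewise}, and your split of the supremum over $[|t|,10)$ and $[10,\infty)$, with the index constraint $k\geq j$ coming from the left endpoint of $\ml{U}_j$, is the natural (indeed the only reasonable) way to make that assertion precise. One small remark: as you observe, the wave inequality produced by your argument has $\wt{|W^{(i)}|}(k)$ on the right-hand side, whereas the corollary as printed writes $\wt{|B^{(i)}|}(k)$ in both lines; this is evidently a typo in the paper and your version is the intended statement.
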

\Cref{lem:Piecewise} and \Cref{cor:Monotonize} provide upper bounds on
the bumps, waves, and their derivatives that are symmetric,
sample-independent and, in the case of \Cref{cor:Monotonize},
monotonic.

\subsection{Proof of \Cref{lem:CoeffBound}: Invertibility of the Interpolation Equations}
\label{sec:CoeffBoundProof}

In this section we prove \Cref{lem:CoeffBound}, establishing the
invertibility of the interpolation equations~\eqref{eq:Interpolation}.
As a by-product, we also obtain bounds on the interpolation
coefficients $\alpha,\beta$ in \eqref{eq:reparametrization}.  For
simplicity we use the following abbreviated notation for the bumps and
waves:
\begin{align}
  B_i(t) &:=
  B_{t_i}(t,\tilde{s}_{i,1},\tilde{s}_{i,2}), \label{eq:shorthand_B}\\ W_i(t)
  &:=
  W_{t_i}(t,\tilde{s}_{i,1},\tilde{s}_{i,2}). \label{eq:shorthand_W}
\end{align}
To begin, we express equations~\eqref{eq:Interpolation} in terms of
bumps and waves:
\begin{align}
\sum_{j=1}^n \alpha_j B_j (t_i) + \beta_jW_j(t_i) & =
\rho_i,\\ \sum_{j=1}^n \alpha_j B_j^{(1)}(t_i) + \beta_jW_j^{(1)}(t_i)
& = 0, \quad \text{for all $t_i\in T$.}
\end{align}
Define the $n\times n$ matrices $\BB,\WW,\DBB,\DWW $ by
\begin{align}
\label{eq:block_matrices}
\begin{array}{rcl} (\BB)_{ij} & := & B_j(t_i),\\ (\WW)_{ij} & := &
  W_j(t_i),\\ (\DBB)_{ij} & := & B^{(1)}_j(t_i),\\ (\DWW )_{ij} & := &
  W^{(1)}_j(t_i).
\end{array}
\end{align}
This allows us to express the reparametrized interpolation equations in block-matrix
form:
\begin{equation}\label{eq:BlockEquation}
\MAT{\BB&\WW\\\DBB&\DWW }\MAT{\alpha\\\beta} = \MAT{\rho\\0}.
\end{equation}
If $\Delta(T)$ is sufficiently large we can exploit the decay of the
bumps and waves to prove that this matrix is close to the
identity.  This is formalized in the following linear-algebra result,
which shows how to convert norm bounds on these blocks into bounds on
$\alpha,\beta$, and gives conditions under which the system is
invertible.  Throughout, for an $n\times n$ matrix $A$, we write
$\|A\|_\infty$ to denote the matrix norm
\begin{equation}
  \|A\|_\infty = \sup_{\|x\|_\infty\leq 1 } \normInf{ Ax}.
\end{equation}
\begin{lemma}
  \label{lem:Schur} Suppose
  $\|\II-\DWW \|_\infty < 1$, and $\|\II-\CCC\|_\infty <1$ where
  \begin{equation*}
    \CCC = \BB-\WW(\DWW )^{-1}(\DBB)
  \end{equation*}
  is the Schur complement of $\DWW $, and $\II$ is the identity
  matrix.  Then
  \begin{equation}
    \MAT{\BB&\WW\\\DBB&\DWW }\MAT{\alpha\\\beta} =
    \MAT{\rho\\0}\label{eq:SchurEq}
  \end{equation}
  has a unique solution. Furthermore, we have
  \begin{align}
    \|\alpha\|_\infty & \leq
    \|\CCC^{-1}\|_\infty,\label{eq:SchurCoeff1}\\ \|\beta\|_\infty &
    \leq \|(\DWW
    )^{-1}\|_\infty\|\DBB\|_\infty\|\CCC^{-1}\|_\infty,\label{eq:SchurCoeff2}\\ \|\alpha-\rho\|_\infty
    & \leq \|\II-\CCC\|_\infty\|\CCC^{-1}\|_\infty,\label{eq:SchurLB}
  \end{align}
  where
  \begin{align}
    \|\II-\CCC\|_\infty & \leq \|\II-\BB\|_\infty +
    \|\WW\|_\infty\|(\DWW
    )^{-1}\|_\infty\|\DBB\|_\infty,\label{eq:SchurIC}\\ \|\CCC^{-1}\|_\infty
    & \leq
    \frac{1}{1-\|\II-\CCC\|_\infty},\label{eq:Neumann1}\\ \|(\DWW
    )^{-1}\|_\infty & \leq \frac{1}{1-\|\II-\DWW
      \|_\infty}.\label{eq:Neumann2}
  \end{align}
\end{lemma}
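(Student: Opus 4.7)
The plan is to prove this by block Gaussian elimination using the Schur complement of $\WW^{(1)}$, together with standard Neumann-series arguments. Everything follows from the submultiplicativity of the infinity matrix norm once the reductions are set up; there are no new ideas required and the main obstacle is simply keeping the bookkeeping straight.

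First, I would handle the two invertibility/Neumann bounds \eqref{eq:Neumann1} and \eqref{eq:Neumann2}. Since $\|\II - \DWW\|_\infty < 1$, the Neumann series $\sum_{k\geq 0} (\II - \DWW)^k$ converges in operator norm and equals $(\DWW)^{-1}$, yielding both the invertibility of $\DWW$ and the bound $\|(\DWW)^{-1}\|_\infty \leq 1/(1 - \|\II - \DWW\|_\infty)$. The same argument, applied to the Schur complement $\CCC$ under the assumption $\|\II - \CCC\|_\infty < 1$, gives invertibility of $\CCC$ and \eqref{eq:Neumann1}.

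Next, I would reduce the block system \eqref{eq:SchurEq} to a system in $\alpha$ alone. The second block row reads $\DBB\,\alpha + \DWW\,\beta = 0$, and since $\DWW$ is invertible this gives $\beta = -(\DWW)^{-1}\DBB\,\alpha$. Substituting into the first block row yields $\CCC\,\alpha = \rho$, which has the unique solution $\alpha = \CCC^{-1}\rho$ by the preceding step; this also uniquely determines $\beta$, establishing the uniqueness claim. From $\alpha = \CCC^{-1}\rho$ and $\|\rho\|_\infty = 1$ I immediately obtain \eqref{eq:SchurCoeff1}, and from $\beta = -(\DWW)^{-1}\DBB\,\alpha$ together with submultiplicativity I obtain \eqref{eq:SchurCoeff2}.

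For \eqref{eq:SchurLB} I would write $\alpha - \rho = \CCC^{-1}\rho - \rho = \CCC^{-1}(\II - \CCC)\rho$ and take infinity norms. Finally, for \eqref{eq:SchurIC} I would use the definition of the Schur complement to expand
\begin{equation*}
\II - \CCC \;=\; \II - \BB + \WW(\DWW)^{-1}\DBB,
\end{equation*}
apply the triangle inequality, and bound $\|\WW(\DWW)^{-1}\DBB\|_\infty$ by $\|\WW\|_\infty\|(\DWW)^{-1}\|_\infty\|\DBB\|_\infty$ via submultiplicativity. All six claimed inequalities follow from these elementary manipulations, so the proof is essentially a tidy compilation of Neumann-series and Schur-complement facts specialized to the infinity norm.
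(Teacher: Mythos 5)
Your proposal is correct and takes essentially the same approach as the paper. The paper proves this lemma as a special case of a more general result (\Cref{lem:SparseSchur}) for arbitrary right-hand sides $(\psi,\zeta)$, using block Gaussian elimination with the Schur complement of $\DWW$, Neumann series for \eqref{eq:Neumann1}--\eqref{eq:Neumann2}, and submultiplicativity of $\|\cdot\|_\infty$ for the remaining bounds; your substitution-based reduction to $\CCC\alpha=\rho$ and $\beta=-(\DWW)^{-1}\DBB\alpha$ is the same computation, just specialized to $\psi=\rho$, $\zeta=0$, where $\|\rho\|_\infty=1$.
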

\begin{proof}
  Follows as a special case of \Cref{lem:SparseSchur}.
\end{proof}

To apply \Cref{lem:Schur} we require bounds on $\normInf{\II-\BB}$,
$\normInf{\WW}$, $\normInf{\DBB}$, and $\normInf{\II-\DWW }$.  We
compute these using the bounds in \Cref{sec:bwbounds}. We have
\begin{align}
\|\II-\BB\|_\infty & = \max_{t_i\in T} \sum_{\substack{t_j\in
    T\\t_j\neq t_i}} |B_j(t_i)| \leq 2\sum_{j=1}^\infty
\smo{|B|}(j\Delta(T)), \\ \|\WW\|_\infty &= \max_{t_i\in T} \sum_{t_j\in
  T} |W_j(t_i)| \leq 2\sum_{j=1}^\infty
\smo{|W|}(j\Delta(T)),\\ \|\DBB\|_\infty &= \max_{t_i\in T} \sum_{t_j\in
  T} |B^{(1)}_j(t_i)| \leq 2\sum_{j=1}^\infty
\smo{|B^{(1)}|}(j\Delta(T)),\\ \|\II-\DWW \|_\infty &= \max_{t_i\in T}
\sum_{\substack{t_j\in T\\t_j\neq t_i}} |W^{(1)}_j(t_i)| \leq
2\sum_{j=1}^\infty \smo{|W^{(1)}|}(j\Delta(T)),
\end{align}
where the monotonicity of the bounds allows us to assume that the
spacing between adjacent spikes equals the minimum separation. All
sums start at $j=1$ because $W_j(t_j)=B^{(1)}_j(t_j)=0$ and
$W^{(1)}_j(t_j)=B_j(t_j)=1$ by construction.

Applying \Cref{lem:TailDecay} we obtain, for $\Delta(T)\geq 2$ and
$\epsilon := 10^{-12}/\kappa(S)$,
\begin{align}
  \|\II-\BB\|_\infty & \leq 2\sum_{j=1}^5 \smo{|B|}(j\Delta(T)) +
  2\epsilon\label{eq:blocksumsB}\\ \|\WW\|_\infty & \leq 2\sum_{j=1}^5
  \smo{|W|}(j\Delta(T)) +
  2\epsilon\label{eq:blocksumsW}\\ \|\DBB\|_\infty & \leq
  2\sum_{j=1}^5 \smo{|B^{(1)}|}(j\Delta(T)) +
  2\epsilon\label{eq:blocksumsDB}\\ \|\II-\DWW \|_\infty & \leq
  2\sum_{j=1}^5 \smo{|W^{(1)}|}(j\Delta(T)) +
  2\epsilon\label{eq:blocksumsDW}.
\end{align}
If we fix values for $\Delta(T)$, $\gamma(S,T)$, $\kappa(S)$, and the
parameters $N_1$ and $N_2$ from \Cref{lem:Piecewise}, we can use
\Cref{cor:Monotonize} to compute the above bounds numerically. This
allows us to check whether the interpolation equations are invertible
for a specific triple $(\Delta(T),\gamma(S,T),\kappa(S))$ by
\Cref{lem:Schur}.  The upper bounds in
\cref{eq:blocksumsB,eq:blocksumsW,eq:blocksumsDB,eq:blocksumsDW} decrease
as $\Delta(T)$ increases due to monotonicity.  By combining this fact
with the definitions of $\gamma(S,T)$ and $\kappa(S)$, we see that
invertibility for a fixed triple $(a,b,c)$ implies invertibility for
all triples in the set
\begin{align}
\{(\Delta(T),\gamma(S,T),\kappa(S))\mid \Delta(T)\geq a,
\gamma(S,T)\leq b, \kappa(S)\geq c\}.
\end{align} 
This allows us to compute the above bounds for a finite set of triples and
obtain an infinite continuous region on which invertibility occurs.

In \Cref{fig:MatrixBounds} we compute bounds on $\normInf{\II-\DWW}$
and $\normInf{\II-\cC}$ for $\kappa(S)=0.05$.  The values of
$\gamma(S,T)$ are sampled at 200 equally-spaced points in the
interval $[0.05,1]$ for the Gaussian and $[0.05,0.7]$ for the Ricker.
The values of $\Delta(T)$ are sampled at 200 equally-spaced points in
the interval $[2,8]$ for both kernels.  For fixed
$\gamma(S,T)$ the value of $N_1$ is chosen to satisfy
$\gamma(S,T)/N_1 = 1/500$ for the Gaussian and $\gamma(S,T)/N_1 =
0.7/500$ for the Ricker.
For fixed $\Delta(T)$ the value of $N_2$ is chosen to satisfy
$\Delta(T)/N_2 = 8/700$ for both kernels.  In other words, we use a
fixed partition width in all upper bound computations (see
\Cref{lem:Piecewise} and \Cref{sec:PiecewiseProof} for the definitions
of $N_1$ and $N_2$).  
The plots show that invertibility is achieved over the required
region.

\begin{figure}[t]
\centering 
\includegraphics{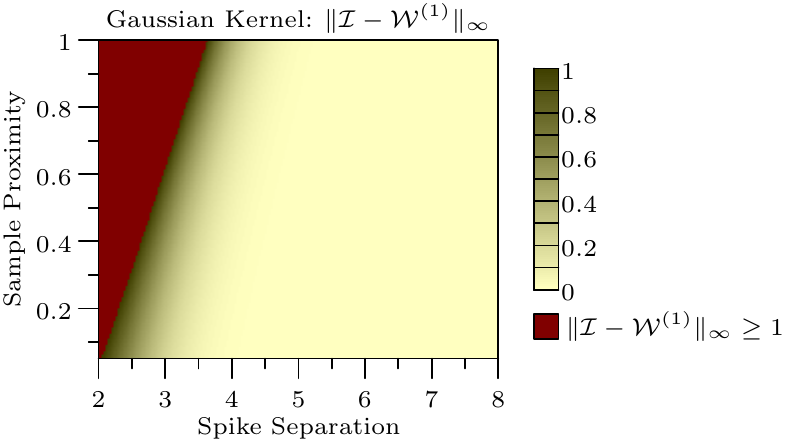}\quad
\includegraphics{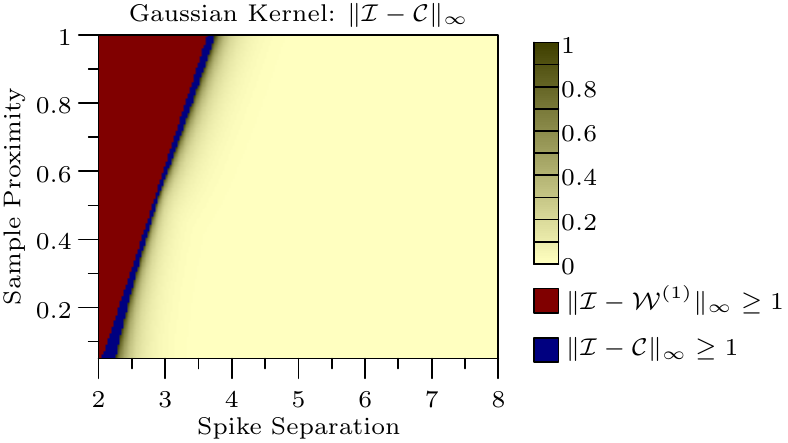}\\\vspace{.5cm}
\includegraphics{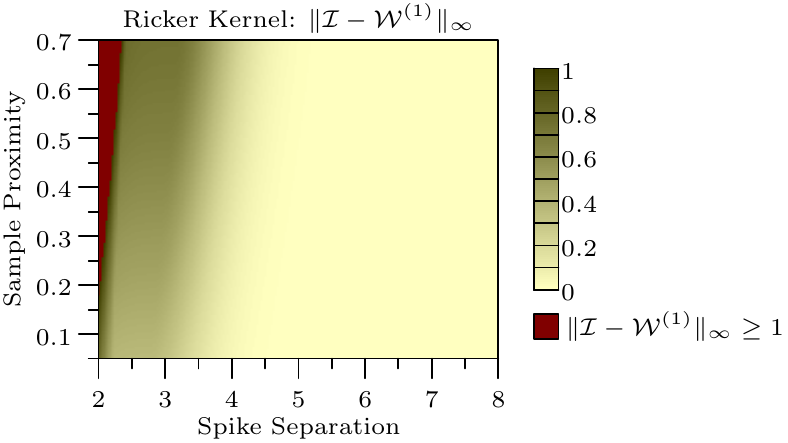}\quad
\includegraphics{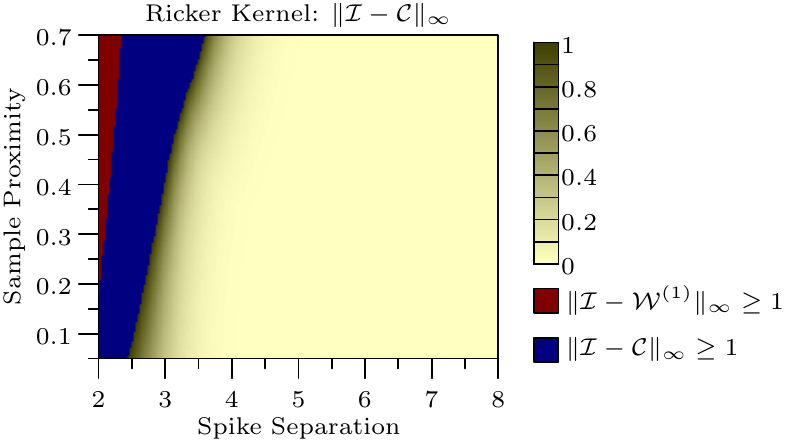}
  \caption{Upper bounds on $\normInf{\II-\DWW}$ and $\normInf{\II-\cC}$ for different values of the minimum separation and the sample proximity.}
  \label{fig:MatrixBounds}
\end{figure}

\subsection{Proof of \Cref{lem:qbound}: Bounding the Dual Combination}
\label{sec:qboundproof}
In this section, we prove that the dual combination $Q$ satisfies the
condition $|Q(t)|<1$ for $t\in T^c$. Without loss of generality, we
assume that there is a spike at the origin with sign $+1$ and we
restrict our analysis to the region between this spike and the next one. The same argument can be applied to the interval between any two other spikes. For clarity, we again use the shorthand
notation~\eqref{eq:shorthand_B} and~\eqref{eq:shorthand_W} for the
bumps and waves.

We begin by decomposing $Q$ as the sum of two terms that account for
the contribution of the bumps and waves separately:
$Q(t)=\sB(t)+\sW(t)$, where
\begin{align}
\sB(t):= \sum_{j=1}^n \alpha_j B_j(t) \quad\text{and}\quad \sW(t):=
\sum_{j=1}^n \beta_j W_j(t).
\end{align}
In the following lemma we bound these functions by applying the
triangle inequality, \Cref{lem:Schur}, and \Cref{lem:TailDecay}.
\begin{restatable}[Proof in
    \Cref{sec:GenericQBoundsProof}]{lemma}{GenericQBounds}\label{lem:GenericQBounds}
  Fix our kernel to be the Gaussian kernel or the Ricker wavelet.
  Assume the conditions of \Cref{lem:Schur,lem:TailDecay} hold, and
  that there is a spike at the origin with sign $+1$.  Let $h$ denote
  half the distance from the origin to the spike with smallest
  positive location, or $\infty$ if no such spike exists. Then, for
  $0<t\leq h$ (or $t>0$ if $h=\infty$) and
  $\epsilon:=10^{-12}/\kappa(S)$,
  \begin{align}
    |\sB^{(p)}(t)| & \leq
    \|\alpha\|_\infty\left(\smo{|B^{(p)}|}(v)+2\epsilon+ \sum_{j=1}^5
    \smo{|B^{(p)}|}(v+j\Delta(T))+\smo{|B^{(p)}|}(j\Delta(T)-v)\right)
    \label{eq:GQBoundAbsB}\\
    |\sW^{(p)}(t)| & \leq
    \|\beta\|_\infty\left(\smo{|W^{(p)}|}(v)+2\epsilon+ \sum_{j=1}^5
    \smo{|W^{(p)}|}(v+j\Delta(T))+\smo{|W^{(p)}|}(j\Delta(T)-v)\right),
    \label{eq:GQBoundAbsW}
  \end{align}
  for $p=0,1,2$ and $v=\min(t,\Delta(T)/2)$.  For $q=1,2$
  \begin{align}
    \sB^{(q)}(t) & \leq
    \alpha_{\LB}\sym{B^{(q)}}(t)+\|\alpha\|_\infty\left(
    2\epsilon+\sum_{j=1}^5 \smo{|B^{(q)}|}(v+j\Delta(T))
    +\smo{|B^{(q)}|}(j\Delta(T)-v) \right)\label{eq:GQBoundB},
  \end{align}
as long as $\sym{B^{(q)}}(t)\leq 0$, where
$\alpha_{\LB}:=1-\|\alpha-\rho\|_\infty\geq0$.
\end{restatable}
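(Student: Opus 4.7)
The plan is to establish all three inequalities by the same recipe: decompose $\sB^{(p)}(t)$ (resp.\ $\sW^{(p)}(t)$) into the contribution of the spike at the origin and the contributions from the remaining spikes, and bound each piece using the sample-independent monotonized envelopes from Section~\ref{sec:bwbounds}. For \eqref{eq:GQBoundAbsB} I would write
\[
\sB^{(p)}(t) = \alpha_{i_0} B_{i_0}^{(p)}(t) + \sum_{t_j \in T,\, t_j \neq 0} \alpha_j B_j^{(p)}(t),
\]
where $i_0$ indexes the spike at the origin, apply the triangle inequality, and pull out $\|\alpha\|_\infty$. The origin term is controlled by $|B_{i_0}^{(p)}(t)| \leq \sym{|B^{(p)}|}(t) \leq \smo{|B^{(p)}|}(v)$ using monotonicity and $v \leq t$.

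The next key step is distance bookkeeping for the remaining spikes. By the minimum-separation hypothesis, the closest positive spike lies at distance $2h \geq \Delta(T)$ from the origin, so the $k$-th positive spike is at position $t_{j_k} \geq 2h + (k-1)\Delta(T) \geq k\Delta(T)$; similarly, the $k$-th negative spike sits at $t_{j_{-k}} \leq -k\Delta(T)$. I would then verify, with a short case split on whether $t \leq \Delta(T)/2$ or $\Delta(T)/2 < t \leq h$, that $t_{j_k} - t \geq k\Delta(T) - v$ and $t - t_{j_{-k}} \geq v + k\Delta(T)$. The second case uses $h \geq \Delta(T)/2$, which is precisely why the cap $v \leq \Delta(T)/2$ is baked into the definition of $v$. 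Applying monotonicity of $\smo{|B^{(p)}|}$ to these distances produces the summed terms $\smo{|B^{(p)}|}(k\Delta(T) - v)$ and $\smo{|B^{(p)}|}(v + k\Delta(T))$. I would then split the sum into $1 \leq k \leq 5$ (kept explicitly) and $k \geq 6$ (absorbed into the $2\epsilon$ term): since $v \leq \Delta(T)/2$, both $k\Delta(T) - v$ and $k\Delta(T) + v$ exceed $(k - 1/2)\Delta(T)$, so \Cref{lem:TailDecay} bounds each tail by $\epsilon = 10^{-12}/\kappa(S)$. The argument for $|\sW^{(p)}|$ is identical, replacing $|\alpha_j|\leq\|\alpha\|_\infty$ with $|\beta_j|\leq\|\beta\|_\infty$ and $\smo{|B^{(p)}|}$ with $\smo{|W^{(p)}|}$.

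The signed bound \eqref{eq:GQBoundB} differs only in how the origin term is treated. Since $\rho_{i_0} = +1$, \Cref{lem:Schur} yields $\alpha_{i_0} \geq 1 - \|\alpha - \rho\|_\infty = \alpha_{\LB} \geq 0$. Under the hypothesis $\sym{B^{(q)}}(t) \leq 0$, the inequality $B_{i_0}^{(q)}(t) \leq \sym{B^{(q)}}(t) \leq 0$ combined with $\alpha_{i_0} \geq \alpha_{\LB} > 0$ gives $\alpha_{i_0} B_{i_0}^{(q)}(t) \leq \alpha_{\LB}\,\sym{B^{(q)}}(t)$, since multiplying a nonpositive number by a larger positive factor makes it no larger. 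The remaining spikes are handled by the same triangle-inequality and monotonicity argument as above, yielding the tail sum and the $2\epsilon$ term in \eqref{eq:GQBoundB}.

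The main obstacle I anticipate is the distance bookkeeping: carefully managing the definition $v = \min(t, \Delta(T)/2)$ across the two cases $t \leq \Delta(T)/2$ and $\Delta(T)/2 < t \leq h$ so that both branches produce the uniform bound $t_{j_k} - t \geq k\Delta(T) - v$, and verifying that the cutoff $(k - 1/2)\Delta(T)$ satisfies the $\Delta \geq 2$ hypothesis of \Cref{lem:TailDecay}. Once this setup is in place, the rest of the argument is a routine application of the triangle inequality, monotonicity of the envelopes, and the elementary sign manipulation $\alpha_{i_0} B_{i_0}^{(q)} \leq \alpha_{\LB} B_{i_0}^{(q)}$ when both factors have the appropriate signs.
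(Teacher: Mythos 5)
Your proposal is correct and follows essentially the same route as the paper: decompose into the origin spike plus positive and negative spikes, pull out $\|\alpha\|_\infty$ via the triangle inequality, replace each shifted bump by the monotonized envelope at the worst-case distance $j\Delta(T)\mp v$, and truncate the tail using \Cref{lem:TailDecay}; the signed bound \eqref{eq:GQBoundB} is obtained by the same sign manipulation $\alpha_1 B_1^{(q)}(t)\leq\alpha_{\LB}\sym{B^{(q)}}(t)$ for the origin term. The distance bookkeeping you spell out (the case split on $t\leq\Delta(T)/2$ vs.\ $\Delta(T)/2<t\leq h$, using $2h\geq\Delta(T)$) is exactly the content the paper compresses into the single annotation ``Monotonicity.''
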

The following lemma provides three conditions under which $Q$ is
strictly bounded by one on the interval $(0,h]$. We omit the proof, which follows from basic calculus and the fact that $Q(0)=1$ and $Q^{(1)}(0)=0$ by \Cref{lem:CoeffBound}.

\begin{lemma}
\label{lem:Regions} 
Assume
  the conditions of \Cref{lem:GenericQBounds}.  Suppose there are
  $u_1,u_2\in(0,h]$ such that
  \begin{enumerate}
  \item (Neighboring) $Q^{(2)}(t)<0$ on $[0,u_1]$,
  \item (Near) $Q^{(1)}(t) < 0$ on $[u_1,u_2]$,
  \item (Far) $|Q(t)|<1$ on $[u_2,h]$,
  \end{enumerate}
  Then $|Q(t)|<1$ for $t\in(0,h]$.
\end{lemma}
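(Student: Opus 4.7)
The plan is to chain the three hypotheses into three consecutive subintervals of $(0,h]$ on which both $Q<1$ and $Q>-1$ can be deduced, using only monotonicity arguments together with the boundary values $Q(0)=1$ and $Q^{(1)}(0)=0$ (the latter guaranteed by \Cref{lem:CoeffBound} because $(\alpha,\beta)$ is defined to satisfy the interpolation/zero-derivative system~\eqref{eq:Interpolation} at the spike situated at the origin).

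First I would handle the \emph{neighboring} interval $[0,u_1]$. Since $Q^{(2)}<0$ there, $Q^{(1)}$ is strictly decreasing; combined with $Q^{(1)}(0)=0$ this yields $Q^{(1)}(t)<0$ for $t\in(0,u_1]$. Thus $Q$ is strictly decreasing on $[0,u_1]$. Next, on the \emph{near} interval $[u_1,u_2]$, hypothesis~2 gives $Q^{(1)}<0$ directly, so $Q$ continues to be strictly decreasing. Chaining these two facts, $Q$ is strictly decreasing on the whole interval $[0,u_2]$, and since $Q(0)=1$ this immediately gives the upper bound $Q(t)<1$ for every $t\in(0,u_2]$.

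For the lower bound on $(0,u_2]$, I would use the same monotonicity in the reverse direction: for any $t\in(0,u_2]$, $Q(t)\geq Q(u_2)$, and by hypothesis~3 applied at $t=u_2$ we have $|Q(u_2)|<1$, in particular $Q(u_2)>-1$. So $Q(t)>-1$ on $(0,u_2]$, which combined with the upper bound gives $|Q(t)|<1$ there. On the \emph{far} interval $[u_2,h]$, the conclusion $|Q(t)|<1$ is exactly hypothesis~3. Taking the union of the three intervals covers $(0,h]$ and finishes the proof.

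There is essentially no obstacle here; the argument is a clean one-dimensional calculus exercise, and all of the substantive analytic work needed to actually verify the three hypotheses for our particular $Q$ lives elsewhere (in the piecewise and tail bounds from \Cref{lem:TailDecay}, \Cref{lem:Piecewise}, \Cref{cor:Monotonize}, and the bounds of \Cref{lem:GenericQBounds}). The only subtlety worth stating explicitly is the use of $Q^{(1)}(0)=0$ to promote the strict inequality $Q^{(2)}<0$ into the sign condition $Q^{(1)}<0$ on $(0,u_1]$; this is what makes the neighboring regime (where the signs of the derivatives, not their magnitudes, are the relevant quantities) strong enough to initiate the monotonicity chain.
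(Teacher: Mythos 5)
Your proof is correct and follows exactly the route the paper has in mind: the paper omits the proof of this lemma but states that it ``follows from basic calculus and the fact that $Q(0)=1$ and $Q^{(1)}(0)=0$,'' which are precisely the two boundary facts you use to turn the strict concavity on $[0,u_1]$ into strict monotone decrease and then chain the three regimes.
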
 
With \Cref{lem:Regions} in place, we outline a procedure for
establishing exact recovery for fixed values of $\Delta(T)$,
$\gamma(S,T)$, $\kappa(S)$ and the parameters $N_1$ and $N_2$ from
\Cref{lem:Piecewise}.
\begin{enumerate}
\item Apply \Cref{cor:Monotonize} and \Cref{lem:Schur} to check that
  the interpolation equations are invertible.
\item Using \Cref{lem:Piecewise} and \Cref{cor:Monotonize}, compute
  piecewise-constant upper bounds for $\smo{|B^{(q)}|}(t)$,
  $\smo{|W^{(q)}|}(t)$, and $\sym{B^{p}}(t)$ for $q=0,1,2$, $p=1,2$,
  and $t\in[0,\Delta/2]$.  These piecewise-constant functions are all
  defined on the same partition (see \Cref{lem:Piecewise}).  By
  \Cref{lem:GenericQBounds}, we obtain bounds on $|Q(t)|$ for
  $t\in(0,h]$, and on $Q^{(1)}(t),Q^{(2)}(t)$ for
    $t\in(0,\Delta(T)/2]$.
\item By iterating over the partition, test whether the conditions of
  \Cref{lem:Regions} are satisfied. If they are, $Q$ is a valid dual
  combination and exact recovery is proven.
\end{enumerate}
As all of the upper bounds in \Cref{lem:GenericQBounds} decrease as
$\Delta(T)$ increases, we again obtain that
recovery for a single triple $(a,b,c)$
implies recovery for all triples in
\begin{equation}
\{(\Delta(T),\gamma(S,T),\kappa(S))\mid \Delta(T)\geq a,
\gamma(S,T)\leq b, \kappa(S)\geq c\}.
\end{equation}
Using the same parameters as described in \Cref{sec:CoeffBoundProof}
to compute \Cref{fig:MatrixBounds} we verify that exact recovery holds
for the entire region specified in \Cref{thm:Exact}.
In \Cref{fig:ExactQPlots} we show an example for
the Gaussian and Ricker kernels where our bounds
on $|Q|$, $Q^{(1)}$ and $Q^{(2)}$ meet the criteria of \Cref{lem:Regions}.
In \Cref{fig:RegionPlots} we plot the region where exact recovery is
proven, and show upper bounds on the value of $u_1$.
\begin{figure}[t]
\centering 
\includegraphics{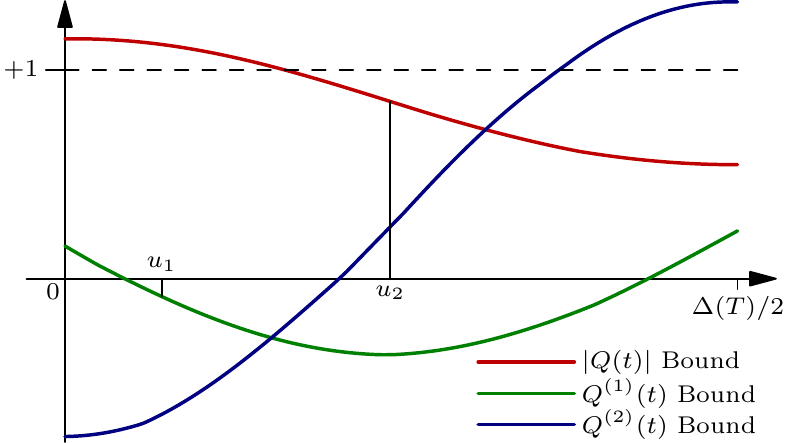}
\includegraphics{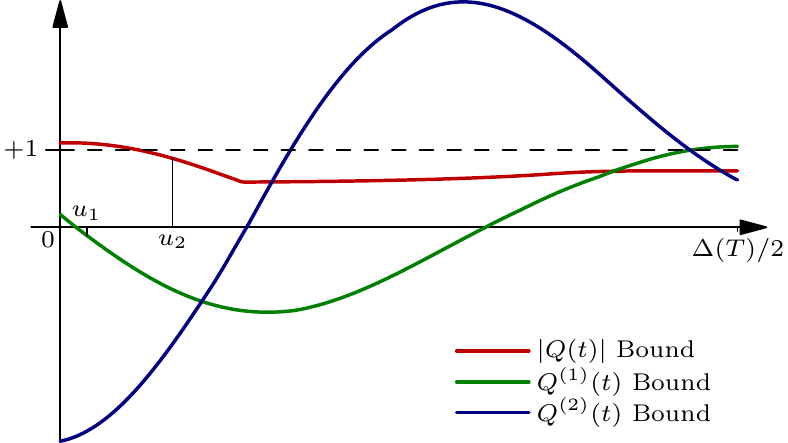}
  \caption{Bounds on the magnitude and the first and second
    derivatives of the dual combination $Q$ for a fixed value of the
    minimum separation and sample proximity. The left figure uses
    the Gaussian kernel with $\Delta(T)=3.5$.
    The right figure uses the Ricker kernel with $\Delta(T)=4.7$.
    Both figures have $\gamma(S,T)=0.3$ and $\kappa(S)=0.05$.}
  \label{fig:ExactQPlots}
\end{figure}
\begin{figure}[t]
\centering 
\includegraphics{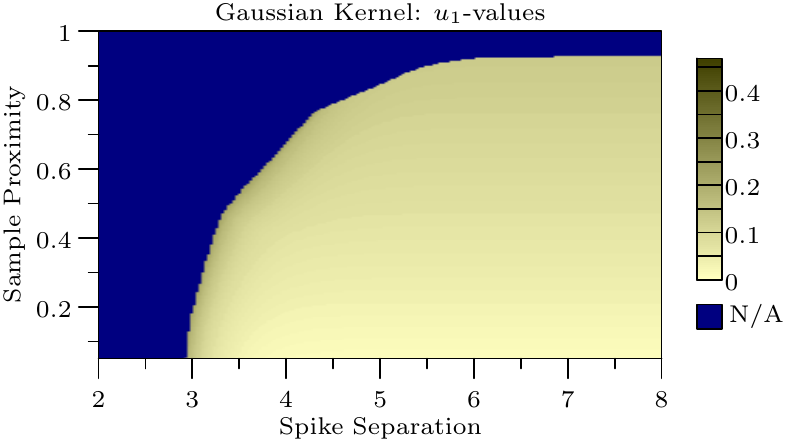}\quad
\includegraphics{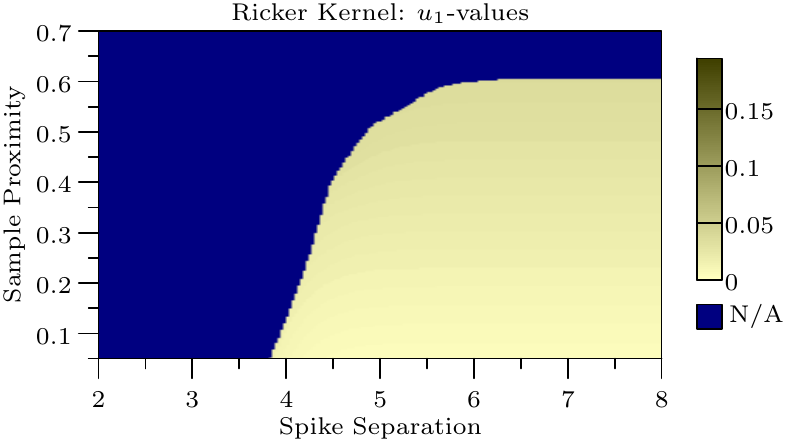}
  \caption{Upper bounds on the values of $u_1$ in the range of values of the minimum separation and the sample proximity for which we establish exact recovery.}
  \label{fig:RegionPlots}
\end{figure}

\section{Proof of Robustness to Dense Noise (\Cref{thm:RobustRecovery})}
\label{sec:RobustRecoveryProof}
The proof of \Cref{thm:RobustRecovery} is based on the proof of
support-detection guarantees for super-resolution from low-pass data
in~\cite{support_detection}. The proofs in~\cite{support_detection}
assume the existence of a dual certificate for the super-resolution problem, provided in~\cite{superres}. Here, we use similar arguments
to derive robustness guarantees for the deconvolution problem using
the dual-certificate construction presented in
\Cref{sec:ExactRecoveryProof}.  We again assume, without loss of
generality, that $\sigma=1$. Auxiliary Mathematica code to perform the computations needed for the proof is available online\footnote{\url{http://www.cims.nyu.edu/~cfgranda/scripts/deconvolution_proof.zip}}.
\subsection{Proof of \Cref{thm:RobustRecovery}, 
Inequalities \eqref{eq:RobustRecovery_2} and
\eqref{eq:RobustRecovery_3}}
\label{sec:RobustRecovery23Proof}
To prove \eqref{eq:RobustRecovery_2} and \eqref{eq:RobustRecovery_3},
we exploit some properties of the dual certificate from
\Cref{sec:ExactRecoveryProof}, which are established in the following
lemma.
\begin{restatable}[Proof in
    \Cref{sec:RobustCombinationOneProof}]{lemma}{RobustCombinationOne}
\label{lem:RobustCombinationOne}
  Under the assumptions of \Cref{thm:RobustRecovery}, there exists a
  function $Q$ of the form
  \begin{equation}
    Q(t) := \sum_{s_i\in S}q_i K(s_i-t), 
  \end{equation}
  for some $q \in \RR^n$ that satisfies:
  \begin{enumerate}
  \item $Q(t_j) = \sign(a_j)$ for all $t_j\in T$,
  \item $|Q(t)|<1$ for all $t\in T^c$,
  \item $|Q(t)| \leq 1-C_1(t-t_j)^2$ if $|t-t_j|\leq \eta$ for some
    $t_j\in T$,
  \item $|Q(t)|\leq 1- C_2$ if $|t-t_j|>\eta$ for all $t_j\in T$,
  \item $\normInf{q}\leq C_3$.
  \end{enumerate}
  The values $\eta,C_1,C_2,C_3>0$ depend only on $\Delta(T)$, $\gamma(S,T)$,
  and $\kappa(S)$. 
\end{restatable}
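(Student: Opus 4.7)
The plan is to take $Q$ to be the dual combination already constructed in Section~\ref{sec:ExactRecoveryProof} for the sign pattern $\rho_j:=\sign(a_j)$. Properties~(1) and~(2) are then exactly the content of Lemmas~\ref{lem:CoeffBound} and~\ref{lem:qbound}, so no new work is needed there. The task reduces to extracting the three quantitative estimates (3)--(5) from the same bumps-and-waves machinery.

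For property~(3), I would perform a Taylor expansion at each spike. Without loss of generality assume $t_j=0$ and $\sign(a_j)=+1$, so $Q(0)=1$ and $Q^{(1)}(0)=0$ by construction. The piecewise-constant upper bounds on $Q^{(2)}$ produced by Lemmas~\ref{lem:GenericQBounds} and~\ref{lem:Piecewise} already show $Q^{(2)}<0$ strictly on the Neighboring interval $[0,u_1]$ of Lemma~\ref{lem:Regions}. Letting $-2C_1$ denote the maximum of those piecewise-constant bounds on $[0,u_1]$, which is still strictly negative, Taylor's theorem with remainder gives $Q(t)=1+\tfrac{1}{2}Q^{(2)}(\xi)t^{2}\leq 1-C_1t^{2}$ for some $\xi\in(0,t)$. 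Since $Q\leq 1$ on this interval, we obtain $|Q(t)|\leq 1-C_1 t^{2}$. The negative-sign case is handled by applying the same argument to $-Q$. Setting $\eta:=u_1$ yields property~(3).

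For property~(4), I would patch together the Near and Far regions of Lemma~\ref{lem:Regions}. On $[u_1,u_2]$, Lemma~\ref{lem:GenericQBounds} provides a strictly negative piecewise-constant upper bound on $Q^{(1)}$, so $Q$ is strictly decreasing; combined with property~(3) evaluated at $t=u_1$ we get $Q(t)\leq 1-C_1 u_1^{2}$ throughout $[u_1,u_2]$. On the Far interval $[u_2,h]$, the same lemma bounds $|Q|$ by a piecewise-constant function strictly below one; let $1-C_2'$ be its minimum over the partition. Setting $C_2:=\min(C_1u_1^{2},C_2')$ and running the symmetric argument between every pair of consecutive spikes yields property~(4).

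For property~(5), I would translate the $\ell_\infty$ bounds on $\alpha,\beta$ given by Lemma~\ref{lem:Schur} back to $q$ through the explicit coefficient formula in Lemma~\ref{lem:BumpExists}. Each entry $q_i$ with $\tilde{s}_i\in\tilde{S}$ is a linear combination of a single $\alpha_j$ and $\beta_j$ weighted by $b_{j,k},w_{j,k}$, while $q_i=0$ otherwise. The numerator in Lemma~\ref{lem:BumpExists} involves $K$ and $K^{(1)}$ evaluated at arguments of magnitude at most $\gamma(S,T)$, and the denominator is bounded away from zero because the two chosen samples are separated by at least $\kappa(S)$. Combining this with the Schur bounds gives a constant $C_3$ depending only on $\Delta(T),\gamma(S,T),\kappa(S)$.

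The main obstacle is property~(3): the trade-off in choosing $\eta$ large enough that the downstream robustness argument in Section~\ref{sec:RobustRecoveryProof} is meaningful, while small enough that the piecewise-constant second-derivative bound remains uniformly strictly negative on $[0,\eta]$. In practice this means sweeping $\eta$ numerically alongside $(\Delta(T),\gamma(S,T),\kappa(S))$ when evaluating the bounds of Lemma~\ref{lem:Piecewise}, and reporting the resulting function $\eta(\Delta(T),\gamma(S,T),\kappa(S))$, which is precisely what is tabulated in Figure~\ref{fig:C0Bounds}.
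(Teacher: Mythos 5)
Your proposal is correct and follows essentially the same route as the paper's proof: take the dual combination built in Section~\ref{sec:ExactRecoveryProof} for $\rho=\sign(a)$, read off properties~1--2 from \Cref{lem:CoeffBound,lem:qbound}, obtain~(3) from Taylor's theorem with the negative piecewise-constant second-derivative bound on the Neighboring region ($\eta=u_1$, $C_1=-c/2$), obtain~(4) by combining that with the decrease on the Near region and the strict bound $|Q|\leq c'<1$ on the Far region (the paper takes $C_2=\min(1-c',-c\eta^2/2)$, matching your $\min(C_1u_1^2,C_2')$), and obtain~(5) by bounding $|b_{i,k}|,|w_{i,k}|$ via \Cref{lem:BumpExists,lem:DenomFormula} and combining with the $\normInf{\alpha},\normInf{\beta}$ estimates from \Cref{lem:Schur}. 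One small caution, shared with the paper's write-up: the Taylor step only yields the one-sided bound $Q(t)\leq 1-C_1t^2$, so passing to $|Q(t)|$ also implicitly uses that $Q$ stays positive on $[0,\eta]$ (which holds, but is worth stating), rather than following merely from ``$Q\leq 1$''.
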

By combining \Cref{lem:RobustCombinationOne} with the fact that
$\normTV{\hat{\mu}}\leq\normTV{\mu}$ we obtain two useful bounds. 
Below we use $d(\hat{t}_k,T)$ to denote the distance from $\hat{t}_k$
to the set $T$:
\begin{equation}
  d(\hat{t}_k,T) := \min_{t\in T}|\hat{t}_k-t|.
\end{equation}
\begin{restatable}[Proof in
    \Cref{sec:GlobalNoiseBoundsProof}]{corollary}{GlobalNoiseBounds}
\label{lem:GlobalNoiseBounds}
  Under the assumptions of \Cref{lem:RobustCombinationOne} 
  \begin{align}
    \int Q(t)\,d\hat{\mu}(t) &
    \geq\normTV{\hat{\mu}}-2\normInf{q}\bxi\sqrt{|T|}
    ,\label{eq:NoiseQMuDiff}\\ \int Q(t)\,d\hat{\mu}(t) & \leq
    \sum_{\hat{t}_k\in\wh{T}:d(\hat{t}_k,T)\leq
      \eta}(1-C_1d(\hat{t}_k,T)^2) |\hat{a}_k| +
    \sum_{\hat{t}_k\in\wh{T}:d(\hat{t}_k,T)>
      \eta}(1-C_2)|\hat{a}_k|. \label{eq:NoiseQBound}
  \end{align}
\end{restatable}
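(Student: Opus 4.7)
The plan is to prove the two inequalities separately, exploiting the dual certificate $Q$ produced by \Cref{lem:RobustCombinationOne} together with the feasibility and optimality of $\hat{\mu}$ for Problem~\eqref{pr:NoiseRecovery}.

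For inequality~\eqref{eq:NoiseQMuDiff}, I would first compute $\int Q\,d\mu$ using property (1) of \Cref{lem:RobustCombinationOne}: since $Q(t_j)=\sign(a_j)$ on $T$, we get $\int Q\,d\mu=\sum_{t_j\in T}a_j\sign(a_j)=\normTV{\mu}$. Next, swapping the finite sum defining $Q$ with the integral,
\begin{equation*}
\int Q(t)\,d(\hat{\mu}-\mu)(t) = \sum_{s_i\in S}q_i\bigl[(K*\hat{\mu})(s_i)-(K*\mu)(s_i)\bigr].
\end{equation*}
I would bound this by Cauchy--Schwarz, combining feasibility of $\hat{\mu}$ (so $\normTwo{K*\hat{\mu}-y}\leq\bxi$) with the noise hypothesis $\normTwo{K*\mu-y}=\normTwo{z}<\xi\leq\bxi$; the triangle inequality then gives $\normTwo{K*\hat{\mu}-K*\mu}\leq 2\bxi$. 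Because the $q$ built in \Cref{sec:ExactRecoveryProof} is supported on $\tilde{S}\subseteq S$ with $|\tilde{S}|$ proportional to $|T|$, we have $\normTwo{q}\lesssim\normInf{q}\sqrt{|T|}$, which converts the $\ell_2$ residual estimate into the desired form involving $\normInf{q}\bxi\sqrt{|T|}$. Finally, the optimality of $\hat{\mu}$ (using that $\mu$ itself is feasible for Problem~\eqref{pr:NoiseRecovery} since $\normTwo{z}<\bxi$) gives $\normTV{\hat{\mu}}\leq\normTV{\mu}$, which replaces $\normTV{\mu}$ by $\normTV{\hat{\mu}}$ in the lower bound and yields~\eqref{eq:NoiseQMuDiff}.

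For inequality~\eqref{eq:NoiseQBound}, I would use the atomic structure $\hat{\mu}=\sum_{\hat{t}_k\in\wh{T}}\hat{a}_k\delta_{\hat{t}_k}$ provided by \Cref{lem:NoiseAtomic} to write
\begin{equation*}
\int Q(t)\,d\hat{\mu}(t) = \sum_{\hat{t}_k\in\wh{T}}\hat{a}_k Q(\hat{t}_k)\leq \sum_{\hat{t}_k\in\wh{T}}|\hat{a}_k|\,|Q(\hat{t}_k)|,
\end{equation*}
and then split the sum according to whether $d(\hat{t}_k,T)\leq\eta$ or $d(\hat{t}_k,T)>\eta$, applying property (3) of \Cref{lem:RobustCombinationOne} on the first group and property (4) on the second.

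The qualitative content is straightforward: $\int Q\,d(\hat{\mu}-\mu)$ is essentially a weighted sum of measurement residuals, and the pointwise bounds on $Q$ near and far from $T$ immediately convert the sign-interpolation certificate into a quantitative support-clustering statement. The only nontrivial piece of bookkeeping is the $\sqrt{|T|}$ factor in~\eqref{eq:NoiseQMuDiff}, which requires noting that the certificate of \Cref{lem:RobustCombinationOne} is built from only $O(|T|)$ shifted kernels so that $\normTwo{q}$ can be controlled in terms of $\normInf{q}$ and $\sqrt{|T|}$ rather than $\sqrt{n}$.
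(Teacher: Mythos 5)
Your proposal is correct and follows essentially the same route as the paper's proof: rewrite $\int Q\,d(\hat{\mu}-\mu)$ as a weighted sum of measurement residuals, apply Cauchy--Schwarz together with the triangle inequality and the feasibility of both $\mu$ and $\hat{\mu}$ to get the $2\bxi\normTwo{q}$ bound, control $\normTwo{q}$ by $\normInf{q}\sqrt{|T|}$ using the sparsity of the certificate, and then invoke $\normTV{\hat{\mu}}\leq\normTV{\mu}$; the second inequality is exactly the paper's pointwise application of properties 3 and 4 to the atomic decomposition of $\hat{\mu}$.
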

The proof of inequality \eqref{eq:NoiseQMuDiff} differs from the analogous result
in~\cite{support_detection} because we cannot use Plancherel's
theorem. As a result, a factor of $\normInf{q}\sqrt{|T|}$ appears in
the bound (see \Cref{sec:GlobalNoiseBoundsProof} for more details).

Applying \eqref{eq:NoiseQMuDiff} and \eqref{eq:NoiseQBound} we obtain
\begin{align}
  \sum_{\hat{t}_k\in\wh{T}:d(\hat{t}_k,T)\leq
    \eta}(1-C_1d(\hat{t}_k,T)^2) |\hat{a}_k| +
  \sum_{\hat{t}_k\in\wh{T}:d(\hat{t}_k,T)> \eta}(1-C_2)|\hat{a}_k|
  &\geq \int Q(t)\,d\hat{\mu}(t) \\ &\geq
  \normTV{\hat{\mu}}-2\normInf{q}\bxi\sqrt{|T|}\\ &=
  \sum_{\hat{t}_k\in\wh{T}}
  |\hat{a}_k|-2\normInf{q}\bxi\sqrt{|T|}.
\end{align}
Rearranging terms gives
\begin{equation}
  \sum_{\hat{t}_k\in\wh{T}:d(\hat{t}_k,T)\leq \eta} C_1d(\hat{t}_k,T)^2
  |\hat{a}_k| + \sum_{\hat{t}_k\in\wh{T}:d(\hat{t}_k,T)>
    \eta}C_2|\hat{a}_k| \leq 2\normInf{q}\bxi\sqrt{|T|}.
\end{equation}
By Property 5 in~\Cref{lem:RobustCombinationOne} we obtain
\eqref{eq:RobustRecovery_2} and \eqref{eq:RobustRecovery_3}. 

\subsection{Proof of \Cref{thm:RobustRecovery}, Inequality \eqref{eq:RobustRecovery_1}}
\label{sec:RobustRecovery1Proof}
The main technical contribution of~\cite{support_detection} is a
method to isolate the error incurred by TV-norm minimization for each
particular spike by constructing a low-pass trigonometric polynomial
that is equal to one at the location of the spike and zero on the rest
of the signal support. The following lemma provides an analogous
object, which we construct using the techniques described in
\Cref{sec:ExactRecoveryProof}.
\begin{restatable}[Proof in
    \Cref{sec:RobustCombinationTwoProof}]{lemma}{RobustCombinationTwo}
\label{lem:RobustCombinationTwo}
  Assume the conditions of Lemma~\ref{lem:RobustCombinationOne}.  Then
  for any $t_j\in T$ there exists a function $Q_j$ of the form
  \begin{equation}
    Q_j(t)=\sum_{s_i\in S} \qj _{i} K(s_i-t), 
  \end{equation}
  where $\qj \in \RR^{n}$, such that:
  \begin{enumerate}
  \item $Q_j(t_j)=1$ and $Q_j(t_l)=0$ for $t_l\in T\setminus\{t_j\}$,
  \item $|Q_j(t)|<1$ for $t\in T^c$,
  \item $|1-Q_j(t)|\leq C_1'(t-t_j)^2$ for $|t-t_j|\leq \eta$,
  \item $|Q_j(t)|\leq C_1'(t-t_l)^2$ for $|t-t_l|\leq \eta$, $l\neq
    j$,
  \item $|Q_j(t)|\leq C_2'$ for $ \min_{t_l\in T} |t-t_l|>\eta$,
  \item $\normInf{\qj}\leq C_3'$.
  \end{enumerate}
  Here the values $C_1',C_2', C_3'>0$ only depend on $\Delta(T)$,
  $\gamma(S,T)$ and $\kappa(S)$.  The constant $\eta$ is the same as in \Cref{lem:RobustCombinationOne}.
\end{restatable}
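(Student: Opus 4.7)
The plan is to construct $Q_j$ using the same bump-and-wave framework from \Cref{sec:ExactRecoveryProof}, solving the same block linear system~\eqref{eq:BlockEquation} but with right-hand side $(e_j,0)$ in place of $(\rho,0)$, where $e_j$ is the $j$-th standard basis vector in $\RR^{|T|}$. Writing $Q_j(t) = \sum_i \alpha_i^{(j)} B_i(t) + \beta_i^{(j)} W_i(t)$, the interpolation constraints $Q_j(t_l) = \delta_{jl}$ and $Q_j^{(1)}(t_l) = 0$ for all $t_l\in T$ uniquely determine $\alpha^{(j)},\beta^{(j)}$ by \Cref{lem:CoeffBound} (the matrix is identical to the one analyzed there). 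Property~1 holds by construction.

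Because $\normInf{e_j} = 1$, the Schur-complement bounds of \Cref{lem:Schur} yield the same estimates $\normInf{\alpha^{(j)}-e_j}\leq\normInf{\II-\cC}\normInf{\cC^{-1}}$ and $\normInf{\beta^{(j)}}\leq\normInf{(\DWW)^{-1}}\normInf{\DBB}\normInf{\cC^{-1}}$ derived for the exact-recovery dual combination, so $\alpha_j^{(j)}\approx 1$, $\alpha_l^{(j)}\approx 0$ for $l\neq j$, and the wave coefficients are uniformly small. Splitting $Q_j$ into its bump and wave contributions and applying \Cref{lem:GenericQBounds} (whose bounds only depend on $\normInf{\alpha^{(j)}}$ and $\normInf{\beta^{(j)}}$, both controlled) yields piecewise-constant upper bounds on $|Q_j^{(p)}|$ for $p=0,1,2$ on every interval between adjacent spikes. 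Property~2 then follows by re-running the numerical verification scheme from \Cref{sec:qboundproof} on each inter-spike interval, using \Cref{lem:Regions}: the only change from the exact-recovery argument is that the nearby interpolation values are $\delta_{jl}$ rather than $\pm 1$, which only tightens the relevant inequalities.

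For Properties~3 and 4 I would apply Taylor's theorem at each $t_l$: since $Q_j(t_l) = \delta_{jl}$ and $Q_j^{(1)}(t_l) = 0$, we have $|1-Q_j(t)| \leq \tfrac{1}{2}\sup_{|\xi-t_j|\leq\eta}|Q_j^{(2)}(\xi)|(t-t_j)^2$ near $t_j$ and $|Q_j(t)| \leq \tfrac{1}{2}\sup_{|\xi-t_l|\leq\eta}|Q_j^{(2)}(\xi)|(t-t_l)^2$ for $l\neq j$. The piecewise-constant bounds on $Q_j^{(2)}$ from the previous step furnish a common constant $C_1'$; I would take $\eta$ to equal the value $u_1$ from \Cref{lem:Regions}, which already guarantees strict concavity of $|Q_j|$ near each $t_l$. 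Property~5 is a direct consequence of the global upper bound on $|Q_j|$ produced while verifying Property~2. Finally, Property~6 follows by writing each $q_i^{(j)}$ as a linear combination of $\alpha^{(j)}$ and $\beta^{(j)}$ with coefficients given explicitly in \Cref{lem:BumpExists}: the denominator in that lemma is bounded away from zero uniformly over the feasible region thanks to $\kappa(S,T)\geq\sigma/20$ (and, for the Ricker, $\gamma(S,T) < 1$), giving a uniform bound on $\normInf{q^{(j)}}$.

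The main obstacle is quantitative rather than conceptual: all four constants $C_1', C_2', C_3', \eta$ depend on $\Delta(T)$, $\gamma(S,T)$, and $\kappa(S)$, and establishing their existence uniformly over the orange region in \Cref{fig:ExactRecoveryRegion} requires re-executing the grid-based numerical verification described in \Cref{sec:CoeffBoundProof} and \Cref{sec:qboundproof}. As in the exact-recovery proof, monotonicity of the bounds in each of these three parameters reduces the task to verification on a finite grid of triples, which then extends to the entire continuous region.
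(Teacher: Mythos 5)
Your high-level plan matches the paper's: solve the interpolation system with right-hand side $(e_j,0)$, reparametrize into bumps and waves, control $\alpha^{(j)}-e_j$ and $\beta^{(j)}$ via \Cref{lem:Schur}, then verify the remaining properties numerically. However, there is a concrete gap in how you establish Property~2 near the spikes $t_l$ with $l\neq j$. You propose to apply \Cref{lem:GenericQBounds} as stated and to invoke \Cref{lem:Regions} on every inter-spike interval, but both fail in that regime. The absolute bound~\eqref{eq:GQBoundAbsB} in \Cref{lem:GenericQBounds} has $\normInf{\alpha}$ as the coefficient of the \emph{nearby} bump term $\smo{|B^{(p)}|}(v)$; since $\normInf{\alpha^{(j)}}\approx 1$ and $\smo{|B|}(v)\approx 1$ for $v$ near zero, this yields a bound on $|Q_j|$ near $t_l$ that exceeds $1$, which is useless. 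What you actually need is a bound whose leading term carries the small factor $\normInf{\alpha^{(j)}-\rho^{(j)}}$ instead of $\normInf{\alpha^{(j)}}$, because $\alpha_l^{(j)}\approx 0$ for $l\neq j$. The paper introduces \Cref{lem:GenericQjBounds} precisely to supply this variant. Your remark that the interpolation values $\delta_{jl}$ ``only tighten the relevant inequalities'' captures the right intuition, but it is not a consequence of \Cref{lem:GenericQBounds} as written; a new bound must be stated and proved.

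Relatedly, \Cref{lem:Regions} assumes $Q(0)=1$ and $Q^{(1)}(0)=0$ and uses concavity to push the function below $1$; it does not apply at $t_l$ with $l\neq j$, where $Q_j(t_l)=0$. The paper therefore splits into two cases: near $t_j$ itself (the $r=1$ case) it reuses the exact-recovery argument via \Cref{lem:Regions}, whereas near $t_l$, $l\neq j$ (the $r=0$ case) it verifies $|Q_j(t)|<1$ \emph{directly} from the modified absolute bound and separately checks that $|Q_j^{(2)}|$ is bounded on $[0,\eta]$ to feed Taylor's theorem for Property~4. Your handling of Properties~1, 3, 5, and 6 is otherwise on target and mirrors the paper's argument.
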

\Cref{lem:RobustCombinationTwo} yields two inequalities that allow us to control the estimation error for the spike at $t_j$.
\begin{restatable}[Proof in
    \Cref{sec:GlobalNoiseBoundsTwoProof}]{corollary}{GlobalNoiseBoundsTwo}
\label{lem:GlobalNoiseBoundsTwo}
Under the conditions of \Cref{lem:RobustCombinationTwo} 
\begin{align}
  \left|\int Q_j(t)\,d(\mu-\hat{\mu})(t)\right| & \leq
  2\normInf{\qj}\bxi\sqrt{|T|},\label{eq:NoiseQjMuDiff}\\ \left|\int
  Q_j(t)\,d\hat{\mu}(t) - \sum_{\hat{t}_k\in\wh{T}:|\hat{t}_k-t_j|\leq
    \eta}\hat{a}_k\right| & \leq
  C'\bxi\sqrt{|T|},\label{eq:NoiseQjSum}
\end{align}
for some $C'>0$ that only depends on $\Delta(T)$, $\gamma(S,T)$,
and $\kappa(S)$.
\end{restatable}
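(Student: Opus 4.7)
The plan is to prove the two inequalities separately, using the two complementary features of $Q_j$ supplied by \Cref{lem:RobustCombinationTwo}: its kernel representation (for~\eqref{eq:NoiseQjMuDiff}) and its pointwise bounds around and away from the support $T$ (for~\eqref{eq:NoiseQjSum}).

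For~\eqref{eq:NoiseQjMuDiff}, I would first substitute the representation $Q_j(t)=\sum_{s_i\in S}\qj_i K(s_i-t)$ and interchange the integral with the finite sum by linearity, obtaining
\begin{equation*}
\int Q_j(t)\,d(\mu-\hat\mu)(t)=\sum_{s_i\in S}\qj_i\brac{(K*\mu)(s_i)-(K*\hat\mu)(s_i)}.
\end{equation*}
Since $\normTwo{z}<\xi\leq\bxi$ shows that $\mu$ is feasible for Problem~\eqref{pr:NoiseRecovery}, and $\hat\mu$ is feasible by assumption, the triangle inequality in $\ell_2$ gives $\sqrt{\sum_{i=1}^n\brac{(K*\mu)(s_i)-(K*\hat\mu)(s_i)}^2}\leq 2\bxi$. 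Applying Cauchy--Schwarz, combined with the fact that $\qj$ is supported on the $2|T|$ samples in $\wt S$ used by the dual-certificate construction of \Cref{sec:ExactRecoveryProof} so that $\normTwo{\qj}\leq\sqrt{2|T|}\,\normInf{\qj}$, then yields the claimed bound up to an absolute constant.

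For~\eqref{eq:NoiseQjSum}, I would invoke \Cref{lem:NoiseAtomic} to write $\int Q_j\,d\hat\mu=\sum_{\hat t_k\in\wh T}Q_j(\hat t_k)\hat a_k$. Partitioning $\wh T$ according to whether $\hat t_k$ is within $\eta$ of $t_j$, within $\eta$ of some other spike $t_l\neq t_j$, or at distance greater than $\eta$ from every element of $T$ (a genuine partition since the minimum separation $\Delta(T)>2\eta$ in our regime), and subtracting the targeted quantity $\sum_{|\hat t_k-t_j|\leq\eta}\hat a_k$, leaves three residual sums. These are bounded respectively by $C_1'\sum(\hat t_k-t_j)^2|\hat a_k|$ via property~3, by $C_1'\sum(\hat t_k-t_l)^2|\hat a_k|$ via property~4, and by $C_2'\sum|\hat a_k|$ via property~5 of \Cref{lem:RobustCombinationTwo}. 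The first two quantities are instances of the quadratic moment already controlled by~\eqref{eq:RobustRecovery_2}, and the third is exactly the off-support mass already controlled by~\eqref{eq:RobustRecovery_3}; both scale as $\bxi\sqrt{|T|}$, so collecting constants produces the desired $C'$.

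The main technical subtlety lies in the $\sqrt{|T|}$ bookkeeping in~\eqref{eq:NoiseQjMuDiff}: it depends critically on $\qj$ having support of size $O(|T|)$ rather than $|S|$, which is the reason the $\ell_2$-to-$\ell_\infty$ conversion does not introduce an undesirable $\sqrt{|S|}$ factor. This is the same phenomenon responsible for the analogous $\sqrt{|T|}$ loss in~\eqref{eq:NoiseQMuDiff} relative to the Plancherel-based arguments of~\cite{support_detection}, and it is the only place where the discrete sampling structure of the problem really constrains the bound; everything else in the argument is a direct substitution of the properties of $Q_j$ into the split integral together with the global estimates from \Cref{sec:RobustRecovery23Proof}.
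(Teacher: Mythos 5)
Your proposal is correct and mirrors the paper's own proof essentially line for line: the same Cauchy--Schwarz plus feasibility argument (using the $O(|T|)$-sized support of $q(j)$ to pass from $\ell_2$ to $\ell_\infty$) for~\eqref{eq:NoiseQjMuDiff}, and the same three-way partition of $\widehat{T}$ by distance to $T$, followed by properties 3--5 of \Cref{lem:RobustCombinationTwo} and the already-established bounds~\eqref{eq:RobustRecovery_2}--\eqref{eq:RobustRecovery_3}, for~\eqref{eq:NoiseQjSum}. (If anything your accounting is slightly more careful: $|\wt{S}|=2|T|$ gives $\normTwo{q(j)}\leq\sqrt{2|T|}\,\normInf{q(j)}$, so there is a $\sqrt{2}$ the paper absorbs silently into the constant.)
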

Applying \Cref{lem:GlobalNoiseBoundsTwo} we have
\begin{align}
  \left|a_j - \sum_{\hat{t}_k\in\wh{T}:|\hat{t}_k-t_j|\leq
    \eta}\hat{a}_k\right| & = \left|\int Q_j(t)\,d(\mu-\hat{\mu})(t) + \int
  Q_j(t)\,d\hat{\mu}(t) - \sum_{\hat{t}_k\in\wh{T}:|\hat{t}_k-t_j|\leq
    \eta}\hat{a}_k\right|\\ & \leq 2\normInf{\qj}\bxi\sqrt{|T|} +
  C'\bxi\sqrt{|T|}\\ & \leq C\bxi\sqrt{|T|},
\end{align}
for some $C>0$ by Property 6 in \Cref{lem:RobustCombinationTwo}.  This concludes the proof.

\section{Proof of Exact Recovery with Sparse Noise (\Cref{thm:SparseNoiseRecovery,thm:SparseNoiseRecoveryRicker})}
\label{sec:SparseNoise}
We again assume, without loss of
generality, that $\sigma=1$. Auxiliary Mathematica code to perform the computations needed for the proof is available online\footnote{\url{http://www.cims.nyu.edu/~cfgranda/scripts/deconvolution_proof.zip}}.
\subsection{Dual Certificate}
The proof is based on the construction of a dual
certificate that guarantees exact recovery of the signal $\mu$ and of
the sparse noise component $z$.
\begin{restatable}[Proof in \Cref{sec:SparseDualCertProof}]{proposition}{SparseDualCert}
\label{prop:SparseDualCert}
Let $T \subseteq \RR$ be the nonzero support of $\mu$ and
$\Noi\subseteq S$ be the nonzero support of $w$.  If for any sign
patterns $\rho\in\{-1,1\}^{|T|}$ and $\rho'\in\{-1,1\}^n$ there exists a
\textit{dual combination} $Q$ of the form
\begin{align}
Q(t) := \sum_{i=1}^n q_i K(s_i-t)
\end{align}
satisfying
\begin{alignat}{2}
  Q(t_j) &= \rho_j,\qquad &&\forall t_j\in
  T \label{eq:SparseDualRho}\\ 
  |Q(t)| &<1, \qquad &&\forall t\in
  T^c,\label{eq:SparseDualQBound}\\ q_l &= \lambda\rho'_l,\qquad
  &&\forall s_l \in\Noi, \label{eq:SparseDualRhoP}\\ |q_l|&<
  \lambda,\qquad &&\forall s_l \in\Noi^c \label{eq:SparseqBound},
\end{alignat}
then $(\mu,w)$ is the unique solution to
problem~\eqref{pr:SparseNoiseRecovery}.  Here $\Noi^c:=S\setminus\Noi$.
\end{restatable}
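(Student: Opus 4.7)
My plan is to run a standard convex-duality argument using the dual combination $Q$ as a certificate. Fix the specific sign patterns $\rho_j := \sign(a_j)$ for $t_j \in T$ and $\rho'_l := \sign(w_l)$ for $s_l \in \Noi$, and invoke the hypothesis to obtain the dual combination $Q$ and coefficients $q$. The central identity is that, for any pair $(\tilde\mu, \tilde w)$ feasible for Problem~\eqref{pr:SparseNoiseRecovery}, both $(\mu, w)$ and $(\tilde\mu, \tilde w)$ satisfy the measurement equations, so interchanging the order of summation in $\sum_{i=1}^n q_i y_i$ and using $Q(t) = \sum_i q_i K(s_i - t)$ yields
\begin{equation*}
\int Q \, d\mu + \sum_{i=1}^n q_i w_i \;=\; \sum_{i=1}^n q_i y_i \;=\; \int Q \, d\tilde\mu + \sum_{i=1}^n q_i \tilde w_i.
\end{equation*}

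Next I would evaluate the leftmost expression exactly: since $\mu$ is supported on $T$, condition \eqref{eq:SparseDualRho} gives $\int Q \, d\mu = \sum_j \rho_j a_j = \normTV{\mu}$, and since $w$ is supported on $\Noi$, condition \eqref{eq:SparseDualRhoP} gives $\sum_i q_i w_i = \lambda \sum_{l\in\Noi}\rho'_l w_l = \lambda \normOne{w}$. I would then upper-bound the rightmost expression using the boundedness conditions \eqref{eq:SparseDualQBound} and \eqref{eq:SparseqBound}: $|Q(t)| \leq 1$ gives $\int Q\, d\tilde\mu \leq \normTV{\tilde\mu}$, while $|q_i| \leq \lambda$ gives $\sum_i q_i \tilde w_i \leq \lambda \normOne{\tilde w}$. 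Chaining these inequalities yields $\normTV{\mu} + \lambda \normOne{w} \leq \normTV{\tilde\mu} + \lambda \normOne{\tilde w}$, establishing optimality of $(\mu, w)$.

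For uniqueness, suppose $(\tilde\mu, \tilde w)$ is another optimum, so that both upper bounds above must be saturated. The strict inequality $|Q(t)| < 1$ for $t \in T^c$ forces $\tilde\mu$ to be supported in $T$, and similarly $|q_l| < \lambda$ for $s_l \in \Noi^c$ forces $\tilde w$ to be supported in $\Noi$; moreover, the amplitudes at common support points must share signs with those of $\mu$ and $w$. The main obstacle is the final step: showing that the difference $(h, v) := (\tilde\mu - \mu, \tilde w - w)$, which inherits supports in $T$ and $\Noi$ respectively, must vanish purely by feasibility. At each clean sample $s_i \in S \setminus \Noi$ one has $v_i = 0$, so $\sum_{t_j \in T} h_j K(s_i - t_j) = 0$; I would close this step by invoking the bump-wave invertibility machinery of \Cref{lem:Schur} applied to the two clean samples close to each spike guaranteed by the sample structure in \Cref{thm:SparseNoiseRecovery,thm:SparseNoiseRecoveryRicker}, which forces $h = 0$. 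Feasibility at the corrupted samples then immediately yields $v_i = -(K*h)(s_i) = 0$, completing the proof.
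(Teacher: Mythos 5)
Your optimality argument and the support-containment step via saturation match the paper's second case essentially verbatim. The problem is the final step, where you try to show that the difference $h := \tilde\mu - \mu$ (now supported in $T$) and $v := \tilde w - w$ (supported in $\Noi$) must vanish. You invoke ``bump-wave invertibility machinery of \Cref{lem:Schur}'' together with the sample-structure assumptions from \Cref{thm:SparseNoiseRecovery,thm:SparseNoiseRecoveryRicker}. This is a genuine gap for two reasons. First, \Cref{prop:SparseDualCert} is stated and used as a general-purpose certificate result, independent of the grid and sample-proximity hypotheses in those theorems; the proof cannot legitimately import them. Second, \Cref{lem:Schur} establishes invertibility of the $2|T|\times 2|T|$ block system $\begin{bmatrix}\BB & \WW\\ \DBB & \DWW\end{bmatrix}$ built from bumps and waves (equivalently, from $K$ and $K^{(1)}$ evaluated at the $\wt S$-samples and $T$-nodes), which is not the same matrix as the forward map $h \mapsto (K*h)(s_i)$, $s_i\in S\setminus\Noi$, and does not directly imply its injectivity on measures supported on $T$.

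The missing idea is that the proposition's hypothesis quantifies over \emph{arbitrary} sign patterns, and this is exactly what closes the uniqueness step without any appeal to invertibility of the measurement map. Once you know $h$ is supported on $T$ and $v$ on $\Noi$, write $h=\sum_{t_j\in T} b_j\delta_{t_j}$, re-invoke the hypothesis with $\rho := \sign(b)$ and $\rho'_l := \sign(v_l)$ (arbitrary in $\{-1,1\}$ where the entry is zero), and let $Q,q$ be the resulting certificate. Feasibility of both pairs gives $(K*h)(s_i)+v_i=0$ for all $i$, so
\begin{equation}
  0 \;=\; \sum_{i=1}^n q_i\bigl((K*h)(s_i)+v_i\bigr) \;=\; \int Q\,dh + q^{\mathsf T}v \;=\; \normTV{h} + \lambda\normOne{v},
\end{equation}
where the last equality uses $Q(t_j)=\sign(b_j)$ on $T$ and $q_l=\lambda\sign(v_l)$ on $\Noi$. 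Hence $h=0$ and $v=0$. This is the paper's argument; your proof is correct up to this point and only needs this replacement for the final step.
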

\bdb{The form of this dual certificate comes from the Lagrange dual to
  problem \eqref{pr:SparseNoiseRecovery} given below:
  \begin{equation}
    \label{pr:SparseNoiseRecoveryDual}
    \begin{aligned}
      \underset{q}{\op{maximize}} \quad& q^Ty\\
      \text{subject to} \quad&
      \sup_{t}\left|\sum_{i=1}^n q_iK(s_i-t)\right|\leq 1,\\
      &\normInf{q}\leq \lambda.
    \end{aligned}
  \end{equation}
}

Condition~\eqref{eq:SparseDualRhoP} implies that any valid dual
combination is of the form $Q(t) = \Qaux(t) + 
R(t)$ where
\begin{align}
\Qaux(t) & := \sum_{s_i \in \Noi^c} q_i K(s_i-t), \\ 
\RQ(t) & := \lambda \sum_{s_i \in \Noi} \rho'_i K(s_i-t),
\end{align}
To obtain a valid certificate, we must construct $\Qaux$ so that it
interpolates $\rho_i - \RQ(t_i)$ for every $t_i \in T$, while
ensuring that \eqref{eq:SparseDualQBound} and
\eqref{eq:SparseqBound} hold. This is reminiscent of the certificate
constructed in~\cite{fernandez2016demixing} to provide exact recovery
guarantees for spectral super-resolution in the presence of sparse
corruptions. The crucial difference is that
in~\cite{fernandez2016demixing} $\RQ$ is a combination of sinusoids
with random signs. As a result, its magnitude is quite small and the
certificate can be constructed by choosing the coefficients of $\Qaux$
so that $\Qaux(t_i)= \rho_i - R(t_i)$ and $\Qaux(t_i)= - R^{(1)}(t_i)$
for all $t_i \in T$. In our case, such a construction satisfies
condition~\eqref{eq:SparseDualRho}, but 
violates either~\eqref{eq:SparseDualQBound}
or~\eqref{eq:SparseqBound}. 
More concretely, suppose we let $\lambda\geq
1$.  Then \eqref{eq:SparseDualQBound} does not hold at samples that are distant from any spike. If we instead choose
$\lambda<1$, it is usually not possible to satisfy \eqref{eq:SparseDualRho} and
\eqref{eq:SparseqBound} simultaneously at samples that are close to a spike.
In order to construct a valid certificate
it is necessary to cancel the effect of $\RQ$ on
$T^c$, as well as on $T$. This requires a new proof technique
described in the following section.
\subsection{Interpolation with Cancellations}
Under the assumptions of \Cref{thm:SparseNoiseRecovery}, the grid of
samples contains two disjoint subsets $\IS$ and $\CS$ which do not
overlap with $\Noi$, such that $\IS$ contains the two samples that are closest to each element
  in the support $T$ and $\CS$ contains the two samples that are immediately adjacent to
  each corrupted sample. We decompose $\Qaux$ into two
terms $\Qaux := \QI + \QC$, where
\begin{align}
\QI(t) & := \sum_{s_i \in \IS} q_i K(s_i-t), \\ \QC(t) & :=
\sum_{s_i \in \CS} q_i K(s_i-t).
\end{align}
The role of $\QC$ is to cancel out $\RQ$ so that
condition~\eqref{eq:SparseDualQBound} is not violated, whereas the
role of $\QI$ is to ensure that $\eqref{eq:SparseDualRho}$ holds after
the cancellation. We build $\QC$ as a superposition of shifted copies
of the bump defined by \cref{eq:bump}, each of which neutralize one
term of $\RQ$:
\begin{align}
\QC(t) := - \lambda \sum_{s_i\in \Noi} \rho'_i
B_{s_i}(t,s_{i-1},s_{i+1}).
\end{align}

\begin{figure}[t]
\centering \includegraphics{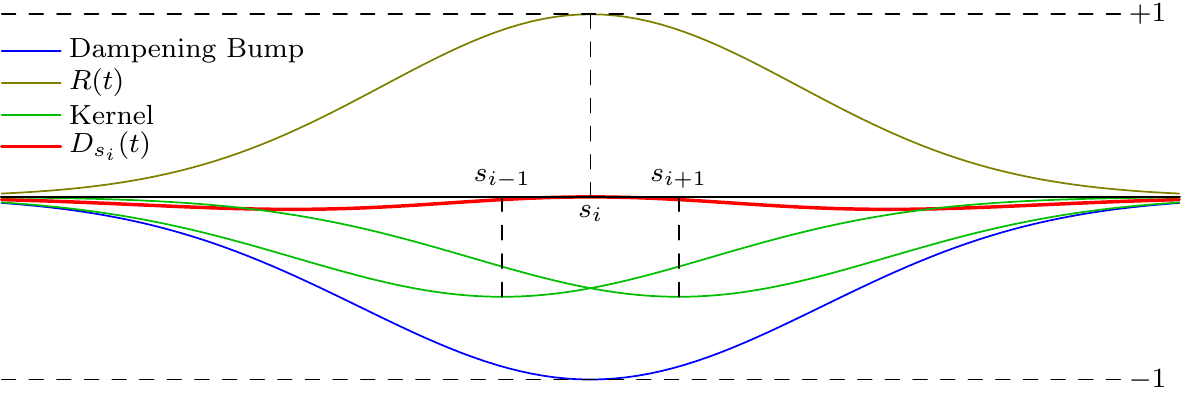}
\caption{Example of dampened kernel $D_{s_i}$ when the convolution kernel $K$ is Gaussian. A shifted copy of $K$ at $s_i$ is dampened by subtracting a bump formed using shifted copies of $K$ centered at two adjacent clean sample locations $s_{i-1}$ and $s_{i+1}$.}
\label{fig:dampen}
\end{figure}

As a result, $\QC + \RQ$ can be decomposed into a sum of
\emph{dampened kernels} defined by
\begin{equation}
  D_{s_i}(t,\lambda,\rho'_i) :=
  \lambda\rho'_i(K(s_i-t)-B_{s_i}(t,s_{i-1},s_{i+1})).
\end{equation}
$D_{s_i}$ is depicted in \Cref{fig:dampen}.
Let us define $\RC:= \QC + \RQ$, the sum of all dampened kernels,
\begin{equation}
  \RC (t) = \sum_{s_i \in \Noi} D_{s_i}(t,\lambda,\rho'_i).
\end{equation}
We choose the coefficients of $\QI$ so that $Q$ interpolates the sign
pattern $\rho$ and has zero derivative on $T$, analogously to the
system~\eqref{eq:Interpolation}:
\begin{equation}
\label{eq:Interpolation_sparse}
\begin{aligned}
\QI(t_i) & = \rho_i - \RC(t_i),\\ \QI^{(1)}(t_i) & = -\RC^{(1)}(t_i),
\quad \text{for all $t_i\in T$}.
\end{aligned}
\end{equation}

\begin{figure}[tp]
\centering
\begin{tabular}{FG}
   & Coefficients\\ \\
  \includegraphics{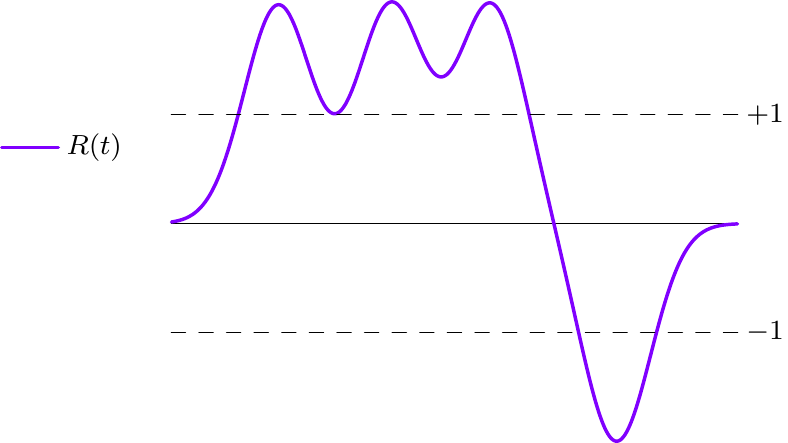} & \includegraphics{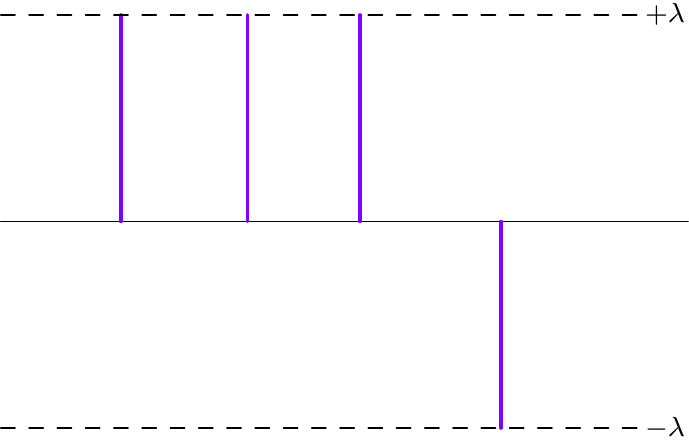}\\
  \includegraphics{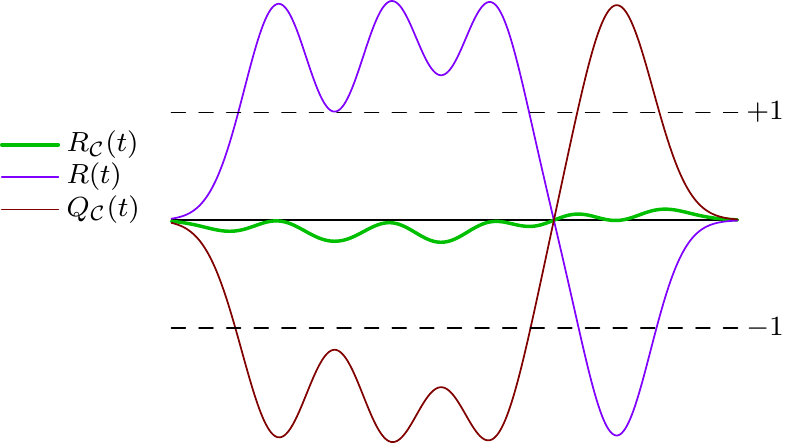}& \includegraphics{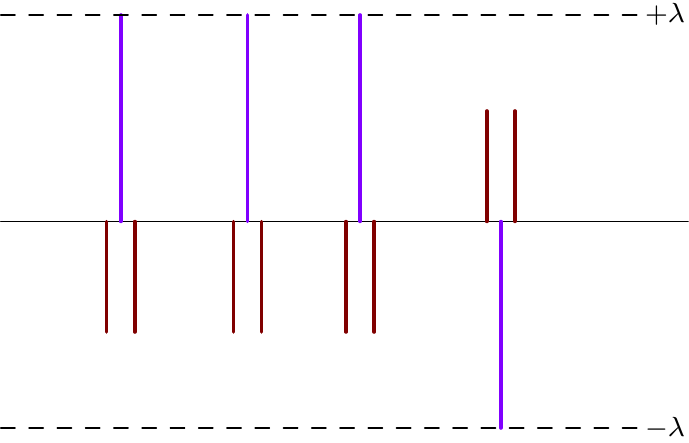}\\
  \includegraphics{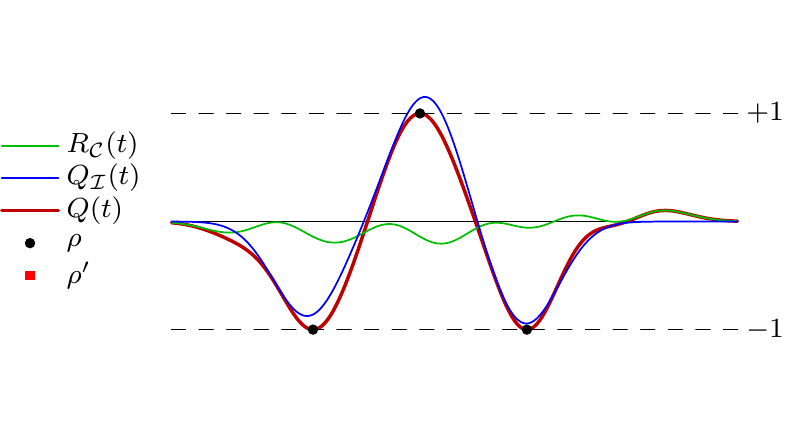} & \includegraphics{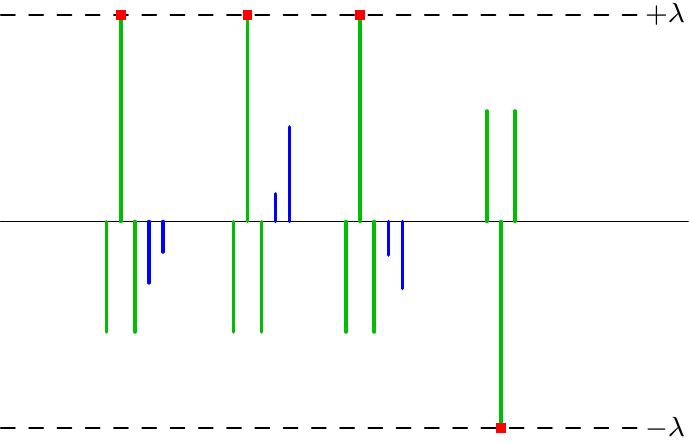}
\end{tabular}
\caption{Construction of the certificate to prove exact recovery in the presence of sparse noise. The top row illustrates the term in the dual combination that is fixed by the sparse noise. The second row shows how this term can be cancelled by forming dampened kernels using the adjacent clean samples. Finally on the third row two additional samples surrounding the spike are used to ensure that the construction interpolates the sign pattern of the spikes and has zero derivative on their support. This produces a valid certificate.}
\label{fig:sparsegaussian}
\end{figure}

This construction is depicted in \Cref{fig:sparsegaussian}. The same techniques used in \Cref{sec:CoeffBoundProof} to prove invertibility apply here, giving the following lemma.
\begin{lemma}[Proof in
  \Cref{sec:SparseCoeffBoundProof}] \label{lem:SparseCoeffBound} Under
the assumptions of \Cref{thm:SparseNoiseRecovery} for the Gaussian
kernel, and \Cref{thm:SparseNoiseRecoveryRicker} for the Ricker wavelet, the system of
equations defined in \eqref{eq:Interpolation_sparse} has a unique
solution.
\end{lemma}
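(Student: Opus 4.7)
The plan is to mirror the argument used in \Cref{sec:CoeffBoundProof}, since the system \eqref{eq:Interpolation_sparse} has exactly the same unknowns on the left-hand side as \eqref{eq:Interpolation}; only the right-hand side changes. First I would reparametrize $\QI$ using bumps and waves centered at the spike locations,
\begin{equation*}
  \QI(t) = \sum_{t_i\in T}\alpha_i B_{t_i}(t,\tilde{s}_{i,1},\tilde{s}_{i,2}) + \beta_i W_{t_i}(t,\tilde{s}_{i,1},\tilde{s}_{i,2}),
\end{equation*}
where $\tilde{s}_{i,1},\tilde{s}_{i,2}\in\IS$ are the two samples closest to $t_i$. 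Under the assumptions of \Cref{thm:SparseNoiseRecovery,thm:SparseNoiseRecoveryRicker}, the samples lie on a regular grid of step size $\wid$ and $\IS$ contains precisely the two samples bracketing each spike, so $|\tilde{s}_{i,k}-t_i|\leq\wid$ and $|\tilde{s}_{i,1}-\tilde{s}_{i,2}|\geq\wid$. This yields sample proximity $\gamma(S,T)\leq\wid$ and sample separation $\kappa(S)\geq\wid$, with $\wid\in[\botwidG,\topwidG]$ for the Gaussian and $\wid\in[\botwidR,\topwidR]$ for the Ricker (in units of $\sigma$).

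Substituting this reparametrization into \eqref{eq:Interpolation_sparse} produces the block-matrix equation
\begin{equation*}
  \MAT{\BB&\WW\\\DBB&\DWW}\MAT{\alpha\\\beta} = \MAT{\rho-\RC|_T\\-\RC^{(1)}|_T},
\end{equation*}
with the blocks defined exactly as in \eqref{eq:block_matrices}. The left-hand operator is identical to the one in \eqref{eq:BlockEquation}, so invertibility is independent of $\RC$ and follows from the Schur-complement bounds in \Cref{lem:Schur}: it suffices to verify $\|\II-\DWW\|_\infty<1$ and $\|\II-\CCC\|_\infty<1$ for the relevant parameter ranges.

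To verify these inequalities I would follow the recipe of \Cref{sec:CoeffBoundProof}. Using \Cref{lem:TailDecay} to handle the tails and \Cref{lem:Piecewise} together with \Cref{cor:Monotonize} to produce piecewise-constant upper bounds on $\smo{|B|}$, $\smo{|W|}$, $\smo{|B^{(1)}|}$, $\smo{|W^{(1)}|}$, I plug the resulting numbers into \cref{eq:blocksumsB,eq:blocksumsW,eq:blocksumsDB,eq:blocksumsDW} and then into \eqref{eq:SchurIC}, \eqref{eq:Neumann1}, \eqref{eq:Neumann2}. By the monotonicity observation at the end of \Cref{sec:CoeffBoundProof}, it is enough to verify the norm inequalities for one corner of the parameter box; invertibility then extends to all $(\Delta(T),\gamma(S,T),\kappa(S))$ with larger separation, smaller proximity and larger sample separation.

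The main obstacle is purely computational: the sample-proximity window used in the sparse-noise theorems (roughly $[\botwidG,\topwidG]\sigma$ for Gaussian and $[\botwidR,\topwidR]\sigma$ for Ricker) is different from the regimes plotted in \Cref{fig:MatrixBounds}, so the corresponding numerical bounds must be recomputed. However, these regimes use a larger minimum separation ($\DeltaG\sigma$ and $\DeltaR\sigma$) and much smaller sample proximity than the exact-recovery case, which makes the bumps nearly equal to $K$ and the waves very small; this strongly suppresses $\|\II-\BB\|_\infty$, $\|\WW\|_\infty$, $\|\DBB\|_\infty$ and $\|\II-\DWW\|_\infty$, so the required inequalities should be satisfied comfortably. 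I would simply tabulate the bounds on a fine grid of $(\Delta(T),\wid)$ in the relevant ranges and appeal to \Cref{lem:Schur} to conclude that \eqref{eq:Interpolation_sparse} has a unique solution.
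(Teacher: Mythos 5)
Your proposal is correct and follows essentially the same approach as the paper: reparametrize $\QI$ into bumps and waves, observe that the left-hand block matrix coincides with the one in \eqref{eq:BlockEquation} so that invertibility is independent of the right-hand side, and invoke the Schur-complement criterion (\Cref{lem:Schur}, realized in the paper via its variant \Cref{lem:SparseSchur}) with $\gamma(S,T)\le\wid$ and $\kappa(S)\ge\wid$. The only small refinement is that no fresh numerics are strictly necessary for this particular lemma: since $\wid\in[\botwidG\sigma,\topwidG\sigma]$ (resp.\ $[\botwidR\sigma,\topwidR\sigma]$) and $\Delta(T)\ge\DeltaG\sigma$ (resp.\ $\DeltaR\sigma$) place the triple inside the already-verified exact-recovery region, the paper simply notes this containment and lets monotonicity of the block-norm bounds carry the invertibility over, rather than recomputing.
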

Once $\QI$ is uniquely determined, the two lemmas below can be used to
show conditions \eqref{eq:SparseDualQBound} and
\eqref{eq:SparseqBound} hold.  To show $\abs{Q}$ is bounded, we write
$Q(t)=\QI(t)+\RC(t)$ and combine bounds on $\RC$ with the techniques
used in the proof of noiseless exact recovery to obtain
\Cref{lem:SparseQBound}.  Having proven
$|Q(t)|<1$ for $t\in T^c$, we then use this fact to prove a bound on $q_i$ for
$s_i\in\II$.  We defer the details to the appendix.
\begin{lemma}[Proof in \Cref{sec:SparseQBoundProof}]
\label{lem:SparseQBound}
Under the assumptions of \Cref{thm:SparseNoiseRecovery} for the Gaussian
kernel, and \Cref{thm:SparseNoiseRecoveryRicker} for the Ricker
wavelet, the dual combination corresponding to the solution
of \eqref{eq:Interpolation_sparse} satisfies $\abs{ Q(t) } <1$ for
all $t\in T^c$.
\end{lemma}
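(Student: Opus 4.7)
The plan is to adapt the proof technique of Section~\ref{sec:qboundproof} to the decomposition $Q = \QI + \RC$. First, I would reparametrize $\QI$ in the bump/wave basis, writing $\QI(t) = \sum_{t_i\in T}\alpha_iB_i(t)+\beta_iW_i(t)$ where the bumps and waves are centered at the spikes using the pair of samples in $\IS$ closest to each spike. By \Cref{lem:SparseCoeffBound} the underlying interpolation system \eqref{eq:Interpolation_sparse} is invertible, but now the right-hand side of the Schur-complement argument involves the values $\rho_i-\RC(t_i)$ and $-\RC^{(1)}(t_i)$ rather than just $\rho_i$ and $0$. I would bound $\normInf{\RC|_T}$ and $\normInf{\RC^{(1)}|_T}$ using the minimum separation $\Delta(\Noi)\geq \DeltaG\sigma$ (or $\DeltaR\sigma$) and the rapid decay of the dampened kernels, then use a variant of \Cref{lem:Schur} to obtain explicit numerical bounds on $\normInf{\alpha}$, $\normInf{\alpha-\rho}$, and $\normInf{\beta}$.

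Next, I would establish the analog of \Cref{lem:GenericQBounds} for the sparse-noise setting. For a generic interval between two adjacent spikes, $\QI$ admits the same style of bound as in the noiseless case, with the magnified coefficient bounds replacing $\|\alpha\|_\infty$ and $\|\beta\|_\infty$. Separately, I would bound $|\RC^{(p)}(t)|$ for $p=0,1,2$ by a monotone, sample-independent decay rate: since each $D_{s_i}$ is the difference between a kernel and a bump matching its value and derivative at $s_i$ built from samples at distance $\tau$, a Taylor expansion shows $D_{s_i}^{(p)}$ decays faster than $K^{(p)}$ at infinity, and its magnitude near $s_i$ is controlled by $\tau^2$ times second-order quantities. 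Combining the two bounds gives a pointwise control of $|Q|$, $Q^{(1)}$, and $Q^{(2)}$.

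I would then invoke \Cref{lem:Regions} (or a trivial adaptation) to conclude. On the subinterval neighboring a spike at $t_j$ (say with sign $+1$), I must show $Q^{(2)}<0$; since the dominant bump $B_j$ is concave at $t_j$ with $\alpha_j\approx 1$, and all other bumps, waves, and dampened kernels contribute small corrections by the tail bounds of \Cref{lem:TailDecay} and the specific range of $\tau$, this follows from piecewise-constant upper bounds of the type in \Cref{lem:Piecewise} and \Cref{cor:Monotonize}. On the intermediate region I would show $Q^{(1)}<0$, and on the far region that $|Q|$ is strictly bounded by $1$, again by combining the $\QI$ bounds with the $\RC$ bounds. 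Because both $\tau$ and the spike/noise spacings are restricted to bounded intervals, the verification reduces to evaluating a finite family of sharp numerical bounds, exactly as in Sections~\ref{sec:CoeffBoundProof} and~\ref{sec:qboundproof}, using the same monotonicity argument to convert pointwise verification at a finite set of triples into verification on a continuous region.

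The main obstacle will be the near-cancellation between $\QI$ and $\RC$ at points close to $\Noi$: when a sample $s_i$ is corrupted, the dampened kernel $D_{s_i}$ is non-negligible in a neighborhood of $s_i$, and the signs $\rho'$ are adversarial, so the bounds on $|\RC|$ must accumulate contributions from many sign patterns while still leaving enough room for $|Q|<1$ to hold. This forces $\tau$ to be neither too small (so that the bump in $D_{s_i}$ cancels $K(s_i-\cdot)$ well without blowing up the bump coefficients) nor too large (so that the interpolation system for $\QI$ remains well-conditioned and the bumps remain truly bump-shaped, which by \Cref{lem:BumpExists} requires $|\tilde{s}_{i,j}-t_i|<1$ for Ricker). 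The admissible window $[\botwid\sigma,\topwid\sigma]$ reflects this trade-off, and tightening the numerical bounds on the dampened kernels so that the region of validity is nonempty is the crux of the computation.
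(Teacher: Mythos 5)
Your proposal matches the paper's proof essentially step-for-step: the decomposition $Q = \QI + \RC$ is the paper's $Q = \sB + \sW + \sD$, the coefficient bounds on $\alpha,\beta$ come from the sparse Schur-complement lemma (\Cref{lem:SparseSchur}) via \Cref{lem:SparseCoeffBound}, the $\QI$ part is controlled by \Cref{lem:GenericQBounds} exactly as in the noiseless case, the $\RC$ part by the monotone dampened-kernel bounds (\Cref{lem:VBounds}), and the conclusion follows from \Cref{lem:Regions} plus interval-arithmetic verification over the admissible $(\Delta(T),\wid)$ range. One small imprecision: the Taylor-matching at $s_i$ does make $D_{s_i}$ small \emph{near} $s_i$ (order $\wid^2$), but the dampened kernel does not decay faster than $K^{(p)}$ at infinity---its tail is dominated by the copies of $K$ centered at $s_{i\pm1}$, which on the relevant side decay at the same Gaussian rate (indeed, more slowly than $K(s_i-\cdot)$ in the constant in the exponent); the paper's \Cref{lem:VTailDecay} only needs $D^{(p)}$ to obey the same $O(t^4 e^{-t^2/2+t})$ tail bound as the bumps and waves, which is all that is required for the argument to close.
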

\begin{lemma}[Proof in \Cref{sec:SparseqBoundProof}]
\label{lem:SparseqBound}
Under the assumptions of \Cref{thm:SparseNoiseRecovery} for the Gaussian
kernel, and \Cref{thm:SparseNoiseRecoveryRicker} for the Ricker
wavelet, the dual combination corresponding to the solution
of \eqref{eq:Interpolation_sparse} satisfies $\abs{ q_l } <\lambda$
for all $l\in \Noi^c$.
\end{lemma}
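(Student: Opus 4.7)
The plan is to exploit the disjoint structure of the dual combination $Q = \QI + \QC + R$: the three pieces have coefficients supported on the pairwise disjoint sample subsets $\IS$, $\CS$, and $\Noi$, respectively. For any $s_l \in \Noi^c$, the coefficient $q_l$ therefore equals a coefficient of $\QI$ (if $s_l \in \IS$), a coefficient of $\QC$ (if $s_l \in \CS$), or is trivially zero otherwise. I would treat the two nontrivial cases separately.

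If $s_l \in \CS$, then $s_l$ is adjacent to some noisy sample $s_i \in \Noi$. The minimum-separation assumption $\Delta(\Noi) \geq \DeltaG\sigma$ combined with $\wid \leq \topwidG\sigma$ implies that $s_l$ is adjacent to exactly one noisy sample, so $q_l = -\lambda \rho'_i b_{i,k}$ for some $k\in\{1,2\}$, where $b_{i,k}$ is given by the closed-form expression in \Cref{lem:BumpExists} with sample arguments $\pm \wid$. By the evenness of $K$ this simplifies to $b_{i,k} = 1/(2K(\wid))$ for the Gaussian (and the analogous expression for the Ricker wavelet). Since $\wid$ ranges over a short compact interval, a direct numerical check shows $|b_{i,k}| < 1$, hence $|q_l| < \lambda$.

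If $s_l \in \IS$, then $s_l = \tilde{s}_{j,k}$ for one (or at most two) spikes $t_j$, and $q_l$ is the corresponding sum of bump-plus-wave contributions $\alpha_j b_{j,k} + \beta_j w_{j,k}$ coming from the reparametrization of $\QI$ as $\sum_j \alpha_j B_{t_j} + \beta_j W_{t_j}$. The coefficients $b_{j,k}, w_{j,k}$ are given explicitly by \Cref{lem:BumpExists} in terms of the local sample geometry at scale $\wid$. To bound $\alpha$ and $\beta$, I would apply the Schur-complement machinery of \Cref{lem:Schur} (really the sparse variant \Cref{lem:SparseSchur}) to the system \eqref{eq:Interpolation_sparse}, whose right-hand side is $(\rho - \RC|_T, -\RC^{(1)}|_T)$ rather than $(\rho, 0)$. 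This produces bounds of the form $\|\alpha\|_\infty \leq \|\CCC^{-1}\|_\infty(1 + \|\RC\|_{\ell^\infty(T)})$, with an analogous bound on $\|\beta\|_\infty$ that also picks up $\|\RC^{(1)}\|_{\ell^\infty(T)}$. The terms involving $\RC$ are controlled by writing $\RC$ as a sum of dampened kernels $D_{s_i}$ centered at the distinct noisy samples; the condition $\IS \cap \CS = \emptyset$ forces every spike to be at distance at least $2\wid$ from every noisy sample, so the tail-decay bounds of \Cref{lem:TailDecay} applied to $K$ and $B_{s_i}$, combined with the noise separation $\Delta(\Noi) \geq \DeltaG\sigma$, yield quantitative control. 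Combining these ingredients with the explicit form of $b_{j,k}, w_{j,k}$ reduces the claim $|q_l| < 2$ to a finite numerical check over $\wid \in [\botwidG\sigma, \topwidG\sigma]$ (respectively the Ricker interval) and $\Delta(T), \Delta(\Noi) \geq \DeltaG\sigma$ (respectively $\DeltaR\sigma$).

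The main obstacle will be the $\IS$ case: the wave coefficient $w_{j,k}$ behaves like $1/\wid$ as $\wid \to 0$, so the product $\beta_j w_{j,k}$ threatens to blow up unless one simultaneously shows that $\beta_j$ is quantitatively small, of order $\wid$. Propagating the smallness of $\RC^{(1)}|_T$ through the Schur complement to obtain this uniform control is the delicate step, and essentially explains why the theorem restricts $\wid$ to a short interval bounded away from zero. Once the required bounds on $\alpha, \beta, \RC, \RC^{(1)}$ are tabulated numerically, the final inequality $|q_l| < 2$ follows by a finite computation in the same spirit as those used in \Cref{sec:CoeffBoundProof} and \Cref{sec:qboundproof}.
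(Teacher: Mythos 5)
Your handling of the $\cC$ case agrees with the paper: \Cref{lem:VqBound} shows $|b_1|,|b_2|<1$ for $\wid\leq 0.6$, so $|q_l|=\lambda|b_{i,k}|<\lambda$ by construction.

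The gap is in the $\IS$ case. Your direct bound $|q_l|\leq|\alpha_j||b_{j,k}|+|\beta_j||w_{j,k}|$ cannot close with the available estimates. The Schur-complement machinery gives a uniform-in-$\wid$ bound $\normInf{\beta}\leq\betaG\approx 0.12$ (Gaussian), not a bound of order $\wid$, and the wave coefficients satisfy $|w_{j,k}|\approx e^{\wid^2/2}/\wid$, so the wave contribution alone reaches $\waveG\approx 1.83$ at $\wid=\botwid$. Adding the bump contribution $|\alpha_j||b_{j,k}|\lesssim\alphaG\approx 1.09$ overshoots $\lambda=2$. You identify this blow-up and propose to rescue the argument by proving $\beta_j=O(\wid)$, but nothing in \Cref{lem:SparseSchur} produces that, and there is no obvious reason it should hold: $\beta_j\approx -\RC^{(1)}(t_j)$, a quantity controlled by the tail decay of the dampened kernels at distance $\Delta(\Noi)$, not by $\wid$. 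So the direct route genuinely fails.

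The paper avoids this with a two-sided, asymmetric argument exploiting the fact that $|Q(t)|<1$ on $T^c$ is already proved (\Cref{lem:SparseQBound}). Place the spike at $0$ with sign $+1$ and flanking samples $s_i\leq 0\leq s_{i+1}$. \Cref{lem:SparseqPos} uses the sign structure of the bump/wave coefficients from \Cref{lem:BumpExists}: for this configuration $b_{i,1},b_{i,2}\geq 0$ and $w_{i,1}\leq 0\leq w_{i,2}$, and combined with $\alpha_j\geq\alphajG>0$ this yields the one-sided bound $\min(q_i,q_{i+1})\geq -\waveG>-\lambda$ (the bump term only helps the lower bound, so one pays only the wave term). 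The upper bound is then proved by contradiction: assuming, say, $q_i\geq\lambda$, the expansion $Q(s_i)=q_iK(0)+q_{i+1}K(\wid)+(\text{bumps/waves/dampened kernels from other locations})$, bounded below via \Cref{lem:GenericQBounds,lem:VBounds}, forces $Q(s_i)\geq\QsiG>1$, contradicting \Cref{lem:SparseQBound}. This contradiction route is what replaces the "delicate step" you flag; without it, the numbers do not close.
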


\section{Numerical experiments}
\label{sec:numerical}
\subsection{Conditioning of Convolution Measurements with a Fixed Support}
\label{sec:conditioning}

\Cref{fig:minsep_illposed} shows an example of two signals with
substantially different supports that produce almost the same
measurements when convolved with a Gaussian or Ricker
kernel. In this section we investigate this phenomenon. We fix a set of points $T_D :=
\keys{t_1,t_2, \ldots,t_m} \subseteq \R$ with minimum separation
$\Delta_0/2$. This represents the support of the difference between
two signals with minimum separation $\Delta_0$, the first
corresponding to the odd locations $t_1, t_3, t_5, \ldots$ and the
second to the even locations $t_2,t_4,t_6,\ldots$ as in the example in \Cref{fig:minsep_illposed}. We denote the amplitudes of the difference
by $\tilde{a}\in\RR^m$, which is normalized so that $\normTwo{\tilde{a}}=1$.
The difference between the
convolutions of the two signals at a fixed sample $s_i$ is given by $ \sum_{t_j\in T_D}\wt{a}_j K(s_i-t_j)$.

\begin{figure}[tp]
\centering
\begin{tabular}{ECC}
  \vspace{.25cm}& Gaussian & Ricker\\ Smallest &
  \includegraphics{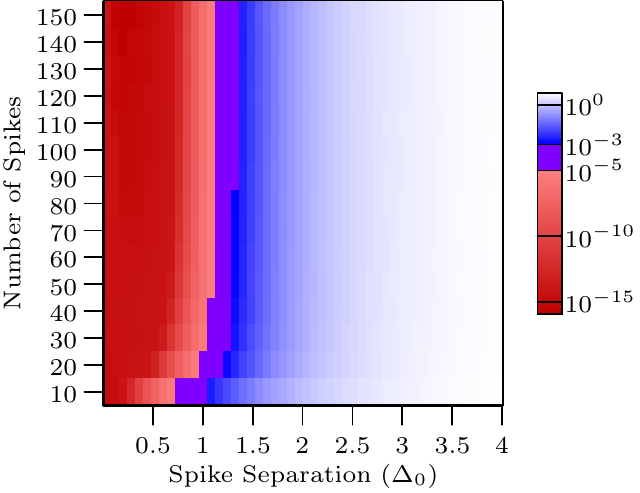}&
  \includegraphics{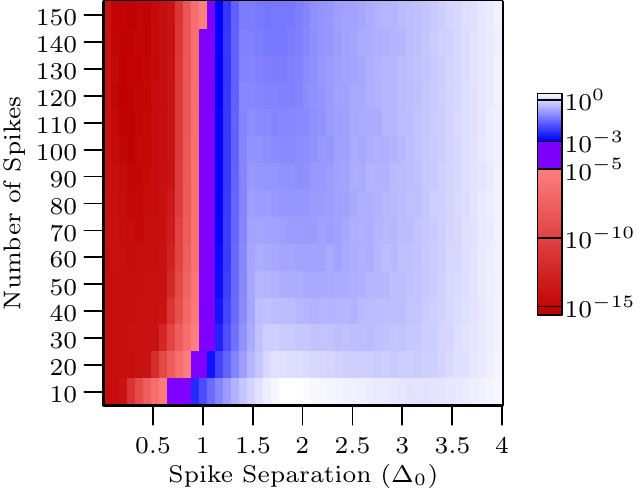}\\ Middle &
  \includegraphics{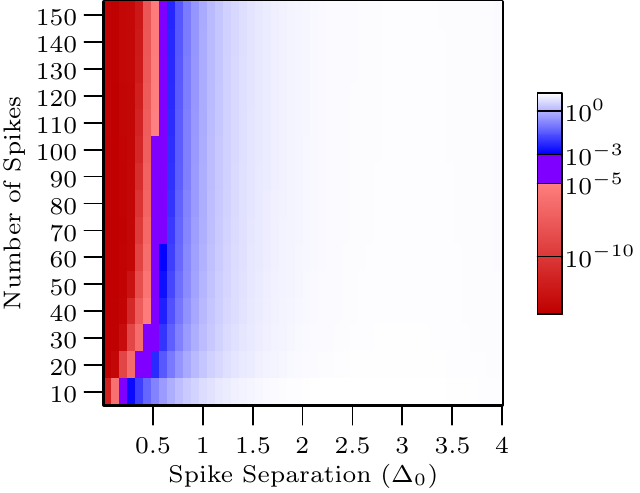}&
  \includegraphics{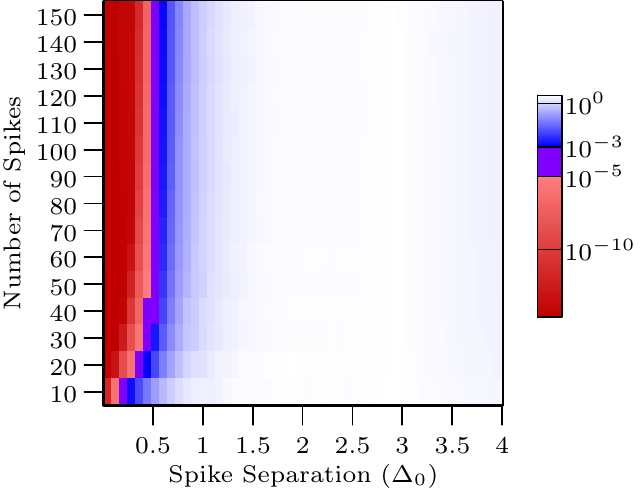}
\end{tabular}
\caption{The smallest and middle singular values of the measurement
  operator as a function of spike separation and number of
  spikes. Each plotted value is an average over 5 runs. The spike
  locations are perturbed randomly so that their separation varies up
  to 2.5\% of the given $\Delta_0$ value. The spike separation is
  expressed in units of~$\sigma$. }
\label{fig:SingNSpikeSep}
\end{figure}

\begin{figure}[tp]
\centering
\begin{tabular}{C@{\qquad\qquad}C}
  Gaussian & Ricker\\ \includegraphics{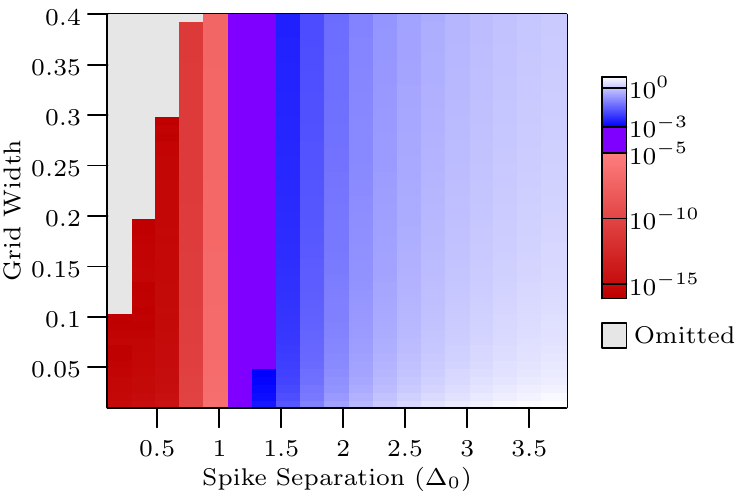} &
  \includegraphics{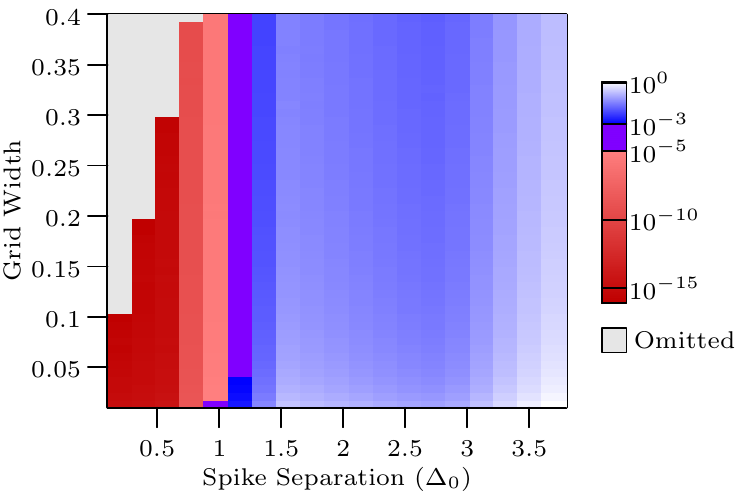}
\end{tabular}
\caption{The smallest singular value of the measurement operator as a
  function of spike separation and sampling grid width. The number of
  spikes is fixed at 50. Grid widths and spike separations are in
  units of~$\sigma$. Points with grid width larger than $\Delta_0/2$
  are omitted.}
  \label{fig:SingSepSep}
\end{figure}

Our aim is to determine for what values of $\Delta_0$ these
samples have negligible energy in the worst case and on average for a fixed set of sample locations
$s_1, s_2, \ldots, s_n$. This
requires choosing a worst-case and average-case value for the
amplitudes, which can be achieved by computing the singular-value decomposition
of a matrix with entries given by $K(s_i-t_j)$ for $t_j\in T_D$ and
$1\leq i \leq n$. The smallest singular value corresponds to the
smallest possible $\ell_2$ norm of the samples, whereas the middle
singular value can be interpreted as an average value of their $\ell_2$ norm. In
\Cref{fig:SingNSpikeSep} we plot the smallest and the middle singular
value for a range of values of the number of spikes $m$ and the
minimum separation $\Delta_0$ using a uniform grid of $n:=10m$
samples. For both convolution kernels of interest there is a phase
transition at a value of the minimum separation that does not seem to
depend on the number of spikes, as long as they are not very few. This
phase transition occurs around $\Delta_0 = \sigma$ (smallest singular
value) and $\Delta_0 = 0.5 \sigma$ (middle singular value) for both
kernels\footnote{The experiment is carried out for $\sigma=1$, but the
  results apply to any $\sigma$ by a simple change of
  variable.}. These results are not affected by the choice of step
size for the sampling grid, as shown in \Cref{fig:SingSepSep}, where
we fix the number of spikes to 50 and repeat the experiment for
different step sizes.

\subsection{Exact Recovery}
\label{sec:numexactrecovery}

In this section we evaluate our optimization-based deconvolution
method for different values of the minimum separation and the sample
separation. In our experiments, we restrict the signal to lie on a
uniform grid with $5\cdot 10^4$ points and solve
problem~\eqref{pr:TVnormDisc} using CVX~\cite{cvx}. We choose the
sample locations so that there are just two samples per spike which
are as far as possible from the spike location, with some random
jitter ($\pm 1\%$ of the sample proximity). In our experience, this
yields a \emph{worst-case} nonuniform sampling pattern for a fixed
sample proximity. The spike locations are also worst case in the sense
that the spikes are separated as much as possible, with some random
jitter ($\pm1\%$ of the minimum separation). We consider that the
method achieves exact recovery if the relative error of the recovered
amplitudes is below $10^{-4}$. We observe that when the minimum
separation is small, the chances of achieving exact recovery may be
higher for larger sample proximities. In those cases, there are four
samples close to any given spike (its own two and two more from the
neighboring spikes), which may be better than having two samples that
are closer. In order to correct for this effect in
\Cref{fig:ProofVsCVX} we \emph{monotonize} the results to show when we
observe exact recovery for all larger minimum separations and smaller
sample proximities.

\begin{figure}[tp]
\centering
\begin{tabular}{ECC}
  \vspace{.25cm}& Gaussian&Ricker\\ 
  10 Spikes&
  \includegraphics{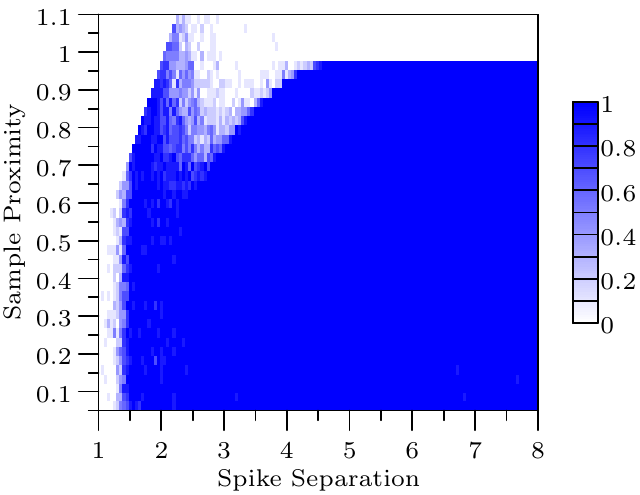}&
  \includegraphics{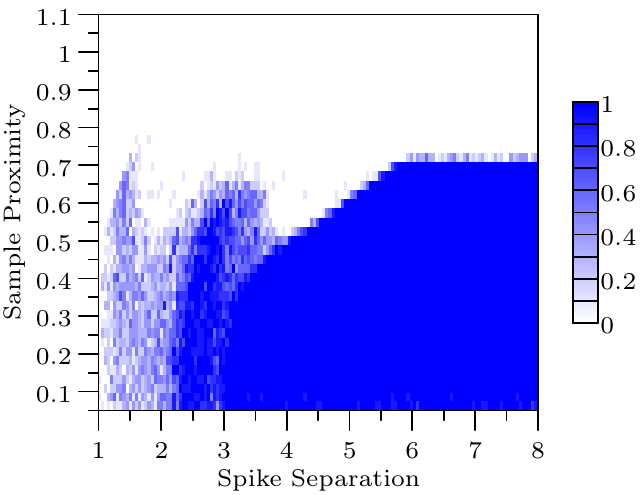}\\ 
  30 Spikes&
  \includegraphics{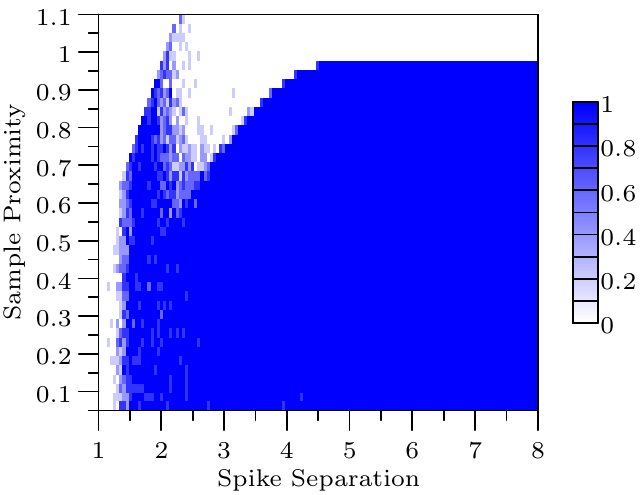}&
  \includegraphics{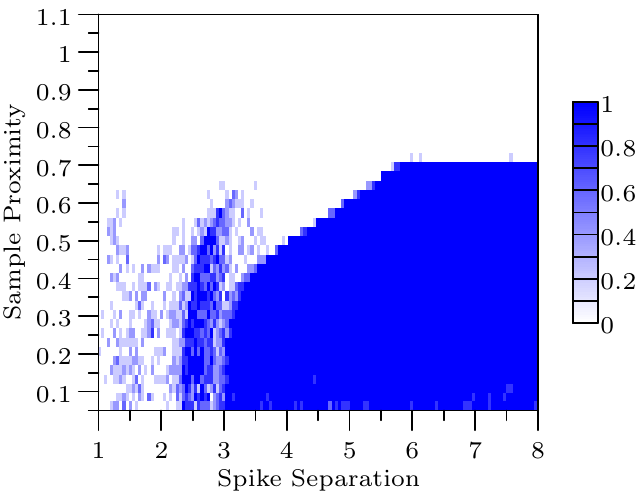}\\
  60 Spikes&
  \includegraphics{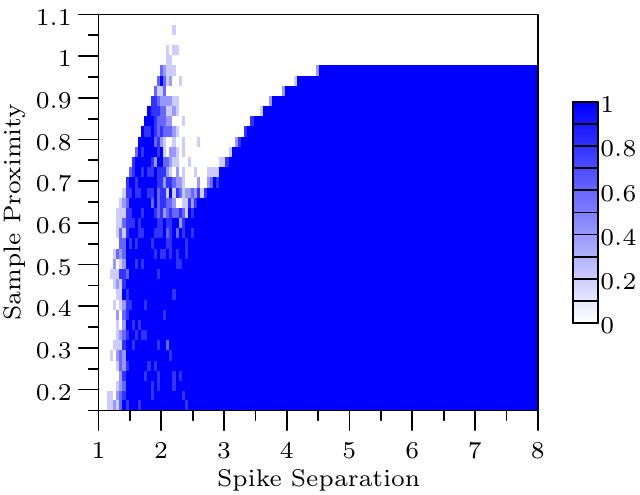}&
  \includegraphics{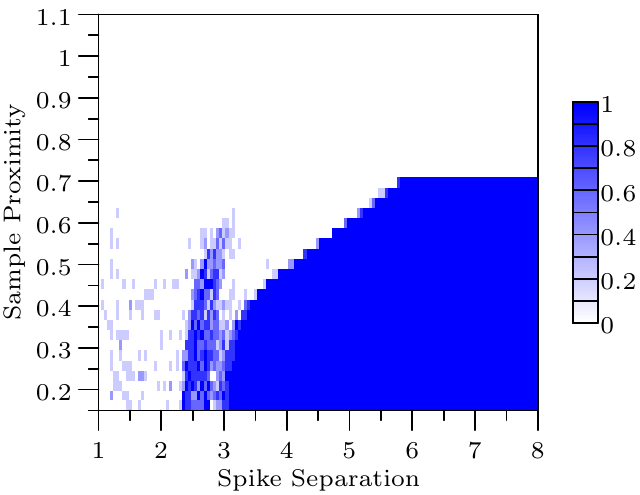}
\end{tabular}
\caption{Fraction of 5 runs yielding exact recovery for varying values
  of the spike minimum separation, sample proximity, and numbers of
  spikes in the signal. Spike separations and sample proximities are
  given in units of~$\sigma$ (the experiments are carried out for
  $\sigma:=0.003$). }
\label{fig:CVXExactRecovery}
\end{figure}

\begin{figure}[tp]
\centering \includegraphics{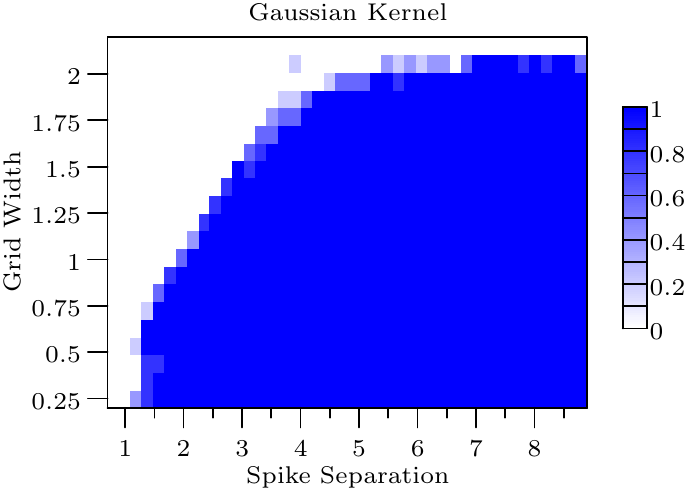}\qquad
\includegraphics{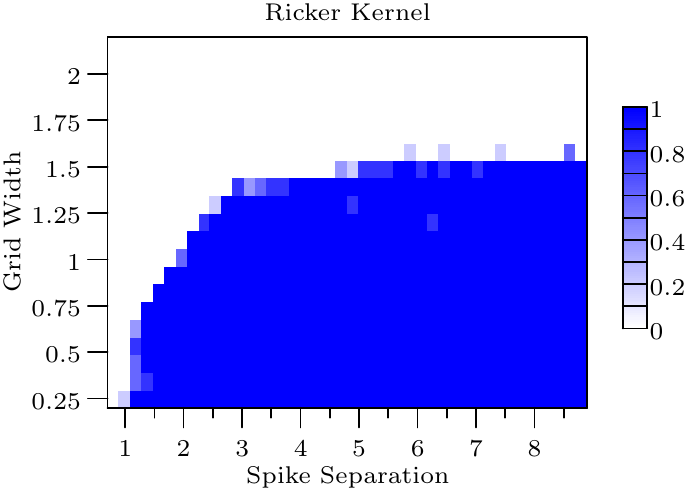}
\caption{Fraction of 5 runs yielding exact recovery for varying spike
  separations and grid widths when the sampling pattern is uniform. Spike separations
  and sampling grid widths are given in units of~$\sigma$. }
\label{fig:CVXExactRecoveryUnif}
\end{figure}

In addition, we evaluate the method for a uniform sampling pattern, varying the grid step size and the minimum separation between spikes. \Cref{fig:CVXExactRecoveryUnif} shows the results. The method recovers the signal perfectly up until a grid step size that is approximately double the largest sample proximity at which exact recovery occurs in \Cref{fig:CVXExactRecovery}. The reason may be that there are spikes that are right in between two samples, so we can interpret the grid as imposing an effective sample separation that is half the grid step size (note however that this is not completely accurate because every spike has a neighboring sample that is closer and another that is farther than this separation). 


\begin{figure}[tp]
\centering \includegraphics{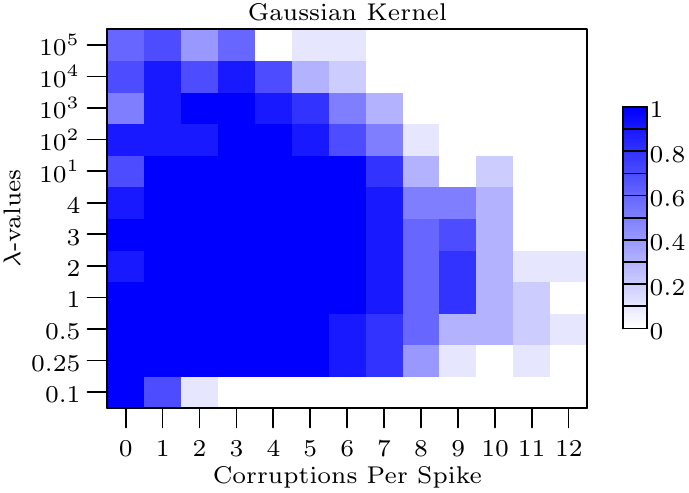}\qquad
\includegraphics{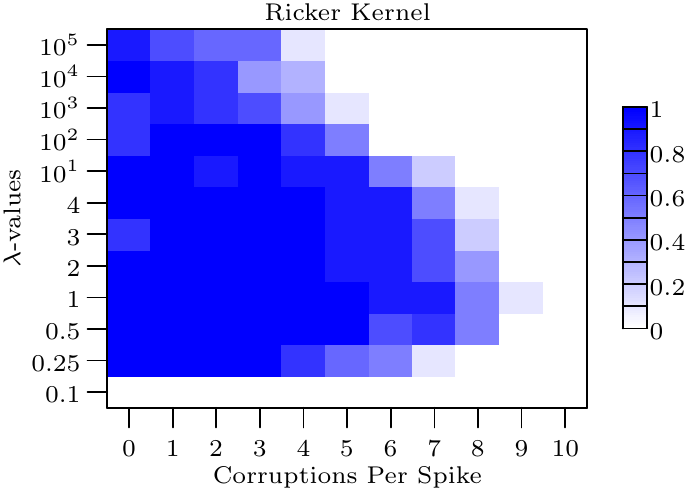}
\caption{Fraction of 10 instances yielding exact recovery for varying
  values of $\lambda$ and number of corruptions per spike. We fix
$\sigma=0.02$, 10 spikes and a spike separation of $4.5\sigma$.}
\label{fig:LambdaVsNoise}
\end{figure}

\begin{figure}[tp]
\centering 
\includegraphics{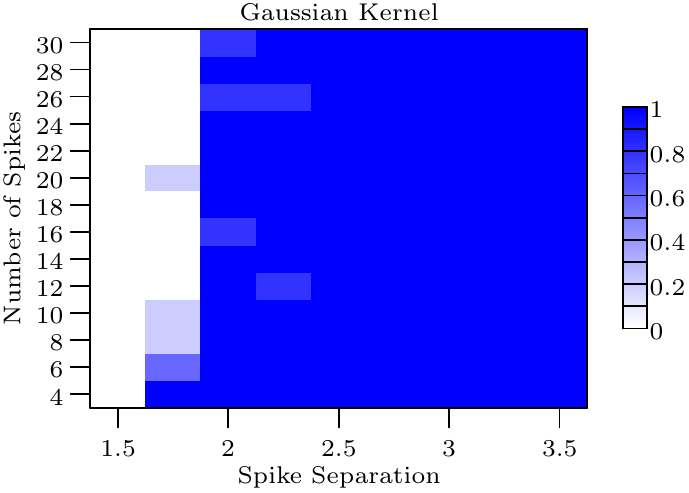}\qquad
\includegraphics{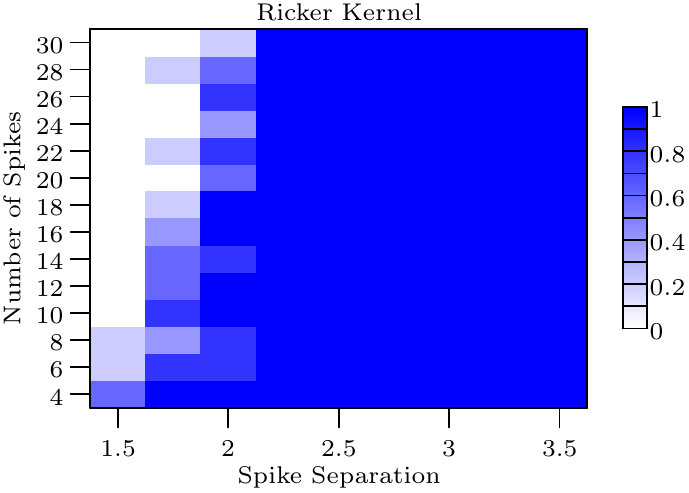}
\caption{Fraction of 5 instances yielding exact recovery with 2 corruptions
  per spike for varying numbers of spikes, and values of the spike
  separation. We fix $\sigma=0.004$, $\lambda=2$, and a sampling grid width of
$0.2\sigma$. Spike separations are given in units of~$\sigma$.}
\label{fig:SepVsNSpikes}
\end{figure}


\begin{figure}[tp]
\centering \includegraphics{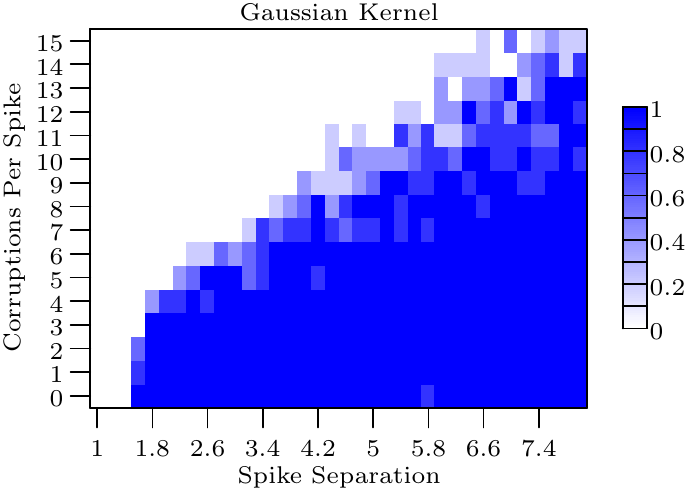}\qquad
\includegraphics{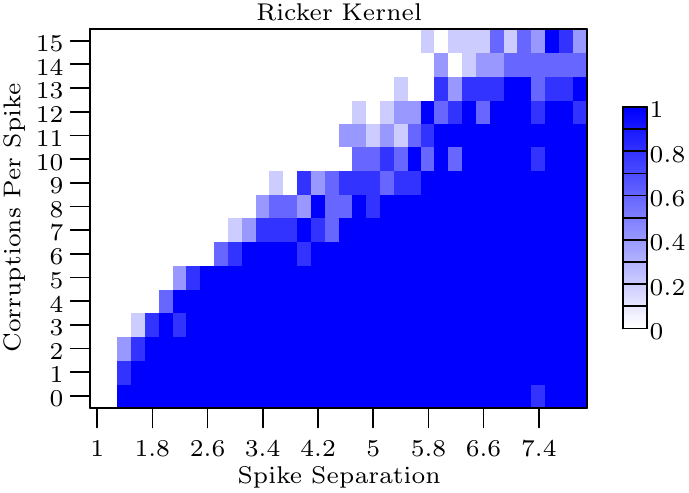}
\caption{Fraction of 5 runs yielding exact recovery with 10 spikes for
  varying spike separation and corruptions per spike. We fix $\sigma=0.01$ and $\lambda=2$. Spike
  separations are given in units of~$\sigma$.}
\label{fig:SpikeSepVsNoise}
\end{figure}

\subsection{Robustness to Sparse Noise}
\label{sec:sparse_recovery}
In this section, we investigate the performance of the optimization-based approach for deconvolution in the presence of outliers described in \Cref{sec:sparse_noise_results}. As in \Cref{sec:numexactrecovery}, we discretize Problem~\eqref{pr:SparseNoiseRecovery} by restricting the spikes in the measure to lie on a uniform grid with $5\cdot 10^4$ points and solve the resulting convex program using CVX~\cite{cvx}. We consider that the method achieves exact recovery if the relative error is $10^{-3}$ (we use a larger tolerance because we found the solutions to be less precise than in \Cref{sec:numexactrecovery}). Our experiments evaluate the effect of the following parameters:
\begin{itemize}
\setlength\itemsep{0em}
\item \emph{Number of spikes} and \emph{minimum separation}: Adjacent spike locations have separation $\Delta$ with a random jitter of $\pm 1\%$. The amplitudes are drawn iid from a standard normal distribution. 
\item \emph{Number of corruptions}: We divide the unit interval into
  segments of length $\Delta$ and corrupt $c$ samples per segment 
selected at random by drawing their amplitudes iid from a standard normal distribution. This allows $c$ to be interpreted as the number of corruptions per spike, and prevents the corruptions
from becoming too clustered.  
\item The \emph{regularization parameter} $\lambda$.
\end{itemize}
For all experiment the samples are uniformly distributed in the unit
interval on a grid of width $0.2\sigma$. \Cref{fig:LambdaVsNoise}
shows that the performance of the method is quite robust to $\lambda$,
as suggested by \Cref{lem:SparseTrim}. \Cref{fig:SepVsNSpikes} shows
that for a fixed number of corruptions (two per spike in this case)
the method succeeds as long as the minimum separation is large enough,
and that this minimum separation does not vary much for different
numbers of spikes (especially in the case of the Gaussian kernel). \Cref{fig:SpikeSepVsNoise} shows that a larger
minimum separation between spikes allows the method to achieve exact recovery in the presence of more corruptions. 

\section{Conclusions and Directions for Future Research}
\label{sec:conclusions}
In this work we establish that minimizing a continuous counterpart of the $\ell_1$ norm achieves exact recovery of signals satisfying a minimum-separation condition from samples of their convolution with a Gaussian or Ricker kernel, under the assumption that there are at least two samples that are close to each spike. Our theoretical analysis is based on a dual-certificate construction that is not specific to these convolution kernels and can be adapted to study other deconvolution problems. In addition, we show that the optimization-based deconvolution approach is robust to dense and sparse noise. Some interesting research directions are:
\begin{itemize}
\setlength\itemsep{0em}
\item Investigating the effect of additional samples (beyond two per spike) on the minimum separation at which exact recovery is achieved, as well as on robustness to noise.
\item Quantifying the discretization error incurred by solving $\ell_1$-norm minimization on a fine grid instead of solving the continuous TV-norm minimization problem, in the spirit of~\cite{duval2017sparse}.
\item Developing algorithms for TV-norm minimization on a continuous domain, see~\cite{boyd2017alternating,bredies2013inverse,rao2015forward} for some recent work in this direction.
\item Extending our proof techniques to analyze deconvolution in multiple dimensions.
\item Characterizing robustness to sparse noise under weaker conditions.
\item Extending the method to handle unknown noise levels as in~\cite{superres_unknown_noise}, which studies super-resolution of point sources.
\item Studying blind deconvolution, i.e. joint estimation of the convolution kernel and the signal, for deterministic kernels. Recent works in this direction focus on randomized measurements~\cite{ahmed2014blind,ling2017blind} and alternating optimization~\cite{repetti2015euclid}.
\end{itemize}

\subsection*{Acknowledgements}
C.F. is generously supported by NSF award DMS-1616340. C.F. thanks Waheed Bajwa for useful comments and Stan Osher for introducing him to the problem of heat-source localization via deconvolution. 

\begin{small}
\bibliographystyle{abbrv}
\bibliography{refs}
\end{small}

\appendix
\renewcommand{\appendixpagename}{Appendix}
\appendixpage
Throughout the appendix, we assume there is some compact interval
$I\subset\RR$ containing the support of the true measure $\mu$ (given in
\eqref{eq:signal}).  In problems \eqref{pr:TVnorm},
\eqref{pr:NoiseRecovery}, and \eqref{pr:SparseNoiseRecovery} 
the variable $\tilde{\mu}$ takes values in the set of finite signed Borel
measures supported on $I$.
\section{Proof of \Cref{prop:DualCert}: Certifying Exact Recovery}
\label{sec:DualCertProof}
The proof of \Cref{prop:DualCert} follows standard arguments and is included for completeness (see Section A in~\cite{superres} and also~\cite{de2012exact}).
\DualCert*
\begin{proof}
  Let $\nu$ be feasible for Problem~\eqref{pr:TVnorm}
  and define $h=\nu-\mu$.
  By taking the Lebesgue decomposition of $h$ with respect to
  $|\mu|$ we can write
  \begin{equation}
    h = h_{T}+h_{T^c},
  \end{equation}
  where $h_{T}$ is absolutely continuous with respect to $|\mu|$,
  and $h_{T^c}$ is mutually orthogonal to $|\mu|$. In other
  words, the support of $h_T$ is contained in~$T$, and $h_{T^c}(T)=0$.
  This allows us to write
  \begin{equation}
    h_{T} = \sum_{t_j \in T} b_j\delta_{t_j},
  \end{equation}
  for some $b\in\RR^{|T|}$. Set $\rho :=\sign(b)$, where we
  arbitrarily choose $\rho_j=\pm1$ if $b_j=0$. 
  By assumption there exists a corresponding
  $Q$ interpolating $\rho$ on $T$.  Since $\mu$ and $\nu$ are feasible
  for Problem~\eqref{pr:TVnorm} we have
  $(K*h)(s_i)=0$ for $i=1,\ldots,n$.  This implies
  \begin{equation}
    \label{eq:nutvbound}
    \begin{aligned}
      0 &= \sum_{i=1}^n q_i(K*h)(s_i) = \sum_{i=1}^n q_i\int K(s_i-t)\,dh(t)\\
      &= \int Q(t)\,dh(t)= \normTV{h_{T}} + \int Q(t)\,dh_{T^c}(t).
    \end{aligned}
  \end{equation}
  Applying the triangle inequality, we obtain
  \begin{align}
    \normTV{\nu} & =
    \normTV{\mu+h_{T}}+\normTV{h_{T^c}} &&\text{(Mutual Orthogonality)}\\
    & \geq  \normTV{\mu} + \normTV{h_{T^c}}-\normTV{h_T}&& \text{(Triangle Inequality)}\\
    & = \normTV{\mu} + \normTV{h_{T^c}}+\int
      Q(t)\,dh_{T^c}(t)&& \text{(Equation
        \ref{eq:nutvbound})}\\
    & \geq \normTV{\mu} && \text{($|Q(t)|\leq1$)},
  \end{align}
  where the last inequality is strict if $\normTV{h_{T^c}}> 0$ since
  $|Q(t)|<1$ for $t\in T^c$.
  This establishes that $\mu$ is optimal for Problem~\eqref{pr:TVnorm},
  and that any other optimal solution must be supported on $T$.
  Equation~\eqref{eq:nutvbound} implies that any feasible solution
  supported on $T$ must be equal to $\mu$ (since $\normTV{h_T}=0$), completing the proof of uniqueness.
\end{proof}
\section{Bumps and Waves}
\subsection{Auxiliary Results}
In this section we compile several results that will be useful in the following sections.
\begin{lemma}\label{lem:GaussDerivs}
  For $|s|\leq 1$ we have $|(\KG)^{(i)}(s)|\leq 3$ for $i=0,\ldots,4$.  If we also have
  $t\geq 10$ then 
  \begin{align}
    |(\KG)^{(i)}(s-t)| &\leq
    2t^4\exp\brac{-\frac{t^2}{2}+t-\frac{s^2}{2}},  \\
    |(\KR)^{(i)}(s-t)| &\leq 2t^4\exp\brac{-\frac{t^2}{2}+t-\frac{s^2}{2}},  
  \end{align}
  for $i=0,\ldots,2$.
\end{lemma}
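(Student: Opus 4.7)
The lemma has two separate parts and each reduces to elementary estimates. My plan is to treat the Gaussian case by the standard identity that its derivatives are polynomial multiples of itself, then deduce the Ricker bounds from the relation $\KR = -(\KG)^{(2)}$, so that $(\KR)^{(i)} = -(\KG)^{(i+2)}$.

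For the first claim I would just compute $(\KG)^{(i)}(u) = p_i(u)\exp(-u^2/2)$ explicitly for $i=0,1,2,3,4$, where $p_0(u)=1$, $p_1(u)=-u$, $p_2(u)=u^2-1$, $p_3(u)=3u-u^3$, $p_4(u)=u^4-6u^2+3$. For $|s|\leq1$ each polynomial is bounded in absolute value by $3$ (easy case-by-case: $|p_i(s)|\leq 3$ on $[-1,1]$ by checking endpoints and any interior critical points), and the Gaussian factor is $\leq 1$. This gives $|(\KG)^{(i)}(s)|\leq 3$ for $i=0,\dots,4$.

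For the second claim I would split $(\KG)^{(i)}(s-t) = p_i(s-t)\exp(-(s-t)^2/2)$ and bound the two factors separately. For the exponential factor, expanding $(s-t)^2 = t^2 - 2st + s^2$ and using $s\leq 1$, $t\geq 10>0$ gives $-2st \geq -2t$, hence
\begin{equation}
  \exp\brac{-\tfrac{(s-t)^2}{2}}\leq \exp\brac{-\tfrac{t^2}{2}+t-\tfrac{s^2}{2}}.
\end{equation}
For the polynomial factor, $|s-t|\leq t+1\leq 1.1\,t$ since $t\geq 10$, so for $i\in\{0,1,2\}$ I get $|p_i(s-t)|\leq 1 + (1.1 t)^2 \leq 2t^2 \leq 2t^4$. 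Putting the two bounds together yields the first displayed inequality. For the Ricker bounds, the indices shift by two so I need the same kind of estimate on $|p_{i+2}(s-t)|$ for $i=0,1,2$, i.e.~on polynomials of degree up to $4$. Using $|s-t|\leq 1.1\,t$ again, each such polynomial satisfies $|p_{i+2}(s-t)|\leq 1.1^{i+2} t^{i+2} + (\text{lower-order})$, which for $t\geq 10$ is dominated by $2t^4$.

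The only mildly delicate step is verifying $|p_4(s-t)|\leq 2t^4$: expanding gives at worst $|s-t|^4 + 6|s-t|^2 + 3 \leq 1.4641\,t^4 + 7.26\,t^2 + 3$, and for $t\geq 10$ the lower-order terms contribute at most roughly $0.08\,t^4$, keeping the total well under $2t^4$. There is no real obstacle — the whole lemma amounts to explicit polynomial bookkeeping — so I would just write the calculations out cleanly and collect the two resulting inequalities.
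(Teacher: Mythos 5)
Your proof is correct and follows essentially the same route as the paper's: write $(\KG)^{(i)}$ as a polynomial times the Gaussian, bound the polynomial factors on $[-1,1]$ directly, use $st\leq t$ to bound the exponential for $t\geq 10$, and handle the Ricker case via $(\KR)^{(i)}=-(\KG)^{(i+2)}$. The only cosmetic difference is that you bound $|s-t|\leq 1.1t$ uniformly and plug into each $p_i$, while the paper expands each polynomial separately; both give the same $2t^4$ factor.
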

\begin{proof}
  The first 4 derivatives of the Gaussian kernel are given by
  \begin{align}
    (\KG)^{(1)}(t) & = -t\KG(t), \qquad & (\KG)^{(2)}(t) & = (t^2-1)\KG(t),  \\
    (\KR)^{(1)}(t) & = t(t^2-3)\KG(t), \qquad & (\KR)^{(2)}(t) & = - (t^4-6t^2+3)\KG(t).
  \end{align}
  If $|s|\leq 1$ then $\KG(s)\leq 1$, so we just need to bound the polynomial factors.  We have
  \begin{align}
    |s| & \leq  1, \\
    |s^2-1| & \leq 1,\\
    |s(s^2-3)| & \leq |s^2-3| \leq 3.
  \end{align}
  For the remaining factor, note that $p(s)=s^4-6s^2+3$ has negative second derivative,
  \begin{align}
    p^{(2)}(s) = 12s^2-12 \leq 0,
  \end{align}
  so $p$ is even and concave when $|s|\leq1$.  This implies $|p(s)|\leq p(0)=3$.

  Next assume $t\geq 10$.  Since
  \begin{align}
    \KG(s-t)=\exp\brac{-\frac{(s-t)^2}{2}} =
    \exp\brac{-\frac{t^2}{2}+st-\frac{s^2}{2}}
    \leq \exp\brac{-\frac{t^2}{2}+t-\frac{s^2}{2}},
  \end{align}
we again only need to bound the polynomial factors. The following bounds complete the proof: 
  \begin{align}
    |s-t| & \leq 2t \leq 2t^4\\
    |(s-t)^2-1| & = t^2-2st+s^2-1 \leq t^2+2t \leq 2t^2\leq2t^4\\
    |(s-t)((s-t)^2-3)| & = t^3-3st^2+3s^2t-3t-s^3+3s \leq
    t^3+3t^2+6t+4 \leq 4t^3\leq 2t^4\\
    |(s-t)^4-6(s-t)^2+3| & \leq 
    s^4-4 s^3 t+6 s^2 t^2-6 s^2-4 s t^3+12 s t+t^4-6 t^2+3\\
    & \leq t^4+4t^3+12t^2+16t+10 \leq t^4 + 4t^3 + 2t^3 + t^3 + t^3
    \leq 2t^4.
  \end{align}
\end{proof}
The next lemma gives explicit formulas for the denominators of the
bump and wave coefficients in \Cref{lem:BumpExists}. The result is obtained by a simple expansion of the expressions, so we omit the proof. 
\begin{lemma}\label{lem:DenomFormula}
  Define
  \begin{align}
    \DG(s_1,s_2) &:= \KG(s_2)(\KG)^{(1)}(s_1)-(\KG)^{(1)}(s_2)\KG(s_1),\\
    \DR(s_1,s_2) &:= \KR(s_2)(\KR)^{(1)}(s_1)-(\KR)^{(1)}(s_2)\KR(s_1).
  \end{align}
  Then 
  \begin{align}
    \DG(s_1,s_2) &= (s_2-s_1)\exp\brac{-\frac{s_1^2+s_2^2}{2}},\\
    \DR(s_1,s_2) &=(s_2-s_1)(3-(s_1-s_2)^2+s_1^2s_2^2)\exp\brac{-\frac{s_1^2+s_2^2}{2}}.
  \end{align}
\end{lemma}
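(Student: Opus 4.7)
The plan is essentially direct computation, since the statement just asserts closed-form factorizations of two expressions in the kernels and their first derivatives. I would organize the proof around isolating the common exponential factor and then doing polynomial algebra.

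First, I would record the first-derivative formulas. For the Gaussian, $(\KG)^{(1)}(t) = -t\,\KG(t)$. For the Ricker, differentiating $\KR(t) = (1-t^2)\exp(-t^2/2)$ gives $(\KR)^{(1)}(t) = (t^3 - 3t)\exp(-t^2/2) = t(t^2-3)\exp(-t^2/2)$. In both cases every factor that appears in $\DG$ or $\DR$ carries a factor of either $\exp(-s_1^2/2)$ or $\exp(-s_2^2/2)$, so after multiplying, each term of the difference picks up the common factor $\exp(-(s_1^2+s_2^2)/2)$, which I factor out.

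For $\DG$, after factoring out the exponential I am left with $(-s_1)\cdot 1 - (-s_2)\cdot 1 = s_2 - s_1$, which yields the claimed formula immediately. For $\DR$, the polynomial factor after pulling out the exponential is
\[
P(s_1,s_2) := s_1(1-s_2^2)(s_1^2-3) - s_2(1-s_1^2)(s_2^2-3).
\]
I would then expand, grouping terms that share a factor of $(s_2-s_1)$. Concretely, $s_1^3 - s_2^3 = (s_1-s_2)(s_1^2 + s_1 s_2 + s_2^2)$, the terms $-3s_1+3s_2$ combine to $3(s_2-s_1)$, the terms $3 s_1 s_2^2 - 3 s_1^2 s_2$ combine to $3 s_1 s_2 (s_2-s_1)$, and $-s_1^3 s_2^2 + s_1^2 s_2^3 = s_1^2 s_2^2 (s_2-s_1)$. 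Collecting,
\[
P(s_1,s_2) = (s_2-s_1)\bigl[-(s_1^2 + s_1 s_2 + s_2^2) + 3 + 3 s_1 s_2 + s_1^2 s_2^2\bigr].
\]
Finally I would use the identity $-s_1^2 + 2 s_1 s_2 - s_2^2 = -(s_1-s_2)^2$ to rewrite the bracket as $3 - (s_1-s_2)^2 + s_1^2 s_2^2$, which matches the claimed expression for $\DR$.

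There is no real obstacle here; the entire proof is a short algebraic verification and, as the excerpt notes, is routine. The only mildly delicate step is the regrouping of $P(s_1,s_2)$ to expose the $(s_2-s_1)$ factor, but this is straightforward once one notices the symmetry of the expression under swapping $s_1\leftrightarrow s_2$ with a sign flip.
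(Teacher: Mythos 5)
Your computation is correct and is precisely the ``simple expansion'' the paper alludes to when it omits the proof: factor out the common exponential $\exp\brac{-(s_1^2+s_2^2)/2}$, then do polynomial algebra, extracting the $(s_2-s_1)$ factor for the Ricker case via the grouping you describe. The algebra checks out, so there is nothing to add.
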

The following lemma allows us to control $\DR$.
\begin{restatable}{lemma}{RickerF}
\label{lem:RickerF}
  Let $f:\RR^2\to\RR$ be defined by $f(x,y)=3-(x-y)^2+x^2y^2$. 
  If $\max(|x|,|y|) \leq a < 1$ then $f(x,y)\geq 3-4a^2+a^4 > 0$.
\end{restatable}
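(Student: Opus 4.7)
The plan is to minimize $f$ over the compact square $[-a,a]^2$ and read off $3 - 4a^2 + a^4$ as the global minimum, then verify positivity. Since $f$ is a smooth polynomial, the minimum is attained either at an interior critical point or on the boundary.

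First I would locate the interior critical points. The partial derivatives are $\partial_x f = -2x + 2y + 2xy^2$ and $\partial_y f = 2x - 2y + 2x^2 y$, so the critical-point equations are $y(1+xy) = x$ and $x(1+xy) = y$. Multiplying these gives $xy(1+xy)^2 = xy$, so either $xy = 0$ or $(1+xy)^2 = 1$ (i.e.\ $xy = -2$). The second case forces $|xy|=2 > a^2$, which is outside our region, while the first case combined with either original equation forces $x = y = 0$. Hence the only interior critical point is the origin, with $f(0,0) = 3$.

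Next I would examine the boundary. Using the symmetries $f(x,y)=f(y,x)$ and $f(-x,-y)=f(x,y)$, it suffices to study the edge $x = a$, where $g(y) := f(a,y) = 3 - (a-y)^2 + a^2 y^2$ satisfies $g''(y) = 2(a^2 - 1) < 0$ because $a < 1$. Hence $g$ is strictly concave on $[-a,a]$, so its minimum over the interval is attained at one of the endpoints. Direct computation gives $g(a) = 3 + a^4$ and $g(-a) = 3 - 4a^2 + a^4$, so the edge minimum is $3 - 4a^2 + a^4$, realized at $(a,-a)$ and (by symmetry) at $(-a,a)$.

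Comparing the interior value $3$ with the boundary minimum $3 - 4a^2 + a^4$, and noting that $-4a^2 + a^4 = a^2(a^2 - 4) < 0$ for $0 < a < 1$, the global minimum of $f$ on $[-a,a]^2$ is $3 - 4a^2 + a^4$. To see this is positive, substitute $u = a^2 \in [0,1)$ and factor $u^2 - 4u + 3 = (u-1)(u-3)$; both factors are negative on $[0,1)$, so the product is strictly positive. The argument is entirely routine; the only mild care needed is the case split in the critical-point equations and the concavity check that pushes the boundary minimum out to the corners.
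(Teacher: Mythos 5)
Your proposal is correct and follows essentially the same route as the paper's proof: both locate the interior critical points (you multiply the rewritten gradient equations, the paper sums the gradient components, but both conclude the origin is the only one in $[-a,a]^2$), then use concavity of $f$ restricted to each edge to push the boundary minimum to the corners $(a,-a)$, $(-a,a)$, and finally factor $3-4a^2+a^4=(a^2-1)(a^2-3)$ to verify positivity. No gaps.
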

\begin{proof} Given in \Cref{sec:RickerFProof}. \end{proof}
\subsection{Proof of \Cref{lem:BumpExists}: Existence of the
  Bumps and Waves}
\label{sec:BumpExistsProof}
\BumpExists*
\begin{proof}
  In matrix form, the bump and wave coefficients satisfy
  \begin{align}
    \MAT{K(\tilde{s}_{i,1}-t_i) & K(\tilde{s}_{i,2}-t_i)\\
    -K^{(1)}(\tilde{s}_{i,1}-t_i) & -K^{(1)}(\tilde{s}_{i,2}-t_i)}
 \MAT{b_{i,1} & w_{i,1} \\ b_{i,2} & w_{i,2}} 
  &= \MAT{1&0\\0&1}.
  \end{align}
  We obtain the formulas in the statement by solving the system.  Without loss of generality, we set
  $t_i=0$.  The conditions for the denominators to be non-zero then follow directly from \Cref{lem:DenomFormula,lem:RickerF}.
\end{proof}
\subsection{Proof of \Cref{lem:EvenKernel}: Symmetry of the Bumps
  and Waves}
\label{sec:EvenKernelProof}
\EvenKernel* 
\begin{proof}
  By \Cref{lem:BumpExists} we have
  \begin{align}
    B(t,s_1,s_2)
    & = \frac{-K^{(1)}(s_2)K(s_1-t) + K^{(1)}(s_1)K(s_2-t)}
    {K(s_2)K^{(1)}(s_1)-K^{(1)}(s_2)K(s_1)} \\
    & = \frac{K^{(1)}(-s_2)K(-s_1+t) - K^{(1)}(-s_1)K(-s_2+t)}
    {-K(-s_2)K^{(1)}(-s_1)+K^{(1)}(-s_2)K(-s_1)} \\
    & = B(-t,-s_2,-s_1)\\
    & = B(-t,-s_1,-s_2)\\
  \end{align}
  since $K$ is even and $K^{(1)}$ is odd.  Similarly we have
  \begin{align}
    W(t,s_1,s_2)
    & = \frac{-K(s_2)K(s_1-t) + K(s_1)K(s_2-t)}
    {K(s_2)K^{(1)}(s_1)-K^{(1)}(s_2)K(s_1)} \\
    & = \frac{-K(-s_2)K(-s_1+t) + K(-s_1)K(-s_2+t)}
    {-K(-s_2)K^{(1)}(-s_1)+K^{(1)}(-s_2)K(-s_1)} \\
    & = -W(-t,-s_1,-s_2).
  \end{align}
\end{proof}

\subsection{Proof of \Cref{lem:TailDecay}: Decay of the Bumps and
  Waves}
\label{sec:TailDecayProof}
\TailDecay*
For ease of notation we denote the bump and wave coefficients by $b_1$, $b_2$, $w_1$ and $w_2$. By the triangle inequality we have
\begin{align}
    \left|\frac{\partial^i}{\partial t^i}B(t,s_1,s_2)\right| & \leq
    |b_1K^{(i)}(s_1-t)|+|b_2K^{(i)}(s_2-t)|,\label{eq:bumpabs}\\
    \left|\frac{\partial^i}{\partial t^i}W(t,s_1,s_2)\right| & \leq
    |w_1K^{(i)}(s_1-t)|+|w_2K^{(i)}(s_2-t)|,\label{eq:waveabs}
\end{align}
for $i=0,1,2$.  The following lemmas bound these quantities for the Gaussian and Ricker kernels.
\begin{lemma}\label{lem:GaussDecayBound}
  Fix a sample proximity $\gamma(S,T)\leq 1$, a sample separation
  $\kappa(S)$, and $t\geq10$.  Then for the Gaussian kernel and $i=0,1,2$ we have
  \begin{align}
    \left|\frac{\partial^i}{\partial t^i}B(t,s_1,s_2)\right| & \leq
    \frac{4t^4}{\kappa(S)}\exp\brac{-\frac{t^2}{2}+t},\\
    \left|\frac{\partial^i}{\partial t^i}W(t,s_1,s_2)\right| & \leq \frac{4t^4}{\kappa(S)}\exp\brac{-\frac{t^2}{2}+t}.
  \end{align}
\end{lemma}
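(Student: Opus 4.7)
The plan is to prove both bounds in Lemma~\ref{lem:GaussDecayBound} by direct substitution into inequalities~\eqref{eq:bumpabs} and~\eqref{eq:waveabs}. From Lemma~\ref{lem:BumpExists} we have closed-form expressions
\begin{equation*}
b_1 = \frac{-K^{(1)}(s_2)}{\DG(s_1,s_2)},\quad b_2 = \frac{K^{(1)}(s_1)}{\DG(s_1,s_2)},\quad w_1 = \frac{-K(s_2)}{\DG(s_1,s_2)},\quad w_2 = \frac{K(s_1)}{\DG(s_1,s_2)},
\end{equation*}
and from Lemma~\ref{lem:DenomFormula} the denominator satisfies $\DG(s_1,s_2)=(s_2-s_1)\exp(-(s_1^2+s_2^2)/2)$, so the sample-separation condition gives $|\DG(s_1,s_2)|\geq\kappa(S)\exp(-(s_1^2+s_2^2)/2)$. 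Since $|s_1|,|s_2|\leq\gamma(S,T)\leq1$, the numerators satisfy $|K(s_j)|=\exp(-s_j^2/2)$ and $|K^{(1)}(s_j)|=|s_j|\exp(-s_j^2/2)\leq\exp(-s_j^2/2)$.

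The next step is to apply Lemma~\ref{lem:GaussDerivs}, which gives $|K^{(i)}(s_j-t)|\leq 2t^4\exp(-t^2/2+t-s_j^2/2)$ for $t\geq 10$. Multiplying a bump (or wave) coefficient by the corresponding shifted-kernel derivative then yields, for instance,
\begin{equation*}
|b_1 K^{(i)}(s_1-t)| \leq \frac{|s_2|\exp(-s_2^2/2)}{\kappa(S)\exp(-(s_1^2+s_2^2)/2)}\cdot 2t^4\exp\!\left(-\frac{t^2}{2}+t-\frac{s_1^2}{2}\right) \leq \frac{2t^4}{\kappa(S)}\exp\!\left(-\frac{t^2}{2}+t\right),
\end{equation*}
where the crucial observation is that the $e^{s_1^2/2}$ coming from the denominator cancels the $e^{-s_1^2/2}$ coming from the kernel bound at $s_1-t$, and $|s_2|\leq1$. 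The analogous estimate for $|b_2 K^{(i)}(s_2-t)|$ holds by symmetry, and adding the two gives the bump bound through~\eqref{eq:bumpabs}.

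The wave bound is identical in spirit: the coefficient $w_1$ now contributes $\exp(-s_2^2/2)/(\kappa(S)\exp(-(s_1^2+s_2^2)/2)) = \exp(s_1^2/2)/\kappa(S)$, which again cancels the $\exp(-s_1^2/2)$ from Lemma~\ref{lem:GaussDerivs}, producing the same $2t^4/\kappa(S)\cdot\exp(-t^2/2+t)$ bound; summing the two terms in~\eqref{eq:waveabs} gives the claim. There is no real obstacle to the argument—the whole thing is a bookkeeping exercise—so the only thing worth highlighting in the write-up is precisely this cancellation of the $s_j^2/2$ Gaussian factors, which is what makes the bound independent of $\gamma(S,T)$ beyond the assumption $\gamma(S,T)\leq1$.
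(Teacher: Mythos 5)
Your proof is correct and follows essentially the same argument as the paper: express $b_1 K^{(i)}(s_1-t)$ and $w_1 K^{(i)}(s_1-t)$ via Lemma~\ref{lem:BumpExists} and Lemma~\ref{lem:DenomFormula}, apply Lemma~\ref{lem:GaussDerivs} to the shifted kernel, observe that the $\exp(s_1^2/2)$ from the denominator cancels the $\exp(-s_1^2/2)$ from the kernel-decay bound, bound each term by $2t^4/\kappa(S)\cdot\exp(-t^2/2+t)$, and sum the two terms of \eqref{eq:bumpabs} and \eqref{eq:waveabs} to obtain the stated factor of $4$. The cancellation you highlight is exactly the key observation in the paper's proof.
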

\begin{proof}
  For the Gaussian kernel,
  \begin{align}
    |b_1(\KG)^{(i)}(s_1-t)|
    & = \frac{|(\KG)^{(i)}(s_1-t)(\KG)^{(1)}(s_2)|}{|\DG(s_1,s_2)|}
    &&\text{(\Cref{lem:BumpExists})}\\
    & \leq
    \frac{2t^4\exp\brac{-\frac{t^2}{2}+t-\frac{s_1^2}{2}}|s_2|\exp\brac{-\frac{s_2^2}{2}}}
    {|s_2-s_1|\exp\brac{-\frac{s_1^2+s_2^2}{2}}}&&\text{(\Cref{lem:DenomFormula,lem:GaussDerivs})}\\
    & \leq 
    \frac{2t^4\exp\brac{-\frac{t^2}{2}+t}}{\kappa(S)}.
  \end{align} 
  The bound also holds for $|b_2\KG(s_2-t)|$ by the same argument. For the wave, 
  \begin{align}
    |w_1(\KG)^{(i)}(s_1-t)|
    & = \frac{|(\KG)^{(i)}(s_1-t)\KG(s_2)|}{|\DG(s_1,s_2)|}
    &&\text{(\Cref{lem:BumpExists})}\\
    & \leq
    \frac{2t^4\exp\brac{-\frac{t^2}{2}+t-\frac{s_1^2}{2}}\exp\brac{-\frac{s_2^2}{2}}}
    {|s_2-s_1|\exp\brac{-\frac{s_1^2+s_2^2}{2}}}&&\text{(\Cref{lem:DenomFormula,lem:GaussDerivs})}\\
    & \leq 
    \frac{2t^4\exp\brac{-\frac{t^2}{2}+t}}{\kappa(S)}.
  \end{align}
  The same argument can be applied to $w_2(\KG)^{(i)}(s_2-t)$. 
\end{proof}
\begin{lemma}\label{lem:RickerDecayBound}
  Fix a sample proximity $\gamma(S,T)\leq 0.8$, a sample separation
  $\kappa(S)$, and $t\geq 10$.  Then for the Ricker kernel we have
  \begin{align}
    \left|\frac{\partial^i}{\partial t^i}B(t,s_1,s_2)\right| & \leq
    \frac{11.3t^4}{\kappa(S)}\exp\brac{-\frac{t^2}{2}+t},\\
    \left|\frac{\partial^i}{\partial t^i}W(t,s_1,s_2)\right| & 
    \leq \frac{4.8t^4}{\kappa(S)}\exp\brac{-\frac{t^2}{2}+t},
  \end{align}
  for $i=0,1,2$.
\end{lemma}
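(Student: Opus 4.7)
The plan is to bound each of the two summands in the triangle-inequality estimates \eqref{eq:bumpabs} and \eqref{eq:waveabs} separately, treating the bump and wave cases in parallel and exploiting the sample-proximity and sample-separation hypotheses to control the denominator of the coefficient formula. This mirrors the strategy already used to prove \Cref{lem:GaussDecayBound} for the Gaussian kernel, but with the extra factor $f(s_1,s_2)=3-(s_1-s_2)^2+s_1^2s_2^2$ that appears in $\DR$ needing to be bounded away from zero via \Cref{lem:RickerF}.

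First, I would invoke \Cref{lem:BumpExists} to write $b_1=-\KR^{(1)}(s_2)/\DR(s_1,s_2)$ and $w_1=-\KR(s_2)/\DR(s_1,s_2)$ (and symmetrically for $b_2,w_2$), and then use \Cref{lem:DenomFormula} to replace the denominator by $(s_2-s_1)f(s_1,s_2)\exp(-(s_1^2+s_2^2)/2)$. The sample-separation hypothesis gives $|s_2-s_1|\geq\kappa(S)$, and since $\max(|s_1|,|s_2|)\leq\gamma(S,T)\leq 0.8<1$, \Cref{lem:RickerF} with $a=0.8$ yields $f(s_1,s_2)\geq 3-4(0.64)+(0.64)^2=0.8496$. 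Next I would bound the numerator factors using the explicit formulas $\KR(s)=(1-s^2)\exp(-s^2/2)$ and $\KR^{(1)}(s)=s(s^2-3)\exp(-s^2/2)$: for $|s_2|\leq 0.8$ these give $|\KR(s_2)|\leq\exp(-s_2^2/2)$ and $|\KR^{(1)}(s_2)|\leq 0.8\cdot 3\cdot\exp(-s_2^2/2)=2.4\exp(-s_2^2/2)$. The shifted-kernel derivative $|\KR^{(i)}(s_1-t)|$ is controlled by \Cref{lem:GaussDerivs}, which gives the uniform bound $2t^4\exp(-t^2/2+t-s_1^2/2)$ whenever $t\geq 10$ and $|s_1|\leq 1$.

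Combining these estimates, the Gaussian factors $\exp(-s_1^2/2)\exp(-s_2^2/2)$ in numerator and denominator cancel exactly, leaving
\begin{equation*}
|b_1\KR^{(i)}(s_1-t)|\leq\frac{2.4\cdot 2 t^4}{0.8496\,\kappa(S)}\exp\!\brac{-\tfrac{t^2}{2}+t},\qquad |w_1\KR^{(i)}(s_1-t)|\leq\frac{2 t^4}{0.8496\,\kappa(S)}\exp\!\brac{-\tfrac{t^2}{2}+t},
\end{equation*}
with identical bounds for the $b_2,w_2$ terms by the symmetric roles of $s_1,s_2$. Summing each pair and checking $2\cdot 2.4\cdot 2/0.8496\leq 11.3$ and $2\cdot 2/0.8496\leq 4.8$ gives the claimed bounds on $|\partial^i B/\partial t^i|$ and $|\partial^i W/\partial t^i|$. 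The whole argument is essentially a bookkeeping exercise; the only real obstacle is that, unlike the Gaussian case, the denominator of $\DR$ carries the extra polynomial factor $f$, which is why the proof is set up to use $\gamma(S,T)\leq 0.8$ (rather than $\leq 1$) to keep $f$ bounded away from zero with a comfortable margin — this is what forces the slightly larger constants $11.3$ and $4.8$ relative to the Gaussian bounds.
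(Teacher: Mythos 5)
Your proposal is correct and follows essentially the same route as the paper's proof: express $b_1,w_1$ via \Cref{lem:BumpExists}, expand the denominator via \Cref{lem:DenomFormula}, bound the polynomial factor below via \Cref{lem:RickerF} with $a=0.8$, bound the shifted kernel via \Cref{lem:GaussDerivs}, cancel the Gaussian factors, and double by the triangle inequality. The only cosmetic difference is that the paper rounds the per-term bounds to $5.65$ and $2.4$ before doubling, whereas you carry the exact ratio $9.6/0.8496$ and $4/0.8496$ through to the end.
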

\begin{proof}
  We have
  \begin{align}
    |b_1(\KR)^{(i)}(s_1-t)|
    & = \frac{|(\KR)^{(i)}(s_1-t)(\KR)^{(1)}(s_2)|}{|\DR(s_1,s_2)|}
    &&\text{(\Cref{lem:BumpExists})} \notag \\
    & \leq
    \frac{2t^4\exp\brac{-\frac{t^2}{2}+t-\frac{s_1^2}{2}}|s_2(s_2^2-3)|\exp\brac{-\frac{s_2^2}{2}}}
    {|s_2-s_1|(3-(s_1-s_2)^2+s_1^2s_2^2)\exp\brac{-\frac{s_1^2+s_2^2}{2}}}
    &&\text{(\Cref{lem:DenomFormula,lem:GaussDerivs})} \notag\\
    & \leq 
    \frac{4.8t^4\exp\brac{-\frac{t^2}{2}+t}}{\kappa(S)(3-4(0.8)^2+0.8^4)}
    &&\text{(\Cref{lem:RickerF})}\\
    & \leq 
    \frac{5.65t^4\exp\brac{-\frac{t^2}{2}+t}}{\kappa(S)}.
  \end{align}
  The same argument applies to $|b_2\KR(s_2-t)|$.  For the wave,
    \begin{align}
    |w_1(\KR)^{(i)}(s_1-t)|
    & = \frac{|(\KR)^{(i)}(s_1-t)\KR(s_2)|}{|\DR(s_1,s_2)|}
    &&\text{(\Cref{lem:BumpExists})} \notag \\
    & \leq
    \frac{2t^4\exp\brac{-\frac{t^2}{2}+t-\frac{s_1^2}{2}}|s_2^2-1|\exp\brac{-\frac{s_2^2}{2}}}
    {|s_2-s_1|(3-(s_1-s_2)^2+s_1^2s_2^2)\exp\brac{-\frac{s_1^2+s_2^2}{2}}}
    &&\text{(\Cref{lem:DenomFormula,lem:GaussDerivs})} \notag \\
    & \leq 
    \frac{2t^4\exp\brac{-\frac{t^2}{2}+t}}{\kappa(S)(3-4(0.8)^2+0.8^4)}
    &&\text{(\Cref{lem:RickerF})}\\
    & \leq 
    \frac{2.4t^4\exp\brac{-\frac{t^2}{2}+t}}{\kappa(S)}.
  \end{align}
  The same argument applies to $|w_2(\KR)^{(i)}(s_2-t)|$.
\end{proof}
The proof is completed by combining \Cref{lem:GaussDecayBound,lem:RickerDecayBound} with the following result, which can be applied to both the Gaussian kernel and the Ricker wavelet.
\begin{lemma}\label{lem:SumTailBound}
  Suppose $f(t)\leq 20t^4\exp(-t^2/2+t)$ for $t\geq10$.  Then
  $$\sum_{k=j}^\infty
  f(c+kd) \leq 10^{-12},$$
  for any $c,d\in\RR$ and $j\in\ZZ_{>0}$ with $d\geq1$ and $c+jd\geq10$.
\end{lemma}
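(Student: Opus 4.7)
Let $g(t) := 20\,t^4 \exp(-t^2/2 + t)$ denote the stated upper bound on $f$. Since $c + kd \geq c + jd \geq 10$ for every $k \geq j$, the hypothesis $f(t)\leq g(t)$ on $[10,\infty)$ gives $\sum_{k=j}^\infty f(c+kd) \leq \sum_{k=j}^\infty g(c+kd)$. The plan is to reduce this to a convergent geometric series dominated by $g(10)$, and then verify numerically that $g(10)/(1-r) \leq 10^{-12}$ for the resulting ratio $r$.

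First I would establish that $g$ is decreasing on $[10,\infty)$: differentiating, $g'(t) = 20\,t^3(4 + t - t^2)\exp(-t^2/2 + t)$, and $4 + t - t^2 < 0$ for $t \geq 10$. Next, for $t \geq 10$ and $d \geq 1$, write
\begin{equation*}
\frac{g(t+d)}{g(t)} \;=\; \left(1 + \tfrac{d}{t}\right)^4 \exp\!\left(-(t-1)d - \tfrac{d^2}{2}\right).
\end{equation*}
Taking the logarithmic derivative in $d$ yields $\tfrac{4}{t+d} - (t-1) - d$, which is negative for $t \geq 10$, $d \geq 1$; so the ratio is maximized at $d=1$. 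The logarithmic derivative in $t$ of $g(t+1)/g(t) = (1 + 1/t)^4 \exp(-(t-1) - 1/2)$ equals $-\tfrac{4}{t(t+1)} - 1 < 0$, so the ratio is further maximized at $t = 10$.

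Combining these two monotonicities, for every $k \geq j$,
\begin{equation*}
\frac{g(c + (k+1)d)}{g(c+kd)} \;\leq\; \frac{g(11)}{g(10)} \;=\; (1.1)^4 \exp(-9.5) \;=:\; r.
\end{equation*}
A direct computation gives $r \leq 1.1 \times 10^{-4}$. Hence
\begin{equation*}
\sum_{k=j}^\infty g(c+kd) \;\leq\; g(c+jd) \sum_{k=0}^\infty r^k \;\leq\; \frac{g(10)}{1-r},
\end{equation*}
where the last inequality uses monotonicity of $g$ and $c+jd \geq 10$. Finally, $g(10) = 2\cdot 10^5 \cdot e^{-40}$, and one checks $e^{-40} \leq 4.25 \times 10^{-18}$, so $g(10) \leq 8.5 \times 10^{-13}$, giving $g(10)/(1-r) \leq 10^{-12}$.

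The proof is essentially a routine calculus-plus-arithmetic exercise; no step is a serious obstacle. The one point worth being careful about is that the geometric ratio bound must be uniform over $d \geq 1$ and $k \geq j$ simultaneously, which is why I verify monotonicity of $g(t+d)/g(t)$ both in $d$ (to reduce to $d=1$) and in $t$ (to reduce to $t=10$) before plugging in numbers.
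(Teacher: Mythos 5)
Your proof is correct, and it follows the same overall strategy as the paper's: dominate $f$ by $g(t)=20t^4e^{-t^2/2+t}$, show $g$ is decreasing past $10$, and control the sum by a geometric series. The difference is in how tightly the geometric ratio is estimated. The paper's proof bounds the ratio of consecutive terms crudely: it shows $p(t)/p(t+1)\geq\tfrac{1}{2}e^{t-1/2}>2$ for $t\geq10$, keeps only the ``$>2$'', and concludes $\sum_{i\geq 10}p(i)\leq 2p(10)\leq 10^{-12}$. That last numerical step is actually slightly off: $p(10)=2\times10^5 e^{-40}\approx 8.5\times10^{-13}$, so $2p(10)\approx1.7\times10^{-12}>10^{-12}$. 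The lemma is nonetheless true because the actual ratio is enormous, and your version does the work the paper elides: you reduce the ratio $g(t+d)/g(t)$ to its maximum over $d\geq1$ and $t\geq10$ via two clean log-derivative monotonicity checks, obtain $r=(1.1)^4e^{-9.5}\leq1.1\times10^{-4}$, and get $g(10)/(1-r)\leq8.51\times10^{-13}<10^{-12}$. So your argument is not only correct but repairs a small numerical slip in the published proof, at the cost of a couple of extra (but routine) monotonicity computations.
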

\begin{proof}
  Let $p(t):=20t^4\exp(-t^2/2+t)$.  
For $t\geq 10$
  \begin{align}
    p^{(1)}(t) & = 
    80t^3e^{-t^2/2+t}+20t^4e^{-t^2/2+t}(1-t)\\
    & \leq 8t^4e^{-t^2/2+t}+20t^4e^{-t^2/2+t}(1-t)\\
    & = (28-20t)t^4e^{-t^2/2+t}\\
    & < 0,
  \end{align}
  and
  \begin{align}
    \frac{p(t)}{p(t+1)} & = 
    \frac{20t^4\exp(-t^2/2+t)}{20(t+1)^4\exp(-(t+1)^2/2+(t+1))}\\
    & = \frac{t^4}{(t+1)^4}\exp(((t+1)^2-t^2)/2-1)\\
    & = \frac{t^4}{t^4+4t^3+6t^2+4t+1}\exp(t-1/2)\\
    & \geq \frac{1}{2e^{1/2}}e^t\\
    & > 2.
  \end{align}  
  Combining these facts, we obtain a geometric-series bound, which we sum to complete the proof:
  \begin{align}
    \sum_{k=j}^\infty f(c+kd) &\leq
    \sum_{k=j}^\infty p(c+kd) 
    \leq \sum_{i=10}^\infty p(i)\\
    & \leq p(10)\sum_{i=0}^\infty 2^{-i} = 2p(10)\leq 10^{-12}.
  \end{align}
\end{proof}

\subsection{Proof of \Cref{lem:Piecewise}: Piecewise-Constant Bounds}
\label{sec:PiecewiseProof}
\Piecewise*
In this section we define piecewise-constant upper bounds for
$\sym{|B^{(i)}|}$, $\sym{|W^{(i)}|}$, $\sym{B^{(i)}}$ where $i=0,1,2$
and $t\in[0,10)$.  We begin by partitioning the interval $[0,10)$ into $N_1$ segments of the form
\begin{align}
  \ml{U}_j &:= \left[\frac{10 \brac{j-1}}{N_1},\frac{10 j}{N_1}\right), \quad 1 \leq j \leq N_1,
\end{align}
and by dividing the interval $[-\gamma(S,T),\gamma(S,T)]$ into $2N_2$ almost
disjoint segments of the form
\begin{align}
  \ml{V}_k &:= \left[\frac{\brac{k-1} \gamma(S,T)}{N_2},\frac{ k \gamma(S,T) }{N_2}\right],  \quad -N_2+1 \leq k \leq N_2.
\end{align}
Next we define bounds on our functions of interest when $t \in \ml{U}_j$, $s_1 \in \ml{V}_k$ and $s_2 \in \ml{V}_l$
\begin{align}
  |b^{(i)}|_{j,k,l} &  \geq \sup |B^{(i)}(\ml{U}_j,\ml{V}_k,\ml{V}_l)|,\\
  |w^{(i)}|_{j,k,l} &  \geq \sup |W^{(i)}(\ml{U}_j,\ml{V}_k,\ml{V}_l)|,\\
  b^{(i)}_{j,k,l} &  \geq \sup B^{(i)}(\ml{U}_j,\ml{V}_k,\ml{V}_l),
\end{align}
where $i=0,1,2$, $1 \leq j \leq N_1$ and $-N_2+1 \leq k,l \leq N_2$. To be clear, $|B^{(i)}(\ml{U}_j,\ml{V}_k,\ml{V}_l)|$ is the image of the set $\ml{U}_j \times \ml{V}_k \times \ml{V}_l$ with respect to the function $|B^{(i)}|$. These bounds can be computed by interval arithmetic using the fact that the functions $B^{(i)}$ and $W^{(i)}$ can be expressed in terms of exponentials and polynomials. We provide a small description of interval arithmetic in \Cref{sec:IntervalArithmetic}. To compute the bounds in practice, we apply the interval-arithmetic library in Mathematica, which we have found to be sufficient for our purposes. The code is available online\footnote{\url{http://www.cims.nyu.edu/~cfgranda/scripts/deconvolution_proof.zip}}.

To define piecewise bounds that are valid on each $\ml{U}_j$, we maximize over the sets in the partition of $[-\gamma(S,T),\gamma(S,T)]^2$ which contain points that satisfy the sample-separation condition in \Cref{def:ProxSep}. Consider the distance between $\ml{V}_k$ and $\ml{V}_l$
\begin{align} 
  d(\ml{V}_k,\ml{V}_l) := \inf_{a\in \ml{V}_k ,b\in \ml{V}_l } |a-b|.
\end{align}
The following lemma implies that it is sufficient to consider the pairs $\ml{V}_k$, $\ml{V}_l$ such that $d(\ml{V}_k,\ml{V}_l)\geq c:=\kappa(S)-2\gamma(S,T)/N_2$. 
\begin{lemma} 
 If $d(\ml{V}_k,\ml{V}_l) < c:=\kappa(S)-2\gamma(S,T)/N_2$ then no pair of points $s_1 \in \ml{V}_k$, $s_2 \in \ml{V}_l$ satisfy the sample-separation condition $\abs{s_1 - s_2} \geq \kappa(S)$.
\end{lemma}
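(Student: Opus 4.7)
The plan is to give a direct geometric argument based on the elementary fact that each partition element $\ml{V}_k$ has length $\gamma(S,T)/N_2$, so the diameter (maximum distance between two points inside it) is exactly $\gamma(S,T)/N_2$. The argument is simply that the maximum possible distance between any point in $\ml{V}_k$ and any point in $\ml{V}_l$ is bounded above by the infimum distance $d(\ml{V}_k,\ml{V}_l)$ plus the two diameters.

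First I would fix arbitrary $s_1 \in \ml{V}_k$ and $s_2 \in \ml{V}_l$, and pick points $\hat{s}_1 \in \ml{V}_k$ and $\hat{s}_2 \in \ml{V}_l$ that (approximately) attain the infimum $d(\ml{V}_k,\ml{V}_l)$ — or equivalently, take a sequence of such pairs and pass to the limit, since the sets are compact and the infimum is attained. By the triangle inequality,
\begin{equation*}
|s_1 - s_2| \;\leq\; |s_1 - \hat{s}_1| + |\hat{s}_1 - \hat{s}_2| + |\hat{s}_2 - s_2| \;\leq\; \frac{\gamma(S,T)}{N_2} + d(\ml{V}_k,\ml{V}_l) + \frac{\gamma(S,T)}{N_2},
\end{equation*}
where I used that $\text{diam}(\ml{V}_k) = \text{diam}(\ml{V}_l) = \gamma(S,T)/N_2$.

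Then I would substitute the assumed bound $d(\ml{V}_k,\ml{V}_l) < \kappa(S) - 2\gamma(S,T)/N_2$ to obtain
\begin{equation*}
|s_1 - s_2| \;<\; \kappa(S) - \frac{2\gamma(S,T)}{N_2} + \frac{2\gamma(S,T)}{N_2} \;=\; \kappa(S),
\end{equation*}
which contradicts the sample-separation requirement $|s_1 - s_2| \geq \kappa(S)$. Since $s_1$ and $s_2$ were arbitrary, this shows that no pair of points in $\ml{V}_k \times \ml{V}_l$ satisfies the sample-separation condition, as claimed.

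There is no real obstacle here; the statement is essentially a definitional consequence of partitioning an interval of length $2\gamma(S,T)$ into $2N_2$ pieces of equal length. The only mild care required is being explicit that $d(\cdot,\cdot)$ denotes the infimum distance and that $\text{diam}(\ml{V}_k) = \gamma(S,T)/N_2$, so that the triangle-inequality chain is rigorous.
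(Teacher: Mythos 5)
Your proof is correct and follows essentially the same route as the paper's: both use compactness of $\ml{V}_k\times\ml{V}_l$ to find points attaining the infimum distance, then apply the triangle inequality together with the diameter bound $\operatorname{diam}(\ml{V}_k)=\gamma(S,T)/N_2$ to conclude $|s_1-s_2|<\kappa(S)$. No gap.
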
    
\begin{proof}
Since $\ml{V}_k \times \ml{V}_l$ is compact there exist two points $\tilde{a}  \in \ml{V}_k$ and $\tilde{b} \in \ml{V}_l$ such that $|\tilde{a} - \tilde{b}| = d(\ml{V}_k,\ml{V}_l)$. By the triangle inequality, for any pair of points $s_1 \in \ml{V}_k$, $s_2 \in \ml{V}_l$
\begin{align}
\abs{s_1 - s_2} & \leq \abs{s_1 - \tilde{a}} + \abs{\tilde{a} - \tilde{b}} + \abs{\tilde{b} - s_2} \\
& <  \frac{2\gamma(S,T)}{N_2} + c = \kappa(S).
\end{align}
\end{proof}
Finally, we obtain the following bounds on $\sym{|B^{(i)}|}\brac{t}$, $\sym{|W^{(i)}|}(t)$ and $\sym{B^{(i)}}\brac{t}$ for $t \in \ml{U}_j$ and $i=0,1,2$
 \begin{align}
  \wt{|B^{(i)}|}(j) &:= \max_{d(\ml{V}_k,\ml{V}_l)\geq c}
  |b^{(i)}|_{j,k,l},\label{eq:AbsBumpApprox}\\
  \wt{|W^{(i)}|}(j) &:= \max_{d(\ml{V}_k,\ml{V}_l)\geq c}
  |w^{(i)}|_{j,k,l},\label{eq:AbsWaveApprox}\\
  \wt{B^{(i)}}(j) &:= \max_{d(\ml{V}_k,\ml{V}_l)\geq c}
  b^{(i)}_{j,k,l}.\label{eq:BumpApprox}
\end{align}
These bounds are parametrized by a finite number of points and can therefore be computed explicitly as long as we fix $N_1$ and $N_2$.  
\Cref{fig:MaxRegion} shows, for a fixed $\ml{U}_j$, the subset of
$[-\gamma(S,T),\gamma(S,T)]^2$ used to compute $\wt{|B|^{(i)}}(j)$,
$\wt{|W|^{(i)}}(j)$, and $\wt{B^{(i)}}(j)$.

\begin{figure}[t]
\centering
\includegraphics{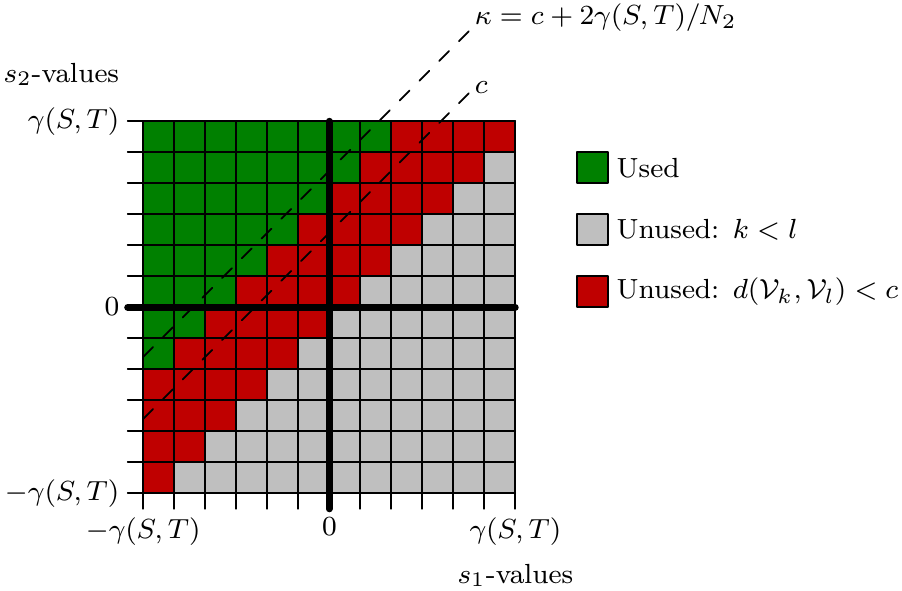}
\caption{Squares $\ml{V}_k\times \ml{V}_l$ used to compute $\wt{|B|^{(i)}}(j)$,
    $\wt{|W|^{(i)}}(j)$, and $\wt{B^{(i)}}(j)$ for a fixed $\ml{U}_j$.}
\label{fig:MaxRegion}
\end{figure}

\subsection{Interval Arithmetic}
\label{sec:IntervalArithmetic}
This section briefly describes interval arithmetic, a technique developed to handle rounding and measurement errors in numerical analysis. In this work, we apply it to derive the piecewise-constant bounds in \Cref{sec:PiecewiseProof}. Interval arithmetic allows us to compute an interval which contains the possible values of a function when its arguments belong to given intervals. To be precise, let $f:\RR^n\to\RR$ be a function that can be expressed by composing predefined functions (such as monomials or exponentials) with standard arithmetic operations. Given $n$ intervals of the form $\sqbr{a_i,b_i}\subseteq\RR$, $1\leq i \leq n$, the goal is to obtain an interval $[c,d]\subseteq\RR$ such that  
\begin{equation*}
  f([a_1,b_1]\times\cdots\times[a_n,b_n])\subseteq[c,d].
\end{equation*}
For a set $\ml{A}$, $f\brac{\ml{A}}$  denotes the image of $\ml{A}$. Interval-arithmetic methods usually compute $[c,d]$ recursively by applying simple identities to obtain intervals containing the output of the functions that are composed to obtain $f$. The following lemma lists some of these expressions. 
\begin{lemma}[Proof in \cite{hickey2001interval}]\label{lem:IntervalArithmetic}
  Let $f_1,f_2:\RR^n\to\RR$, $\ml{A}\subseteq\RR$, 
  $f_1(\ml{A})\subseteq[c_1,d_1]$ and $f_2(\ml{A})\subseteq [c_2,d_2]$ for some
  $c_1,d_1,c_2,d_2\in\RR$.  Then the following hold.
  \begin{enumerate}
  \item $|f_1(\ml{A})| \subseteq [\max(0,c_1),\max(|c_1|,|d_1|)]$.
  \item $f_1(\ml{A})+f_2(\ml{A})\subseteq [c_1+c_2,d_1+d_2]$.
  \item $-f_1(\ml{A})\subseteq [-d_1,-c_1]$.
  \item $f_1(\ml{A})f_2(\ml{A})\subseteq
    [\min(c_1d_1,c_1d_2,c_2d_1,c_2d_2),\max(c_1d_1,c_1d_2,c_2d_1,c_2d_2)]$.
  \item Let $k\in\ZZ_{>0}$. If $0$ is not in $ [c_1,d_1]$ then
    $f_1(\ml{A})^{2k} \subseteq [\min(c_1^{2k},d_1^{2k}),\max(c_1^{2k},d_1^{2k})]$. If $0$ is in $ [c_1,d_1]$ then $f_1(\ml{A})^{2k}\subseteq[0,\max(c_1^{2k},d_1^{2k})]$.
  \item If $0\notin [c_1,d_1]$, $1/f_1(\ml{A})\subseteq[1/d_1,1/c_1]$.
  \item If $g:\RR\to\RR$ is increasing $g(f_1(\ml{A}))\subseteq[g(c_1),g(d_1)]$.
  \end{enumerate}
\end{lemma}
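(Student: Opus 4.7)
The plan is to work through the seven items one by one, each time using the same core observation: the hypothesis $f_i(\mathcal{A}) \subseteq [c_i, d_i]$ means that for every $x \in \mathcal{A}$, the value $f_i(x)$ lies in $[c_i, d_i]$. So for each item it suffices to take an arbitrary $x \in \mathcal{A}$, exhibit two real numbers $u(x), v(x) \in [c_i, d_i]$ such that the combined expression equals the claimed function of $u(x)$ and $v(x)$, and then show that this function maps $[c_1,d_1] \times [c_2,d_2]$ (or just $[c_1,d_1]$) into the target interval. In other words, the lemma reduces entirely to range bounds for a handful of elementary real-valued functions on rectangles or intervals.

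Items 2, 3, and 7 are essentially immediate from monotonicity: addition is coordinate-wise monotone, negation reverses the interval, and a monotone increasing $g$ maps $[c_1, d_1]$ to $[g(c_1), g(d_1)]$. Item 1 requires a brief case split on the sign of $c_1$ and $d_1$: when $c_1 \geq 0$ we have $|f_1(x)| \in [c_1, d_1]$; when $d_1 \leq 0$ we have $|f_1(x)| \in [|d_1|, |c_1|]$; when $0 \in [c_1, d_1]$ we have $|f_1(x)| \in [0, \max(|c_1|,|d_1|)]$. In all three cases the image is contained in $[\max(0, c_1), \max(|c_1|, |d_1|)]$, which is the stated (not necessarily tight) bound.

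The main content is in items 4, 5, and 6. For item 4, I would observe that the map $(u,v) \mapsto uv$ is bilinear, so on the compact rectangle $[c_1,d_1] \times [c_2,d_2]$ its extrema are attained at the four corners $\{(c_1,c_2),(c_1,d_2),(d_1,c_2),(d_1,d_2)\}$; taking the min and max over these four values gives the stated interval. For item 5, I would write $f_1(x)^{2k} = (f_1(x)^2)^k$ and split on whether $0 \in [c_1, d_1]$: if $0 \notin [c_1,d_1]$ then $t \mapsto t^{2k}$ is monotone on $[c_1,d_1]$ (increasing if $c_1 > 0$, decreasing if $d_1 < 0$), so the image is bracketed by $\min(c_1^{2k}, d_1^{2k})$ and $\max(c_1^{2k}, d_1^{2k})$; if $0 \in [c_1,d_1]$ then the minimum of $t^{2k}$ on the interval is $0$, attained at $t=0$, while the max is still at an endpoint. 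Item 6 follows from the observation that $t \mapsto 1/t$ is continuous and monotone decreasing on any interval not containing $0$, so it maps $[c_1, d_1]$ onto $[1/d_1, 1/c_1]$.

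The main obstacle is not mathematical difficulty but bookkeeping: one must be careful in item 5 to separate the three sign regimes, and in item 1 to handle the degenerate case $d_1 < 0$ where the stated lower bound $\max(0, c_1) = 0$ is loose but still valid. No single step is deep, and the entire argument amounts to verifying that the combinators used in our interval-arithmetic implementation respect the containment relation claimed in each line; this is why the result is standard and we simply cite \cite{hickey2001interval} for the full write-up.
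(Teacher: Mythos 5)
The paper does not give its own proof of this lemma --- it simply cites \cite{hickey2001interval} --- so there is nothing internal to compare against. Your write-up is a correct, self-contained verification of the standard interval-arithmetic containment rules by the natural argument (pointwise containment plus monotonicity/bilinearity of the elementary combinators); in particular the three-way case split in item~1, the corner-extrema observation for the bilinear map in item~4, and the sign-regime split in item~5 are handled correctly, and item~6 correctly uses that $t\mapsto 1/t$ is monotone decreasing on any interval missing $0$. This is exactly the argument the cited reference gives, so your proposal matches the intended proof.
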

To conclude we illustrate how interval arithmetic works with a simple example.
\begin{example}
  Let
  \begin{align}
    f(x,y) := \frac{(x-y)^2xy}{x}
  \end{align}
  where $x\in [1,2]$ and $y\in[-3,3]$.  Then by \Cref{lem:IntervalArithmetic}
  \begin{align}
    -y &\in [-3,3],&x-y&\in [-2,5], & (x-y)^2&\in [0,25],\\
    xy &\in [-6,6],& 1/x &\in [1/2,1],&(x-y)^2xy &\in [-150,150],
  \end{align}
  and $f(x,y):= \frac{(x-y)^2xy}{x}\subseteq [-150,150]$ for any $x,y$ in the predefined intervals.
  Note that if we perform the algebraic simplification $f(x,y)=(x-y)^2y$, the result is improved to $(x-y)^2y\in[-75,75]$, a tighter interval.  This shows that interval arithmetic bounds are sensitive to how we express the function.
\end{example}

\subsection{Proof of Lemma~\ref{lem:RickerF}}
\label{sec:RickerFProof}
\RickerF*
\begin{proof}
For fixed $a$, the continuous function $f$ is restricted to a
  compact set, and thus must attain its minimum.  We determine the
  critical points of $f$, and then show the minimum must occur on the
  boundary. Note that the gradient of $f$ is given by
  \begin{align}
    \nabla f(x,y) = (-2(x-y)+2xy^2,2(x-y)+2x^2y).
  \end{align}
  Assume $\nabla f(x,y)=0$ and sum the components giving
  \begin{align}
    0 = 2xy^2 + 2x^2y = 2xy(x+y).
  \end{align}
  This implies $x=0$, $y=0$ or $x=-y$.  If $x=0$ then 
  \begin{align}
    0 = \nabla f(0,y) = (2y,-2y)
  \end{align}
  shows $y=0$.  By symmetry, $y=0$ implies $x=0$.  Finally, if
  $x=-y$ then
  \begin{align}
    0 = \nabla f(x,-x) = (-4x+2x^3,4x-2x^3) = 2x(x^2-2,2-x^2).
  \end{align}
  Thus the critical points of $f$ are $(0,0)$, $(\sqrt{2},-\sqrt{2})$,
  $(-\sqrt{2},\sqrt{2})$.  Restricting to $|x|,|y|\leq a<1$, only the origin is a
  critical point with value $f(0,0)=3$.  Fixing $x=a$,
    \begin{align}
    f(a,y)=3-(a-y)^2+a^2y^2 = 3-a^2+(a^2-1)y^2+2ay
      \end{align}
  is a concave quadratic in $y$ minimized at $y=-a$.  Similarly, if
  $x=-a$ 
    \begin{align}
    f(-a,y)=3-(-a-y)^2+a^2y^2 = 3-a^2+(a^2-1)y^2-2ay
    \end{align}
  is minimized at $y=a$.  As $f$ is symmetric, we have accounted for all
  points on the boundary, and have shown that the minimum occurs at
  \begin{align}
    f(a,-a)=3-4a^2+a^4 = (a^2-3)(a^2-1).
  \end{align}
  This is positive for $|a|<1$ completing the proof.
\end{proof}
\section{Proof of \Cref{lem:GenericQBounds}: Bounding the $Q$ Function}
\label{sec:GenericQBoundsProof}
\GenericQBounds*
\begin{proof}
Suppose the spikes are numbered so that $t_1=0$.
We have, for $p=0,1,2$,
\begin{align}
   |\sB^{(p)}(t)| 
   &= \left|\alpha_1B_1^{(p)}(t)+\sum_{t_j>0}\alpha_jB_j^{(p)}(t)
   +\sum_{t_j<0}\alpha_jB_j^{(p)}(t)\right|\\
   &\leq \normInf{\alpha}\left(|B^{(p)}(t,\tilde{s}_{1,1},\tilde{s}_{1,2})|
   + \sum_{t_j>0}|B^{(p)}(t-t_j,\tilde{s}_{j,1}-t_j,\tilde{s}_{j,2}-t_j)|\right.\\
   &\left.\quad+ \sum_{t_j<0}|B^{(p)}(t-t_j,\tilde{s}_{j,1}-t_j,\tilde{s}_{j,2}-t_j)|\right)\\
   & \leq \normInf{\alpha}\left(\smo{|B^{(p)}|}(t)
      + \sum_{t_j>0}\smo{|B^{(p)}|}(t_j-t)+ \sum_{t_j<0}\smo{|B^{(p)}|}(t-t_j)\right)
   \\ 
   &\leq \normInf{\alpha}\left(\smo{|B^{(p)}|}(v)
      + \sum_{j=1}^\infty\smo{|B^{(p)}|}(j\Delta(T)-v)+\smo{|B^{(p)}|}(v+j\Delta(T))\right)
   && \text{(Monotonicity)}\notag \\
   &\leq \normInf{\alpha}\left(\smo{|B^{(p)}|}(v)+2\epsilon
      + \sum_{j=1}^5\smo{|B^{(p)}|}(j\Delta(T)-v)+\smo{|B^{(p)}|}(v+j\Delta(T))\right)
   && \text{(\Cref{lem:TailDecay})}  . \notag 
\end{align}
This establishes \eqref{eq:GQBoundAbsB}. The same argument applied to $\smo{|W^{(p)}|}$ in place of
$\smo{|B^{(p)}|}$ yields \eqref{eq:GQBoundAbsW}.  For $q=1,2$, if in addition we 
assume $\sym{B^{(q)}}(t)\leq 0$, then
\begin{align}
   \sB^{(q)}(t) 
   &= \alpha_1B_1^{(q)}(t)+\sum_{t_j>0}\alpha_jB_j^{(q)}(t)
   +\sum_{t_j<0}\alpha_jB_j^{(q)}(t)\\
   &\leq \alpha_{\LB}B^{(q)}(t,\tilde{s}_{1,1},\tilde{s}_{1,2})+ \normInf{\alpha}\left(
      \sum_{t_j>0}|B^{(q)}(t-t_j,\tilde{s}_{j,1}-t_j,\tilde{s}_{j,2}-t_j)|\right.\\
   &\left.\quad+ \sum_{t_j<0}|B^{(q)}(t-t_j,\tilde{s}_{j,1}-t_j,\tilde{s}_{j,2}-t_j)|\right)\\
   &\leq \alpha_{\LB}\sym{B^{(q)}}(t)+
      \normInf{\alpha}\left(
      \sum_{t_j>0}\smo{|B^{(q)}|}(t_j-t)+ \sum_{t_j<0}\smo{|B^{(q)}|}(t-t_j)\right)
   \\ 
   &\leq \alpha_{\LB}\sym{B^{(q)}}(t)+
      \normInf{\alpha}\left(
      \sum_{j=1}^\infty\smo{|B^{(q)}|}(j\Delta(T)-v)+\smo{|B^{(q)}|}(v+j\Delta(T))\right)
   && \text{(Monotonicity)} \notag  \\
   &\leq \alpha_{\LB}\sym{B^{(q)}}(t)+
   \normInf{\alpha}\left(2\epsilon
      + \sum_{j=1}^5\smo{|B^{(q)}|}(j\Delta(T)-v)+\smo{|B^{(q)}|}(v+j\Delta(T))\right)
   && \text{(\Cref{lem:TailDecay})}.  \notag   
\end{align}
The above argument requires $\alpha_{\LB}\geq 0$.  In
\Cref{fig:alphaLBValues} we apply \Cref{lem:Schur} to compute the
region where $\alpha_{\LB}\geq0$ for the Gaussian and Ricker kernels.
The parameters values we use in the computation are the same as used to compute
\Cref{fig:MatrixBounds} at the end of \Cref{sec:CoeffBoundProof}.  The
regions where $\alpha_{\LB}\geq0$ for the two kernels are 
strictly contained within the exact recovery regions stated in
\Cref{thm:Exact}, as required.
\begin{figure}[t]
\centering 
\includegraphics{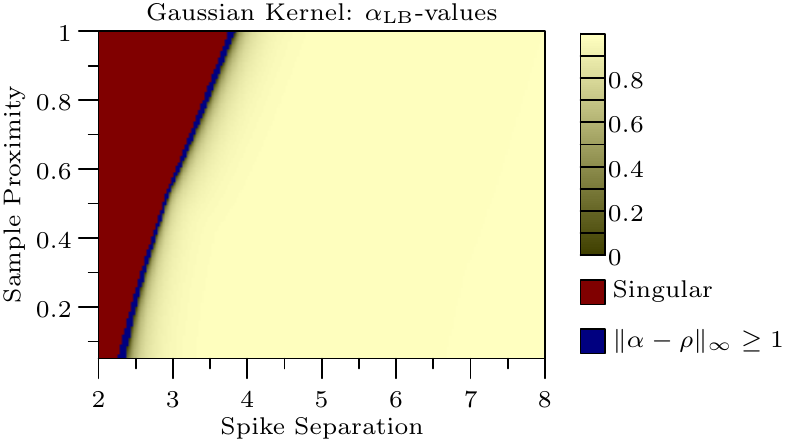}\quad
\includegraphics{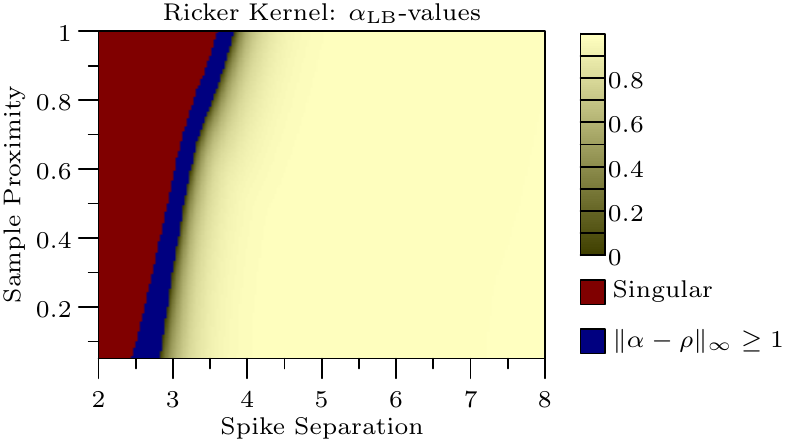}
  \caption{Lower bounds on $\alpha_{\LB}$ for different values of the
    minimum separation and the sample proximity.  The singular region
    denotes where we are unable to prove the system of interpolation
    equations is invertible.}
  \label{fig:alphaLBValues}
\end{figure}

\end{proof}


\section{Duality for Problem~\eqref{pr:NoiseRecovery}}
Recall that $I$ is a compact interval containing the support of
$\mu$, and that $\tilde{\mu}$ in problem~\eqref{pr:NoiseRecovery}
takes values in the set of finite signed Borel measures supported on $I$.
\begin{lemma}
  \label{lem:NoiseRecoveryDual}
  Problem~\eqref{pr:NoiseRecovery} is the Lagrange dual of
  \begin{equation}
    \label{pr:NoiseRecoveryDual}
    \begin{aligned}
      \text{maximize} \quad& c^Ty - \bxi\normTwo{c}\\
      \text{subject to} \quad&
      \sup_{t\in I}\left|\sum_{i=1}^n c_iK(s_i-t)\right|\leq 1,
    \end{aligned}
  \end{equation}
  and strong duality holds.  Furthermore, optimal solutions exist for
  both the primal and dual problems.
\end{lemma}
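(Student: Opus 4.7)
The plan is to derive the dual by a slack-variable reformulation together with the classical duality between the total-variation norm and the sup norm of continuous functions on $I$. Writing the primal constraint as $A\tilde{\mu}+v=y$ with $\normTwo{v}\leq\bxi$, where $(A\tilde{\mu})_i:=(K\ast\tilde{\mu})(s_i)$, I attach a multiplier $c\in\RR^n$ to the equality and form the Lagrangian $L(\tilde{\mu},v,c)=\normTV{\tilde{\mu}}+c^T(y-A\tilde{\mu}-v)$. The infimum over the closed ball $\normTwo{v}\leq\bxi$ equals $-\bxi\normTwo{c}$ by Cauchy--Schwarz. For the measure part, note $c^TA\tilde{\mu}=\int Q_c\,d\tilde{\mu}$ with $Q_c(t):=\sum_i c_iK(s_i-t)$, a continuous function on the compact interval $I$. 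By the Riesz duality $\normTV{\tilde{\mu}}=\sup_{f\in C(I),\,\normInf{f}\leq 1}\int f\,d\tilde{\mu}$, the infimum $\inf_{\tilde{\mu}}[\normTV{\tilde{\mu}}-\int Q_c\,d\tilde{\mu}]$ equals $0$ when $\sup_{t\in I}|Q_c(t)|\leq 1$ (attained at $\tilde{\mu}=0$) and $-\infty$ otherwise (using $\tilde{\mu}=\lambda\sign(Q_c(t_0))\delta_{t_0}$ at a point $t_0$ where $|Q_c(t_0)|>1$ and sending $\lambda\to\infty$). Assembling these pieces yields exactly Problem~\eqref{pr:NoiseRecoveryDual}.

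Strong duality reduces to Slater's condition: since $\normTwo{z}<\xi\leq\bxi$, the true measure $\tilde{\mu}=\mu$ strictly satisfies the inequality constraint of the primal, so the duality gap vanishes. Existence of a primal optimum follows from a standard weak-$\ast$ compactness argument. Every minimizing sequence $\{\tilde{\mu}_k\}$ has $\normTV{\tilde{\mu}_k}\leq\normTV{\mu}$, so by the Banach--Alaoglu theorem (applied to the space of finite signed Borel measures on $I$ viewed as the dual of $C(I)$) it admits a weak-$\ast$ convergent subsequence with limit $\hat{\mu}$. Since each $K(s_i-\cdot)$ is continuous on $I$, the map $A$ is weak-$\ast$ continuous, and the TV norm is weak-$\ast$ lower semi-continuous, so $\hat{\mu}$ is feasible and attains the primal minimum.

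For the dual, the objective $c^Ty-\bxi\normTwo{c}$ is concave, continuous on $\RR^n$, and coercive (it tends to $-\infty$ as $\normTwo{c}\to\infty$ since $\bxi>0$), while the constraint set is a closed convex subset of $\RR^n$ containing $c=0$. Hence the upper level sets of the objective intersected with the constraint set are compact and non-empty, so a maximizer exists. The main obstacle is the passage from finite-dimensional Lagrangian calculus to the infinite-dimensional measure space: identifying $\inf_{\tilde{\mu}}[\normTV{\tilde{\mu}}-\int Q_c\,d\tilde{\mu}]$ via the Riesz characterization and justifying weak-$\ast$ lower semi-continuity of $\normTV{\cdot}$ in the existence argument are the only non-routine ingredients; the finite-dimensional parts are standard.
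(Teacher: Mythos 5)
Your derivation runs in the opposite direction from the paper's: you treat the measure-valued problem~\eqref{pr:NoiseRecovery} as primal and obtain~\eqref{pr:NoiseRecoveryDual} as its Lagrange dual via a slack-variable split together with the Riesz pairing between $C(I)$ and signed measures, whereas the paper treats the finite-dimensional problem~\eqref{pr:NoiseRecoveryDual} as the primal of a semi-infinite program, introduces nonnegative Borel measures $\nu,\lambda$ as multipliers for the constraints $C(t)\leq 1$ and $-C(t)\leq 1$, minimizes the Lagrangian over $c$ in closed form, and recognizes~\eqref{pr:NoiseRecovery} as the result. Both routes produce the same dual pair; the paper's direction plugs directly into the semi-infinite-programming theorems it cites (Shapiro's Theorems 2.2 and 2.3) for strong duality and attainment, while yours stays closer to textbook Fenchel--Rockafellar duality. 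Your Lagrangian computation, the Riesz identification of the inner infimum, the scalar Slater condition, and the Banach--Alaoglu argument for primal attainment are all sound.

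The one genuine gap is your argument for attainment of the dual maximizer. The objective $c^Ty - \bxi\normTwo{c}$ is \emph{not} coercive in general: along the ray $c = ty/\normTwo{y}$ (for $y\neq 0$) its value is $t(\normTwo{y}-\bxi)$, which tends to $+\infty$ whenever $\normTwo{y} > \bxi$, the typical situation. Hence the upper level sets need not be compact and your conclusion does not follow as written. The correct fix does not require coercivity: the feasible set $\{c\in\RR^n : \sup_{t\in I}|\sum_i c_iK(s_i-t)|\leq 1\}$ is itself compact. It is closed as a sublevel set of a continuous function of $c$, and it is bounded because $c\mapsto\sup_{t\in I}|\sum_i c_iK(s_i-t)|$ is a norm on $\RR^n$: if this supremum vanishes then $\sum_i c_iK(s_i-t)=0$ on $I$, hence by analyticity on all of $\RR$, and the distinct translates $K(s_i-\cdot)$ of a nonpolynomial analytic kernel are linearly independent, forcing $c=0$. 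A continuous function on a nonempty compact set attains its maximum. This is the step the paper instead handles via the perturbation-stability criterion around~\eqref{eq:ShapTest}, which is where its hypothesis $\normTwo{z}<\bxi$ enters for the existence of $c^*$; your route, once corrected, gets dual attainment without that inequality (though you still need it for Slater's condition and hence for zero duality gap).
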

\begin{proof}
  For convenience, we minimize
  $-c^Ty+\bxi\normTwo{c}$ instead.
  The Lagrangian is given by
  \begin{align}
    \cL(c,\nu,\lambda) 
    & = -c^Ty + \bxi\normTwo{c} + \int
    \brac{ C(t)-1} \,\diff{\nu}(t) +\int \brac{-C(t)-1} \,d\lambda(t) 
    \label{eq:NoiseDualLagrange}\\
    & = -c^Ty + \bxi\normTwo{c} + \int C(t)\,d(\nu-\lambda)(t)  -
    \normTV{\nu+\lambda} \\
    & = \sum_{i=1}^n c_i( (K*(\nu-\lambda))(s_i) - y_i) 
    + \bxi\normTwo{c}-\normTV{\nu+\lambda},
  \end{align}
  where $C(t):=\sum_{i=1}^n c_iK(s_i-t)$ and $\nu,\lambda$ are finite
  non-negative Borel measures on $I$.  Here the measures $\nu,\lambda$
  act as the Lagrange multipliers (see \cite{shapiro2009semi}).
  To compute the dual function 
  $g$, we minimize the Lagrangian over the
  primal variable, $g(\nu,\lambda):=\inf_c\cL(c,\nu,\lambda)$.
  Define $v\in \RR^n$ by $v_i := (K*(\nu-\lambda))(s_i) - y_i$. 
  Since $c$ occurs in an inner product with $v$ and in an
  $\ell_2$-norm, 
  we can set $c :=-\alpha v/\normTwo{v}$ and minimize over $\alpha\geq0$, so that
  \begin{equation}
    \cL(-\alpha v/\normTwo{v},\nu,\lambda) = 
    -\alpha\normTwo{v} +
    \alpha\bxi-\normTV{\nu+\lambda}
    = \alpha(\bxi-\normTwo{v})-\normTV{\nu+\lambda}.
  \end{equation}
  Unless $\normTwo{v}\leq\bxi$, the minimum is $-\infty$.  Under this constraint, the minimum occurs
  at $\alpha:=0$, which yields the dual problem
  \begin{align}
    \text{maximize} \quad& -\normTV{\nu+\lambda}\\
    \text{subject to} \quad& \sum_{i=1}^n \brac{
      (K*(\nu-\lambda))(s_i) - y_i}^2 \leq \bxi.
  \end{align}
  Fix $\zeta :=\nu-\lambda$ and let $\zeta = \zeta_+-\zeta_-$ be the
  Jordan decomposition, where $\zeta_+,\zeta_-$ are the positive and
  negative parts of $\zeta$, respectively.  Then for any feasible
  $\zeta$ we see that
  $\normTV{\nu+\lambda}$ is minimized when $\nu=\zeta_+$ and
  $\lambda=\zeta_-$.  Up to the negation we introduced for convenience,
  this is exactly Problem~\eqref{pr:NoiseRecovery}.

  To prove strong duality we apply Theorem~2.3 of
  \cite{shapiro2009semi}, noting that the objective is finite by weak
  duality, and that $c=0$ satisfies Slater's condition.  The same
  theorem also shows that the Problem~\eqref{pr:NoiseRecovery} has a minimizing
  solution $\mu^*$. To prove Problem~\eqref{pr:NoiseRecoveryDual} has a
  maximizing solution $c^*$, we will apply Theorem~2.2 of
  \cite{shapiro2009semi} which states the following sufficient
  condition: there is a neighborhood $\cN$ of $0\in\RR^n$ such that
  for every $w\in\cN$ there are finite Borel measures $\nu,\lambda$ on
  $I$ satisfying
  \begin{equation}
    \label{eq:ShapTest}
    \inf_{c\in\RR^n} \cL(c,\nu,\lambda) -w^Tc = -c^T(y+w) + \bxi\normTwo{c} + \int
    C(t)-1\,d\nu(t) +\int -C(t)-1\,d\lambda(t)>-\infty.
  \end{equation}
  This is the Lagrangian for (the minimizing version of) 
  Problem~\eqref{pr:NoiseRecoveryDual} where $y$ is replaced by
  $y+w$.  Our analysis above shows that \eqref{eq:ShapTest} holds for $w$
  whenever there are $\nu,\lambda$ satisfying
  \begin{equation}
    \sum_{i=1}^n ((K*(\nu-\lambda))(s_i)-(y_i+w_i))^2 \leq \bxi^2.\label{eq:SuffBound}
  \end{equation}
  By assumption, our noisy measurements satisfy
  \begin{equation}
    \sum_{i=1}^n ((K*\mu)(s_i)-y_i)^2 =:\beta^2 < \bxi^2.
  \end{equation}
  Letting $\cN=\{w:\|w\|_2< \bxi-\beta\}$ and applying the triangle
  inequality to \eqref{eq:SuffBound} (after taking square roots) completes the proof.
\end{proof}

\section{Proof of \Cref{lem:NoiseAtomic}}
\label{sec:NoiseAtomicProof}
\NoiseAtomic*
\begin{proof}
  By \Cref{lem:NoiseRecoveryDual}, there is a minimizer $\mu^*$ for
  Problem~\eqref{pr:NoiseRecovery} and a maximizer $c^*$ for 
  Problem~\eqref{pr:NoiseRecoveryDual}.  Then the
  Lagrangian \eqref{eq:NoiseDualLagrange} takes the form
  \begin{equation}
    \cL(c^*,\mu_+^*,\mu_-^*) = -(c^*)^Ty + \bxi\normTwo{c^*} +
    \int C^*(t)-1\,d\mu_+^*(t) +\int -C^*(t)-1\,d\mu_-^*(t),
    \label{eq:OptimalLag}
  \end{equation}
  where $C^*(t):=\sum_{i=1}^m c_i^*K(s_i-t)$.  Define $A\subseteq\RR$ by
  \begin{equation}
    A = \{ t\in\RR: |C^*(t)|=1\}.
  \end{equation}
  We first show that $A\cap I$ is finite.  If not, $A$ has a limit point.
  As $C^*$ is analytic, this implies $C^*$ is constant giving $A=\RR$. 
  But $\lim_{t\to-\infty}C^*(t)=0$ by assumption, a contradiction.  This
  proves $A\cap I$ is finite.

  Applying complementary slackness, both integrals in
  equation~\eqref{eq:OptimalLag} are zero.  For $t\in I\setminus A$,
  we have
  $|C^*(t)|-1<0$ by the feasibility of $c^*$ and the definition of
  $A$. Thus $\mu_+^*(I\setminus A)=\mu_-^*(I\setminus A)=0$.
  This shows any primal optimal solution $\mu^*$ is atomic,
  as required.
\end{proof}

\section{Proofs of Auxiliary Results in \Cref{sec:RobustRecoveryProof}}

\subsection{Proof of \Cref{lem:RobustCombinationOne}}
\label{sec:RobustCombinationOneProof}
\RobustCombinationOne*
\begin{proof}
We consider the dual combination $Q$ defined by
equation \eqref{eq:DualCombination} with coefficients adjusted to satisfy the
system of equations~\eqref{eq:Interpolation} for $\rho :=\sign(a)$.
By \Cref{lem:CoeffBound,lem:qbound}, $Q$ satisfies properties 1 and
2. To establish property 3, without loss of generality we set $t_j:=0$
and $a_j:=1$. The application of \Cref{lem:Regions} in the proof of
\Cref{lem:qbound} shows that for each
$(\Delta(T),\gamma(S,T))$ pair in \Cref{fig:ExactRecoveryRegion}
there is a $u_1>0$ such that $Q^{(2)}(t)<0$ if
$t\in [-u_1,u_1]$. By Taylor's theorem
  \begin{align}
    Q(t) &\leq Q(0) + tQ'(0) + \frac{t^2}{2}\sup_{u\in[-u_1,u_1]}
    Q^{(2)}(u)\\ & = 1 + \frac{ct^2}{2},
  \end{align}
  for $t\in [-u_1,u_1]$ where $c:=\sup_{u\in[-u_1,u_1]} Q^{(2)}(u)$
  satisfies $c<0$. This shows that property 3 holds.
  Similarly, the application of \Cref{lem:Regions} in the 
  proof of \Cref{lem:qbound} also shows that for each
  $(\Delta(T),\gamma(S,T))$ pair in \Cref{fig:ExactRecoveryRegion} we have
  $|Q(t)|\leq c' < 1$ if $t\in[u_2,h]$ for a certain constant $c'$.
  Letting $C_2 = \min(1-c',-c\eta^2/2)$ proves property 4.  

  For the Gaussian, by \Cref{lem:BumpExists,lem:DenomFormula}, 
  \begin{align}
    |b_{i,1}| &=
    \frac{|\tilde{s}_{i,2}|\exp\brac{-\frac{\tilde{s}_{i,2}^2}{2}}}{|\tilde{s}_{i,2}-\tilde{s}_{i,1}|\exp\brac{-\frac{\tilde{s}_{i,1}^2+\tilde{s}_{i,2}^2}{2}}}
    \leq \frac{\gamma(S,T)e^{\gamma(S,T)^2/2}}{\kappa(S)},\\
    |w_{i,1}| &=
    \frac{\exp\brac{-\frac{\tilde{s}_{i,2}^2}{2}}}{|\tilde{s}_{i,2}-\tilde{s}_{i,1}|\exp\brac{-\frac{\tilde{s}_{i,1}^2+\tilde{s}_{i,2}^2}{2}}}
    \leq \frac{e^{\gamma(S,T)^2/2}}{\kappa(S)},
  \end{align}
  where we simplified notation by using $t_i=0$. Similarly, for the Ricker, 
  \begin{align}
    |b_{i,1}| &=
    \frac{|\tilde{s}_{i,2}(\tilde{s}_{i,2}^2-3)|\exp\brac{-\frac{\tilde{s}_{i,2}^2}{2}}}{|\tilde{s}_{i,2}-\tilde{s}_{i,1}||3-(\tilde{s}_{i,2}-\tilde{s}_{i,1})^2+\tilde{s}_{i,2}^2\tilde{s}_{i,1}^2|
      \exp\brac{-\frac{\tilde{s}_{i,1}^2+\tilde{s}_{i,2}^2}{2}}}
    \leq \frac{D\gamma(S,T)e^{\gamma(S,T)^2/2}}{\kappa(S)},\\
    |w_{i,1}| &=
    \frac{|\tilde{s}_{i,2}^2-1|\exp\brac{-\frac{\tilde{s}_{i,2}^2}{2}}}{|\tilde{s}_{i,2}-\tilde{s}_{i,1}||3-(\tilde{s}_{i,2}-\tilde{s}_{i,1})^2+\tilde{s}_{i,2}^2\tilde{s}_{i,1}^2|
      \exp\brac{-\frac{\tilde{s}_{i,1}^2+\tilde{s}_{i,2}^2}{2}}}
    \leq \frac{D'e^{\gamma(S,T)^2/2}}{\kappa(S)},
  \end{align}
  where $D,D'>0$ come from the bounds on $\gamma(S,T)$ and \Cref{lem:RickerF}.
  The same bounds hold for $|b_{i,2}|$ and $|w_{i,2}|$.
  Combining the two bounds we obtain
  \begin{equation}
    \normInf{q} \leq
    \frac{\gamma(S,T)\normInf{\alpha}+\normInf{\beta}}
         {\kappa(S)}e^{\gamma(S,T)^2/2}\max(D,D'), 
  \end{equation}
  which establishes propety 5. Note that bounds on 
  $\normInf{\alpha}$ and $\normInf{\beta}$ computed from $\Delta(T)$,
  $\gamma(S,T)$, and $\kappa(S)$ are given in 
  \Cref{sec:CoeffBoundProof} by applying \Cref{lem:Schur}.
\end{proof}

\subsection{Proof of \Cref{lem:GlobalNoiseBounds}}
\label{sec:GlobalNoiseBoundsProof}
\GlobalNoiseBounds*
\begin{proof}
  We have 
   \begin{equation}
    \normTV{\hat{\mu}}-\int Q(t)\,d\hat{\mu}(t)
    \leq \normTV{\mu}-\int Q(t)\,d\hat{\mu}(t) = \int Q(t)\,d(\mu-\hat{\mu})(t),
  \end{equation}
  since $\mu$ is feasible for Problem~\eqref{pr:NoiseRecovery}. 
  To prove \eqref{eq:NoiseQMuDiff}, we bound the right hand side,
  \begin{align}
    \int Q(t)\,d(\mu-\hat{\mu})(t)
    &= \sum_{s_i\in S} q_i \int
    K(s_i-t)\,d(\mu-\hat{\mu})(t)\\
    &= \sum_{s_i\in S} q_i (K*(\mu-\hat{\mu}))(s_i)\\
    &\leq \normTwo{q} \sqrt{ \sum_{s_i\in S} (K*(\mu-\hat{\mu}))(s_i)^2}\quad \text{(Cauchy-Schwarz)}\\
    &\leq 2\bxi\normTwo{q}\label{eq:TwoEps}\\
    &\leq 2\bxi\sqrt{|T|}\normInf{q},
  \end{align}
where~\cref{eq:TwoEps} follows from the assumption that $\hat{\mu},\mu$ are
  both feasible for Problem~\eqref{pr:NoiseRecovery} and the triangle inequality.  
  
  To prove \eqref{eq:NoiseQBound}, we express the quantity of interest in terms of the coefficients of $\hat{\mu}$ at the different locations 
  \begin{align}
    \int Q(t)\,d\hat{\mu}(t)
    & = \sum_{\hat{t}_k\in\wh{T}} Q(\hat{t}_k)\hat{a}_k\\
    &\leq \sum_{\hat{t}_k\in\wh{T}:d(\hat{t}_k,T)\leq \eta}
    |Q(\hat{t}_k)||\hat{a}_k|
    +\sum_{\hat{t}_k\in\wh{T}:d(\hat{t}_k,T)> \eta}
    |Q(\hat{t}_k)||\hat{a}_k|.
  \end{align}
  Applying properties 3 and 4 in \Cref{lem:RobustCombinationOne} and
  noting that $\eta < \Delta(T)/2$ completes the proof.
\end{proof}

\subsection{Proof of \Cref{lem:RobustCombinationTwo}}
\label{sec:RobustCombinationTwoProof}
\RobustCombinationTwo*
The construction is very similar to the one presented in \Cref{sec:ExactRecoveryProof}. We define $Q_j$ as
\begin{equation}
Q_j(t) := \sum_{\tilde{s}_k \in \wt{S}} \qj_k K( \tilde{s}_k - t),
\end{equation}
where the coefficients $\qj \in \RR$ are chosen so that
\begin{equation}
  \label{eq:QjInterpolation}
  \begin{aligned}
    Q_j(t_j) &= 1, \\
    Q_j(t_l) &= 0\quad\text{for $t_l\in T\setminus\{t_j\}$,}\\
    Q_j'(t_i) &= 0\quad\text{for $t_i\in T$.}
  \end{aligned}
\end{equation}
By \Cref{lem:Schur} the system is invertible and if we reparametrize $Q$ in terms of the bumps and waves defined in \Cref{sec:bumps_waves},
\begin{align}
Q_j(t) = \sum_{t_i\in T}\alpha(j)_i B_{t_i}(t,\tilde{s}_{i,1},\tilde{s}_{i,2})+\beta(j)_i W_{t_i}(t,\tilde{s}_{i,1},\tilde{s}_{i,2}),
\end{align}
the bounds in \cref{eq:SchurCoeff1,eq:SchurCoeff2,eq:SchurLB} hold for $\normInf{\alpha(j)}$, $\normInf{\alpha(j)-\rho(j)}$
and $\normInf{\beta(j)}$, where $\alpha(j),\beta(j),\rho(j) \in \RR^{\abs{T}}$ and the entries of $\rho(j)$ are all zero except for $\rho(j)_j=\sign{a_j}$. To control the magnitude of $Q(j)$, we decompose it as $Q_j(t)=\sB_j(t)+\sW_j(t)$, where
\begin{equation}
\sB_j(t):= \sum_{k=1}^n \alpha(j)_k B_k(t) \quad\text{and}\quad \sW_j(t):= \sum_{k=1}^n \beta_k W_k(t).
\end{equation}
These quantities can be bounded by a modified version of \Cref{lem:GenericQBounds}, which has the same proof.
\begin{lemma}
  \label{lem:GenericQjBounds}
  Assume the
  conditions of \Cref{lem:Schur,lem:TailDecay} hold, and that $0\in T$
  has corresponding $\rho$-value $r\in\{0,1\}$.  Let $h$ denote half the distance from
  the origin to the element of $T$ with smallest positive location, or $\infty$
  if no such element exists. Then, for $0<t\leq h$ (or $t>0$ if
  $h=\infty$) and $\epsilon:=10^{-12}/\kappa(S)$,
  \begin{align}
    |\sB_j^{(p)}(t)| & \leq
    \|\alpha(j)\|_\infty\left(\smo{|B^{(p)}|}(v)+2\epsilon+ \sum_{j=1}^5
    \smo{|B^{(p)}|}(v+j\Delta)+\smo{|B^{(p)}|}(j\Delta-v)\right)
    \notag \\
    |\sW_j^{(p)}(t)| & \leq
    \|\beta(j)\|_\infty\left(\smo{|W^{(p)}|}(v)+2\epsilon+ \sum_{j=1}^5
    \smo{|W^{(p)}|}(v+j\Delta)+\smo{|W^{(p)}|}(j\Delta-v)\right),
    \label{eq:GQjBoundAbsBW}
  \end{align}
  for $p=0,1,2$ and
  $v=\min(t,\Delta(T)/2)$.  Under the same conditions, if $r=0$ we
  have
  \begin{equation*}
    |\sB_j^{(p)}(t)| \leq
    \|\alpha(j)-\rho(j)\|_\infty\smo{|B^{(p)}|}(v)+\|\alpha(j)\|_\infty\left(2\epsilon+ \sum_{j=1}^5
    \smo{|B^{(p)}|}(v+j\Delta)+\smo{|B^{(p)}|}(j\Delta-v)\right).
  \end{equation*}
  For $q=1,2$ 
  \begin{align}
    \sB_j^{(q)}(t) & \leq
    \alpha_{\LB}(j)\sym{B^{(q)}}(t)+\|\alpha(j)\|_\infty\left(
    2\epsilon+\sum_{j=1}^5 \smo{|B^{(q)}|}(v+j\Delta)
    +\smo{|B^{(q)}|}(j\Delta-v) \right)\label{eq:GQjBoundB},
  \end{align}
  as long as $r=1$, and $\sym{B^{(q)}}(t)\leq 0$, where 
  $\alpha_{\LB}(j):=1-\|\alpha(j)-\rho(j)\|_\infty\geq0$. 
\end{lemma}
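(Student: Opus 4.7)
The plan is to mirror the proof of \Cref{lem:GenericQBounds} with only minor modifications that reflect the fact that $\rho(j)$ has a single nonzero entry. The structure in all three cases will be: decompose the sum over spikes into a ``central'' contribution coming from the spike at the origin and a ``tail'' contribution coming from the other spikes; apply the triangle inequality and the sample-independent monotonic bounds $\smo{|B^{(p)}|}$, $\smo{|W^{(p)}|}$, $\sym{B^{(p)}}$ defined in \Cref{sec:bwbounds}; use the minimum-separation assumption together with monotonicity to pass to a sum over $j\Delta(T)\pm v$; and finally truncate the infinite tail at $j=5$ using \Cref{lem:TailDecay}, which introduces the $2\epsilon$ term.

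First I would prove \eqref{eq:GQjBoundAbsBW}. Writing
\[
|\sB_j^{(p)}(t)| \;\leq\; \sum_{t_i\in T}|\alpha(j)_i|\,|B_i^{(p)}(t)|
\;\leq\; \|\alpha(j)\|_\infty \sum_{t_i\in T} \smo{|B^{(p)}|}(|t-t_i|),
\]
and then following verbatim the monotonicity-plus-\Cref{lem:TailDecay} argument used for \eqref{eq:GQBoundAbsB} in \Cref{lem:GenericQBounds}, we obtain the stated bound for $\sB_j^{(p)}$. Replacing $\smo{|B^{(p)}|}$ by $\smo{|W^{(p)}|}$ and $\alpha(j)$ by $\beta(j)$ yields the bound for $\sW_j^{(p)}$.

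Second, I would handle the refined bound in the $r=0$ case. Here $\rho(j)_0=0$, so by the definition of $\|\alpha(j)-\rho(j)\|_\infty$ we have $|\alpha(j)_0|\leq \|\alpha(j)-\rho(j)\|_\infty$. Splitting the sum as
\[
|\sB_j^{(p)}(t)| \;\leq\; |\alpha(j)_0|\,|B_0^{(p)}(t)| + \sum_{t_i\in T,\,t_i\neq 0}|\alpha(j)_i|\,|B_i^{(p)}(t)|,
\]
bounding the first term by $\|\alpha(j)-\rho(j)\|_\infty\,\smo{|B^{(p)}|}(v)$ and the second by $\|\alpha(j)\|_\infty$ times the sum $2\epsilon+\sum_{j=1}^5 \smo{|B^{(p)}|}(v+j\Delta)+\smo{|B^{(p)}|}(j\Delta-v)$ as in the first step, produces the stated inequality.

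Third, for \eqref{eq:GQjBoundB} in the $r=1$ case with $\sym{B^{(q)}}(t)\leq 0$: since $\rho(j)_0=1$, we have $\alpha(j)_0 \geq 1 - \|\alpha(j)-\rho(j)\|_\infty = \alpha_{\LB}(j)\geq 0$, and since $B_0^{(q)}(t)\leq \sym{B^{(q)}}(t)\leq 0$, multiplying the nonnegative $\alpha(j)_0$ by a nonpositive quantity reverses the inequality: $\alpha(j)_0\,B_0^{(q)}(t) \leq \alpha_{\LB}(j)\,\sym{B^{(q)}}(t)$. The remaining tail terms are bounded as before using $\|\alpha(j)\|_\infty$ together with monotonicity and \Cref{lem:TailDecay}. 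No real obstacle arises; the argument is a careful bookkeeping exercise in which the only subtlety is choosing between $\|\alpha(j)\|_\infty$, $\|\alpha(j)-\rho(j)\|_\infty$, and $\alpha_{\LB}(j)$ to bound the coefficient at the origin, depending on whether $r=0$ or $r=1$ and on the sign of $\sym{B^{(q)}}$.
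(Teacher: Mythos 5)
Your proof is correct and matches the paper's intent: the paper gives no proof for this lemma, stating only that it ``has the same proof'' as \Cref{lem:GenericQBounds}, and your argument supplies exactly the modifications that statement gestures at---bounding the origin-spike coefficient by $\|\alpha(j)-\rho(j)\|_\infty$ when $r=0$, and, when $r=1$ with $\sym{B^{(q)}}(t)\leq 0$, using $\alpha(j)_0\geq\alpha_{\LB}(j)\geq 0$ together with $B_0^{(q)}(t)\leq\sym{B^{(q)}}(t)\leq 0$ to obtain $\alpha(j)_0 B_0^{(q)}(t)\leq\alpha_{\LB}(j)\sym{B^{(q)}}(t)$. The remaining bookkeeping (triangle inequality, monotonized bounds, truncation via \Cref{lem:TailDecay}) mirrors the proof of \Cref{lem:GenericQBounds} verbatim, as intended.
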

To complete the proof, we give a method for testing whether properties
2-5 are satisfied for fixed values of  
$\Delta(T)$, $\gamma(S,T)$, $\kappa(S)$ 
and the parameters $N_1$ and $N_2$ in \Cref{sec:bwbounds}. Without
loss of generality we set $t_j:=0$ and restrict the analysis to
$\brac{0,h}$ for the $h$ defined in \Cref{lem:GenericQjBounds}.  
\begin{enumerate}
\item Case $r=1$:  
  Test whether the conditions of
  \Cref{lem:Regions} hold using the same process as performed for 
  exact recovery at the end of \Cref{sec:qboundproof} with the bounds
  from \Cref{lem:GenericQjBounds}.
  Also verify that $|Q^{(2)}(t)|$ is bounded above on $[0,\eta]$.
\item Case $r=0$: 
  Verify that $|Q(t)|<1$ for $t\in(0,h]$ by using the bounds on
    $|\sB_j(t)|$ and $|\sW_j(t)|$ from \Cref{lem:GenericQjBounds} 
    directly.  Also verify that
    $|Q^{(2)}(t)|$ is bounded on $[0,\eta]$.
\end{enumerate}
Case $r=1$ above must always succeed for the region stated in
\Cref{thm:RobustRecovery}, as it is the exact same calculation as
performed in \Cref{sec:qboundproof}, and our bound on $|Q^{(2)}(t)|$
is always finite.
In \Cref{fig:C2PBound}, we compute bounds on $\sup_{t\in(0,h]}|Q(t)|$ for case
$r=0$ above.  The parameter values we use in the 
computation are the same as used to compute
\Cref{fig:MatrixBounds} at the end of \Cref{sec:CoeffBoundProof}.
This shows that the steps above succeed for the entire region stated
in \Cref{thm:RobustRecovery}.  
\begin{figure}[t]
\centering
\includegraphics{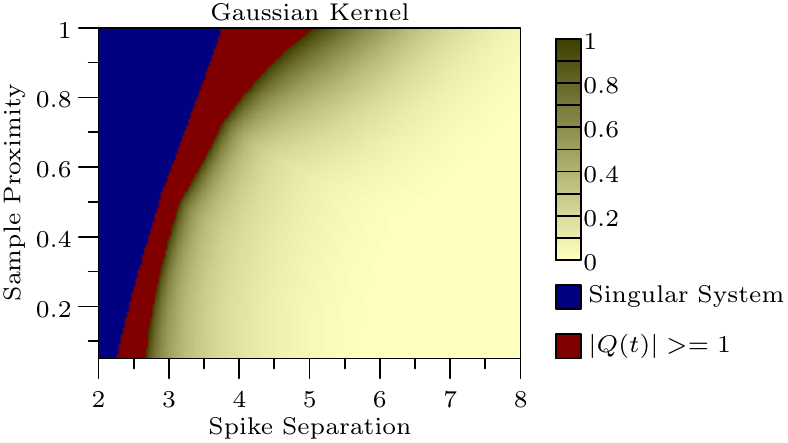}
\includegraphics{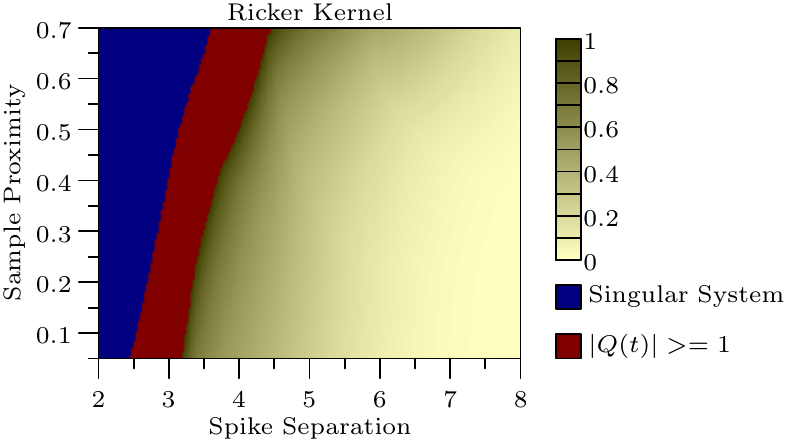}
\caption{Bounds on $\sup_{t\in(0,h]}|Q(t)|$ when $r=0$
for different values of the minimum separation and sample proximity.}
\label{fig:C2PBound}
\end{figure}

When cases $r=0$ and $r=1$ above are satisfied, they yield property 2.  
Property 4 follows from
Taylor's theorem since $|Q^{(2)}(t)|$ is bounded on $[0,\eta]$ in case
$r=0$ above.  The same type of bound was used in the proof of
\Cref{lem:RobustCombinationOne} property 3.
Property 5 follows by \Cref{lem:Regions} for the case $r=1$, and from
\Cref{fig:C2PBound} for the case $r=0$.
To establish property 3, we apply Taylor's theorem to obtain 
\begin{align}
  Q_j(t) &\geq Q_j(0) + tQ_j^{(1)}(0) +
  \frac{t^2}{2}\inf_{u\in[0,\eta]}Q_j^{(2)}(u)\\
  & \geq 1 - ct^2/2,
\end{align}
where $c$ is the upper bound on $|Q^{(2)}(t)|$ computed in case $r=1$ above.
By taking maxes, we can uses the same coefficient $C_1'$ in both properties 3 and~4.

The proof of the bound on $\normInf{\qj}$ is also identical to that 
given in \Cref{lem:RobustCombinationOne}.  This completes the proof of 
\Cref{lem:RobustCombinationTwo}.
\subsection{Proof of \Cref{lem:GlobalNoiseBoundsTwo}}
\label{sec:GlobalNoiseBoundsTwoProof}
\GlobalNoiseBoundsTwo*
\begin{proof}
  To prove \eqref{eq:NoiseQjMuDiff} we follow the same argument as the proof of \eqref{eq:NoiseQMuDiff},
  \begin{align}
    \left|\int Q_j(t)\,d(\mu-\hat{\mu})(t)\right|
    &= \left|\sum_{s_i\in S} \qj_i \int
    K(s_i-t)\,d(\mu-\hat{\mu})(t)\right|\\
    &= \left|\sum_{s_i\in S} \qj_i (K*(\mu-\hat{\mu}))(s_i)\right|\\
    &\leq 2\bxi\normTwo{\qj} &&\text{(Cauchy-Schwarz)}\\
    &\leq 2\normInf{\qj}\bxi\sqrt{|T|}.
  \end{align}
 To prove \eqref{eq:NoiseQjSum}, note that
  \begin{align}
    \alignmulticolumn{\left|\int Q_j(t)\,d\hat{\mu}(t)-\sum_{\hat{t}_k\in\wh{T}:|\hat{t}_k-t_j|\leq
      \eta}\hat{a}_k\right|}\\
    & = 
  \left|\sum_{\hat{t}_k\in\wh{T}:|\hat{t}_k-t_j|>\eta}\hat{a}_kQ_j(\hat{t}_k)
    + \sum_{\hat{t}_k\in\wh{T}:|\hat{t}_k-t_j|\leq
      \eta}\hat{a}_k(Q_j(\hat{t}_k)-1)\right|\\ 
    & = 
  \left|\sum_{\hat{t}_k\in\wh{T}:d(\hat{t}_k,T)>\eta}\hat{a}_kQ_j(\hat{t}_k)
  + \sum_{\hat{t}_k\in\wh{T}:d(\hat{t}_k,T\setminus\{t_j\})\leq \eta}\hat{a}_kQ_j(\hat{t}_k)
    + \sum_{\hat{t}_k\in\wh{T}:|\hat{t}_k-t_j|\leq
      \eta}\hat{a}_k(Q_j(\hat{t}_k)-1)\right|\\ 
  &\leq
  \sum_{\hat{t}_k\in\wh{T}:d(\hat{t}_k,T)>\eta}|\hat{a}_k|C_2'
  + \sum_{\hat{t}_k\in\wh{T}:d(\hat{t}_k,T\setminus\{t_j\})\leq
    \eta}|\hat{a}_k|C_1'd(\hat{t}_k,T)^2
  + \sum_{\hat{t}_k\in\wh{T}:|\hat{t}_k-t_j|\leq \eta}|\hat{a}_k|C_1'd(\hat{t}_k,T)^2,
\end{align}
  by \Cref{lem:RobustCombinationTwo}.  Applying
  \eqref{eq:RobustRecovery_2} and \eqref{eq:RobustRecovery_3}
  completes the proof.
\end{proof}

\section{Proof of \Cref{prop:SparseDualCert}}
\label{sec:SparseDualCertProof}
Our proof of \Cref{prop:SparseDualCert} is based on the proof of Lemma 2.3 in
\cite{fernandez2016demixing}. 
\SparseDualCert*
\begin{proof}
  Let $(\mu',w')$ denote a feasible solution to Problem~\eqref{pr:SparseNoiseRecovery}.
  Applying the Lebesgue decomposition to $\mu'$ we can write
  $\mu'=\mu'_T+\mu'_{T^c}$ where $\mu'_T$ is absolutely continuous
  with respect to $|\mu|$, and $\mu'_{T^c}$ is mutually orthogonal to
  $|\mu|$.  Analogously, we can write $w'=w'_{\Noi}+w'_{\Noi^c}$,
  where $(w'_\Noi)_i=0$ for $s_i\notin\Noi$ and
  $(w'_{\Noi^c})_i=0$ for $s_i\notin\Noi^c$.  

  We first prove that if
  $\mu'_{T^c}=0$ and $w'_{\Noi^c}=0$ then $\mu=\mu'$ and $w=w'$.
  Let $h := \mu'-\mu$ and $g:=w'-w$.  This allows us to write
  \begin{equation}
    h = \sum_{t_j\in T}b_j\delta_{t_j}
  \end{equation}
  where $b\in\RR^{|T|}$.
  Set $\rho = \sign(b)$ and $\rho'=\sign(g)$ where we choose an
  arbitrary value in $\{-1,1\}$ for $\sign(0)$.  By assumption there
  exists a corresponding $Q$ satisfying the hypotheses of
  \Cref{prop:SparseDualCert} with respect to $\rho,\rho'$.  
  As $(\mu,w)$ and $(\mu',w')$ are
  feasible, we have $(K*h)(s_i)+g_i=0$ for $i=1,\ldots,n$.  This
  gives
  \begin{align}
    0 & = \sum_{i=1}^n q_i((K*h)(s_i)+g_i) = \sum_{i=1}^n q_i\int
    K(s_i-t)\,dh(t) + q^Tg\\
    &= \int Q(t)\,dh(t)+q^Tg = \normTV{h}+\lambda\normOne{g},
  \end{align}
  implying $h=0$ and $g=0$ and proving that $\mu=\mu'$ and $w=w'$.

  Next assume that $\mu'_{T^c}\neq 0$ or
  $w'_{\Noi^c}\neq 0$.  To finish the proof, 
  we will show that $(\mu',w')$ is not a minimizer for
  Problem~\eqref{pr:SparseNoiseRecovery}.  Write $\mu$ as
  \begin{equation}
    \mu = \sum_{t_j\in T}a_j\delta_{t_j},
  \end{equation}
  where $a\in\RR^{|T|}$.  Set $\rho=\sign(a)$ and $\rho'=\sign(w)$, where
  again we choose an arbitrary value in $\{-1,1\}$ for $\sign(0)$.
  By assumption there
  exists a corresponding $Q$ satisfying the hypotheses of
  \Cref{prop:SparseDualCert} with respect to $\rho,\rho'$.
  Then we have
  \begin{align}
    \normTV{\mu'} + \lambda\normOne{w'}
    &> \int Q(t)\,d\mu'(t) + q^Tw' \label{eq:QZAssump}\\
    &= \sum_{i=1}^n q_i((K*\mu')(s_i)+w'_i)\\
    &= \sum_{i=1}^n q_i((K*\mu)(s_i)+w_i)\label{eq:QZFeas}\\
    &= \int Q(t)\,d\mu(t) + q^Tw\\
    &= \normTV{\mu} + \lambda\normOne{w},
  \end{align}
  where \eqref{eq:QZAssump} follows from $|Q(t)|<1$ for $t\in T^c$
  and $|q_i|<\lambda$ for $s_i\in\Noi^c$.  Equation~\eqref{eq:QZFeas}
  holds because $(\mu,w)$ and $(\mu',w')$ are both feasible by assumption.
\end{proof}

\section{Proof of \Cref{lem:SparseTrim}}
\label{sec:SparseTrimProof}
\SparseTrim*
\begin{proof}
  For any vector $u$ and any atomic measure $\nu$, we denote $u_S$ and
  $\nu_S$ the restriction of $u$ and $\nu$ to the subset of their
  support index by a set~$S$.  Let $(\hat{\mu},\hat{w})$ be any
  solution to Problem~\eqref{pr:SparseNoiseRecovery} applied to $y'$.
  The pair $(\hat{\mu}+\mu_{T\setminus
    T'},\hat{w}+w_{\wid\setminus\wid'})$ is feasible for 
  Problem~\eqref{pr:SparseNoiseRecovery} applied to $y$ since
  \begin{align}
    (K*\hat{\mu})(s_i) + (K*\mu_{T\setminus T'})(s_i) +
    \hat{w}_i+(w_{\wid\setminus\wid'})_i
    & = y'_i + (K*\mu_{T\setminus T'})(s_i) +
    (w_{\wid\setminus\wid'})_i\\
    & = (K*\mu')(s_i) + w'_i +(K*\mu_{T\setminus T'})(s_i) +
    (w_{\wid\setminus\wid'})_i\notag\\
    & = (K*\mu)(s_i) + w_i\\
    & = y_i,
  \end{align}
  for $i=1,\ldots,n$. By the triangle inequality and the assumption
  that $(\mu,w)$ is the unique solution to
  Problem~\eqref{pr:SparseNoiseRecovery} applied to $y$, this implies
  \begin{align}
    \normTV{\mu} + \lambda\normOne{w}
    & < \normTV{\hat{\mu}+\mu_{T\setminus T'}} +
    \lambda\normOne{\hat{w}+w_{\wid\setminus\wid'}}\\
    & \leq \normTV{\hat{\mu}}+\normTV{\mu_{T\setminus T'}} +
    \lambda\normOne{\hat{w}}+\lambda\normOne{w_{\wid\setminus\wid'}}
  \end{align}
  unless $\hat{\mu}+\mu_{T\setminus T'}=\mu$ and
  $\hat{w}+w_{\wid\setminus\wid'}=w$.  This implies
  \begin{align}
    \normTV{\mu'} + \lambda\normOne{w'}
    & = \normTV{\mu}-\normTV{\mu_{T\setminus T'}} +
    \lambda\normOne{\hat{w}}-\lambda\normOne{w_{\wid\setminus\wid'}}\\
    & < \normTV{\hat{\mu}}+\lambda\normOne{\hat{w}},
  \end{align}
  unless $\hat{\mu}=\mu'$ and $\hat{w}=w'$.  We conclude that
  $(\mu',w')$ must be the unique solution to 
  Problem~\eqref{pr:SparseNoiseRecovery} applied to $y'$.
\end{proof}

\section{Proofs of Auxiliary Results in \Cref{sec:SparseNoise}}
In this section we prove the lemmas that allow us to establish exact recovery in the presence of sparse noise in \Cref{sec:SparseNoise}. We begin by proving some bounds on the dampened noise function $D$ which will be needed in the following sections. For notational convenience we consider $\wid\in[\wid_1,\wid_2]$
where this interval is $[\botwid,\topwid]$ for the Gaussian, and
$[\botwidR,\topwidR]$ for the Ricker.

\subsection{Bounds on the Dampened Noise Function}
For convenience, define
\begin{equation}
  D(t) := D_0(t,1,1),
\end{equation}
and note that $D_0(t,\lambda,\rho'_i)=\lambda\rho'_iD(t)$.  
As in \Cref{sec:bwbounds} where we bounded the bumps and waves, we need 
monotonic bounds on $D$ and its derivatives, which we define below:
\begin{equation}
  \smo{|D^{(j)}|}(t) := \sup_{\substack{u\geq |t|\\\wid\in[\wid_1,\wid_2]}}|D^{(j)}|(u),
\end{equation}
for $j=0,1,2$.  By \Cref{lem:EvenKernel}, these functions are all
even.  In \Cref{fig:Vfunction} we show the derivatives of $D$, and their
monotonized versions. The following lemmas derive bounds on $D$ using the same techniques used to bound the bump
and wave functions in \Cref{sec:TailDecayProof}. 
\begin{figure}[t]
\centering
\includegraphics{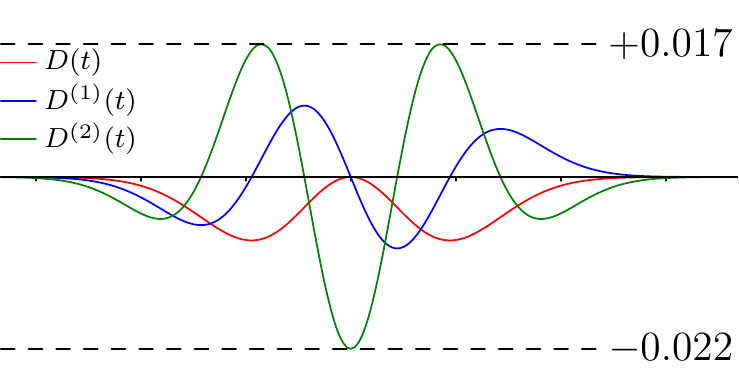} $ \qquad $ 
\includegraphics{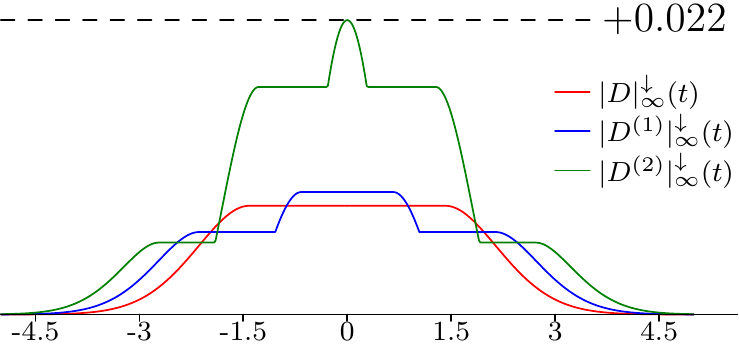}
\caption{The left image shows the dampened kernel $D$ for the Gaussian
  kernel, along with its derivatives, for $\wid=.15$. On the right, we can see the corresponding monotonized bounds.}
\label{fig:Vfunction}
\end{figure}

\begin{lemma}
  \label{lem:VTailDecay}
  For the Gaussian and Ricker kernels with $\wid\in[\wid_1,\wid_2]$ we have
  \begin{align}
  \sum_{j=6}^\infty \smo{|D^{(i)}|}(j\Delta) & \leq
  \frac{10^{-12}}{\wid_1},\label{eq:VTailSumBound}\\
  \smo{|D^{(i)}|}(t) & \leq \frac{10^{-12}}{\wid_1},\label{eq:VTailBound}
  \end{align}
  for $i=0,1,2$, $t\geq 10$, and $\Delta\geq 2$.
\end{lemma}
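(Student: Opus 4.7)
The plan is to follow the same template as the proof of Lemma TailDecay (in sec:TailDecayProof), with the role of the sample separation $\kappa(S)$ played here by the grid-spacing lower bound $\wid_1$, since the two clean samples flanking a corrupted sample sit at distance $2\wid \geq 2\wid_1$ from each other. Since $D$ is a special case of the bump construction, it can be analyzed by essentially the same decomposition as bumps and waves.

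First, I would write $D(t) = K(-t) - B_0(t,-\wid,\wid)$ explicitly and compute the bump coefficients. By symmetry of the kernel (Lemma EvenKernel) the two coefficients coincide, and using Lemma BumpExists and Lemma DenomFormula one obtains closed forms
\[
b_1 = b_2 = \frac{1}{2K(\wid)},
\]
for the Gaussian, and an analogous expression for the Ricker with denominator $2\wid(3-4\wid^2+\wid^4)\exp(-\wid^2)$ (nonzero for $\wid \leq \topwidR < 1$ by Lemma RickerF). Consequently, for $\wid\in[\wid_1,\wid_2]$, the coefficients $b_1,b_2$ are bounded by an absolute constant that can be multiplied by $1/\wid_1$ to give a $\wid$-uniform bound in the style used for the bumps and waves.

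Next, I apply the triangle inequality to get, for $i=0,1,2$,
\[
|D^{(i)}(t)| \leq |K^{(i)}(-t)| + |b_1|\,|K^{(i)}(-\wid-t)| + |b_2|\,|K^{(i)}(\wid-t)|.
\]
For $t \geq 10$, Lemma GaussDerivs bounds each kernel derivative evaluated at a shift of magnitude at most $\wid \leq 1$ by $2t^4\exp(-t^2/2+t-s^2/2)$. Combining this with the coefficient bounds above gives an estimate of the form $|D^{(i)}(t)| \leq C t^4 \exp(-t^2/2+t)/\wid_1$ for some absolute constant $C$ (different for Gaussian and Ricker, but in both cases small). Choosing the constants loosely so that $Ct^4 \exp(-t^2/2+t) \leq 20 t^4\exp(-t^2/2+t)/\wid_1$ lets me invoke Lemma SumTailBound with $d=\Delta \geq 2$ and $c+jd \geq 10$ to obtain \eqref{eq:VTailSumBound}. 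For the pointwise tail bound \eqref{eq:VTailBound}, I apply the same coefficient-and-kernel estimate at a single point $t\geq 10$, observing that the monotonized supremum is attained at the left endpoint by the decay of $t^4\exp(-t^2/2+t)$ for $t \geq 10$, which was already established inside the proof of Lemma SumTailBound.

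The only mild obstacle is verifying that the bump coefficients remain uniformly bounded (times $1/\wid_1$) over the whole interval $\wid\in[\wid_1,\wid_2]$, which amounts to checking that the denominators $D^{\mathcal G}(-\wid,\wid)$ and $D^{\mathcal R}(-\wid,\wid)$ do not degenerate on the admissible ranges $[\botwidG,\topwidG]$ and $[\botwidR,\topwidR]$; for the Ricker case this uses Lemma RickerF since $\topwidR < 1$. Once that is in hand the rest of the argument is a direct reuse of the Gaussian-derivative bound from Lemma GaussDerivs and the geometric-tail summation from Lemma SumTailBound, exactly as in the proof of Lemma TailDecay.
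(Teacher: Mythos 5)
Your proposal is correct and follows essentially the same route as the paper: decompose $D$ as $K$ minus a bump, bound the bump part via its explicit coefficients and the kernel decay, and then invoke Lemma~\ref{lem:SumTailBound}. The only practical difference is that the paper short-circuits your re-derivation of the bump-coefficient and decay estimates by directly citing Lemma~\ref{lem:GaussDecayBound} and Lemma~\ref{lem:RickerDecayBound} (already established in \Cref{sec:TailDecayProof}), with the grid width $\wid$ playing the role of $\kappa(S)$; and note that in the symmetric configuration $(s_1,s_2)=(-\wid,\wid)$ your explicit formulas $b_1=b_2=1/(2K(\wid))$ (and the Ricker analogue) are in fact bounded by absolute constants with no $1/\wid_1$ blow-up, so the $1/\wid_1$ in the stated bound is simply inherited from reusing the general bump estimates rather than being intrinsic here.
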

\begin{proof}
  By \Cref{lem:GaussDecayBound,lem:RickerDecayBound} we have
  \begin{equation}
    \left|\frac{\partial^i}{\partial t^i}B(t,s_1,s_2)\right| \leq
    \frac{12t^4}{\wid_1}\exp\brac{-\frac{t^2}{2}+t},
  \end{equation}
  where the bound holds for both kernels.  Note that the grid width
  $\tau$ can be used in place of $\kappa(S)$.
  Thus we have, for $t\geq 10$,
  \begin{align}
    \left|\KG(t)-\frac{\partial^i}{\partial t^i}B(t,s_1,s_2)\right| &\leq
    \frac{12t^4}{\wid_1}\exp\brac{-\frac{t^2}{2}+t} +
    \exp\brac{-\frac{t^2}{2}}\\
    & \leq \frac{13t^4}{\wid_1}\exp\brac{-\frac{t^2}{2}+t},
  \end{align}
  for the Gaussian and
  \begin{align}
    \left|\KR(t)-\frac{\partial^i}{\partial t^i}B(t,s_1,s_2)\right| &\leq
    \frac{12t^4}{\wid_1}\exp\brac{-\frac{t^2}{2}+t} +
    (t^2-1)\exp\brac{-\frac{t^2}{2}}\\
    & \leq \frac{12t^4}{\wid_1}\exp\brac{-\frac{t^2}{2}+t} 
    + t^4\exp\brac{-\frac{t^2}{2}}\\
    & \leq \frac{13t^4}{\wid_1}\exp\brac{-\frac{t^2}{2}+t},
  \end{align}
  for the Ricker.  Applying \Cref{lem:SumTailBound} gives
  \eqref{eq:VTailSumBound} and \eqref{eq:VTailBound}.
\end{proof}
\begin{lemma}
\label{lem:VPiecewise}
  Fix $\wid\in[\wid_1,\wid_2]$ and let $N_1\in\ZZ_{>0}$. Partition the
  interval $[0,10)$ into $N_1$ intervals of the form
  \begin{equation}
    \ml{U}_j  := \sqbr{ \frac{10(j-1)}{N_1},\frac{10j}{N_1}},
  \end{equation}
 and $j=1,\ldots,N_1$.  For the Gaussian and Ricker kernels, there
 exists a function $\wt{|D^{(i)}|}$
 such that for all $ t \in \ml{U}_j$ and $i=0,1,2$
 \begin{equation}
    \smo{|D^{(i)}|}(t) \leq \wt{|D^{(i)}|}(j).
 \end{equation}
\end{lemma}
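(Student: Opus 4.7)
The proof will mirror the structure of \Cref{lem:Piecewise}, replacing the three-parameter interval arithmetic (over $t,s_1,s_2$) with two-parameter interval arithmetic (over $t,\wid$), since for samples on a regular grid with spacing $\wid$ the dampened kernel is simply
\begin{equation*}
 D(t) = K(-t) - B_{0}(t,-\wid,\wid),
\end{equation*}
a function of the single sample parameter $\wid\in[\wid_1,\wid_2]$. By \Cref{lem:EvenKernel} combined with the symmetry of $B$ in its last two arguments, $D$ is even in $t$, so it suffices to bound $\smo{|D^{(i)}|}$ for $t\ge 0$.

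First I would partition $[\wid_1,\wid_2]$ into $N_2$ subintervals
\begin{equation*}
 \mathcal{W}_k := \left[\wid_1 + \tfrac{(k-1)(\wid_2-\wid_1)}{N_2},\,
 \wid_1 + \tfrac{k(\wid_2-\wid_1)}{N_2}\right],\qquad k=1,\ldots,N_2,
\end{equation*}
so that $[0,10)\times[\wid_1,\wid_2]$ is covered by the rectangles $\mathcal{U}_j\times\mathcal{W}_k$. Using the closed-form coefficient expressions from \Cref{lem:BumpExists} together with the explicit denominator formulas in \Cref{lem:DenomFormula}, the function $D^{(i)}(t)$ is expressible purely in terms of polynomials and exponentials in $t$ and $\wid$. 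For the Gaussian, $\DG(-\wid,\wid) = 2\wid e^{-\wid^2}$ is bounded away from zero on $[\wid_1,\wid_2]$; for the Ricker, \Cref{lem:RickerF} applies since $|\wid|\le\topwidR<1$, so $\DR(-\wid,\wid)$ is also bounded away from zero. Hence for each pair $(j,k)$ we can compute, via the interval-arithmetic rules of \Cref{lem:IntervalArithmetic}, a numerical bound
\begin{equation*}
 |d^{(i)}|_{j,k} \;\ge\; \sup_{t\in\mathcal{U}_j,\;\wid\in\mathcal{W}_k} |D^{(i)}(t)|,
\end{equation*}
for $i=0,1,2$. Maximizing over $k$ gives the sample-independent piecewise bound $\sym{|D^{(i)}|}(t)\le \max_{k} |d^{(i)}|_{j,k}$ for $t\in\mathcal{U}_j$.

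To convert this into a bound on the monotonized quantity $\smo{|D^{(i)}|}(t)$, I would set
\begin{equation*}
 \wt{|D^{(i)}|}(j) \;:=\; \max\!\left(\max_{k:j\le k\le N_1}\,\max_{\ell}|d^{(i)}|_{k,\ell},\;\tfrac{10^{-12}}{\wid_1}\right),
\end{equation*}
where the inner maximum over subsequent partition indices realizes the monotonization over $[t,10)$ and the additional constant $10^{-12}/\wid_1$ absorbs all values $u\ge 10$ by \Cref{lem:VTailDecay}. By construction, $\wt{|D^{(i)}|}$ is nonincreasing in $j$ and dominates $\smo{|D^{(i)}|}$ throughout $\mathcal{U}_j$, yielding the claim for each $i\in\{0,1,2\}$.

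The main obstacle is essentially bookkeeping rather than mathematical depth: one must ensure the interval-arithmetic evaluation of $D^{(i)}(t)$ remains tight enough to be useful, in particular that the $\wid$-subinterval $\mathcal{W}_k$ does not cross regions where the polynomial factors of the denominator change sign. The bounds $\wid\in[\botwid,\topwid]$ for the Gaussian and $\wid\in[\botwidR,\topwidR]$ for the Ricker guarantee this, and the same pdflatex-friendly Mathematica routines used for \Cref{lem:Piecewise} can be reused with the two-variable grid described above.
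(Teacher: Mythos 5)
Your proof is correct and follows essentially the same strategy as the paper's: express $D^{(i)}(t)$ explicitly in terms of polynomials and exponentials via \Cref{lem:BumpExists} and \Cref{lem:DenomFormula}, verify via \Cref{lem:RickerF} that the denominators are bounded away from zero on the allowed $\wid$-range, then obtain piecewise-constant bounds on each $\ml{U}_j$ by interval arithmetic. Two minor deviations from the paper: you introduce an additional partition of $[\wid_1,\wid_2]$ into subintervals $\mathcal{W}_k$ (the paper runs interval arithmetic over the single interval $[\wid_1,\wid_2]$; your refinement costs nothing and could tighten the bound), and you fold the monotonization-plus-tail step directly into the definition of $\wt{|D^{(i)}|}(j)$, whereas the paper places the interval-arithmetic bound in \Cref{lem:VPiecewise} and the monotonization in \Cref{cor:VMonotonize}. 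In fact your merged version more literally matches the lemma's wording (which refers to the monotonized $\smo{|D^{(i)}|}$), though the paper's intent is to treat that step separately in the corollary.
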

\begin{proof}
  The construction 
  here is a slight modification to the argument given in \Cref{sec:PiecewiseProof}.
  Partition $[0,10)$ into $N_1$ segments of the form
    \begin{align}
      \ml{U}_j &:= \left[\frac{10 \brac{j-1}}{N_1},\frac{10
          j}{N_1}\right), \quad 1 \leq j \leq N_1,
    \end{align}
    and define
    \begin{equation}
      |\wt{D^{(i)}}(j)| \geq \sup_{\substack{t\in
          \ml{U}_j\\\wid\in[\wid_1,\wid_2]}} |D^{(i)}|(t)
      = \sup_{\substack{t\in
          \ml{U}_j\\\wid\in[\wid_1,\wid_2]}} |K^{(i)}(t)-B^{(i)}(t,-\wid,\wid)|,
    \end{equation}
    for $i=0,1,2$.  This can be computed by interval arithmetic (see
    \Cref{sec:IntervalArithmetic}).
   
\end{proof}
\begin{corollary}\label{cor:VMonotonize}
  Assuming the conditions and definitions in
  \Cref{lem:VTailDecay,lem:VPiecewise} we have, for $i=0,1,2$ and $t
  \in \ml{U}_j$
  \begin{equation}
    \smo{|D^{(i)}|}(t) \leq \max\left(\max_{k:j\leq k \leq
      N_1}\wt{|D^{(i)}|}(k),\epsilon\right),
  \end{equation}
  where $\epsilon := 10^{-12}/\wid_1$.
\end{corollary}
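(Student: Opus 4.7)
The plan is to imitate the argument underlying \Cref{cor:Monotonize} essentially verbatim, with the dampened-kernel bounds from \Cref{lem:VTailDecay} and \Cref{lem:VPiecewise} in place of the bump/wave bounds used there. The key point is that $\smo{|D^{(i)}|}(t)$ is by definition the supremum of $\sym{|D^{(i)}|}(u)$ over $u \geq |t|$, so all we have to do is bound this supremum by splitting the range of $u$ into the compact part $[|t|,10)$, where \Cref{lem:VPiecewise} applies, and the tail $[10,\infty)$, where \Cref{lem:VTailDecay} applies.

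Concretely, fix $t \in \ml{U}_j$ and a value $u \geq |t|$. First I would dispose of the case $u \geq 10$ directly: the tail bound \eqref{eq:VTailBound} yields $\sym{|D^{(i)}|}(u) \leq \smo{|D^{(i)}|}(u) \leq \epsilon$ with $\epsilon = 10^{-12}/\wid_1$. Next, in the case $|t| \leq u < 10$, the point $u$ lies in some cell $\ml{U}_k$ of the partition with $k \geq j$ (since the cells are ordered and $u \geq |t|$ implies the cell containing $u$ is at or after $\ml{U}_j$), and \Cref{lem:VPiecewise} gives $\sym{|D^{(i)}|}(u) \leq \wt{|D^{(i)}|}(k) \leq \max_{j \leq k \leq N_1} \wt{|D^{(i)}|}(k)$. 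Taking the supremum over both cases yields the stated bound.

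There is no real obstacle here; this is a bookkeeping step. The only thing to verify carefully is the interaction between the strict inequality $k \leq N_1$ and the definition of the partition, namely that every $u \in [|t|,10)$ indeed lies in some $\ml{U}_k$ with $j \leq k \leq N_1$, which is immediate from the fact that the cells $\ml{U}_1,\ldots,\ml{U}_{N_1}$ partition $[0,10)$. Since the argument is line-by-line identical to the one that establishes \Cref{cor:Monotonize}, the proof can simply be stated as "analogous to the proof of \Cref{cor:Monotonize}, using \Cref{lem:VTailDecay} and \Cref{lem:VPiecewise} in place of \Cref{lem:TailDecay} and \Cref{lem:Piecewise}."
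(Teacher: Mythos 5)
Your proof is correct and is exactly the argument the paper has in mind (the paper states \Cref{cor:Monotonize} and \Cref{cor:VMonotonize} without proof): split $\sup_{u\geq |t|,\,\tau}|D^{(i)}|(u)$ into $u\in[|t|,10)$, where every such $u$ lands in a cell $\ml{U}_k$ with $k\geq j$ so \Cref{lem:VPiecewise} applies, and $u\geq 10$, where \eqref{eq:VTailBound} gives the $\epsilon$ term. The one small caveat is notational: you write $\sym{|D^{(i)}|}$, which the paper never defines for the dampened kernel, but your intended meaning $\sup_{\tau\in[\tau_1,\tau_2]}|D^{(i)}|(u)$ is clear from context and is the quantity \Cref{lem:VPiecewise} actually controls.
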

\subsection{Proof of \Cref{lem:SparseCoeffBound}: Invertibility of the
  Interpolation Equations}
\label{sec:SparseCoeffBoundProof}
In this section, we use a variant of \Cref{lem:Schur} to prove the
interpolation equations \eqref{eq:Interpolation_sparse} are
invertible.  We begin by reparametrizing into bumps and waves:
\begin{equation}
\label{eq:Reparam_sparse}
\begin{aligned}
\sum_{t_j\in T} \alpha_jB_j(t_i) + \beta_jW_j(t_i) & = \psi_i, \\
\sum_{t_j\in T} \alpha_jB_j^{(1)}(t_i) + \beta_jW_j^{(1)}(t_i) & =
\zeta_i, 
\end{aligned}
\end{equation}
for all $t_i\in T$ where
\begin{align}
  \psi_i &= \rho_i - \RC(t_i),\\
  \zeta_i &= -\RC^{(1)}(t_i).
\end{align}
The block-matrix representation of \eqref{eq:Reparam_sparse} is:
\begin{equation}\label{eq:SparseBlockEquation}
\MAT{\BB&\WW\\\DBB&\DWW }\MAT{\alpha\\\beta} = \MAT{\psi\\\zeta},
\end{equation}
where the different matrices are defined in~\eqref{eq:block_matrices}. 
\begin{lemma}
  \label{lem:SparseSchur}
 Suppose
  $\|\II-\DWW \|_\infty < 1$, and $\|\II-\CCC\|_\infty <1$ where
  \begin{equation*}
    \CCC = \BB-\WW(\DWW )^{-1}(\DBB)
  \end{equation*}
  is the Schur complement of $\DWW$, and $\II$ is the identity
  matrix.  Then
  \begin{equation}
    \MAT{\BB&\WW\\\DBB&\DWW }\MAT{\alpha\\\beta} =
    \MAT{\psi\\\zeta}\label{eq:SparseSchurEq}
  \end{equation}
  has a unique solution. Furthermore, we have
  \begin{align}
    \normInf{\alpha} & \leq \normInf{\CCC^{-1}}\brac{\normInf{\psi}
    +\normInf{\WW}\normInf{(\DWW)^{-1}}\normInf{\zeta}}, \label{eq:SparseSchurCoeff1}\\ 
    \normInf{\beta} & \leq \normInf{(\DWW)^{-1}}\brac{\normInf{\zeta}
    +\normInf{\DBB}\normInf{\alpha}},\label{eq:SparseSchurCoeff2}\\
    \normInf{\alpha-\psi}
    & \leq \normInf{\II-\CCC}\normInf{\alpha}
    +\normInf{\WW}\normInf{(\DWW)^{-1}}\normInf{\zeta},\label{eq:SparseSchurCoeff3}
  \end{align}
  where
  \begin{align}
    \|\II-\CCC\|_\infty & \leq \|\II-\BB\|_\infty +
    \|\WW\|_\infty\|(\DWW
    )^{-1}\|_\infty\|\DBB\|_\infty,\label{eq:SparseSchurIC}\\ \|\CCC^{-1}\|_\infty
    & \leq
    \frac{1}{1-\|\II-\CCC\|_\infty},\label{eq:SparseNeumann1}\\ \|(\DWW
    )^{-1}\|_\infty & \leq \frac{1}{1-\|\II-\DWW
      \|_\infty}.\label{eq:SparseNeumann2}
  \end{align}
\end{lemma}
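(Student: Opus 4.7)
The plan is to invert the block system by Schur complement, then read off the stated norm bounds by algebraic manipulation. Since the hypotheses $\|\II-\DWW\|_\infty<1$ and $\|\II-\CCC\|_\infty<1$ are just Neumann-series invertibility conditions, the main work is bookkeeping: tracking how $\psi$ and $\zeta$ propagate through the reduction.

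First I would use the bottom block of \eqref{eq:SparseSchurEq} to solve for $\beta$ in terms of $\alpha$, namely
\begin{equation}
  \beta = (\DWW)^{-1}(\zeta - \DBB\alpha),
\end{equation}
which requires that $\DWW$ be invertible. This follows from the assumption $\|\II-\DWW\|_\infty<1$ via the Neumann series $(\DWW)^{-1}=\sum_{k\geq 0}(\II-\DWW)^k$, which also yields \eqref{eq:SparseNeumann2}. Substituting the expression for $\beta$ into the top block gives $\CCC\alpha = \psi - \WW(\DWW)^{-1}\zeta$, so by the same Neumann-series argument applied to $\CCC$ under the assumption $\|\II-\CCC\|_\infty<1$, we get invertibility of $\CCC$, \eqref{eq:SparseNeumann1}, and the unique solution
\begin{equation}
  \alpha = \CCC^{-1}\bigl(\psi - \WW(\DWW)^{-1}\zeta\bigr).
\end{equation}
Uniqueness of $(\alpha,\beta)$ follows.

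For \eqref{eq:SparseSchurIC}, I expand $\II-\CCC = \II - \BB + \WW(\DWW)^{-1}\DBB$ and apply the triangle inequality together with submultiplicativity of $\|\cdot\|_\infty$. Bound \eqref{eq:SparseSchurCoeff1} comes from applying $\|\cdot\|_\infty$ to the closed-form expression for $\alpha$ above and using submultiplicativity. Bound \eqref{eq:SparseSchurCoeff2} comes from applying $\|\cdot\|_\infty$ to the formula $\beta=(\DWW)^{-1}(\zeta-\DBB\alpha)$ in the same way.

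The only mildly non-obvious piece is \eqref{eq:SparseSchurCoeff3}. For this I would start from the identity
\begin{equation}
  \alpha - \psi = (\II - \CCC)\alpha + (\CCC\alpha - \psi) = (\II-\CCC)\alpha - \WW(\DWW)^{-1}\zeta,
\end{equation}
where I used $\CCC\alpha = \psi - \WW(\DWW)^{-1}\zeta$ in the second step, and then apply the triangle inequality and submultiplicativity. Note that this reduces to the corresponding bound in \Cref{lem:Schur} when $\zeta=0$ and $\psi=\rho$, so \Cref{lem:Schur} is indeed the special case as claimed. There is no real obstacle here, only careful tracking of the extra $\WW(\DWW)^{-1}\zeta$ term that arises because the right-hand side now has a nonzero lower block.
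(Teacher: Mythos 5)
Your proof is correct and follows essentially the same route as the paper's: invertibility of $\DWW$ and $\CCC$ via Neumann series, block elimination to obtain $\alpha = \CCC^{-1}(\psi - \WW(\DWW)^{-1}\zeta)$ and $\beta = (\DWW)^{-1}(\zeta - \DBB\alpha)$, and the identity $\alpha - \psi = (\II-\CCC)\alpha - \WW(\DWW)^{-1}\zeta$ for \eqref{eq:SparseSchurCoeff3}. The algebraic bookkeeping you describe matches the paper's argument step for step.
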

\begin{proof}
  For any matrix $A\in\RR^{n\times n}$ such that $\normInf{ A} < 1$ the Neumann
  series $\sum_{j=0}^\infty A^j$ converges to $(\II-A)^{-1}$.  By the
  triangle inequality and the submultiplicativity of the $\infty$-norm, this gives
  \begin{align}
  \|(\II-A)^{-1}\|_\infty \leq \sum_{j=0}^\infty \|A\|_\infty^j =
  \frac{1}{1-\|A\|_\infty}.
  \end{align}
  Setting $A:=\II-\DWW$ proves $\DWW$ is invertible, $\cC$ exists, and
  gives inequality \eqref{eq:SparseNeumann2}. Setting
  $A:=\II-\cC$ proves that $\cC$ is invertible and gives
  inequality \eqref{eq:SparseNeumann1}.  If both $\DWW$ and its Schur complement
  $\CCC$ are invertible, then the matrix in \cref{eq:SparseSchurEq} is
  invertible and the system has a unique solution.
  Applying blockwise Gaussian elimination to the augmented
  system we have
  \begin{equation}
    \left[\begin{array}{cc|c}
        \BB&\WW&\psi\\\DBB&\DWW&\zeta\end{array}\right]
    \implies 
    \left[\begin{array}{cc|c}
        \BB-\WW(\DWW)^{-1}\DBB&0&\psi-\WW(\DWW)^{-1}\zeta\\
        \DBB&\DWW&\zeta\end{array}\right].
  \end{equation}
 As a result
  \begin{equation}
    \alpha = \cC^{-1}(\psi-\WW(\DWW)^{-1}\zeta),
  \end{equation}
  giving \eqref{eq:SparseSchurCoeff1}. Since $\DBB\alpha+\DWW\beta=\zeta$,
  \begin{equation}
    \beta = (\DWW)^{-1}(\zeta-\DBB\alpha),
  \end{equation}
  giving \eqref{eq:SparseSchurCoeff2}.
  Finally, note that
  \begin{equation}
    (\II-\cC)\alpha = \alpha - \cC\cC^{-1}(\psi-\WW(\DWW)^{-1}\zeta).
  \end{equation}
  Rearranging gives \eqref{eq:SparseSchurCoeff3}.
\end{proof}
Using $\gamma(S,T)=\kappa(S)=\wid$, 
the techniques of \Cref{sec:CoeffBoundProof} apply directly to bound
$\normInf{\BB}$, $\normInf{\DBB}$, $\normInf{\WW}$, and
$\normInf{\DWW}$.   This proves invertibility for $(\Delta(T),\wid)$
in the required region, as it is a subset of the
$(\Delta(T),\gamma(S,T))$ region used in \Cref{thm:Exact}.
To bound $\normInf{\psi}$ and $\normInf{\zeta}$ we
apply the triangle inequality to obtain
\begin{align}
  |\psi_j| &\leq 1 + \sum_{i\in\Noi}|D_{s_i}(t_j,\lambda,\rho'_i)|\\
  |\zeta_j| &\leq \sum_{i\in\Noi}\abs{\frac{\partial}{\partial t}
    D_{s_i}(t_j,\lambda,\rho'_i)},
\end{align}
for all $t_j\in T$.  By the separation requirement on the sparse noise
locations, and monotonicity we obtain the bounds
\begin{align}
  |\psi_j| &\leq 1 + 2\lambda\sum_{i=0}^\infty\smo{|D|}(i\Delta(T)),\\
  |\zeta_j| &\leq 2\lambda\sum_{i=0}^\infty\smo{|D^{(1)}|}(i\Delta(T)),
\end{align}
for all $t_j\in T$.  By \Cref{lem:VTailDecay},
\begin{align}
  \normInf{\psi} &\leq 1 + 2\lambda\brac{\sum_{i=0}^5\smo{|D|}(i\Delta(T))+\epsilon} ,\\
  \normInf{\zeta} &\leq 2\lambda\brac{\sum_{i=0}^5\smo{|D^{(1)}|}(i\Delta(T))+\epsilon},\\
\end{align}
where $\epsilon=10^{-12}/\wid_1$.  For the Gaussian kernel with
$\Delta(T)\geq \DeltaG$ and $\wid\in[\wid_1,\wid_2]$  
\begin{equation}
  \normInf{\psi} \leq \psiG \quad\text{and}\quad\normInf{\zeta}\leq \zetaG,
\end{equation}
whereas for the Ricker wavelet with $\Delta(T)\geq \DeltaR$
\begin{equation}
  \normInf{\psi} \leq \psiR \quad\text{and}\quad\normInf{\zeta}\leq \zetaR,
\end{equation}
by \Cref{cor:VMonotonize}.  Applying
\Cref{lem:SparseSchur}, for the Gaussian kernel we have
\begin{equation}
  \normInf{\alpha} \leq \alphaG \quad\text{and}\quad\normInf{\beta}\leq \betaG,
\end{equation} 
and for the Ricker kernel
\begin{equation}
  \normInf{\alpha} \leq \alphaR \quad\text{and}\quad\normInf{\beta}\leq \betaR.
\end{equation}
\subsection{Proof of \Cref{lem:SparseQBound}: 
  Satisfying Condition \eqref{eq:SparseDualQBound}}
\label{sec:SparseQBoundProof}
In this section, we prove that the dual combination $Q$ satisfies the
condition $|Q(t)|<1$ for $t\in T^c$.
Without loss of generality, we restrict our attention to the
region between a spike at the origin with sign $+1$ and an adjacent spike.  

We begin by decomposing $Q$ as the sum of three terms that account for
the bumps, waves, and noise separately: $Q(t)=\sB(t)+\sW(t)+\sD(t)$,
where
\begin{align}
  \sB(t) &:= \sum_{j=1}^m \alpha_jB_j(t),\\
  \sW(t) &:= \sum_{j=1}^m \beta_jW_j(t),\\
  \sD(t) &:= \sum_{s_i\in\Noi} D_{s_i}(t,\lambda,\rho'_i).
\end{align}
\Cref{lem:GenericQBounds} provides bounds for $|\sB^{(p)}(t)|$,
$|\sW^{(p)}(t)|$, and $\sB^{(q)}(t)$ for $p=0,1,2$ and $q=1,2$.
For the noise component, we have the following bounds which follow
from monotonicity, \Cref{lem:VTailDecay}, and the noise separation condition:
\begin{lemma}
  \label{lem:VBounds}
  Fix our kernel to be the Gaussian kernel or the Ricker wavelet.
  Assume the conditions of \Cref{lem:VTailDecay} hold, and
  that there is a spike at the origin with sign $+1$.  Let $h$ denote
  half the distance from the origin to the spike with smallest
  positive location, or $\infty$ if no such spike exists. Then, for
  $0<t\leq h$ (or $t>0$ if $h=\infty$) and
  $\epsilon:=10^{-12}/\wid_1$,
  \begin{equation}
    |\sD^{(p)}(t)|  \leq
    2\lambda\brac{\sum_{i=0}^5\smo{|D^{(p)}|}(i\Delta(T))+\epsilon},
  \end{equation}
  for $p=0,1,2$.
\end{lemma}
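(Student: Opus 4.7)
The plan is to unfold the definition of $\sD$, bound each shifted dampened kernel pointwise by the monotonized envelope $\smo{|D^{(p)}|}$, and then exploit the minimum-separation condition on the noise locations to turn the sum over $\Noi$ into a sum of the form $\sum_i \smo{|D^{(p)}|}(i\Delta(T))$, truncated via \Cref{lem:VTailDecay}. Concretely, from the definition of the dampened kernel we have $D_{s_i}(t,\lambda,\rho'_i)=\lambda\rho'_i D(t-s_i)$, so differentiating term by term and applying the triangle inequality together with $|\rho'_i|=1$ gives
\begin{equation*}
|\sD^{(p)}(t)| \;\leq\; \lambda\sum_{s_i\in\Noi}|D^{(p)}(t-s_i)| \;\leq\; \lambda\sum_{s_i\in\Noi}\smo{|D^{(p)}|}(|t-s_i|),
\end{equation*}
where the last step uses the pointwise bound $|D^{(p)}(u)|\le \smo{|D^{(p)}|}(|u|)$ that holds by construction of the monotonized envelope for any $\wid\in[\wid_1,\wid_2]$.

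Next I would partition $\Noi$ into the noise locations lying to the right of $t$ and those lying to the left, enumerating the corresponding distances as $0\le d_1<d_2<\cdots$ and $0\le e_1<e_2<\cdots$. By the minimum-separation hypothesis on $\Noi$ (which, together with the theorem's hypothesis $\Delta(T)\ge\DeltaG\sigma$ or $\DeltaR\sigma$, guarantees $\Delta(\Noi)\ge\Delta(T)$ in the regime considered), consecutive distances on the same side differ by at least $\Delta(T)$; in particular $d_k\ge (k-1)\Delta(T)$ and $e_k\ge(k-1)\Delta(T)$. Since $\smo{|D^{(p)}|}$ is monotone nonincreasing in $|t|$, this yields
\begin{equation*}
\sum_{s_i\in\Noi}\smo{|D^{(p)}|}(|t-s_i|) \;\leq\; 2\sum_{k=0}^\infty \smo{|D^{(p)}|}(k\Delta(T)).
\end{equation*}

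I would then split the infinite sum into its first six terms, $k=0,\ldots,5$, and its tail $k\ge 6$. The hypothesis $\Delta(T)\ge 2$ (implied by $\Delta(T)\ge\DeltaG\sigma$ with $\sigma=1$, since $\DeltaG>2$, and similarly for Ricker) lets me invoke \Cref{lem:VTailDecay} to bound the tail by $\epsilon=10^{-12}/\wid_1$. Collecting everything,
\begin{equation*}
|\sD^{(p)}(t)| \;\leq\; 2\lambda\left(\sum_{k=0}^5\smo{|D^{(p)}|}(k\Delta(T))+\epsilon\right),
\end{equation*}
which is the claimed inequality.

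The step requiring the most care will be the reduction from the natural quantity $\Delta(\Noi)$ to $\Delta(T)$ that appears in the statement. If $\Delta(\Noi)\ge\Delta(T)$ then the argument above goes through directly by monotonicity of $\smo{|D^{(p)}|}$. Otherwise, since the theorems imposing this lemma assume both separations exceed the common kernel-dependent threshold $\DeltaG\sigma$ (resp.\ $\DeltaR\sigma$), one may simply replace $\Delta(T)$ with this common lower bound in the chain of inequalities; the resulting bound is weaker but still sufficient, because the monotonized bound is only invoked at values $k\Delta(T)\ge\Delta(T)\ge\DeltaG\sigma\ge 2$, where \Cref{lem:VTailDecay} applies. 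Everything else---the triangle inequality, the pointwise envelope bound, and the left/right splitting---is routine.
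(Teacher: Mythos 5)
Your proof is correct and follows essentially the same route as the paper's: triangle inequality on the sum of dampened kernels, a left/right split of the noise locations relative to $t$, monotonicity of $\smo{|D^{(p)}|}$ combined with the noise-separation condition to reduce to $\sum_{i\ge 0}\smo{|D^{(p)}|}(i\Delta(T))$, and \Cref{lem:VTailDecay} to absorb the tail into $\epsilon$. Your explicit discussion of why $\Delta(T)$ rather than $\Delta(\Noi)$ may appear in the bound is a point the paper glosses over, and your resolution (both separations exceed the common threshold at which the bound is actually evaluated) is the right one.
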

\begin{proof}
  By the triangle inequality and monotonicity 
  \begin{align}
    |\sD{(p)}(t)|
    &\leq \sum_{i\in\Noi} |D^{(p)}_{s_i}(t,\lambda,\rho'_i)|\\
    &= \lambda\sum_{i\in\Noi} |D^{(p)}(t-s_i)|\\
    &= \lambda\sum_{\substack{i\in\Noi\\t\geq s_i}}
    |D^{(p)}(t-s_i)|+
    \lambda\sum_{\substack{i\in\Noi\\t< s_i}} |D^{(p)}(s_i-t)|\\
    &\leq \lambda\sum_{i=0}^\infty \smo{|D^{(p)}|}(i\Delta(T))+
    \lambda\sum_{i=0}^\infty \smo{|D^{(p)}|}(i\Delta(T))\label{eq:sVMonotoneBound}\\
    &\leq 2\lambda\brac{\sum_{i=0}^5
      \smo{|D^{(p)}|}(i\Delta(T))+\epsilon}\label{eq:sVTailBound},
  \end{align}
  where \eqref{eq:sVMonotoneBound} holds due to monotonicity, and
  \eqref{eq:sVTailBound} is due to \Cref{lem:VTailDecay}.
\end{proof}

We use the procedure at the end of
\Cref{sec:qboundproof} with $\gamma(S,T)=\wid_2$ to prove that
$|Q(t)|<1$ for $t\in T^c$.  \Cref{fig:SparseQPlots} shows upper bounds on
$|Q|$, $Q^{(1)}$ and $Q^{(2)}$ for $\wid\in[\wid_1,\wid_2]$ with
$\Delta(T)\geq\DeltaG$ for the Gaussian kernel and $\Delta(T)\geq\DeltaR$ for
the Ricker wavelet. As a result, the conditions of
\Cref{lem:Regions} are satisfied and the proof is complete.
\begin{figure}[t]
\centering 
\includegraphics{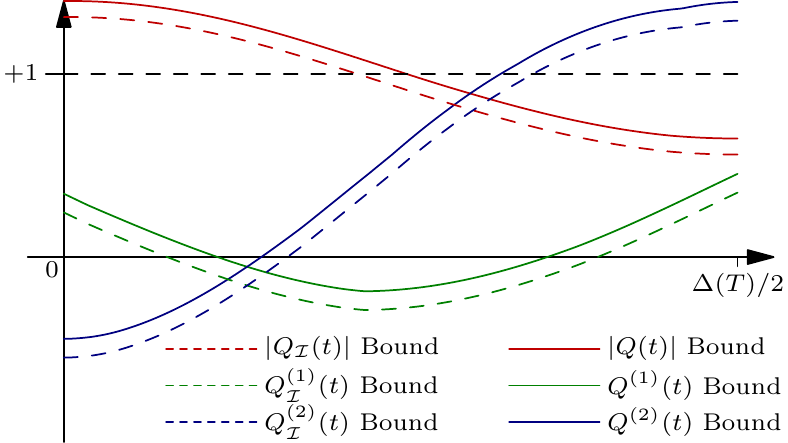}
\includegraphics{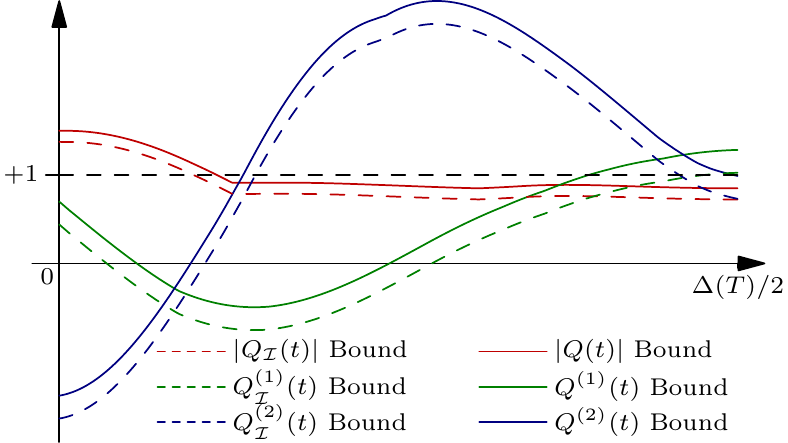}
  \caption{Upper bounds on $Q$, $Q^{(1)}$ and $Q^{(2)}$ for $\wid\in[\wid_1,\wid_2]$ with
$\Delta(T)\geq\DeltaG$ for the Gaussian kernel and $\Delta(T)\geq\DeltaR$ for
the Ricker wavelet.  The difference between the dashed and undashed
curves shows the contribution of the dampened noise.}
  \label{fig:SparseQPlots}
\end{figure}

\subsection{Proof of \Cref{lem:SparseqBound}: 
  Satisfying Condition \eqref{eq:SparseqBound}}
\label{sec:SparseqBoundProof}
Here we prove that $|q_i|<\lambda$ for $s_i\in\Noi^c$.
We begin by establishing that $q_i$ is bounded when $s_i\in\cC$.
\begin{lemma}
  \label{lem:VqBound}
  For the Gaussian and Ricker kernels, if $\wid \leq 0.6$ 
  then $|b_1|<1$ and $|b_2|<1$ where
  \begin{equation}
    B(t,-\wid,\wid) = b_1K(-\wid-t)+b_2K(\wid-t),
  \end{equation}
  is the bump function centered at the origin with samples at $-\wid,\wid$.
\end{lemma}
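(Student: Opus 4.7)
The plan is to compute $b_1$ and $b_2$ explicitly using the closed-form coefficient formula in \Cref{lem:BumpExists}, then exploit the evenness of the Gaussian and Ricker kernels to drastically simplify, and finally verify the resulting one-variable bound for $\wid\leq 0.6$.

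First I would apply \Cref{lem:BumpExists} with $t_i=0$, $\tilde{s}_{i,1}=-\wid$, $\tilde{s}_{i,2}=\wid$. This gives
\begin{equation*}
  b_1 = \frac{-K^{(1)}(\wid)}{K(\wid)K^{(1)}(-\wid)-K^{(1)}(\wid)K(-\wid)},
  \qquad
  b_2 = \frac{K^{(1)}(-\wid)}{K(\wid)K^{(1)}(-\wid)-K^{(1)}(\wid)K(-\wid)}.
\end{equation*}
Since $\KG$ and $\KR$ are even, $K(-\wid)=K(\wid)$ and $K^{(1)}(-\wid)=-K^{(1)}(\wid)$, so the denominator collapses to $-2K(\wid)K^{(1)}(\wid)$. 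Both numerators equal $-K^{(1)}(\wid)$, and the $K^{(1)}(\wid)$ factors cancel (this cancellation is why the condition $s_1\neq s_2$ in \Cref{lem:BumpExists} is enough for existence here), leaving the clean identity
\begin{equation*}
  b_1 = b_2 = \frac{1}{2K(\wid)}.
\end{equation*}

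Next I would simply substitute each kernel and bound the resulting one-variable expression on $[0,0.6]$. For the Gaussian, $b_1=b_2=\tfrac{1}{2}\exp(\wid^2/2)$, which is increasing in $\wid$, so its maximum on $[0,0.6]$ is $\tfrac{1}{2}\exp(0.18)<0.6<1$. For the Ricker, $b_1=b_2=\dfrac{\exp(\wid^2/2)}{2(1-\wid^2)}$; the numerator is increasing and the denominator is positive and decreasing on $[0,0.6]$, so the ratio is increasing and attains its maximum at $\wid=0.6$, where it equals $\exp(0.18)/(2\cdot 0.64)\approx 1.197/1.28<1$.

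The main (very minor) obstacle is only to make sure the monotonicity argument on $[0,0.6]$ is justified without hand-waving in the Ricker case; this is immediate because $\wid\mapsto\exp(\wid^2/2)$ is nondecreasing and $\wid\mapsto 1-\wid^2$ is strictly positive and nonincreasing on $[0,0.6]$. With the monotonicity in hand, it suffices to check the bound at $\wid=0.6$, which is done above, completing the proof for both kernels.
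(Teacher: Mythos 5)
Your proof is correct and uses the same main ingredient as the paper (the explicit coefficient formula from \Cref{lem:BumpExists}), but your simplification is both cleaner and, as it turns out, necessary. The paper routes through \Cref{lem:DenomFormula}, substituting $|(\KR)^{(1)}(\wid)|=\wid|3-\wid^2|\exp(-\wid^2/2)$ into the fraction, but then reports $|b_1|=\wid|3-\wid^2|\exp(\wid^2/2)/\bigl(2|3-4\wid^2+\wid^4|\bigr)$; in fact the $\wid$ in that numerator cancels against the $\wid$ coming from $\DR(-\wid,\wid)=2\wid(3-4\wid^2+\wid^4)\exp(-\wid^2)$, so the correct value is $|3-\wid^2|\exp(\wid^2/2)/\bigl(2|3-4\wid^2+\wid^4|\bigr)$, which after factoring $3-4\wid^2+\wid^4=(1-\wid^2)(3-\wid^2)$ is exactly your $\exp(\wid^2/2)/\bigl(2(1-\wid^2)\bigr)$. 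Your evenness cancellation of the $K^{(1)}(\wid)$ factors gets there directly. The distinction matters: the paper's relaxation ($\wid\leq0.6$, $3-\wid^2\leq3$, $3-4\wid^2+\wid^4\geq3-4(0.6)^2$) yields roughly $0.69<1$ only because of the spurious $\wid$ factor; applied to the corrected formula the same relaxation gives about $1.15>1$, which is inconclusive. Your tighter monotonicity argument (increasing numerator over positive decreasing denominator) correctly shows the maximum on $[0,0.6]$ is attained at $\wid=0.6$ with value approximately $0.935<1$. In short, your blind proof is correct and quietly repairs a small arithmetic slip in the paper's computation while proving the same (true) statement.
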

\begin{proof}
  For the Gaussian kernel we have, by \Cref{lem:BumpExists,lem:DenomFormula},
  \begin{equation}
    |b_1| = \frac{|(\KG)^{(1)}(\wid)|}{2\wid\exp(-\wid^2)}
    = \frac{\exp(\wid^2/2)}{2},
  \end{equation}
  with the same formula for $|b_2|$.  As $e^{0.6^2/2}/2\approx0.5986$
  the result follows for the Gaussian.  
  For the Ricker, 
  by \Cref{lem:BumpExists,lem:DenomFormula}
  \begin{equation}
    |b_1| = \frac{|(\KR)^{(1)}(\wid)|}{2\wid|3-4\wid^2+\wid^4|\exp(-\wid^2)}
    = \frac{\wid|3-\wid^2|\exp(\wid^2/2)}{2|3-4\wid^2+\wid^4|},
  \end{equation}
  with the same formula for $|b_2|$.  For $\wid\leq0.6$ 
  \begin{equation}
    \frac{\wid(3-\wid^2)\exp(\wid^2/2)}{2(3-4\wid^2+\wid^4)}
    \leq \frac{0.6(3)\exp(0.6^2/2)}{2(3-4(0.6)^2)} \approx 0.6907,
  \end{equation}
  completing the proof for the Ricker wavelet.
\end{proof}

If the conditions of \Cref{lem:VqBound} hold then by construction
$|q_{i}|<\lambda$ for all $i\in\cC$. To
complete the proof of \Cref{lem:SparseqBound}, we must show that these
bounds also hold for the pairs of samples closest to each spike.

For the remainder of the section, we assume there is a spike $t_j$ at the
origin with sign $+1$, and $s_i,s_{i+1}$ are the two samples closest to the origin with
$s_{i+1}=s_{i}+\wid$.  We prove that $|q_i|<\lambda$ by showing
that $|q_i|\geq\lambda$ implies $|Q(s_i)|>1$, 
which contradicts \Cref{lem:SparseQBound}. 
By combining \Cref{lem:SparseSchur} and \Cref{lem:BumpExists} we
obtain the following result.
\begin{lemma}
  \label{lem:SparseqPos}
  Suppose $s_i,s_{i+1}$ are the two closest samples to a spike with
  location $t_j=0$.  Assume the spike at
  the origin has sign $+1$, $\alpha_j\geq0$, and that
  $s_i\leq 0\leq s_{i+1}$.
  Then, under the assumptions of \Cref{lem:SparseqBound}, we have
  \begin{equation}
    \min(q_i,q_{i+1})\geq -\frac{\normInf{\beta}e^{\wid^2/2}}{\wid} > -\lambda,
  \end{equation}
  for the Gaussian kernel and 
  \begin{equation}
    \min(q_i,q_{i+1})\geq -\frac{\normInf{\beta}e^{\wid^2/2}}{\wid(3-\wid^2)} > -\lambda,
  \end{equation}
  for the Ricker wavelet.
\end{lemma}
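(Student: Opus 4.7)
The plan is to compute $q_i$ and $q_{i+1}$ explicitly using the bump/wave reparametrization, and then exploit sign information to drop the bump contribution entirely, leaving only the wave part to bound.

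First I would observe that, by the disjointness conditions in \eqref{eq:sparse_conditions}, the samples $s_i$ and $s_{i+1}$ belong to $\II$ (not to $\Noi$ or $\cC$), and since they are the two closest samples to $t_j$, they appear as the pair $(\tilde{s}_{j,1},\tilde{s}_{j,2})=(s_i,s_{i+1})$ in the decomposition \eqref{eq:reparametrization} of $\QI$. They appear in no other bump or wave, nor in $\QC$ or $\RQ$. Therefore
\begin{equation*}
  q_i = \alpha_j b_{j,1} + \beta_j w_{j,1},\qquad
  q_{i+1} = \alpha_j b_{j,2} + \beta_j w_{j,2}.
\end{equation*}
Second, I would plug in the closed-form expressions from \Cref{lem:BumpExists} together with \Cref{lem:DenomFormula}. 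For the Gaussian with $t_j=0$ and $s_{i+1}-s_i=\wid$, the denominator equals $\wid\exp(-(s_i^2+s_{i+1}^2)/2)$, yielding
\begin{equation*}
  b_{j,1} = \frac{s_{i+1}\,e^{s_i^2/2}}{\wid},\quad
  b_{j,2} = \frac{-s_i\,e^{s_{i+1}^2/2}}{\wid},\quad
  w_{j,1} = \frac{-e^{s_i^2/2}}{\wid},\quad
  w_{j,2} = \frac{e^{s_{i+1}^2/2}}{\wid}.
\end{equation*}
The key sign observation is now immediate: because $s_i\leq 0\leq s_{i+1}$, both $b_{j,1}$ and $b_{j,2}$ are non-negative, and combined with the assumption $\alpha_j\geq 0$ the bump contribution to each of $q_i,q_{i+1}$ is non-negative. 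Hence $q_i\geq\beta_j w_{j,1}$ and $q_{i+1}\geq \beta_j w_{j,2}$, and since $|s_i|,|s_{i+1}|\leq\wid$, both $|w_{j,1}|$ and $|w_{j,2}|$ are at most $e^{\wid^2/2}/\wid$, giving the Gaussian bound. For the Ricker the denominator in \Cref{lem:DenomFormula} gains the factor $3-\wid^2+s_i^2 s_{i+1}^2\geq 3-\wid^2$, which is positive for $\wid\leq\topwidR$, and $b_{j,1},b_{j,2}$ remain non-negative because $3-s_{i+1}^2>0$ and $3-s_i^2>0$ in the required range. The same argument then produces the stated bound with the extra $(3-\wid^2)$ factor in the denominator.

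To close the argument I would verify the strict inequality $\|\beta\|_\infty e^{\wid^2/2}/\wid<\lambda=2$ (respectively with $\wid(3-\wid^2)$ in the denominator for the Ricker) numerically on the compact ranges $\wid\in[\botwidG,\topwidG]$ and $\wid\in[\botwidR,\topwidR]$. The function $e^{\wid^2/2}/\wid$ is decreasing on these intervals, so the maximum occurs at $\wid_1$; substituting the bound $\|\beta\|_\infty\leq\betaG$ from \Cref{sec:SparseCoeffBoundProof} for the Gaussian and $\|\beta\|_\infty\leq\betaR$ for the Ricker yields values strictly below $2$. There is no real obstacle in this proof beyond bookkeeping: the algebra is routine and the sign arithmetic is delicate but short. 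The only subtle point to get right is the \emph{locality} claim in the first step, namely that $s_i$ and $s_{i+1}$ do not appear in any other term of the reparametrization, which is what allows us to express $q_i,q_{i+1}$ purely in terms of a single pair $(\alpha_j,\beta_j)$ and is precisely what the disjointness hypothesis \eqref{eq:sparse_conditions} buys us.
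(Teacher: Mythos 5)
Your proof is correct and follows the same core strategy as the paper: compute the coefficients via the bump/wave reparametrization, exploit $\alpha_j\geq 0$ together with the non-negativity of $b_{j,1},b_{j,2}$ to drop the bump contribution, then bound the wave contribution using the closed-form coefficient formulas. You make one useful point more explicit than the paper: you justify the locality identity $q_i=\alpha_j b_{j,1}+\beta_j w_{j,1}$, $q_{i+1}=\alpha_j b_{j,2}+\beta_j w_{j,2}$ from the disjointness condition \eqref{eq:sparse_conditions}, whereas the paper uses it implicitly.

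Your treatment of the Ricker wave coefficients is a genuine (and nicer) shortcut. The paper deals with this in a separate auxiliary result (Lemma~\ref{lem:RickerWaveBound}), where it defines
\begin{equation*}
  f(s) = \frac{(1-s^2)\exp((s+\wid)^2/2)}{\wid\,(3-\wid^2+s^2(s+\wid)^2)},
\end{equation*}
computes $f'(s)$ explicitly, shows it is positive on $[-\wid,0]$, and evaluates $f(0)$. You instead bound the numerator factors $1-s_i^2\leq 1$ and $e^{s_{i+1}^2/2}\leq e^{\wid^2/2}$ and the denominator factor $3-\wid^2+s_i^2s_{i+1}^2\geq 3-\wid^2$ separately. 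This factor-wise bound happens to be tight here because all three factors attain their extremum simultaneously at $s_i=0$, so it yields exactly $e^{\wid^2/2}/\big(\wid(3-\wid^2)\big)$ without any differentiation. The only small detail you gloss over is the monotonicity needed to maximize this bound over $\wid\in[\wid_1,\wid_2]$: for the Ricker one should bound numerator and denominator separately (numerator $e^{\wid^2/2}$ increasing, denominator $\wid(3-\wid^2)$ increasing on the relevant range), whereas your stated observation that $e^{\wid^2/2}/\wid$ is decreasing handles only the Gaussian case directly; this is routine bookkeeping but worth spelling out.
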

\begin{proof}
  Applying \Cref{lem:BumpExists,lem:DenomFormula} we have
  \begin{equation}
    b_{i,1}
    =\frac{s_{i+1}\KG(s_{i+1})}{(s_{i+1}-s_i)\exp\brac{-\frac{s_i^2+s_{i+1}^2}{2}}},
    \quad
    b_{i,2} =\frac{-s_i\KG(s_i)}{(s_{i+1}-s_i)\exp\brac{-\frac{s_i^2+s_{i+1}^2}{2}}},
  \end{equation}
  for the Gaussian kernel and
  \begin{align}
    b_{i,1}
    &=\frac{s_{i+1}(3-s_{i+1}^2)\KG(s_{i+1})}{(s_{i+1}-s_i)(3-(s_i-s_{i+1})^2+s_i^2s_{i+1}^2)
      \exp\brac{-\frac{s_i^2+s_{i+1}^2}{2}}},\\
    b_{i,2}
    &=\frac{-s_i(3-s_i^2)\KG(s_i)}{(s_{i+1}-s_i)(3-(s_i-s_{i+1})^2+s_i^2s_{i+1}^2)
      \exp\brac{-\frac{s_i^2+s_{i+1}^2}{2}}},
  \end{align}
  for the Ricker wavelet.
  Since $s_1\leq 0\leq s_2$ we see that both bump coefficients are
  non-negative for both kernels.  By a similar calculation we have
  \begin{equation}
    w_{i,1}
    =\frac{-\KG(s_{i+1})}{(s_{i+1}-s_i)\exp\brac{-\frac{s_i^2+s_{i+1}^2}{2}}},
    \quad
    w_{i,2} =\frac{\KG(s_i)}{(s_{i+1}-s_i)\exp\brac{-\frac{s_i^2+s_{i+1}^2}{2}}},
  \end{equation}
  for the Gaussian kernel and
  \begin{align}
    w_{i,1}
    &=\frac{(s_{i+1}^2-1)\KG(s_{i+1})}{(s_{i+1}-s_i)(3-(s_i-s_{i+1})^2+s_i^2s_{i+1}^2)\exp\brac{-\frac{s_i^2+s_{i+1}^2}{2}}},\\
    w_{i,2} &
    =\frac{(1-s_i^2)\KG(s_i)}{(s_{i+1}-s_i)(3-(s_i-s_{i+1})^2+s_i^2s_{i+1}^2)\exp\brac{-\frac{s_i^2+s_{i+1}^2}{2}}},\\
  \end{align}
  for the Ricker wavelet.
  Thus $w_{i,1}\leq 0$ and $w_{i,2}\geq0$ for both kernels.  As
  $\alpha_j\geq0$, we can
  lower bound $\min(q_i,q_{i+1})$ by considering the magnitudes of
  $w_{i,1},w_{i,2}$.  This shows
  \begin{equation}
    \min(q_i,q_{i+1}) \geq -\frac{\normInf{\beta}e^{\wid^2/2}}{\wid}
  \end{equation}
  for the Gaussian kernel.  As $f(x)=e^{x^2/2}/x$ has $f'(x)=e^{x^2/2}(x^2-1)/x^2<0$ for
  $|x|<1$ we can plug in our bound on $\normInf{\beta}$ and $\wid_1=\botwid$ to obtain 
  \begin{equation}
    \frac{\normInf{\beta}e^{\wid_1^2/2}}{\wid_1} = \waveG < \lambda.
  \end{equation}
  For the Ricker wavelet, the result follows from \Cref{lem:RickerWaveBound}, an auxiliary result that we defer to \Cref{sec:RickerWaveBound}.
\end{proof}

To verify that $\alpha_j\geq 0$ note that
\begin{align}
  \alpha_j
  & \geq \psi_j - \normInf{\alpha-\psi}\\
  & \geq 1 - 2\lambda\brac{\sum_{i=0}^5
      \smo{|D|}(i\Delta(T))+\epsilon} - \normInf{\alpha-\psi},
\end{align}
by monotonicity and \Cref{lem:VTailDecay}, where $\epsilon := 10^{-12}/\wid_1$.
This gives $\alpha_j\geq \alphajG$ for the Gaussian and
$\alpha_j\geq \alphajR$ for the Ricker.

By \Cref{lem:SparseqPos} we assume, for contradiction, that
$q_i\geq\lambda$ and
$q_{i+1}\geq-\frac{\normInf{\beta}e^{\wid^2/2}}{\wid}$.
The proof with the bounds on $q_i,q_{i+1}$ swapped is the same.
Applying \Cref{lem:GenericQBounds,lem:VBounds} and monotonicity, we
obtain
\begin{align}
  Q(s_i)
  &= q_iK(0) +q_{i+1}K(s_{i+1}-s_i) + \sum_{l\neq
    j}\alpha_lB_{t_l}(s_i)+\beta_lW_{t_l}(s_i)
  + \sum_{l\in\Noi}D_{s_l}(s_i,\lambda,\rho'_l)\\
  &\geq q_i +q_{i+1}K(\wid)\notag\\ 
  &- 2\brac{3\epsilon +\sum_{i=1}^5
    \normInf{\alpha}\smo{|B|}(i\Delta(T)-\wid_2)
    +\normInf{\beta}\smo{|W|}(i\Delta(T)-\wid_2)+\lambda\sum_{i=0}^5 \smo{|D|}(i\Delta(T))},\label{eq:qLowerBoundTest}
\end{align}
where $\epsilon:=10^{-12}/\wid_1$. Applying \Cref{lem:SparseqPos} gives
\begin{equation}
  q_{i+1}K(s_{i+1}-s_i) \geq -\frac{\normInf{\beta}}{\wid}\geq-\frac{\normInf{\beta}}{\wid_1}
\end{equation}
for the Gaussian and
\begin{equation}
  q_{i+1}K(s_{i+1}-s_i) \geq
  -\frac{\normInf{\beta}}{\wid(3-\wid^2)}\geq -\frac{\normInf{\beta}}{\wid_1(3-\wid_1^2)}
\end{equation}
for the Ricker (differentiate $\wid(3-\wid^2)$).
If the expression on the right-hand side of
\eqref{eq:qLowerBoundTest} is strictly larger than $1$ when
$q_i=\lambda$ then we obtain
$Q(s_i)>1$ which contradicts \Cref{lem:SparseQBound}.  Computing these
bounds we have $Q(s_i)\geq \QsiG$ for the Gaussian and $Q(s_i)\geq
\QsiR$ for the Ricker. This contradiction implies $|q_i|<\lambda$ and
completes the proof.
\subsection{\Cref{lem:RickerWaveBound}}
\label{sec:RickerWaveBound}
\begin{lemma}
  \label{lem:RickerWaveBound}
  Under the assumptions of \Cref{lem:SparseqPos} we have
  \begin{alignat}{4}
    0&\leq\frac{\normInf{\beta}(1-s_{i+1}^2)\KG(s_{i+1})}{(s_{i+1}-s_i)(3-(s_i-s_{i+1})^2+s_i^2s_{i+1}^2)\exp\brac{-\frac{s_i^2+s_{i+1}^2}{2}}}
    & \leq \frac{\normInf{\beta}e^{\wid^2/2}}{\wid(3-\wid^2)} &<\lambda,\label{eq:WaveBound1}\\
    0&\leq\frac{\normInf{\beta}(1-s_i^2)\KG(s_i)}{(s_{i+1}-s_i)(3-(s_i-s_{i+1})^2+s_i^2s_{i+1}^2)\exp\brac{-\frac{s_i^2+s_{i+1}^2}{2}}}
    & \leq \frac{\normInf{\beta}e^{\wid^2/2}}{\wid(3-\wid^2)} &<\lambda,\label{eq:WaveBound2}
  \end{alignat}
  for the Ricker kernel with $\wid\in[\wid_1,\wid_2]$.
\end{lemma}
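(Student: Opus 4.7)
The plan is to reduce both bounds to a single clean inequality by first exploiting the algebraic identity $\KG(s_j)\cdot\exp((s_i^2+s_{i+1}^2)/2) = \exp(s_k^2/2)$ where $k\neq j$, and then invoking monotonicity of a simple one-variable function of $\wid$.

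First I would simplify. Since $\KG(t) = \exp(-t^2/2)$, the factor $\KG(s_{i+1})$ in the numerator combined with the factor $\exp(-(s_i^2+s_{i+1}^2)/2)$ in the denominator yields $\exp(s_i^2/2)$. Using $s_{i+1}-s_i = \wid$ and $(s_i-s_{i+1})^2 = \wid^2$, the first expression rewrites as
\begin{equation*}
\frac{\normInf{\beta}(1-s_{i+1}^2)\exp(s_i^2/2)}{\wid(3-\wid^2+s_i^2 s_{i+1}^2)}.
\end{equation*}
The assumptions of \Cref{lem:SparseqPos} give $s_i\in[-\wid,0]$, $s_{i+1}\in[0,\wid]$ with $\wid\le \topwidR < 1$, so each factor is easy to control: $0\le 1-s_{i+1}^2\le 1$, $\exp(s_i^2/2)\le\exp(\wid^2/2)$, and $s_i^2 s_{i+1}^2\ge 0$ so the denominator satisfies $\wid(3-\wid^2+s_i^2 s_{i+1}^2)\ge \wid(3-\wid^2)>0$. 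Combining these three elementary bounds proves both the nonnegativity and the middle inequality of \eqref{eq:WaveBound1}. The argument for \eqref{eq:WaveBound2} is identical with the roles of $s_i$ and $s_{i+1}$ swapped.

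For the strict bound $\normInf{\beta}e^{\wid^2/2}/(\wid(3-\wid^2))<\lambda$, I would reduce to a single-point numerical check by showing that the function $g(\wid):=e^{\wid^2/2}/(\wid(3-\wid^2))$ is decreasing on $[\wid_1,\wid_2]=[\botwidR,\topwidR]$. A quick logarithmic derivative calculation gives
\begin{equation*}
(\log g)'(\wid) = \wid - \frac{1}{\wid} + \frac{2\wid}{3-\wid^2},
\end{equation*}
whose zeros satisfy $\wid^4 - 6\wid^2+3=0$, i.e.\ $\wid^2 = 3\pm\sqrt{6}$. The smaller root is $\wid\approx 0.742$, which is well above $\topwidR$, so $(\log g)'<0$ on $[\wid_1,\wid_2]$ and $g$ attains its maximum at $\wid=\wid_1$. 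It then suffices to evaluate $\normInf{\beta}\cdot g(\wid_1)$: using the bound $\normInf{\beta}\le\betaR$ from \Cref{sec:SparseCoeffBoundProof} and the numerical value $g(\botwidR)$, the product equals $\waveR$, which is strictly less than $\lambda=2$.

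There is no substantial obstacle here since, once the exponential simplification is performed, every inequality follows from monotonicity and a one-dimensional numerical bound. The only minor care needed is to verify that the monotonicity window $(0,\sqrt{3-\sqrt{6}})$ comfortably contains $[\wid_1,\wid_2]$, which is clear from the numerical values, and that the $3-\wid^2$ factor stays positive—automatic since $\wid\le\topwidR<\sqrt{3}$.
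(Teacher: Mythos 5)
Your proof is correct, and it is genuinely simpler than the paper's. After the exponential cancellation $\KG(s_{i+1})/\exp(-(s_i^2+s_{i+1}^2)/2)=\exp(s_i^2/2)$, you bound the numerator factor-by-factor ($1-s_{i+1}^2\le 1$ at $s_{i+1}=0$, and $\exp(s_i^2/2)\le\exp(\wid^2/2)$ at $s_i=-\wid$) and the denominator from below by dropping $s_i^2 s_{i+1}^2\ge 0$. Because all three per-factor extrema occur at the \emph{same} configuration ($s_i=-\wid,\ s_{i+1}=0$), this ``crude'' bound is actually tight and reproduces exactly the quantity $e^{\wid^2/2}/(\wid(3-\wid^2))$ appearing in the statement. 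The paper instead defines $f(s)=\frac{(1-s^2)\exp((s+\wid)^2/2)}{\wid(3-\wid^2+s^2(s+\wid)^2)}$ and establishes $f'(s)>0$ on $[-\wid,0]$ via a term-by-term estimate of a degree-$7$ polynomial, arriving at $\max f = f(0)$; your argument makes that derivative computation entirely unnecessary. For the final numerical step you additionally note that $g(\wid)=e^{\wid^2/2}/(\wid(3-\wid^2))$ is decreasing on $[\wid_1,\wid_2]$ (since the zeros of $(\log g)'$ lie at $\wid^2=3\pm\sqrt6$, well outside the range), whereas the paper takes the looser route of maximizing numerator and denominator independently, i.e.\ $\exp(\wid_2^2/2)/(\wid_1(3-\wid_1^2))$; this is how the constant $\waveR$ is actually defined, so strictly speaking $\normInf{\beta}g(\wid_1)$ is slightly \emph{smaller} than $\waveR$ rather than equal to it---a harmless imprecision since either bound is below $\lambda=2$. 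Finally, your symmetry reduction of \eqref{eq:WaveBound2} to \eqref{eq:WaveBound1} mirrors the paper's substitution $(s_i,s_{i+1})\mapsto(-s_{i+1},-s_i)$, so that part is essentially identical.
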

\begin{proof}
  As $\wid_2=\topwidR$ we immediately obtain that all terms in the
  statement are nonnegative.
  We define $f$ below so
  that the first inequality in \eqref{eq:WaveBound2} reads $0\leq \normInf{\beta}f(s_i)$:
  \begin{equation}
    f(s) = \frac{(1-s^2)\exp((s+\wid)^2/2)}{\wid(3-\wid^2+s^2(s+\wid)^2)}.
  \end{equation}
  
  Differentiating and some algebraic manipulations yield
  \begin{equation}
    \hspace{-.5cm}f'(s) =
    -\frac{e^{\frac{1}{2} (s+\wid)^2} \left(\left(s^2-1\right)^2 \wid^3+s \left(3 s^4-4 s^2+1\right) \wid^2+\left(3 s^6-5 s^4+9 s^2-3\right) \wid+s \left(s^6-3 s^4+7 s^2+3\right)\right)}{\wid \left(\left(s^2+(s-1) \wid\right) (s (s+\wid)+\wid)+3\right)^2}.
  \end{equation}
  We will show this is always positive when
  $\wid\in[\botwidR,\topwidR]$.  As the exponential and the
  denominator are always positive, we focus on the parenthesized
  term in the numerator.
  As $-\wid\leq s \leq 0$ 
  \begin{align}
    (s^2-1)^2\wid^3 & \leq \wid_2^3\\
    s(3s^4-4s^2+1)\wid^2 & \leq 0\\
    (3s^6-5s^4+9s^2-3)\wid & \leq \wid(12s^2-3) \leq
    -\wid_1(3-12\wid_2^2)\\
    s(s^6-3s^4+7s^2+3) & \leq s(3-3s^4) \leq 0.
  \end{align}
  Noting that $\wid_2^3-\wid_1(3-12\wid_2^2) \leq -0.20268$, we see
  that $f'(s)>0$.  This shows that $s=0$ maximizes $f$.  As a result,
  \begin{equation}
    f(0) = \frac{\exp(\wid^2/2)}{\wid(3-\wid^2)}
  \end{equation}
  and
  \begin{equation}
    \frac{\normInf{\beta}\exp(\wid^2/2)}{\wid(3-\wid^2)}
    \leq \frac{\betaR\exp(\wid_2^2/2)}{\wid_1(3-\wid_1^2)} \leq \waveR < \lambda.
  \end{equation}
  Let $(a,b)$ be a valid pair of values for $(s_i,s_{i+1})$, i.e.,
  \begin{equation}
    -\wid \leq a \leq 0 \leq b = a+\wid.
  \end{equation}
  Then $(-b,-a)$ is another valid pair.  Replacing
  $(s_i,s_{i+1})\to(-s_{i+1},-s_i)$ in \eqref{eq:WaveBound2} gives
  \eqref{eq:WaveBound1} and completes the proof.
\end{proof}

\end{document}